\numberwithin{equation}{section}
\DeclareMathAlphabet{\pazocal}{OMS}{zplm}{m}{n}
\def\eps{\varepsilon }
\newcommand\R{\mathbb R}
\def\eps{\varepsilon}
\newcommand\br{\begin{remark}}
\newcommand\er{\end{remark}}
\newcommand\bp{\begin{pmatrix}}
\newcommand\ep{\end{pmatrix}}
\newcommand{\be}{\begin{equation}}
\newcommand{\ee}{\end{equation}}
\newcommand\ba{\begin{equation}\begin{aligned}}
\newcommand\ea{\end{aligned}\end{equation}}
\newcommand{\bap}{\begin{app}}
\newcommand{\eap}{\end{app}}
\newcommand{\begs}{\begin{exams}}
\newcommand{\eegs}{\end{exams}}
\newcommand{\beg}{\begin{example}}
\newcommand{\eeg}{\end{exaplem}}
\newcommand{\bpr}{\begin{proposition}}
\newcommand{\epr}{\end{proposition}}
\newcommand{\bt}{\begin{theorem}}
\newcommand{\et}{\end{theorem}}
\newcommand{\bc}{\begin{corollary}}
\newcommand{\ec}{\end{corollary}}
\newcommand{\bl}{\begin{lemma}}
\newcommand{\el}{\end{lemma}}
\newcommand{\bd}{\begin{definition}}
\newcommand{\ed}{\end{definition}}
\newcommand{\brs}{\begin{remarks}}
\newcommand{\ers}{\end{remarks}}
\newcommand{\RR}{{\mathbb R}}
\newcommand{\NN}{{\mathbb N}}
\newcommand{\pa}{{\partial}}
\newcommand{\rmi}{{\mathrm{i}}}
\newcommand{\rmd}{{\mathrm{d}}}
\newcommand{\rms}{{\mathrm{s}}}
\newcommand{\rmu}{{\mathrm{u}}}
\newcommand{\rmc}{{\mathrm{c}}}
\newcommand{\rmh}{{\mathrm{h}}}
\newcommand{\rmone}{{\mathbbm{1}}}
\newcommand{\const}{\text{\rm constant}}
\newcommand{\Id}{{\rm Id }}
\newcommand{\im}{{\rm im }}
\newcommand{\sgn}{\text{\rm sgn}}
\newtheorem{theorem}{Theorem}[section]
\newtheorem{proposition}[theorem]{Proposition}
\newtheorem{corollary}[theorem]{Corollary}
\newtheorem{lemma}[theorem]{Lemma}
\theoremstyle{remark}
\newtheorem{remark}[theorem]{Remark}
\theoremstyle{definition}
\newtheorem{definition}[theorem]{Definition}
\newtheorem{example}[theorem]{Example}
\newcommand\cB{{\mathcal B}}
\newcommand\cD{{\mathcal D}}
\newcommand\cJ{{\mathcal J}}
\newcommand\cV{{\mathcal V}}
\newcommand\cW{{\mathcal W}}
\newcommand\cR{{\mathcal R}}
\newcommand\cG{{\mathcal G}}
\newcommand\cK{{\mathcal K}}
\newcommand\cL{{\mathcal L}}
\newcommand\cE{{\mathcal E}}
\newcommand\cF{{\mathcal F}}
\newcommand\cY{{\mathcal Y}}
\newcommand\cM{{\mathcal M}}
\newcommand\cN{{\mathcal N}}
\newcommand\cT{{\mathcal T}}
\newcommand\cS{{\mathcal S}}
\newcommand\cZ{{\mathcal Z}}
\newcommand\bH{{\mathbb H}}
\newcommand\bV{{\mathbb V}}
\newcommand\bX{{\mathbb X}}
\newcommand\bY{{\mathbb Y}}
\newcommand\bZ{{\mathbb Z}}
\newcommand{\tT}{{\widetilde{T}}}
\newcommand{\wti}{\widetilde}
\newcommand{\tv}{\wti{v}}
\newcommand{\tu}{\wti{u}}
\newcommand{\tA}{\wti{A}}
\newcommand{\tP}{\wti{P}}
\newcommand{\tbV}{\wti{\bV}}
\newcommand{\bu}{\mathbf{u}}
\newcommand{\bw}{\mathbf{w}}
\newcommand{\obu}{\overline{\mathbf{u}}}
\newcommand{\obr}{\overline{\mathbf{r}}}
\newcommand{\obw}{\overline{\mathbf{w}}}
\newcommand{\obf}{\overline{\mathbf{f}}}
\newcommand{\tbH}{\widetilde{\bH}}
\newcommand{\tbh}{\widetilde{h}}
\newcommand{\tgamma}{\widetilde{\gamma}}
\newcommand{\tbeta}{\widetilde{\beta}}
\newcommand{\obeta}{\overline{\beta}}
\newcommand{\ogamma}{\overline{\gamma}}
\newcommand{\by}{\mathbf{y}}
\newcommand{\dom}{\text{\rm{dom}}}
\newcommand{\beq}{\begin{equation}}
\newcommand{\eeq}{\end{equation}}
\title{Center manifolds for a class of degenerate evolution equations and existence of small-amplitude kinetic shocks}
\author{ Alin Pogan}
\address{Miami University, Oxford, OH 45056}
\email{pogana@miamioh.edu}
\thanks{ A. P. research was partially supported under the
Summer Research Grant program, Miami University}
\author{Kevin Zumbrun}
\address{Indiana University, Bloomington, IN 47405}
\email{kzumbrun@indiana.edu}
\thanks{Research of K.Z. was partially supported
under NSF grant no. DMS-0300487}
\begin{document}

\begin{abstract}
We construct center manifolds for a class of degenerate evolution equations including the
steady Boltzmann equation and related kinetic models,
establishing in the process existence and behavior of small-amplitude kinetic shock and boundary layers.
Notably, for Boltzmann's equation, we show that elements of the center manifold decay in velocity at 
\emph{near-Maxwellian rate}, in accord with the formal Chapman-Enskog picture of near-equilibrium flow as
evolution along the manifold of Maxwellian states, or Grad moment approximation via Hermite polynomials in velocity.
Our analysis is from a classical dynamical systems point of view, 
with a number of interesting modifications to accommodate ill-posedness of the underlying evolution equation.
\end{abstract}

\maketitle

\vspace{0.3cm}
\begin{minipage}[h]{0.48\textwidth}
\begin{center}
Miami University\\
Department of Mathematics\\
301 S. Patterson Ave.\\
Oxford, OH 45056, USA
\end{center}
\end{minipage}
\begin{minipage}[h]{0.48\textwidth}
\begin{center}
Indiana University \\
Department of Mathematics\\
831 E. Third St.\\
Bloomington, IN 47405, USA
\end{center}
\end{minipage}

\vspace{0.3cm}

\tableofcontents

\section{Introduction }\label{s1}

In this paper, we study existence and properties of
near-equilibrium steady solutions, including in particular small-amplitude
shock and boundary layers, of kinetic-type relaxation systems
\begin{equation}\label{Relax-EQ}
A^0 \bu_t + A\bu_x = Q(\bu),
\end{equation}
on a general Hilbert space $\bH$, where $A^0$, $A$ are given (constant) bounded linear operator and $Q$ is a bounded bilinear map (cf. \cite{MZ,PZ1}).
More generally, we study existence and approximation of center manifolds for a
class of degenerate evolution equations arising as steady equations
\begin{equation}\label{nonlinear}
A\bu' = Q(\bu)
\end{equation}
for \eqref{Relax-EQ}, including in particular the steady Boltzmann equation and cousins
along with approximants such as BGK and discrete-velocity models \cite{MZ,PZ1}.
Specifically, we are interested in the case when the linear operator $A$ is self-adjoint,
bounded, and one-to-one, but \textit{not boundedly invertible}.

Following \cite{MZ,PZ1}, we make the following assumptions on linear operator $A$ and nonlinearity $Q$.

\smallskip

\noindent{\bf Hypothesis (H1)}
\begin{enumerate}
\item[(i)] The linear operator $A$ is
bounded, self-adjoint, and one-to-one
on the Hilbert space $\bH$;
\item[(ii)] There exists $\bV$ a proper, closed subspace of $\bH$ with $\dim\bV^\perp<\infty$ and $B:\bH\times\bH\to\bV$ is a bilinear, symmetric, continuous map such that $Q(\bu)=B(\bu,\bu)$.
\end{enumerate}

\smallskip

\noindent{\bf Hypothesis (H2)} There exists an equilibrium $\obu\in\ker Q$ satisfying
\begin{enumerate}
\item[(i)] $Q'(\obu)$ is self-adjoint and $\ker Q'(\obu)=\bV^\perp$;
\item[(ii)] There exists $\delta>0$ such that $Q'(\obu)_{|\bV}\leq -\delta I_{\bV}$.
\end{enumerate}

\begin{example}\label{boltzeg}
Our main example is the steady Boltzmann equation
\be\label{Boltz}
\xi_1 f_x=Q(f),\quad x\in \R^1, \, \xi\in \R^3,
\ee
where $f=f(x,\xi)$ denotes density at spatial point $x$ of particles with velocity $\xi$;
$$
Q (f) := \int_{\RR^3}\int_{S^2} \big( f( \xi') f (\xi'_*)  -  f(\xi) f(\xi_*) \big) C(\Omega, \xi - \xi_*) d \Omega d\xi_*
$$
is a collision operator, with $\xi_* \in \RR^3$, $\Omega \in S^2$, and
$ \xi' = \xi + \big(\Omega\cdot(\xi_* - \xi) \big) \Omega$,
$\xi'_* = \xi_*- \big(\Omega \cdot  (\xi_* - \xi) \big) \Omega$;
and $C$ is a specified collision kernel;
see, e.g., \cite{Ce,Gl} for further details.
In the hard sphere case, $C (\Omega, \xi) = \big| \Omega \cdot \xi \big|$,
this can be put in form \eqref{nonlinear} satisfying (H1)-(H2) by the coordinate change
\be\label{change}
f\to \langle \xi \rangle^{1/2}f, \quad Q \to \langle \xi \rangle^{1/2}Q, \qquad \langle \xi\rangle:=\sqrt{1+|\xi|^2},
\ee
with $\bH$ the standard square-root Maxwellian-weighted $L^2$ space in variable $\xi$
and $A=\xi_1/\langle \xi\rangle$ \cite{MZ}.
Note that $A$ has no kernel on $\bH$, but essential spectra
$\xi_1/\langle \xi\rangle \to 0$ as $\xi_1\to 0$: an {\it essential singularity}.
\end{example}

Our analysis continues a program begun in \cite{PZ1} to develop dynamical systems tools for degenerate equations
\eqref{nonlinear}, suitable for the treatment of existence
and stability of kinetic shock and boundary layers in Boltzmann-type equations.
Similarly as in \cite{PZ1}, our basic strategy is, in the perturbation equations of \eqref{nonlinear}
about $\obu$, to isolate by direct computation a center subspace flow
$w_\rmc'=Jw_\rmc + g_\rmc$, and a hyperbolic (stable/unstable) subspace flow $\Gamma_0 w_\rmh' = E_0w_\rmh + g_\rmh$,
coupled by quadratic order nonlinearities $g_\rmc$ and $g_\rmh$. Here $J$ is a finite-dimensional matrix in Jordan form, and
$\Gamma_0$, $E_0$ are self-adjoint bounded operators, with $E_0$ negative definite and $\Gamma_0$ one-to-one but not
boundedly invertible (see \eqref{canon} and derivation) on $\tbV$ a finite codimension subspace of $\bH$ introduced in Section~\ref{s2},
then construct the center manifold by a fixed-point iteration based on inversion of the linear operators
$(\partial_x-J)$ and $(\Gamma_0 \partial_x -E_0)$ in a negatively exponentially weighted space in $x$.

As noted in \cite{PZ1}, a key point is that $(\Gamma_0 \partial_x -E_0)^{-1}$ is bounded in $H^1(\R,\tbV)$ {\it but not
in $C_{\mathrm{b}}(\R,\tbV)$};
hence, we are prevented from applying the usual sup-norm based arguments in the evolutionary variable $x$ \cite{B,HI,Z1,Zode}.
Accordingly, we carry out our fixed-point iteration instead in the Sobolev space $H^1(\R,\bH)$, a modification that,
as in \cite{PZ1}, costs a surprising amount of difficulty.
Interestingly, the difficulties in the two (stable vs. center manifold) cases are essentially complementary.
In the stable manifold case \cite{PZ1}, where the analysis is on $\R_+$, the main difficulty was in handling traces
at the boundary $x=0$.
In the center manifold case, where the analysis is on the whole space $\R$, the difficulty is rather with
regularity, specifically $H^1$ analysis in a negatively weighted space. 
In particular, we find it necessary to work in a mixed norm 
$\|f\|_{\cZ_{\gamma,\beta}(\bH)}:=(\|f\|_{L^2_{-\gamma}}^2+\|f'\|_{L^2_{-\beta}}^2)^{\frac12}$, with $\beta > \gamma$,
in order to obtain contraction of our fixed point iteration.  
The introduction of these spaces, along with the associated contraction estimates, we consider as one of the main technical
novelties of this paper.
The presence of an additional weight, along with derivative terms, considerably complicates the usual
argument for higher regularity via a cascade of spaces with decreasing weights;
see the treatment of smoothness of substitution operators in Appendix \ref{appendix}.

\subsection{Formal Chapman--Enskog expansion}
The Implicit Function Theorem yields the standard result of existence of an isolated
finite-dimensional manifold of equilibria through the base point $\obu$.
\begin{lemma}\label{l:eq}
	Assume that $\obu\in\ker Q$ is an equilibrium satisfying Hypotheses (H1) and (H2). Then,
there exists local to $\obu$ a unique $C^\infty$ (in Fr\'echet sense) manifold
of equilibria $\cE$, tangent at $\obu$ to $\bV^\perp$,
expressible in coordinates $\bw:=\bu-\obu$ as a $C^\infty$
graph $v_*:\bV^\perp \to\bV$.
\end{lemma}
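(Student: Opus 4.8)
The plan is to obtain the lemma from the Implicit Function Theorem in Banach spaces, applied to the equilibrium equation $Q(\bu)=0$ after rewriting it in the coordinate $\bw=\bu-\obu$ and splitting $\bw$ along the orthogonal decomposition $\bH=\bV^\perp\oplus\bV$ furnished by Hypothesis (H1)(ii).

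First I would expand $Q$ about the base point. Since $Q(\bu)=B(\bu,\bu)$ with $B$ bilinear and symmetric, one has the exact identity $Q(\obu+\bw)=Q'(\obu)\bw+B(\bw,\bw)$, using $Q(\obu)=0$. Set $L:=Q'(\obu)$ and write $\bw=p+q$ with $p\in\bV^\perp$, $q\in\bV$. By Hypothesis (H2)(i), $\ker L=\bV^\perp$, so $Lp=0$ and the equilibrium equation reduces to
\[
F(p,q):=Lq+B(p+q,\,p+q)=0 ,
\]
an identity valued in $\bV$, since $B$ takes values in $\bV$ by (H1)(ii). The map $F\colon\bV^\perp\times\bV\to\bV$ is a bounded quadratic map, hence $C^\infty$ in the Fr\'echet sense, with $F(0,0)=0$ and partial derivative $D_qF(0,0)=L|_{\bV}$. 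By Hypothesis (H2), $L|_{\bV}$ is a bounded self-adjoint operator on $\bV$ with $L|_{\bV}\le-\delta I_{\bV}$, so its spectrum lies in $(-\infty,-\delta]$ and it is boundedly invertible on $\bV$.

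The Implicit Function Theorem then yields a neighborhood $U$ of $0$ in the finite-dimensional space $\bV^\perp$ and a unique $C^\infty$ map $v_*\colon U\to\bV$ with $v_*(0)=0$ and $F(p,v_*(p))=0$ for $p\in U$; setting $\cE:=\{\obu+p+v_*(p):p\in U\}$ produces the asserted $C^\infty$ graph of equilibria over $\bV^\perp$, and the uniqueness clause of the theorem shows that any solution of $Q(\bu)=0$ close enough to $\obu$ lies on $\cE$. For the tangency claim I would note that the only $p$-dependent term of $F$, namely $B(p+q,p+q)$, has $p$-differential $2B(p+q,\cdot)$, which vanishes at $(p,q)=(0,0)$; hence $D_pF(0,0)=0$ and $Dv_*(0)=-(L|_{\bV})^{-1}D_pF(0,0)=0$, so $\cE$ is tangent to $\bV^\perp$ at $\obu$. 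I do not anticipate a genuine obstacle here: the argument is soft apart from two structural inputs, namely the bounded invertibility of $L|_{\bV}$, which is exactly what (H2) provides, and the fact that the equation is genuinely $\bV$-valued with $\ker L=\bV^\perp$, which is what (H1)(ii) together with (H2)(i) supply.
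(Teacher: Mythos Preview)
Your proof is correct and follows exactly the approach the paper indicates: the paper simply states that ``The Implicit Function Theorem yields the standard result'' without further detail, and you have supplied those details accurately, using (H1)(ii) to see that the equation is $\bV$-valued, (H2)(i) to kill the $p$-component of $Q'(\obu)$, and (H2)(ii) to get bounded invertibility of $D_qF(0,0)=E$.
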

Denote by $u=P_{\bV^\perp}\bu$, $v=P_{\bV}\bu$, where $P_{\bV^\perp}$ and $P_{\bV}$ are the orthogonal
projections onto associated to the decomposition $\bH=\bV^\perp \oplus \bV$.
Denoting $A_{11}=P_{\bV^\perp}A_{|\bV^\perp}$, $A_{12}=P_{\bV^\perp}A_{|\bV}$, we obtain the
standard fact that \eqref{nonlinear} admits a \textit{conservation law}
\be\label{cons}
(A_{11}u + A_{12}v)'=0.
\ee
The formal, \textit{first-order Chapman--Enskog} approximation of near-equilibrium behavior,
based on the assumption that deviations $v-v_*(u)$ from equilibrium
are small compared to variations in $u$, and their derivatives are small compared to the derivative
of $u$ (see, e.g., \cite{L}), is given by
\be\tag{${\rm CE}_1$}\label{ce}
(f_*(u))'=0, \; v\equiv v_*(u), \quad \hbox{\rm where $ f_*(u):=A_{11}u + A_{12}v_*(u)$},
\ee
corresponding to the steady problem for the system of hyperbolic conservation laws \cite{La,Sm}
\be\label{hyp}
h_*(u)_t + f_*(u)_x=0, \quad \hbox{\rm where $h_*(u):=P_{\bV^\perp}A^0u+ P_{\bV}A^0v_*(u)$,}
\ee
i.e., flow along equilibrium manifold $\cE$ (parametrized by $u$) governed by
\be\label{f*q}
f_*(u)\equiv q=\const.
\ee
The \textit{second-order Chapman-Enskog} approximation, corresponding to
$ h_*(u)_t + f_*(u)_x=D_*u_{xx}, $ is
\be\tag{${\rm CE}_2$}\label{ce2}
u'= D_*^{-1}(f_*(u)- q), \quad
\hbox{\rm where $ f_*(u):=A_{11}u + A_{12}v_*(u)$ and $D_*:=-A_{12} E^{-1}  A_{12}^*$},
\ee
with $E:= Q'(\obu)_{|\bV}$ denoting the restriction of $Q'(\obu)$ to its range.
See \cite{L,MZ,MZ2} for further details.

A secondary goal of this paper is to relate the rigorous center-manifold flow of \eqref{nonlinear} to
the first- and second-order Chapman--Enskog systems \eqref{ce} and \eqref{ce2}.
To this end, notice, first, that the set $\cE$ of equilibria of \eqref{nonlinear} is precisely the set of
solutions of \eqref{ce}, which in turn is the set of equilibria of \eqref{ce2}.
Thus, {\it at the level of equilibria, all three models exactly correspond.}

\subsubsection{Case structure}\label{s:class}
Next, we distinguish the {\it noncharacteristic case} $\det f_*'(\bar u)\neq 0$
and {\it characteristic case} $\det f_*'(\bar u)=0$, according as the characteristic velocities $\lambda_j(u)\in \sigma (f'_*(u))$ of
\eqref{hyp} are nonvanishing at $\bar u=P_{\bV}\obu$ or not.
In the noncharacteristic case, the Inverse Function Theorem yields that $f_*$ maps a neighborhood of $\bar u$
one-to-one onto a neighborhood of $\bar q:= f_*(\bar u)$, hence each fiber of \eqref{ce} is trivial, consisting
of a single equilibrium. Likewise, comparing dimensions, it is easily seen that the center manifold of \eqref{ce2} at $\bar u$ is
just the set of all equilibrium solutions, consisting of constant states $u(x)\equiv 0$ \cite{MaP}.

The characteristic case is more interesting, admitting nontrivial dynamics.
We distinguish two important subcases, the {\it simple, genuinely nonlinear}
and the {\it linearly degenerate} case \cite{La,Sm},
again having to do with structure of the first-order system \eqref{hyp}, both of which (and no others)
arise for Example~\ref{boltzeg}.
The simple, genuinely nonlinear case consists of the situation that $f_*'(\bar u)$ has a simple zero eigenvalue
with associated unit eigenvector $\obr$, for which \cite{Sm}
\be\tag{GNL}\label{gnl}
\Lambda:=\obr\cdot f_*''(\bar u)(\obr,\obr)\neq 0.
\ee
In this case, \eqref{ce} corresponds to a {\it fold bifurcation} \cite{Fre}, with $f_*$ mapping a disk around
$\bar u$ to a topological half-disk, with covering number two.
Moreover, points $u_1$, $u_2$ with the same image $q$, corresponding to equilibria of \eqref{nonlinear} and \eqref{ce2},
satisfy the Rankine-Hugoniot jump condition
\be\tag{RH}\label{rh}
f_*(u_1)=f_*(u_2),
\ee
corresponding to a discontinuous ``Lax-type'' standing-shock solution of \eqref{hyp}, \eqref{ce} \cite{La,Sm}.
Such a solution, having infinite derivative, does not satisfy in any obvious ways the assumptions in deriving the
formal approximation \eqref{ce}; however, a center-manifold analysis \cite{MaP} shows that the corresponding
fiber \eqref{ce2} of the associated second-order system contains a heteroclinic connection joining these two equilibria,
or {\it viscous shock profile.}  See \cite{MaP} for further discussion.

The linearly degenerate case consists of the opposite extreme, that, not only $\Lambda=0$, but
\be\tag{LDG}\label{ldg}
\hbox{\rm The solutions of \eqref{ce} consists, locally, of $\emptyset$ or an $m$-parameter manifold $\Delta$,}
\ee
where $m=\dim \ker f_*'(\bar u)$, given as the integral manifold of $m$ characteristic eigenvectors $\mathbf{e}_j(u)$
with common eigenvalue $\lambda_j(u)=\lambda(u)$ vanishing at $\bar u$, and constant along $\Delta$.
Thus, the $(\mathrm{dim}\bV^\perp+m)$-dimensional center manifold of \eqref{ce2} consists of the union of fibers \eqref{ce2} either composed entirely
of equilibria or having none; it therefore {\it admits no heteroclinic or homoclinic connections, nor even solutions approaching
an equilibrium as $x\to +\infty$ or $x\to -\infty$.} For further discussion, see Section~\ref{s:bif}.

\begin{example}\label{boltzceeg}
	For Example~\ref{boltzeg},
	the steady Boltzmann equation with hard sphere potential,
	$\cE$ is the set of Maxwellian distributions
	\be\label{M}
	M_u(\xi)  = \rho(4\pi e/3)^{-3/2}  e^{-  | \xi - v |^2(4 e/3)^{-1} },
	\ee
indexed by $u=(\rho, v^T,e)^T \in \R^5$, where $\rho$ represents density, $v\in \R^3$ velocity, and $e$ internal energy,
and \eqref{ce2} is the steady compressible Navier-Stokes (cNS) system with monatomic equation of state,
or {\it hydrodynamic limit} \cite{HLyZ,LiuYu,MZ}.
The corresponding first-order system \eqref{ce}, the compressible Euler equation, possesses two simple genuinely nonlinear
``acoustic'' characteristics $\lambda_1=v_1-\overline{c}$, $\lambda_5=v_1+\overline{c}$, where $\overline{c}>0$ is sound speed, and three linearly degenerate ``entropic/vorticity'' characteristics $\lambda_2=\lambda_3=\lambda_4=v_1$ \cite{Sm}.
\end{example}

\subsection{Main results}\label{s:main}
Under assumptions (H1) and (H2), we find (see Lemma \ref{l2.10}) that, under the linearized flow of \eqref{nonlinear}
about $\obu$, $\bH$ decomposes into invariant subspaces $\bH_\rmc\oplus\bH_\rms\oplus\bH_\rmu$, where
$\bH_\rmc$ is a finite-dimensional center subspace of dimension $\dim \bV^\perp + \dim \ker A_{11}$ and
$\bH_\rms$ and $\bH_\rmu$ are (typically infinite-dimensional) stable and unstable subspaces in the standard sense of
(nondegenerate) dynamical systems.
Our first main result asserts, likewise, existence of a center manifold in the usual dynamical systems
sense (cf. \cite{B,HI,Z1,Zode}).

\begin{theorem}\label{t1.1}
	Assume that $\obu\in\ker Q$ is an equilibrium satisfying (H1), (H2).
	Then, for any integer $k\geq2$
there exists local to $\obu$ a $C^k$ center manifold $\cM_\rmc$ (not necessary unique), tangent at $\obu$ to $\bH_\rmc$,
expressible in coordinates $\bw:=\bu-\obu$ as a $C^k$ graph $\cJ_\rmc:\bH_\rmc\to\bH_\rms\oplus\bH_\rmu$,
that is locally invariant under the flow of equation \eqref{nonlinear}, and contains all solutions
that remain bounded and sufficiently close to $\obu$ in forward and backward
time.
\end{theorem}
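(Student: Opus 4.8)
The plan is to follow the classical Lyapunov--Perron scheme adapted to the degenerate setting, as initiated in \cite{PZ1}, working not in sup-norm but in the mixed Sobolev-type spaces $\cZ_{\gamma,\beta}(\bH)$ described in the introduction. First I would use the spectral decomposition of Lemma~\ref{l2.10} to split the perturbation equation for $\bw = \bu - \obu$ into the center-subspace flow $w_\rmc' = J w_\rmc + g_\rmc(\bw)$ and the hyperbolic flow $\Gamma_0 w_\rmh' = E_0 w_\rmh + g_\rmh(\bw)$, with $J$ in Jordan form, $E_0$ negative definite and self-adjoint, and $\Gamma_0$ self-adjoint, one-to-one, but not boundedly invertible on $\tbV$; the nonlinearities $g_\rmc, g_\rmh$ are quadratic (hence smooth, by (H1)(ii)) and must be cut off by a smooth bump function so that they are globally Lipschitz with small Lipschitz constant on a ball of radius $\rho$ around the origin. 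The center manifold is then realized as the graph $\cJ_\rmc$ whose value at $\xi_\rmc \in \bH_\rmc$ is read off from the unique bounded-growth solution of the coupled system with $w_\rmc(0) = \xi_\rmc$.

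Next I would set up the fixed-point operator. On the center component one inverts $(\partial_x - J)$ against the exponential dichotomy of the finite-dimensional matrix $J$ — standard, giving a bounded operator on an exponentially weighted $L^2$ (or $\cZ_{\gamma,\beta}$) space with any weight strictly between the (zero) center rate and the spectral gap. On the hyperbolic component one inverts $(\Gamma_0 \partial_x - E_0)$; the key analytic input, already flagged in the introduction and established in the manner of \cite{PZ1}, is that this inverse is bounded on $H^1(\R,\tbV)$ with exponential weights, and more precisely maps $L^2_{-\gamma}$-data with an $L^2_{-\beta}$ control on the derivative of the data, so the mixed norm $\cZ_{\gamma,\beta}$ with $\beta > \gamma$ is preserved and in fact contracted when the weights and the cutoff radius $\rho$ are chosen appropriately. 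One then defines $\cT(\bw)$ by applying these two resolvents to $(g_\rmc(\bw), g_\rmh(\bw))$ plus the $J$-flow of the initial datum $\xi_\rmc$, and verifies $\cT: \cZ_{\gamma,\beta} \to \cZ_{\gamma,\beta}$ is a contraction uniformly in $\xi_\rmc$, using the small Lipschitz constant of the cut-off nonlinearities together with the bounds on the resolvents. The Banach fixed-point theorem gives a unique fixed point $\bw(\cdot;\xi_\rmc)$, and $\cJ_\rmc(\xi_\rmc) := P_{\bH_\rms \oplus \bH_\rmu}\bw(0;\xi_\rmc)$.

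For the remaining assertions: tangency at $\obu$ (i.e.\ $D\cJ_\rmc(0) = 0$) follows from the quadratic vanishing of $g_\rmc, g_\rmh$ by differentiating the fixed-point identity; local invariance under \eqref{nonlinear} follows from translation-invariance of the equation and uniqueness of bounded-growth solutions through a given point; and the inclusion of all solutions that stay close to $\obu$ in forward and backward time is exactly the statement that such a solution has controlled growth at $\pm\infty$, hence lies in $\cZ_{\gamma,\beta}$, hence equals the fixed point with the appropriate $\xi_\rmc$. The $C^k$ regularity of $\cJ_\rmc$, for arbitrary $k \ge 2$, is obtained by the usual bootstrap: differentiating the fixed-point equation $k$ times gives fixed-point equations for the formal derivatives in a cascade of spaces with successively relaxed weights, and one closes these using smoothness of the substitution (Nemytskii) operators $\bw \mapsto g_\rmc(\bw), g_\rmh(\bw)$; choosing the cutoff radius $\rho$ small (depending on $k$) keeps all contraction constants $< 1$.

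\medskip

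\noindent\textbf{Main obstacle.} The hard part is not the fixed-point iteration per se but the regularity theory in the weighted mixed-norm setting: because $(\Gamma_0\partial_x - E_0)^{-1}$ is \emph{not} bounded on $C_\rmb(\R,\tbV)$, one cannot run the textbook sup-norm cascade, and the extra weight $\beta > \gamma$ together with the derivative term in the $\cZ_{\gamma,\beta}$-norm substantially complicates the proof that the substitution operators are $C^k$ between the relevant weighted spaces — this is precisely the technical content deferred to Appendix~\ref{appendix}, and it is where the bulk of the work lies. A secondary delicate point is the simultaneous choice of the weight exponents and the cutoff radius so that every operator in the cascade is a contraction while the weights still lie in the admissible spectral window dictated by Lemma~\ref{l2.10}.
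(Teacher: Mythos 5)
Your high-level architecture matches the paper's: the decomposition of Lemma~\ref{l2.10} into center and hyperbolic flows, a truncated Lyapunov--Perron fixed point in the mixed spaces $\cZ_{\gamma,\beta}$, tangency from quadratic vanishing, local invariance from translation plus uniqueness, and inclusion of near-equilibrium bounded solutions because the cutoff is inactive for them. But two of your steps as written would fail, and both land squarely on the paper's main technical difficulties.

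First, the fixed point. You claim the truncated map is a contraction on $\cZ_{\gamma,\beta}$ uniformly in the base point and then invoke the Banach fixed-point theorem. This is not available here: Lemma~\ref{l3.2}(iv) gives Lipschitz smallness of $\cN_\eps$ in the $\cZ_{\gamma,\beta}$-norm only under an a~priori bound on the arguments in the \emph{stronger} norm $H^1_{-\alpha}$, and Remark~\ref{keyrmk} shows explicitly that $\cN_\eps$ is neither contractive nor even Lipschitz in $H^1_{-\beta}$: the derivative term $(N_\eps'\circ\bw_1-N_\eps'\circ\bw_2)\bw_2'$ brings in $N_\eps''$, which is bounded but not small, and the weight $e^{-2\beta|x|}$ can compensate the growth of $\|\bw_1-\bw_2\|$ or of $\|\bw_2'\|$ but not both. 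So there is no single norm in which $\cT$ is contractive on a ball. The paper instead (Lemma~\ref{l3.3}) runs a two-norm Picard scheme: the iterates are uniformly bounded in $H^1_{-\alpha}$, Cauchy in $\cZ_{\gamma,\beta}$ \emph{on} that $H^1_{-\alpha}$-bounded set, and weak compactness of the Hilbert ball places the limit back in $H^1_{-\alpha}$. You flag the ``bounded high norm / contractive low norm'' idea in passing, but your argument as written does not actually implement it.

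Second, the $C^k$ bootstrap. You propose a cascade in ``successively relaxed weights'' closed by shrinking the cutoff radius with $k$. This is the wrong direction and the wrong parameter. Each differentiation of the fixed-point relation passes through the Sobolev-type embedding of Lemma~\ref{lsob}, which forces the weights to (roughly) \emph{double} at each step, from $\cZ_{\gamma,\beta}$ to $\cZ_{2\gamma,2\beta}$ and onward through Lemmas~\ref{l3.7}--\ref{l3.9} and Appendix~\ref{appendix}. Since all weights must stay below the bi-semigroup rate $\nu(\Gamma_0,E_0)$, the bootstrap can run only finitely many times; the fix (Remark~\ref{finitely-many-times}) is to choose the \emph{initial} weights $\alpha,\gamma,\beta$ small depending on $k$, not to shrink the cutoff radius. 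This is exactly why the theorem gives $C^k$ for arbitrary but fixed $k$ rather than $C^\infty$, an obstruction your proposal does not surface.
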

Once existence of a center manifold is established, one may obtain existence of small-amplitude shock profiles
by adapting the center manifold arguments of \cite{MaP,MasZ2} in the finite-dimensional case.
Here, we give instead a particularly simple normal form argument under the additional assumption of genuine nonlinearity \eqref{gnl},
whereas the arguments of \cite{MaP,MasZ2} were for the general case.
Similarly as in \cite{MaP,MasZ2}, the main idea is to use the fact that equilibria are predicted by the Rankine-Hugoniot shock
conditions for \eqref{ce} to deduce normal form information from the structure of the first-order Chapman-Enskog approximation.
Our second main result relates behavior on the center manifold to that of \eqref{ce2}.
\begin{theorem}\label{t1.2}
	Assume that $\obu\in\ker Q$ is an equilibrium satisfying (H1), (H2).
	In the noncharacteristic case, the center manifolds of \eqref{nonlinear} at $\obu$
	and \eqref{ce2} at $\bar u=P_{\bV^\perp}\obu$ consist entirely of equilibria, with trivial (constant) flow.
	In the characteristic case \eqref{gnl}, the center manifolds of \eqref{nonlinear} and
	\eqref{ce2} both consist of the union of one-dimensional fibers parametrized by $q\in \R^r$,
	governed by approximate Burgers flows: specifically,
	setting $u_1:=\obr \cdot u$, $q_1:=\obr \cdot q$, $q=(q_1,\tilde q)$, and
	without loss of generality (see Section~\ref{s:approx}) taking $\tilde q=0$,  the flow
	\be\label{burgers}
	u_1'=\varkappa^{-1}\big(-q_1 + \Lambda u_1^2/2\big) + \mathcal{O}(|u_1|^3 + |q_1||u_1|+ |q_1|^2),
	\ee
	where $\varkappa:=\obr^T D_* \obr>0$, $\obr$, $D_*$, $q$ as in \eqref{gnl}, \eqref{ce2}.
	In particular, there exist
	local heteroclinic (Lax shock) connections for $q_1 \Lambda>0$ between endstates
	$u_1^\pm \approx \sqrt{2 q_1/\Lambda}$.
	In the characteristic case \eqref{ldg}, the center manifolds of \eqref{nonlinear} and
	\eqref{ce2} consist of the union of $m$-dimensional\footnote{Here $m=\mathrm{dim}f_*'(\bar u)$} fibers with approximate flow, taking
	again without loss of generality $\tilde q=0$,
	\be\label{ldgflow}
	u_1'= -\varkappa^{-1}q_1 +\mathcal{O}(|q_1||u_{1}|+|q_1|^2),
	\ee
	$u_1, q_1\in \R^m$, $\varkappa\in \R^{m\times m}$,
	either consisting entirely of equilibria, or entirely
	of solutions leaving the vicinity of $\obu$ ($\bar u$) at both $x\to +\infty$ and $x\to -\infty$;
	in particular, there are no local heteroclinic (shock type) solutions, or (boundary layer type)
	solutions converging to equilibria as $x\to +\infty$ or $x\to -\infty$.
\end{theorem}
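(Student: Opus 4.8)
\textbf{Proof proposal for Theorem \ref{t1.2}.}

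The plan is to deduce all three cases from Theorem \ref{t1.1} together with the Chapman--Enskog structure already recorded in Section \ref{s1}, the point being that the center manifold $\cM_\rmc$ is finite dimensional and the conservation law \eqref{cons} provides a first integral that pins down its fiber structure. First I would use the decomposition $\bH_\rmc$ of dimension $\dim\bV^\perp + \dim\ker A_{11}$ and the graph $\cJ_\rmc:\bH_\rmc\to\bH_\rms\oplus\bH_\rmu$ from Theorem \ref{t1.1} to write the reduced flow on $\cM_\rmc$ in the coordinates $u=P_{\bV^\perp}\bu$, $v=P_\bV\bu$, with $v=v_*(u)+O(|u|^2+\text{higher})$ the correction supplied by the center-manifold graph. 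Restricting \eqref{cons} to $\cM_\rmc$ gives $f_*(u)+O(\cdot)\equiv q=\const$ along each trajectory; this identifies $q\in\R^r$ (with $r=\dim\bV^\perp$) as the natural fiber parameter and shows the center manifold of \eqref{nonlinear} fibers exactly as does that of \eqref{ce2}, since both are cut out, to leading order, by $f_*(u)=q$.

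Next I would treat the three cases. In the \emph{noncharacteristic case} $\det f_*'(\bar u)\neq0$: the Inverse Function Theorem (already invoked in Section \ref{s:class}) makes $f_*$ a local diffeomorphism, so $f_*(u)=q$ together with the $O(\cdot)$ corrections, by a further implicit-function argument, forces $u$ to be locally constant on each fiber; counting dimensions ($\dim\bH_\rmc=\dim\bV^\perp$ here, as $\ker A_{11}=0$) shows $\cM_\rmc$ is exactly the equilibrium manifold $\cE$ of Lemma \ref{l:eq}, with trivial flow, matching \eqref{ce2}. In the \emph{genuinely nonlinear case} \eqref{gnl}: $f_*'(\bar u)$ has a simple zero eigenvalue with eigenvector $\obr$, so $\dim\ker A_{11}=1$ and the fibers are one-dimensional; projecting the reduced equation onto $\obr$ and Taylor-expanding $f_*$ to second order using \eqref{gnl} yields $u_1'=\varkappa^{-1}(-q_1+\Lambda u_1^2/2)+O(|u_1|^3+|q_1||u_1|+|q_1|^2)$ once one computes that the effective diffusion coefficient in the $\obr$ direction is $\varkappa=\obr^TD_*\obr$ with $D_*=-A_{12}E^{-1}A_{12}^*$ as in \eqref{ce2}; the factorization of $v$ near the center manifold shows the quadratic term comes precisely from $D_*^{-1}$ acting on $f_*(u)-q$, up to the stated cubic errors. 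The existence of the heteroclinic Lax connection for $q_1\Lambda>0$ between $u_1^\pm\approx\pm\sqrt{2q_1/\Lambda}$ is then the elementary phase-line analysis of this scalar ODE: the right-hand side has two simple zeros of opposite sign in $u_1'$-slope, giving a monotone orbit from one to the other, stable under the $O(\cdot)$ perturbation by structural stability of hyperbolic equilibria on the line. In the \emph{linearly degenerate case} \eqref{ldg}: $\dim\ker f_*'(\bar u)=m$ and along the integral manifold $\Delta$ the common eigenvalue $\lambda(u)\equiv0$, so $f_*$ is constant along $\Delta$; projecting gives $u_1'=-\varkappa^{-1}q_1+O(|q_1||u_1|+|q_1|^2)$ with $u_1,q_1\in\R^m$, $\varkappa\in\R^{m\times m}$, whose right-hand side is, to leading order, a nonzero constant $-\varkappa^{-1}q_1$ when $q_1\neq0$ and zero when $q_1=0$ — hence each fiber is either all equilibria ($q_1=0$) or consists of orbits drifting at roughly constant speed out of the neighborhood in both time directions, precluding heteroclinics or one-sided-asymptotic orbits.

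The key algebraic input throughout is the identity relating the center-manifold reduction of \eqref{nonlinear} to the second-order Chapman--Enskog operator $D_*^{-1}(f_*(u)-q)$: one must verify that inverting $(\Gamma_0\partial_x-E_0)$ on $\tbV$ and substituting back reproduces $-A_{12}E^{-1}A_{12}^*$ as the leading coefficient, which is where the canonical form \eqref{canon} and its derivation (Section \ref{s2}) enter. I would organize this as: (a) write the reduced vector field on $\cM_\rmc$ as $w_\rmc'=Jw_\rmc+g_\rmc(w_\rmc,\cJ_\rmc(w_\rmc))$; (b) split $w_\rmc$ into the $\bV^\perp$ part $u$ and the $\ker A_{11}$ part, solve the latter in terms of the former via the graph and the conservation law; (c) Taylor expand to the order dictated by $k$ in Theorem \ref{t1.1} (here $k=2$ or $3$ suffices), reading off \eqref{burgers} or \eqref{ldgflow}; (d) run the scalar/vector phase analysis.

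\medskip

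I expect the main obstacle to be step (c) in the genuinely nonlinear case: identifying the coefficient of the quadratic term as exactly $\varkappa^{-1}\Lambda/2$ with $\varkappa=\obr^TD_*\obr$, rather than merely "some nonzero constant," requires carefully tracking how the center-manifold graph correction $v-v_*(u)$ feeds back through $A_{12}$ into the conservation law at second order, and matching this against the formal derivation of \eqref{ce2}. A secondary subtlety is making precise "without loss of generality $\tilde q=0$" — this is a reparametrization of the fiber family (shifting the base point along $\cE$), and I would justify it by noting that $f_*$ restricted to the transverse directions is a local diffeomorphism, so one may first solve for the non-$\obr$ components of $u$ in terms of $\tilde q$ and substitute, absorbing the result into the $O(\cdot)$ remainder. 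Everything else — the noncharacteristic and linearly degenerate cases, and the phase-line arguments — is routine once the reduction in (a)–(b) is in place.
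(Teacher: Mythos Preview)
Your proposal is structurally sound and tracks the paper's argument closely: the canonical form reduction, the observation that the conservation law makes part of $w_\rmc$ a constant of motion (so the center flow fibers over $q$), the dimension count in each case, and the phase-line analysis for \eqref{burgers} and \eqref{ldgflow} are all exactly as in Sections~\ref{s:approx}--\ref{s:bif}. The computation of $\varkappa=\obr^T D_*\obr$ via the block-inverse formula is also carried out explicitly in the paper in Section~\ref{s:gnl}.

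The one point where the paper proceeds differently, and more economically, is precisely the step you flag as the main obstacle: identifying the quadratic coefficient as $\varkappa^{-1}\Lambda/2$. Rather than tracking the center-manifold graph correction through $A_{12}$ at second order, the paper exploits the principle that \emph{equilibria of \eqref{nonlinear}, \eqref{ce2}, and \eqref{ce} coincide exactly}. Since the normal form \eqref{fibeqT} is $w_{\rmc,1}'=\zeta+\chi w_{\rmc,1}^2+\cdots$ with $\zeta=-\varkappa^{-1}q_1$ already identified, and since the equilibrium equation of this normal form must agree (up to an overall factor) with the Lyapunov--Schmidt normal form $0=-q_1+\tfrac12\Lambda u_1^2+\cdots$ of the equilibrium problem \eqref{ce}, one reads off $\varkappa\chi=\Lambda/2$ by comparison. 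The same matching argument handles the linearly degenerate case \eqref{ldg}: because in the fiber $q_1=0$ every point is an equilibrium of \eqref{ce}, every term in the normal form must carry a factor of $q_1$, which immediately gives \eqref{ldgflow}. Your direct-expansion route would also work, but the matching argument bypasses the second-order bookkeeping entirely and is the shortcut the paper actually uses.
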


\begin{corollary}\label{c1.3}
Assume that $\obu\in\ker Q$ is an equilibrium satisfying (H1), (H2),
in the characteristic case \eqref{gnl}, and let $k$ be an integer $\geq 2$.
Then, local to $\obu$, ($\bar u$), each pair of points $u_\pm$ corresponding to a standing Lax-type shock
of \eqref{ce} has a corresponding viscous shock solution $u_{CE}$ of \eqref{ce2} and
relaxation shock solution $\bu_{REL}=(u_{REL},v_{REL})$ of \eqref{nonlinear}, satisfying for all $j\leq k-2$:
\begin{equation}\label{finalbds}
\begin{aligned}
	\big|\partial_x^j ( u_{REL}- u_{CE})(x)\big| &\le C \eps^{j+2}e^{-\delta  \eps|x|},\\
	\big|\partial_x^j \big(v_{REL}-v_*(u_{CE})\big)(x)\big| &\le C \eps^{j+2}e^{-\delta  \eps|x|},\\
|\partial_x^j (u_{REL}-u_\pm)(x)|&\le C \eps^{j+1}e^{-\delta \eps|x|}, \quad x\gtrless 0,\\
\end{aligned}
\end{equation}
for some $\delta>0$, $C>0$,  where $\eps:=|u_+-u_-|$, with also $\lambda(u_{REL}(x))$ monotone in $x$,
where $\lambda(u)$ is the simple eigenvalue of $f_*'(u)$ vanishing at $u=\bar u$.
Up to translation, these are the unique such solutions.
\end{corollary}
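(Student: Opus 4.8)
The plan is to combine Theorem \ref{t1.1}, Theorem \ref{t1.2}, and the classical center-manifold theory for the second-order system \eqref{ce2} to produce the three solutions, then estimate their differences by a normal-form comparison. First I would record, using Theorem \ref{t1.2} in the genuinely nonlinear case \eqref{gnl}, that on the (one-dimensional, after fixing $q$) center manifold of \eqref{nonlinear} the flow is the Burgers-type equation \eqref{burgers}; by the standard phase-plane analysis of $u_1' = \varkappa^{-1}(-q_1 + \Lambda u_1^2/2) + \mathcal O(\cdots)$ this has, for $q_1\Lambda > 0$ and $|q_1|$ small, exactly two equilibria $u_1^\pm \approx \pm\sqrt{2q_1/\Lambda}$ connected by a unique (up to translation) heteroclinic orbit, with $u_1$ — hence $\lambda(u_{REL})$, since $\lambda'(\bar u)\cdot\obr = \Lambda \neq 0$ — monotone in $x$. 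The Rankine–Hugoniot relation \eqref{rh} identifies these equilibria with the given shock pair $u_\pm$ (after choosing $q = f_*(u_\pm)$); this produces $\bu_{REL}$, and its component relation $v_{REL} = \cJ_\rmc(\ldots)$ gives $v_{REL} = v_*(u_{REL}) + \mathcal O(|u_1 - \bar u_1|^2)$. The identical argument applied to \eqref{ce2}, whose center manifold Theorem \ref{t1.2} shows is governed by the \emph{same} Burgers flow \eqref{burgers}, yields $u_{CE}$.

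Next I would set up the scaling. With $\eps = |u_+ - u_-|$, the endstates satisfy $u_1^\pm - \bar u_1 = \mathcal O(\eps)$ and $q_1 = \mathcal O(\eps^2)$ (from \eqref{burgers} at equilibrium), so the natural rescaling $u_1 = \bar u_1 + \eps\, U(\eps x)$ turns \eqref{burgers} into a Burgers traveling-wave ODE in $U(y)$, $y = \eps x$, with $\mathcal O(\eps)$ corrections, whose leading solution is the explicit $\tanh$ profile with decay rate $\delta \eps$ in $x$. This gives the weighted exponential bounds $|\partial_x^j(u_{REL} - u_\pm)| \le C\eps^{j+1}e^{-\delta\eps|x|}$ for $x \gtrless 0$, and likewise for $u_{CE}$; differentiating the graph relations and using smoothness of $\cJ_\rmc$ and $v_*$ (Theorem \ref{t1.1}, Lemma \ref{l:eq}) upgrades these to derivative bounds up to order $k-2$, the loss of two derivatives coming from the $C^k$ regularity of $\cM_\rmc$ together with the two derivatives spent relating $\bu$ to $(u_1, v)$.

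The core estimate is $|\partial_x^j(u_{REL} - u_{CE})| \le C\eps^{j+2}e^{-\delta\eps|x|}$. Here I would subtract the two Burgers-type equations: both $u_1^{REL}$ and $u_1^{CE}$ solve $u_1' = \varkappa^{-1}(-q_1 + \Lambda u_1^2/2) + R(u_1, q_1)$ with the \emph{same} leading part and with remainders $R$ that agree to the stated order but differ by $\mathcal O(\eps^3)$ (the difference between the two center manifolds enters only in the cubic-and-higher terms, since Theorem \ref{t1.2} pins down the quadratic Burgers part identically). Writing $w = u_1^{REL} - u_1^{CE}$, one gets $w' = \varkappa^{-1}\Lambda \bar u_1^{(\cdot)} w + (\text{decaying coefficients}) w + \mathcal O(\eps^3 e^{-\delta\eps|x|})$, a scalar linear ODE whose homogeneous part has the hyperbolic dichotomy inherited from the Burgers linearization about its endstates (eigenvalue $\sim \varkappa^{-1}\Lambda\sqrt{2q_1/\Lambda} \sim \eps$); with the translation normalization fixing the one free constant, the variation-of-constants formula gives $|w| \le C\eps^{-1}\cdot\eps^3 e^{-\delta\eps|x|} = C\eps^2 e^{-\delta\eps|x|}$, and bootstrapping through the ODE gives the $\partial_x^j$ bounds with the extra $\eps^j$. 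The bound on $v_{REL} - v_*(u_{CE})$ then follows by writing $v_{REL} - v_*(u_{CE}) = (\cJ_\rmc(\ldots)_{\bV} - v_*(u_{REL})) + (v_*(u_{REL}) - v_*(u_{CE}))$: the first term is $\mathcal O(\eps^2 e^{-\delta\eps|x|})$ because the center-manifold graph agrees with the equilibrium graph $v_*$ to quadratic order in $u_1 - \bar u_1 = \mathcal O(\eps)$ (both are tangent to $\bV^\perp$ and $v_*$ parametrizes the equilibria lying on $\cM_\rmc$), and the second is $\mathcal O(|u_{REL} - u_{CE}|) = \mathcal O(\eps^2 e^{-\delta\eps|x|})$ by the $w$-estimate and Lipschitzness of $v_*$; derivatives are handled as before. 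I expect the main obstacle to be the \emph{uniformity in $\eps$} of the dichotomy/variation-of-constants step — the spectral gap of the linearized Burgers flow collapses like $\eps$ as the shock strength vanishes, so one must track the $\eps$-dependence carefully (working in the rescaled variable $y = \eps x$ throughout, where the gap is $O(1)$) to land exactly the powers $\eps^{j+2}$ rather than something weaker; a secondary nuisance is checking that the remainder discrepancy between the two center-manifold reductions is genuinely $\mathcal O(\eps^3)$ and not merely $\mathcal O(\eps^2)$, which requires unwinding that the quadratic coefficient $\Lambda/(2\varkappa)$ in \eqref{burgers} is computed from \eqref{ce}–\eqref{ce2} data common to both systems.
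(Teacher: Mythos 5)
Your proposal is correct in outline and lands on the same normal-form identification (Theorem \ref{t1.2}) and the same key scaling idea, but for the core estimate $|u_{REL,1}-u_{CE,1}|\lesssim\eps^{2}e^{-\delta\eps|x|}$ you take a genuinely different route from the paper. The paper does not subtract the two Burgers-type ODEs and solve a linear variational equation; instead it introduces Lemma \ref{lburgers}, which compares a heteroclinic of the \emph{approximate} Burgers equation \eqref{approx} to the \emph{exact} $\tanh$-profile of the leading-order Burgers equation, obtaining $\big|\partial_x^{k}\big((\eta-\eta_\pm)-(\bar\eta-\bar\eta_\pm)\big)\big|\le C\eps^{k+2}e^{-\delta\eps|x|}$. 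The proof of that lemma is exactly the rescaling $\eta\to\eta/\eps$, $x\to\Lambda\eps x/\varkappa$ you propose, followed not by variation of constants but by the implicit function theorem and the stable/unstable manifold theorem with smooth dependence on the parameter $\eps$, in the blown-up variables where all constants are $O(1)$. Since $u_{REL,1}$ and $u_{CE,1}$ satisfy approximate Burgers equations with \emph{the same} $\varkappa$, $\Lambda$, $q_1$ (Theorem \ref{t1.2}), both are compared to the \emph{same} reference $\tanh$ profile, and Corollary \ref{ctri} then extracts the $u_1$-comparison by the triangle inequality together with the observation that the endstates $u_{REL,1}^\pm=u_{CE,1}^\pm$ agree exactly. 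This common-reference trick buys you a cleaner argument: you never have to control a variational equation whose spectral gap degenerates like $\eps$ and which has a translational zero mode, which is precisely the ``main obstacle'' you correctly flagged; the rescaled IFT/stable-manifold argument handles all of that uniformly at one stroke. Your variation-of-constants route would also work, but needs the extra observations you only implicitly make: (a) the source $[R_1-R_2](u_1^{CE}(x))$ decays exponentially because $R_1-R_2$ vanishes at both equilibria (not merely because it is $O(\eps^3)$), and (b) one must quotient out the translational direction before inverting the linearization.

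On the $v$-component, your decomposition is in the right spirit but your justification of the first piece is slightly off. ``The center-manifold graph agrees with $v_*$ to quadratic order in $u_1-\bar u_1=O(\eps)$'' gives a uniform bound $O(\eps^2)$, \emph{not} the claimed $O(\eps^{2}e^{-\delta\eps|x|})$: a quadratic-order deviation does not decay in $x$ by itself. What makes the estimate work — and this is exactly what the paper uses — is that the slaved components $\Psi,\Phi$ on the fiber and their Chapman--Enskog counterparts $\Psi_{CE}, v_*$ satisfy $\Psi-\Psi_{CE}=0$ and $\Phi-v_*=0$ at \emph{both} endstates $u_{REL,1}^\pm$ (equilibria lie on all three manifolds), so by smoothness $|\Phi-v_*|\lesssim |u_{REL,1}-u_{REL,1}^+|\,|u_{REL,1}-u_{REL,1}^-|$; one factor contributes an $\eps$ and the other an $\eps e^{-\delta\eps|x|}$ via \eqref{u1comp}(i). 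Your write-up gestures at this (``$v_*$ parametrizes the equilibria lying on $\cM_\rmc$'') but does not actually invoke the vanishing at the endstates, which is the step that produces the exponential factor. Fixing that, and carrying through the rescaling you already identified, your argument would close.
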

\br\label{kawrmk}
We do not assume as in \cite{MZ}
the usual ``genuine coupling'' or Kawashima condition that
no eigenvector of $A$ lie in the kernel of $Q'(\obu)$,
which would imply (see \cite{Y}) that \eqref{ce2} be of Kawashima class \cite{K}:
in particular, that viscosity coefficient $D_*$ be nonnegative semidefinite.
What takes the place of this condition is the assumption that $A$ has no kernel, which implies the weakened Kawashima
condition that no zero eigenvector of $A$ lie in the kernel of $Q'(\obu)$, which is sufficient that $\varkappa>0$
in Theorem~\ref{t1.2}.
As follows from the center manifold analysis of \cite{MaP}, this is enough for existence of small-amplitude
shock profiles for \eqref{ce2}, independent of the nature of $D_*$.
\er

\subsubsection{Boltzmann's equation}\label{s:ap}
Applying Theorems~\ref{t1.1}, ~\ref{t1.2} and Corollary~\ref{c1.3} to Example~\ref{boltzeg}, we immediately
(i) obtain existence of a center manifold, and (ii) recover and substantially
sharpen the fundamental result \cite{CN} of existence of small-amplitude Boltzmann shocks,
both with respect to the space $\bH$ determined by the (slight strengthening of the) classical square-root Maxwellian weighted norm
$\|f\|_{\bH}:=\|\langle \cdot\rangle^{1/2} M_{\bar u}^{-1/2}(\cdot) f(\cdot)\|_{L^2}$ \cite{Ce,G,MZ}.
Adapting a bootstrap argument of \cite{MZ}, we obtain the following improvement.
For any $1/2\leq \sigma<1$, denote by $\bY^\sigma$ the Hilbert space determined by norm
\be\label{Hs}
\|f\|_{\bY^\sigma}:= \|\langle \cdot\rangle^{1/2} M_{\bar u}^{-\sigma}(\cdot) f(\cdot)\|_{L^2}.
\ee
\begin{proposition}\label{p1.4}
For the steady Boltzmann equation with hard sphere potential (Example~\ref{boltzeg}),
for any $1/2\leq \sigma<1$ and integer $k\geq 2$,
there exists in the vicinity of any Maxwellian equilibrium $\obu=M_{\bar u}$
a $C^k(\bH_\rmc,\bY^\sigma)$ center manifold $\cM_\rmc \subset \bY^\sigma$ in the sense of Theorem~\ref{t1.1},
tangent at $\obu$ to $\bH_\rmc\subset \bY^\sigma$,
and expressible in coordinates $\bw:=\bu-\obu$ as a $C^k$ graph $\cJ_\rmc:\bH_\rmc\to(\bH_\rms\oplus\bH_\rmu)\cap \bY^\sigma$.
\end{proposition}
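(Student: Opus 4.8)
\textbf{Proof proposal for Proposition~\ref{p1.4}.}

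The plan is to bootstrap from the abstract center manifold $\cM_\rmc\subset\bH$ produced by Theorem~\ref{t1.1}, applied to the steady Boltzmann equation in the coordinates of Example~\ref{boltzeg}, and upgrade the weight from $M_{\bar u}^{-1/2}$ to $M_{\bar u}^{-\sigma}$ for any $1/2\le\sigma<1$. The starting point is the observation that every $\bu\in\cM_\rmc$ satisfies the steady Boltzmann equation \eqref{nonlinear}, which after the change of variables \eqref{change} reads $A\bu'=Q(\bu)$ with $A=\xi_1/\langle\xi\rangle$. Writing the collision operator in the standard gain/loss split $Q(\bu)=Q^+(\bu)-\nu(\xi)\bu$, where $\nu(\xi)\sim\langle\xi\rangle$ is the collision frequency for hard spheres, the equation on the center manifold can be recast as a fixed-point relation for the new weighted quantity. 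The key structural fact, exploited in \cite{MZ}, is that $Q^+$ has better velocity-decay properties than the loss term: convolution with the collision kernel gains Maxwellian weight, so $M_{\bar u}^{-\sigma}Q^+(\bu)$ is controlled by lower powers of the weight applied to $\bu$. This is what makes the bootstrap close.

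Concretely, I would proceed in the following steps. First, record the Chapman--Enskog-type decomposition of $\bu\in\cM_\rmc$ from the center manifold construction: $\bu=\obu+w_\rmc+\cJ_\rmc(w_\rmc)$ with $w_\rmc$ finite-dimensional (hence automatically in $\bY^\sigma$, since finite-dimensional subspaces spanned by Hermite-type functions against the Maxwellian lie in every weighted space) and $\cJ_\rmc(w_\rmc)$ the graph part lying a priori only in $\bH_\rms\oplus\bH_\rmu\subset\bH$. Second, use the equation $A\bu'=Q(\bu)$ together with the sign and spectral-gap structure in (H2) to write the hyperbolic component $w_\rmh=\cJ_\rmc(w_\rmc)$ as the image under a bounded solution operator --- essentially $(\Gamma_0\partial_x-E_0)^{-1}$ acting on the quadratic remainder $g_\rmh$, exactly as in the fixed-point scheme sketched in the introduction. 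Third, and this is the heart of the matter, show that this solution operator is bounded $\bY^\sigma\to\bY^\sigma$ (in the appropriate mixed norm $\cZ_{\gamma,\beta}$ in $x$): here one multiplies the resolvent identity by $M_{\bar u}^{-\sigma}$, moves the weight through $E_0=Q'(\obu)_{|\bV}$ using that the linearized collision operator commutes with Maxwellian multiplication up to controllable commutators, and absorbs the gain-term contribution using the weight-gaining estimate on $Q^+$. Fourth, run the fixed-point iteration for $\cJ_\rmc$ again, now in the $\bY^\sigma$-valued spaces, showing the contraction constant is unchanged (or can be made small by shrinking the neighborhood); uniqueness of the fixed point then forces the $\bY^\sigma$-valued manifold to coincide with $\cM_\rmc$, giving $\cM_\rmc\subset\bY^\sigma$ and the claimed $C^k$ graph property. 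Smoothness in $\bY^\sigma$ follows by the same substitution-operator machinery of Appendix~\ref{appendix}, now with $\bH$ replaced by $\bY^\sigma$ throughout.

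The main obstacle I anticipate is Step 3: controlling the loss term $\nu(\xi)\bu$ after inserting the stronger weight. The difficulty is that $M_{\bar u}^{-\sigma}$ with $\sigma>1/2$ is \emph{not} bounded relative to $M_{\bar u}^{-1/2}$, so one cannot simply dominate the new norm by the old one; instead one must genuinely use the equation and the coercivity $Q'(\obu)_{|\bV}\le-\delta I_{\bV}$ to trade the bad $\nu$-growth against the dissipation, exactly in the spirit of the weighted energy estimates for linearized Boltzmann. The restriction $\sigma<1$ enters precisely here: for $\sigma=1$ the weight $M_{\bar u}^{-1}$ would exactly cancel the Gaussian and the gain term would no longer live in an $L^2$ space, so the gain-term estimate degenerates. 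A secondary technical point is that the finite-codimension subspace $\tbV$ and the projections defining $\Gamma_0, E_0$ must be checked to behave well under the weight change --- but since $\bV^\perp$ is spanned by collision invariants times $\langle\xi\rangle^{1/2}$, which decay at Maxwellian rate, these projections are bounded on every $\bY^\sigma$, so this causes no real trouble. Modulo these weighted estimates, the argument is a routine re-run of the Theorem~\ref{t1.1} construction in a smaller ambient space, and one should emphasize that the \emph{same} manifold is obtained, not merely some manifold.
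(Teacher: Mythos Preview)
Your strategy is correct and matches the paper's: both bootstrap the construction of Section~\ref{CM} from $\bH=\bY^{1/2}$ to $\bY^\sigma$ by showing that the linear solution operator $\cK_0$ (hence $\cK$) is bounded on $H^1_{-\alpha}(\RR,\bY^\sigma)$, and then re-run the fixed-point argument verbatim in the smaller space, obtaining the same manifold by uniqueness.

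The tactical difference is in your Step~3. You propose to work directly with the reduced resolvent $(\Gamma_0\partial_x-E_0)^{-1}$ and commute the weight $M_{\bar u}^{-\sigma}$ through $E_0$, handling the loss term by weighted energy estimates. The paper (Lemma~\ref{upgrade}) instead lifts back to the \emph{full} linear equation $A\bu'=Q'(\obu)\bu+g$ and uses the splitting $Q'(\obu)=M_*+K$, with $M_*$ a scalar multiplication in $\xi$ and $K$ weight-gaining in the sense of \eqref{bd}. Since both $A$ and $M_*$ are multiplication operators in $\xi$, the operator $(M_a\partial_x-M_*)^{-1}$ is diagonal in $\xi$ and therefore \emph{trivially} bounded on $H^1_{-\alpha}(\RR,\bY^\sigma)$; one then writes $\bu=(M_a\partial_x-M_*)^{-1}(g+K\bu)$ and bootstraps. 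The loop is closed not by an energy argument but by the equivalent-norm device $\|\cdot\|_{\cY^\sigma}:=\|\cdot\|_{\bY^\sigma}+n\|\cdot\|_{\bY^{1/2}}$ with $n\gg1$, so that the already-established $\bY^{1/2}$ bound absorbs the low-$|\xi|$ residual. Your commutator route is plausible, but lifting to the full equation sidesteps the need to track how the finite-rank projections defining $\Gamma_0,E_0$ interact with the $M_*+K$ decomposition, which is what the paper's version buys.
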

Proposition \ref{p1.4} implies that the center manifold, including any small-amplitude shock profiles, lies in the
space $\bY^\sigma$ of functions bounded in $L^2(\RR^3,e^{|\xi|^2}\rmd\xi)$ with a near-Maxwellian weight.
In particular, we obtain ``sharp localization in velocity'' of small-amplitude Boltzmann shock profiles,
recovering the strongest current existence result obtained in \cite{MZ}, plus the additional information of
monotonicity of $\lambda(u_{REL}(x))$ along the profile not available by the Sobolev-based fixed point iteration arguments of \cite{CN,MZ}.
This description of velocity-localization of the center manifold is sharp, as may be seen by the fact that the
equilibrium manifold $\cE$, contained in any center manifold,
itself lies in this space and no stronger one, changes in energy $e$ effectively changing the power of the Gaussian distribution
in the Maxwellian formula \eqref{M}.
It validates in a strong sense the formal Chapman-Enskog picture of near-equilibrium
behavior as governed essentially by the flow along the equilibrium manifold $\cE$, and the Grad hierarchy of
moment-closure approximations \cite{G} based on Hermite polynomials in velocity.

\subsection{Discussion and open problems}\label{s:discussion}
Writing the key infinite-dimensional hyperbolic equation $\Gamma_0 w_\rmh-E_0w_\rmh=g_\rmh$
formally as $ w_\rmh' = \Gamma_0^{-1} E_0w_\rmh + \Gamma_0^{-1} g_\rmh$,
we see that this is in general an ill-posed equation in both forward and backward $x$, due to non-bounded invertibility of
$\Gamma_0$, with the additional difficulty that the unbounded operator $\Gamma_0^{-1}$ also acts on the source
term $g_\rmh$.  In the latter sense, it is similar in flavor to quasilinear PDE problems involving maximal
regularity analysis.
Center manifolds for ill-posed evolutionary systems involving maximal regularity
have been treated by Mielke in \cite{Mi} and others, see, e.g., \cite{HI} and references therein.
The present, semilinar analysis, though different in particulars, seems to belong to this general family of results.

In the case of Boltzmann's equation \eqref{Boltz}, Liu and Yu \cite{LiuYu} have investigated
existence of center manifolds in a (weighted $L^\infty(x,\xi)$) {\it Banach space} setting, using
rather different methods of time-regularization and detailed pointwise bounds,
pointing out that monotonicity of $\lambda(\bar u)$ follows from center manifold reduction
and describing physical applications of center manifold theory
to condensation and subsonic/supersonic transition in Milne's problem.

A larger goal, beyond existence and construction of invariant manifolds,
is to develop dynamical systems tools for systems \eqref{Relax-EQ}
analogous to those developed for finite-dimensional
viscous shock and relaxation systems in \cite{GZ,MasZ2,Z2,Z3,Z4,Z5,ZH,ZS},
sufficient to treat 1- and multi-D stability by the techniques of those papers.
See in particular the discussion of \cite[Remark 4.2.1(4), p. 55]{Z4}, proposing a path toward stability of Boltzmann
shock profiles, which reduces the problem to description of the resolvent kernel in a small neighborhood of the
origin.

Such methods would apply in principle also to large-amplitude shocks, provided profiles exist and are
spectrally stable.
The development of numerical and or analytical methods for the treatment of existence and stability of large-amplitude
kinetic shocks we regard as a further, very interesting open problem.
Indeed, the {\it structure problem} discussed by Truesdell, Ruggeri, Boillat, and others, of existence and description of large-amplitude Boltzmann shocks, is one of the fundamental open problems in the theory, and
(because of more accurate fit to experiments than predictions of Navier-Stokes theory)
an important motivation for their study; see, e.g., the discussion of \cite{BR}.

\medskip

{\bf A glossary of notation:}\,  For $p\geq 1$, $J\subseteq\RR$ and $\bX$ a Banach space,
$L^p(J,\bX)$ are the usual Lebesgue spaces on $J$ with values in
$\bX$, associated with Lebesgue measure $\rmd x$ on $J$. Similarly,
$L^p(J,\bX;w(x)\rmd x)$ are the weighted spaces with a weight $w\geq0$. The respective spaces of bounded continuous functions on $J$
are denoted by $C_{\rm b}(J,\bX)$ and $C_{\rm b}(J,\bX;w(x))$.
$H^s(\RR, \bX)$, $s> 0$, is the usual Sobolev space of $\bX$ valued functions. The identity operator on a Banach space $\bX$ is denoted by $Id$ (or by $Id_{\bX}$ if its dependence on $\bX$
needs to be stressed). The set of bounded linear operators from a Banach space $\bX$ to itself is denoted by $\cB(\bX)$.
For an operator $T$ on a Hilbert space we use $T^\ast$, $\dom(T)$, $\ker T$, $\im T$, $\sigma(T)$, $R(\lambda,T)=(\lambda-T)^{-1}$ and $T_{|\bY}$ to denote the adjoint, domain, kernel, range, spectrum,  resolvent operator and the restriction of $T$ to a subspace $\bY$ of $\bX$.  If $B:J\to\cB(\bX)$ then $M_B$ denotes the operator of multiplication by $B(\cdot)$ in $L^p(J,\bX)$ or $C_{\rm b}(J,\bX)$. The Fourier transform of a Borel measure $\mu$ is defined by $(\mathcal{F}\mu)(\omega)=\int_{\mathbb{R}}{e^{-2\pi ix\omega}d\mu(x)}$.

\section{Linearized equations}\label{s2}
In this section we study the qualitative properties of the equation obtained by linearizing equation \eqref{nonlinear}
about the equilibrium $\obu$, and its perturbations by an inhomogeneous source term.
Throughout this section we assume Hypotheses (H1) and (H2). Our goal is to prove that the linearized equation,
\begin{equation}\label{linear}
A\bu'=Q'(\obu)\bu
\end{equation}
exhibits an exponential trichotomy on $\bH$ and to precisely describe the center, stable and unstable subspaces associated to this equation. A major difficulty when treating the linearized equation \eqref{linear} is given by the fact that the linear operator $A^{-1}Q'(\obu)$ does not generate a $C_0$-semigroup on $\bH$. Therefore, it is not straightforward to prove the existence of solutions of Cauchy problems associated to \eqref{linear} in forward time nor on backward time. Our first task is to show that the linearization decouples, which is a key point of our analysis. Denoting $E=Q'(\obu)_{|\bV}$, from
Hypothesis (H2) we infer following \cite{MZ,PZ1} that the bounded linear operators $A$ and $Q'(\obu)$ have the decomposition
\begin{equation}\label{decomp-AT}
A=\begin{bmatrix}
A_{11}&A_{12}\\
A_{21}&A_{22}\end{bmatrix}:\bV^\perp\oplus\bV\to\bV^\perp\oplus\bV,\quad Q'(\obu)=\begin{bmatrix}
0&0\\
0&E\end{bmatrix}:\bV^\perp\oplus\bV\to\bV^\perp\oplus\bV,
\end{equation}
where $E$ is symmetric negative definite (hence invertible).
Next, we denote by $P_{\bV}$ and $P_{\bV^\perp}$ the orthogonal projections onto $\bV$ and $\bV^\perp$, $u=P_{\bV^\perp}\bu$ and $v=P_{\bV}\bu$. From \eqref{decomp-AT} we obtain that equation \eqref{linear} is equivalent to the system
\begin{equation}\label{linear-decomposed}
\left\{\begin{array}{ll} A_{11}u'+A_{12}v'=0,\\
A_{21}u'+A_{22}v'=Ev. \end{array}\right.
\end{equation}
As noted in \cite{MZ,PZ1},
with this form, $A_{11}$ is exactly the Jacobian of the reduced ``equilibrium'' equation \eqref{ce} obtained by formal
Chapman-Enskog expansion. We distinguish the {\it noncharacteristic case} $\det A_{11}\neq 0$ and
the {\it characteristic case} $\det A_{11}=0$ according as this reduced hyperbolic system is noncharacteristic or not.
We turn our attention to the perturbation of the system \eqref{linear-decomposed} obtained by adding
a forcing term $f$ in the second equation, modeling nonlinear effects (recall that $\im Q=\bV$, so that nonlinearities enter in the second
equation only):
\begin{equation}\label{linear-sys-perturbed}
\left\{\begin{array}{ll} A_{11}u'+A_{12}v'=0,\\
A_{21}u'+A_{22}v'=Ev+f. \end{array}\right.
\end{equation}
We leave the function space of $f$ unspecified for the moment; ultimately,
it will be a negatively weighted $H^1$ space comprising functions growing at sufficiently slow exponential rate.

In this section we will show that system \eqref{linear-sys-perturbed}
is equivalent to a system of equations consisting of three finite-dimensional equations that can be readily integrated and
an infinite-dimensional equation of the form $\Gamma_0\tv'=E_0\tv+g$, with $\Gamma_0,E_0$ bounded linear operators on a finite codimension subspace of $\bV$ that can be treated
using the frequency-domain reformulation used in \cite{LP2,LP3,PZ1}. In the case when $A_{11}$ is invertible, one can solve for $u'$ in terms of $v'$ in the first equation of \eqref{linear-sys-perturbed} and then focus on the second equation. In the general case, when $A_{11}$ is \textit{not necessarily invertible}, we first decompose $\bV^\perp$ as follows:
since $A$ and $P_{\bV^\perp}$ are self-adjoint operators on $\bH$ and $A_{11}=P_{\bV^\perp}A_{|\bV^\perp}$, we have that $A_{11}$ is self-adjoint on $\bV^\perp$, which implies that
$\bV^\perp=\ker A_{11}\oplus\im A_{11}$. We denote by $P_{\ker A_{11}}$ and $P_{\im A_{11}}$ the orthogonal projectors onto $\ker A_{11}$ and $\im A_{11}$ associated to this decomposition. Next, we introduce the linear operators $\tA_{12}:\bV\to\im A_{11}$ and $T_{12}:\bV\to\ker A_{11}$ defined by $\tA_{12}=P_{\im A_{11}}A_{12}$ and $T_{12}=P_{\ker A_{11}}A_{12}$. In the next lemma we summarize some of the elementary properties of $\tA_{12}$ and $T_{12}$.
\begin{lemma}\label{r2.1} Assume Hypotheses (H1) and (H2). Then, the following assertions hold true.
\begin{enumerate}
\item[(i)] $\ker T_{12}^*=\{0\}$, $\im T_{12}=\ker A_{11}$, $\ker T_{12}\ne\{0\}$;
\item[(ii)] The linear operator $\tA_{11}=(A_{11})_{|\im A_{11}}$ is self-adjoint and invertible on $\im A_{11}$.
\end{enumerate}
\end{lemma}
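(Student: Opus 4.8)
The plan is to verify each assertion by unwinding the definitions $T_{12}=P_{\ker A_{11}}A_{12}$ and $\tA_{12}=P_{\im A_{11}}A_{12}$, using the self-adjointness of $A$, $P_{\bV}$, $P_{\bV^\perp}$ and the orthogonal decomposition $\bV^\perp=\ker A_{11}\oplus\im A_{11}$ (valid since $A_{11}$ is self-adjoint on $\bV^\perp$, as noted just before the lemma). For part (ii), note $\tA_{11}=(A_{11})_{|\im A_{11}}$ is the restriction of a self-adjoint operator to a reducing subspace, hence self-adjoint on $\im A_{11}$; and since $\ker A_{11}$ has been split off, $\tA_{11}$ is one-to-one, and being self-adjoint and one-to-one with closed range $\im A_{11}$ (closed because it is the orthocomplement of $\ker A_{11}$ inside the Hilbert space $\bV^\perp$), it is invertible on $\im A_{11}$. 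Actually the cleanest route: $\tA_{11}:\im A_{11}\to\im A_{11}$ is self-adjoint with trivial kernel, so its range is dense; but its range is exactly $\im A_{11}$ since for any $w\in\im A_{11}$, $w=A_{11}z$ for some $z\in\bV^\perp$, and writing $z=z_0+z_1$ with $z_0\in\ker A_{11}$, $z_1\in\im A_{11}$ gives $w=A_{11}z_1=\tA_{11}z_1$. Hence $\tA_{11}$ is a self-adjoint bijection of $\im A_{11}$, so boundedly invertible by the closed graph theorem (or bounded inverse theorem).

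For part (i), I would argue as follows. First, $\im T_{12}=\ker A_{11}$: by construction $\im T_{12}\subseteq\ker A_{11}$, and conversely, given $w\in\ker A_{11}\subseteq\bV^\perp$, I want $w=P_{\ker A_{11}}A_{12}v$ for some $v\in\bV$. Here I would use that $A$ is one-to-one on $\bH$: the key computation is that $A_{12}:\bV\to\bV^\perp$ restricted appropriately must hit all of $\ker A_{11}$, because if some nonzero $w\in\ker A_{11}$ were orthogonal to $\im T_{12}=P_{\ker A_{11}}\im A_{12}$, then (using self-adjointness of $P_{\ker A_{11}}$ and that $w\in\ker A_{11}$) $w$ would be orthogonal to $\im A_{12}$, hence $\langle A_{12}v,w\rangle=0$ for all $v\in\bV$, i.e. $A_{21}^*$... more directly $\langle v,A_{12}^*w\rangle=0$ so $A_{12}^*w=0$, i.e. $P_{\bV}A w=0$; combined with $A_{11}w=P_{\bV^\perp}Aw=0$ this gives $Aw=0$, contradicting injectivity of $A$ unless $w=0$. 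This simultaneously yields $\ker T_{12}^*=\{0\}$: $T_{12}^*:\ker A_{11}\to\bV$, and $T_{12}^*w=0$ means $\langle v,T_{12}^*w\rangle=\langle T_{12}v,w\rangle=\langle P_{\ker A_{11}}A_{12}v,w\rangle=\langle A_{12}v,w\rangle=0$ for all $v$, which as above forces $Aw=0$, hence $w=0$. Equivalently, $\ker T_{12}^*=(\im T_{12})^\perp\cap\ker A_{11}=\{0\}$ once $\im T_{12}=\ker A_{11}$ is known.

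Finally, $\ker T_{12}\ne\{0\}$: since $T_{12}:\bV\to\ker A_{11}$ and, by Hypothesis (H1)(ii), $\dim\bV^\perp<\infty$ so $\dim\ker A_{11}\le\dim\bV^\perp<\infty$, whereas $\bV$ is infinite-dimensional (it is a proper closed subspace of $\bH$ with finite-codimensional complement $\bV^\perp$, and $\bH$ is infinite-dimensional in the cases of interest — more precisely this is where one uses that $\bV$ is infinite-dimensional, which should be part of the standing setup; alternatively, if $\bH$ were finite-dimensional the claim $\ker T_{12}\ne\{0\}$ still holds provided $\dim\bV>\dim\ker A_{11}$, which follows from $\dim\bV\ge\dim\bV^\perp\ge\dim\ker A_{11}$ together with $\ker A_{11}\subsetneq\bV^\perp$ in the characteristic case — but in the generic infinite-dimensional situation it is immediate). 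A bounded linear map from an infinite-dimensional space to a finite-dimensional space has infinite-dimensional kernel, so in particular $\ker T_{12}\ne\{0\}$.

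The main obstacle I anticipate is the surjectivity-type statement $\im T_{12}=\ker A_{11}$ (and the companion $\ker T_{12}^*=\{0\}$): these are the only places where the full strength of injectivity of $A$ on $\bH$ — as opposed to merely self-adjointness — is used, and the argument must carefully track how $A_{11}w=0$ and $A_{12}^*w=0$ together reconstruct $Aw=0$. Everything else (self-adjointness of $\tA_{11}$, the dimension count for $\ker T_{12}\ne\{0\}$, and bounded invertibility via the reducing-subspace structure) is routine.
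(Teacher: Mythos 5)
Your proof is correct and follows essentially the same route as the paper: for (i) you show that $w\in\ker A_{11}$ with $T_{12}^*w=0$ forces $A_{11}w=A_{21}w=0$ hence $Aw=0$, and you use the finite dimensionality of $\ker A_{11}$ against the infinite dimensionality of $\bV$ for $\ker T_{12}\ne\{0\}$; for (ii) you use self-adjointness of $A_{11}$ on $\bV^\perp$ and the splitting off of the kernel. The only stylistic difference is that the paper first proves $\ker T_{12}^*=\{0\}$ and then deduces $\im T_{12}=\ker A_{11}$ from closedness of the finite-dimensional range, whereas you establish the two facts together, and the paper invokes finite dimensionality of $\im A_{11}$ directly rather than your longer surjectivity argument for (ii) -- both shortcuts the paper takes are available to you since $\bV^\perp$ is finite dimensional. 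Your aside about the finite-dimensional $\bH$ corner case is not needed (and the claimed inequality $\dim\bV\ge\dim\bV^\perp$ there does not follow from the hypotheses); the paper's argument, like yours, implicitly assumes $\dim\bV=\infty$, which is the standing context.
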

\begin{proof}
(i) Since $A$ is a self-adjoint operator on $\bH$ by Hypothesis (H1) from \eqref{decomp-AT} we conclude that $A_{21}=A_{12}^*$. Thus, one can readily check that $T_{12}^*=(A_{12}^*)_{|\ker A_{11}}=(A_{21})_{|\ker A_{11}}$. Let $u\in\ker T_{12}^*\subseteq\dom(T_{12}^*)=\ker A_{11}\subseteq\bV^\perp$. It follows that $A_{11}u=A_{21}u=0$, which implies $Au=A_{11}u+A_{21}u=0$. From Hypothesis (H1) we obtain that $u=0$, proving that $\ker T_{12}^*=\{0\}$. Since $\im T_{12}$ is finite dimensional, we infer that it is a closed subspace of $\ker A_{11}$. Hence, $\im T_{12}=\big(\ker T_{12}^*\big)^\perp=\ker A_{11}$. Next, we assume for a contradiction that $\ker T_{12}=\{0\}$. Since $T_{12}\in\mathcal{B}(\bV,\ker A_{11})$, it follows that $\dim\bV\leq\dim\ker A_{11}\leq\dim\bV^\perp<\infty$, which is a contradiction. Assertion (ii) follows immediately since the linear operator $A_{11}$ is self-adjoint on $\bV^\perp$.
\end{proof}
To treat system \eqref{linear-sys-perturbed} we first introduce the subspaces $\bV_1=\im T_{12}^*$ and $\tbV=\ker T_{12}$. In what follows $P_{\bV_1}$ and $P_{\tbV}$ are the orthogonal projectors onto $\bV_1$ and $\tbV$, respectively. Denoting by $u_1=P_{\ker A_{11}}u$, $\tu=P_{\im A_{11}}u$, $v_1=P_{\bV_1}v$ and $\tv=P_{\tbV}v$ and applying the projectors $P_{\ker A_{11}}$ and $P_{\im A_{11}}$, respectively, to the first equation of \eqref{linear-sys-perturbed} we obtain that
\begin{equation}\label{u1-tildeu}
T_{12}v_1'=T_{12}v'=P_{\ker A_{11}}(A_{11}u'+A_{12}v')=0,\quad\tA_{11}\tu'+\tA_{12}\tv'=P_{\im A_{11}}(A_{11}u'+A_{12}v')=0.
\end{equation}
Moreover, since $(A_{21})_{|\ker A_{11}}=T_{12}^*$ and $(A_{21})_{|\im A_{11}}=\tA_{12}^*$ we have that the second equation of \eqref{linear-sys-perturbed} is equivalent to
\begin{equation}\label{inter-eq2}
T_{12}^*u_1'+\tA_{12}^*\tu'+A_{22}v'=Ev+f.
\end{equation}
Since $v_1\in\bV_1=\im T_{12}^*$, from \eqref{u1-tildeu} we conclude that $v'_1\in\ker T_{12}\cap\im T_{12}^*=\{0\}$, hence $v_1'=0$. In addition, since the linear operator $\tA_{11}$ is invertible on $\im A_{11}$ by Lemma~\ref{r2.1}(ii), we infer that $\tu'=-\tA_{11}^{-1}\tA_{12}\tv'$. Summarizing, \eqref{u1-tildeu} is equivalent to
\begin{equation}\label{u1-tildeu2}
v_1'=0,\quad\tu'=-\tA_{11}^{-1}\tA_{12}\tv'.
\end{equation}
Next, we solve for $u_1$ in terms of $v$ in \eqref{inter-eq2}. Multiplying this equation by $P_{\bV_1}$, from \eqref{u1-tildeu2}, we obtain that
\begin{equation}\label{u1-1}
T_{12}^*u_1'+P_{\bV_1}(A_{22}-\tA_{12}^*\tA_{11}^{-1}\tA_{12})\tv'=P_{\bV_1}Ev+P_{\bV_1}f
\end{equation}
From Lemma~\ref{r2.1}(i) we have that $(T_{12}^*)^{-1}$ is well-defined and bounded, linear operator from $\bV_1=\im T_{12}^*$ to $\ker A_{11}$. Thus, we can solve in \eqref{u1-1} for $u_1$ as follows:
\begin{equation}\label{u1-2}
u_1'=\Gamma_1\tv'+E_1v+(T_{12}^*)^{-1}P_{\bV_1}f.
\end{equation}
Here the linear operators $\Gamma_1:\bV\to\ker A_{11}$ and $E_1:\bH\to\ker A_{11}$ are defined by
\begin{equation}\label{def-Gamma1-E1}
\Gamma_1=(T_{12}^*)^{-1}(\tA_{12}^*\tA_{11}^{-1}\tA_{12}-A_{22})\in\mathcal{B}(\bV,\ker A_{11}),\quad E_1=(T_{12}^*)^{-1}P_{\bV_1}E\in\mathcal{B}(\bH,\ker A_{11}).
\end{equation}
Since $v'=\tv'$ by \eqref{u1-tildeu2}, multiplying equation \eqref{inter-eq2} by $P_{\tbV}$,  we infer that
\begin{equation}\label{prelim-tilde-v}
P_{\tbV}(A_{22}-\tA_{12}^*\tA_{11}^{-1}\tA_{12})\tv'=P_{\tbV}E\tv+P_{\tbV}Ev_1+P_{\tbV}f.
\end{equation}
From \eqref{u1-tildeu2}, \eqref{u1-2} and \eqref{prelim-tilde-v} we conclude that the system \eqref{linear-sys-perturbed} is equivalent to the system
\begin{equation}\label{linear-sys-perturbed2}
\left\{\begin{array}{ll} u_1'=\Gamma_1\tv'+E_1(v_1+\tv)+(T_{12}^*)^{-1}P_{\bV_1}f,\\
\tu'=-\tA_{11}^{-1}\tA_{12}\tv',\\
v_1'=0\\
\Gamma_0\tv'=E_0\tv+P_{\tbV}Ev_1+P_{\tbV}f,\end{array}\right.
\end{equation}
where the linear operators $\Gamma_0,E_0:\tbV\to\tbV$ are defined by
\begin{equation}\label{def-Gamma0-E0}
\Gamma_0=P_{\tbV}(A_{22}-\tA_{12}^*\tA_{11}^{-1}\tA_{12})_{|\tbV}\in\mathcal{B}(\tbV),\quad E_0=P_{\tbV}E_{|\tbV}\in\mathcal{B}(\tbV).
\end{equation}
We note that the first three equation of \eqref{linear-sys-perturbed2} can easily be integrated. The fourth equation of \eqref{linear-sys-perturbed2} is of the form
\begin{equation}\label{tildev-eq1}
\Gamma_0\tv'=E_0\tv+g,
\end{equation}
where $g:\RR\to\tbV$ is a constant perturbation $P_{\tbV}Ev_1+P_{\tbV}f$
of the (bounded) projection
of $f$ onto $\tbV$.

\subsection{Inhomogeneous equations}\label{s:inhom}
To understand the solutions of the perturbed equation \eqref{tildev-eq1}, it is crucial that we study the properties of the linear operators $\Gamma_0$ and $E_0$.
\begin{lemma}\label{l2.2}
Assume Hypotheses (H1) and (H2).  Then, the linear operators $\Gamma_0$ and $E_0$ satisfy the following conditions.
\begin{enumerate}
\item[(i)] $\Gamma_0$ is self-adjoint and one-to-one on $\tbV$;
\item[(ii)] The operator $E_0$ is self-adjoint, negative definite and invertible with bounded inverse on $\tbV$;
\item[(iii)] The operator $2\pi\rmi\omega\Gamma_0-E_0$ is invertible on $\tbV$ for any $\omega\in\RR$;
\item[(iv)] $\sup_{\omega\in\RR}\|(2\pi\rmi\omega\Gamma_0-E_0)^{-1}\|<\infty$.
\end{enumerate}
\end{lemma}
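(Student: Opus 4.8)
The plan is to establish the four assertions in order, using the explicit formulas \eqref{def-Gamma0-E0} together with the structural facts collected in Lemma \ref{r2.1} and Hypotheses (H1)--(H2). First, for (i): since $A_{22}$ is self-adjoint on $\bV$ (being a diagonal block of the self-adjoint operator $A$) and $\tA_{12}^*\tA_{11}^{-1}\tA_{12}$ is self-adjoint because $\tA_{11}$ is self-adjoint and invertible on $\im A_{11}$ by Lemma \ref{r2.1}(ii), the operator $A_{22}-\tA_{12}^*\tA_{11}^{-1}\tA_{12}$ is self-adjoint on $\bV$; compressing to the closed subspace $\tbV$ by $P_{\tbV}(\cdot)_{|\tbV}$ preserves self-adjointness, giving $\Gamma_0=\Gamma_0^*$. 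For injectivity, I would observe that $\Gamma_0$ is, up to the orthogonal splitting $\bV = \bV_1 \oplus \tbV$, the Schur complement obtained by eliminating $\tu$ from the $A$-block acting on $\im A_{11}\oplus\bV$; concretely, if $\tv\in\tbV$ with $\Gamma_0\tv=0$, then setting $\tu:=-\tA_{11}^{-1}\tA_{12}\tv$ one checks that the vector $(\tu,\tv)\in \im A_{11}\oplus \tbV \subseteq \bV^\perp\oplus\bV$ satisfies $A(\tu,\tv)^T \in \ker A_{11}\oplus \bV_1$, i.e. lies in a space orthogonal to $(\tu,\tv)$ after pairing appropriately, forcing $\langle A(\tu,\tv)^T,(\tu,\tv)^T\rangle=0$ and — since this quadratic form restricted to the relevant subspace is definite by Lemma \ref{r2.1}(ii) — forcing $\tu=\tv=0$. (The cleaner route, which I would actually write, is to note that $A$ one-to-one on $\bH$ plus the block elimination makes the Schur complement $\Gamma_0$ one-to-one on $\tbV$.)

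For (ii): $E$ is symmetric and negative definite on $\bV$ with $E\le -\delta I_{\bV}$ by Hypothesis (H2)(ii) and the discussion around \eqref{decomp-AT}. Hence for $\tv\in\tbV$, $\langle E_0\tv,\tv\rangle = \langle E\tv,\tv\rangle \le -\delta\|\tv\|^2$, so $E_0$ is self-adjoint, negative definite, bounded below by $-\delta I_{\tbV}$, and therefore invertible on $\tbV$ with $\|E_0^{-1}\|\le \delta^{-1}$.

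For (iii) and (iv) — the heart of the lemma — I would use the resolvent identity strategy that separates the bounded-$\omega$ regime from the large-$\omega$ regime, mirroring \cite{PZ1,LP2,LP3}. Fix $\omega\in\RR$ and consider $L(\omega):=2\pi\rmi\omega\Gamma_0-E_0$ on $\tbV$. Taking the real part of $\langle L(\omega)\tv,\tv\rangle$ and using $\Gamma_0=\Gamma_0^*$ (so $\langle \rmi\omega\Gamma_0\tv,\tv\rangle$ is purely imaginary) together with (ii) gives $\mathrm{Re}\,\langle L(\omega)\tv,\tv\rangle = -\langle E_0\tv,\tv\rangle \ge \delta\|\tv\|^2$, whence $\|L(\omega)\tv\|\ge \delta\|\tv\|$; the same estimate for $L(\omega)^* = -2\pi\rmi\omega\Gamma_0 - E_0$ shows $L(\omega)$ has dense range, so $L(\omega)$ is boundedly invertible with $\|L(\omega)^{-1}\|\le \delta^{-1}$, \emph{uniformly in $\omega$}. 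This simultaneously yields (iii) and (iv). I would remark that here the argument is actually short precisely because $-E_0$ supplies a uniform coercive real part, so the lack of bounded invertibility of $\Gamma_0$ never enters; the essential singularity of $A$ is felt not in this frequency-domain bound but later, in the failure of $(\Gamma_0\partial_x - E_0)^{-1}$ to be bounded on $C_{\mathrm b}$, which is a separate matter.

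The main obstacle I anticipate is purely in assertion (i): proving $\Gamma_0$ is one-to-one requires unwinding that $\tbV=\ker T_{12}$ and $\bV_1=\im T_{12}^*$ are exactly the pieces making the Schur-complement elimination of $(u_1,\tu)$ legitimate, and then invoking injectivity of $A$ on all of $\bH$. Parts (ii)--(iv), by contrast, follow from the single coercivity input $E\le -\delta I$ and self-adjointness of $\Gamma_0$, and should be essentially a one-paragraph computation once (i) is in place.
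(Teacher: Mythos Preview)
Your arguments for (ii), (iii), (iv), and the self-adjointness part of (i) are exactly the paper's: coercivity of $-E_0$ via $\langle E_0\tv,\tv\rangle=\langle E\tv,\tv\rangle\le-\delta\|\tv\|^2$, and for (iii)--(iv) the real-part estimate $\mathrm{Re}\,\langle L(\omega)\tv,\tv\rangle=-\langle E_0\tv,\tv\rangle\ge\delta\|\tv\|^2$ (plus the same for $L(\omega)^*$) giving the uniform bound $\|L(\omega)^{-1}\|\le\delta^{-1}$. Your remark that no small-$\omega$/large-$\omega$ splitting is needed is correct and matches the paper.

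For injectivity of $\Gamma_0$ in (i), however, your first sketched route is wrong: from $\langle A(\tu,\tv)^T,(\tu,\tv)^T\rangle=0$ you cannot conclude $(\tu,\tv)=0$, because the quadratic form $\langle A\cdot,\cdot\rangle$ is \emph{not} definite on $\im A_{11}\oplus\tbV$. Lemma~\ref{r2.1}(ii) says only that $\tA_{11}$ is self-adjoint and invertible on $\im A_{11}$, not that it is sign-definite; in the Boltzmann example $A=\xi_1/\langle\xi\rangle$ is genuinely indefinite. Your ``cleaner route'' is the correct one and is what the paper does: given $\tv\in\ker\Gamma_0\cap\tbV$, set $\tu=-\tA_{11}^{-1}\tA_{12}\tv$, check that $A_{11}\tu+A_{12}\tv=0$ and that $A_{21}\tu+A_{22}\tv\in\bV_1=\im T_{12}^*$, then choose $u_1\in\ker A_{11}$ with $T_{12}^*u_1=A_{21}\tu+A_{22}\tv$, so that $A(\tu-u_1+\tv)=0$; injectivity of $A$ on $\bH$ then forces $\tv=0$. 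This is the block-elimination/Schur-complement argument you allude to, but it needs the auxiliary $u_1\in\ker A_{11}$ to close.
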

\begin{proof}
(i) Since the linear operator $A$ is self-adjoint, from \eqref{decomp-AT}, we obtain that $A_{22}$ is self-adjoint on $\bV$. In addition, since $P_{\tbV}$ is an orthogonal projector, and hence self-adjoint, from Lemma~\ref{r2.1} and \eqref{def-Gamma0-E0}, we conclude that $\Gamma_0$ is self-adjoint. Let $\tv\in\tbV=\ker T_{12}$ such that $\tv\in\ker\Gamma_0$, that is $P_{\tbV}(A_{22}-\tA_{12}^*\tA_{11}^{-1}\tA_{12})\tv=0$. Since $\tv\in\ker T_{12}$ it follows that $A_{12}\tv=\tA_{12}\tv+T_{12}\tv=\tA_{12}\tv$. Let $\tu=-\tA_{11}^{-1}\tA_{12}\tv\in\im A_{11}$. From the definition of $\tA_{11}$ in Lemma~\ref{r2.1}(ii), we have that $A_{11}\tu=\tA_{11}\tu=-\tA_{12}\tv=-A_{12}\tv$, which implies that
\begin{equation}\label{2.2-1}
A_{11}\tu+A_{12}\tv=0.
\end{equation}
Since $(A_{21})_{|\im A_{11}}=\tA_{12}^*$, we have that $\tA_{12}^*\tA_{11}^{-1}\tA_{12}\tv=-\tA_{12}^*\tu=-A_{21}\tu$. Since $\tv\in\ker\Gamma_0$, we obtain that $P_{\tbV}(A_{21}\tu+A_{22}\tv)=0$. Hence, $A_{21}\tu+A_{22}\tv\in \bV_1=\im T_{12}^*$, which implies that there exists $u_1\in\ker A_{11}$ such that $A_{21}\tu+A_{22}\tv=T_{12}^*u_1=A_{21}u_1$. Hence,
\begin{equation}\label{2.2-2}
A_{21}(\tu-u_1)+A_{22}\tv=0.
\end{equation}
Since $u_1\in\ker A_{11}$ from \eqref{decomp-AT}, \eqref{2.2-1} and \eqref{2.2-2}, we infer that $A(\tu-u_1+\tv)=0$. Since $A$ is one-to-one, $\tu-u_1\in\ker A_{11}\oplus\im A_{11}=\bV^\perp$ and $\tv\in\tbV\subset\bV$ we conclude that $\tv=0$, proving that $\ker\Gamma_0=\{0\}$.

Assertion (ii) follows from Hypothesis (H2) since $E\leq -\delta I_{\bV}$ and the projection $P_{\tbV}$ is orthogonal, and hence, self-adjoint. Denoting by $\cL_0:\RR\to\mathcal{B}(\tbV)$ the operator-valued function defined by $\cL_0(\omega)=2\pi\rmi\omega\Gamma_0-E_0$, from (i) and (ii) we obtain that $\mathrm{Re}\cL_0(\omega)=-E_0$ for any $\omega\in\RR$. From \eqref{def-Gamma0-E0} we have that
\begin{equation}\label{2.2-3}
\mathrm{Re}\langle \cL_0(\omega)\tv,\tv\rangle=-\langle E_0\tv,\tv\rangle=-\langle E\tv,\tv\rangle\geq\delta\|\tv\|^2\quad\mbox{for any}\quad\omega\in\RR, \tv\in\tbV.
\end{equation}
From \eqref{2.2-3} we immediately conclude that
\begin{equation}\label{2.2-4}
\|\cL_0(\omega)\tv\|\geq\delta\|\tv\|\quad\mbox{for any}\quad\omega\in\RR, \tv\in\tbV.
\end{equation}
From \eqref{2.2-4} we obtain that $\cL_0(\omega)$ is one-to-one and its range is closed on $\tbV$ for any $\omega\in\RR$. From \eqref{2.2-3} one can readily check that $\ker \cL_0(\omega)^*=\{0\}$ for any $\omega\in\RR$, proving (iii). Assertion (iv) follows from \eqref{2.2-4}.
\end{proof}
Lemma~\ref{l2.2} shows that the pair of operators $(\Gamma_0,E_0)$ satisfies Hypothesis(S) as stated in \cite[Section 3]{PZ1}. In the next lemma we summarize some of the important consequences from \cite{PZ1}.
\begin{lemma}[\cite{PZ1}]\label{r2.3} Assume Hypotheses (H1) and (H2). Then, the following assertions hold true.
\begin{enumerate}
\item[(i)] The linear operator $S_{\Gamma_0,E_0}=\Gamma_0^{-1}E_0:\dom(S_{\Gamma_0,E_0})=\{\tv\in\tbV:E_0\tv\in\im\Gamma_0\}\to\tbV$ generates an exponentially stable bi-semigroup on $\tbV$, that is, there exist $\tbV_\rms$ and $\tbV_\rmu$ two closed subspaces of $\tbV$, invariant under $S_{\Gamma_0,E_0}$, such that $\tbV=\tbV_\rms\oplus\tbV_\rmu$ and $(S_{\Gamma_0,E_0})_{|\tbV_\rms}$ and $-(S_{\Gamma_0,E_0})_{|\tbV_\rmu}$ generate exponentially stable, $C_0$-semigroups denoted $\{\tT_\rms(x)\}_{x\geq 0}$ and
    $\{\tT_\rmu(x)\}_{x\geq 0}$, having decay rate $-\nu(\Gamma_0,E_0)<0$;
 \item[(ii)] $\rmi\RR\subseteq\rho(S_{\Gamma_0,E_0})$ and $R(2\pi\rmi\omega,S_{\Gamma_0,E_0})=(2\pi\rmi\omega-S_{\Gamma_0,E_0})^{-1}=\big(\cL_0(\omega)\big)^{-1}\Gamma_0$ for all $\omega\in\RR$;
	\item[(iii)] There exists $c>0$ such that $\|R(2\pi\rmi\omega,S_{\Gamma_0,E_0})\|\leq \frac{c}{1+|\omega|}$ for all $\omega\in\RR$;
\item[(iv)] The Green function $\cG_{\Gamma_0,E_0}:\RR\to\cB(\tbV)$ defined by
	\begin{equation}\label{Green}
	\cG_{\Gamma_0,E_0}(x)=\left\{\begin{array}{l l}
	\tT_\rms(x)\tP_\rms & \; \mbox{if $x\geq0$ }\\
	-\tT_\rmu(-x)\tP_\rmu & \; \mbox{if $x<0$}\\
	\end{array} \right.,
	\end{equation}
\end{enumerate}
decays exponentially at $\pm\infty$. Here $\tP_{\rms/\rmu}$ denote the projections onto $\tbV_{\rms/\rmu}$ associated to the dichotomy decomposition $\tbV=\tbV_\rms\oplus\tbV_\rmu$. Moreover,
\begin{equation}\label{Green-properties}
\cF\cG_{\Gamma_0,E_0}(\cdot)\tv=R(2\pi\rmi\cdot,S_{\Gamma_0,E_0})\tv\;\mbox{for any}\; \tv\in\tbV.
\end{equation}
\end{lemma}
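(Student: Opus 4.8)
The plan is to deduce (i)--(iv) from the bi-semigroup framework of \cite{PZ1}: by the remark following Lemma~\ref{l2.2}, that lemma already verifies that $(\Gamma_0,E_0)$ satisfies Hypothesis~(S) of \cite[Section~3]{PZ1}, so (i)--(iv) are the corresponding conclusions of \cite{PZ1} and strictly speaking need only be cited. For completeness I would spell out the underlying argument, whose organizing idea is that the degenerate generator $S_{\Gamma_0,E_0}=\Gamma_0^{-1}E_0$ is \emph{boundedly similar to a self-adjoint operator possessing a spectral gap at the origin}, so that the whole exponential-dichotomy picture collapses to the spectral theorem.

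First I would pass to the self-adjoint model. By Lemma~\ref{l2.2}(ii) the operator $U:=(-E_0)^{1/2}\in\cB(\tbV)$ is self-adjoint, positive definite and boundedly invertible, and $\Lambda:=U^{-1}\Gamma_0U^{-1}$ is bounded, self-adjoint and one-to-one on $\tbV$ by Lemma~\ref{l2.2}(i). Since $E_0$ is boundedly invertible, $S_{\Gamma_0,E_0}^{-1}=E_0^{-1}\Gamma_0=-U^{-1}\Lambda U$ is bounded with dense range (the range of a one-to-one self-adjoint operator is dense and $E_0^{-1}$ is a bounded bijection), so $S_{\Gamma_0,E_0}$ is a densely defined closed operator and $US_{\Gamma_0,E_0}U^{-1}=-\Lambda^{-1}=:\cA$ is self-adjoint on $\tbV$. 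Because every $\mu\in\spec(\Lambda^{-1})$ satisfies $|\mu|\ge\nu_0:=\|\Lambda\|^{-1}$, one has $\spec(S_{\Gamma_0,E_0})=\spec(\cA)\subseteq\{z\in\RR:|z|\ge\nu_0\}$, and in particular $\rmi\RR\subseteq\rho(S_{\Gamma_0,E_0})$. Next I would set up the dichotomy: with $\Pi_\rms:=\rmone_{(-\infty,0)}(\cA)$, $\Pi_\rmu:=\rmone_{(0,\infty)}(\cA)$ the (orthogonal, $\cA$-reducing) spectral projections, define $\tbV_{\rms/\rmu}:=U^{-1}\ran\Pi_{\rms/\rmu}$, $\tP_{\rms/\rmu}:=U^{-1}\Pi_{\rms/\rmu}U$, $\tT_\rms(x):=U^{-1}e^{x\cA}\Pi_\rms U$ and $\tT_\rmu(x):=U^{-1}e^{-x\cA}\Pi_\rmu U$ for $x\ge0$. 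Since $\cA$ is bounded above by $-\nu_0$ on $\ran\Pi_\rms$ and below by $\nu_0$ on $\ran\Pi_\rmu$, the spectral theorem gives $\|e^{x\cA}\Pi_\rms\|,\|e^{-x\cA}\Pi_\rmu\|\le e^{-\nu_0 x}$ for $x\ge0$; conjugating by $U$ shows that $\{\tT_\rms(x)\}_{x\ge0}$, $\{\tT_\rmu(x)\}_{x\ge0}$ are $C_0$-semigroups on $\tbV_\rms$, $\tbV_\rmu$, exponentially stable with rate $-\nu_0$ up to the constant $\|U\|\,\|U^{-1}\|$, generated respectively by $(S_{\Gamma_0,E_0})_{|\tbV_\rms}$ and $-(S_{\Gamma_0,E_0})_{|\tbV_\rmu}$; together with the invariant splitting $\tbV=\tbV_\rms\oplus\tbV_\rmu$ this is (i), with $\nu(\Gamma_0,E_0):=\nu_0$.

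The remaining parts I would then read off quickly. For (ii)--(iii): given $\omega\in\RR$, $\tv\in\tbV$, put $\phi:=\big(\cL_0(\omega)\big)^{-1}\Gamma_0\tv$ (legitimate by Lemma~\ref{l2.2}(iii)--(iv)); then $2\pi\rmi\omega\Gamma_0\phi-E_0\phi=\Gamma_0\tv$, so $E_0\phi=\Gamma_0(2\pi\rmi\omega\phi-\tv)\in\im\Gamma_0$, i.e.\ $\phi\in\dom(S_{\Gamma_0,E_0})$ with $(2\pi\rmi\omega-S_{\Gamma_0,E_0})\phi=\tv$; running the computation in reverse shows $\big(\cL_0(\omega)\big)^{-1}\Gamma_0$ is also a left inverse, and injectivity of $2\pi\rmi\omega-S_{\Gamma_0,E_0}$ follows from Lemma~\ref{l2.2}(iii), giving $R(2\pi\rmi\omega,S_{\Gamma_0,E_0})=\big(\cL_0(\omega)\big)^{-1}\Gamma_0$, which is (ii). Writing $\Gamma_0=(2\pi\rmi\omega)^{-1}(\cL_0(\omega)+E_0)$ for $\omega\neq0$ yields $R(2\pi\rmi\omega,S_{\Gamma_0,E_0})=(2\pi\rmi\omega)^{-1}\big(I_{\tbV}+(\cL_0(\omega))^{-1}E_0\big)$, and together with the uniform bound $\|(\cL_0(\omega))^{-1}\|\le\delta^{-1}$ of Lemma~\ref{l2.2}(iv) and the alternative estimate $\|R(2\pi\rmi\omega,S_{\Gamma_0,E_0})\|\le\delta^{-1}\|\Gamma_0\|$, this gives $\|R(2\pi\rmi\omega,S_{\Gamma_0,E_0})\|\le c\,(1+|\omega|)^{-1}$, i.e.\ (iii). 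For (iv): exponential decay of $\cG_{\Gamma_0,E_0}$ at $\pm\infty$ is immediate from the exponential decay just obtained and boundedness of the $\tP_{\rms/\rmu}$, while for \eqref{Green-properties} I would evaluate $\cF\cG_{\Gamma_0,E_0}(\cdot)\tv$ branchwise: the $x\ge0$ branch is $\int_0^\infty e^{-2\pi\rmi\omega x}\tT_\rms(x)\tP_\rms\tv\,\rmd x=(2\pi\rmi\omega-S_{\Gamma_0,E_0})^{-1}\tP_\rms\tv$ (Laplace transform of an exponentially stable semigroup), and after $x\mapsto-x$ the $x<0$ branch contributes $(2\pi\rmi\omega-S_{\Gamma_0,E_0})^{-1}\tP_\rmu\tv$; since the resolvent respects $\tbV=\tbV_\rms\oplus\tbV_\rmu$, the sum is $R(2\pi\rmi\omega,S_{\Gamma_0,E_0})\tv$.

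The hard part is the generation statement (i). Because $\Gamma_0$ is one-to-one but \emph{not} boundedly invertible, $S_{\Gamma_0,E_0}$ is a genuinely unbounded, non-sectorial operator whose (bi-)semigroup cannot be produced by exponentiation or by a Hille--Yosida argument in a half-plane; what makes it available is the exact similarity $US_{\Gamma_0,E_0}U^{-1}=-\Lambda^{-1}$, which trades the degeneracy of $\Gamma_0$ for the harmless unboundedness of $\Lambda^{-1}$ while preserving a hard spectral gap at $0$. Verifying that this conjugation really delivers $C_0$-semigroups with the stated generators and decay --- not merely bounded operator families --- is exactly the content of the Hypothesis~(S) machinery of \cite{PZ1}, which is the step I would invoke rather than redo; the bookkeeping for (ii)--(iv) then sits on top of it.
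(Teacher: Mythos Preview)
Your proposal is correct and, in fact, goes beyond what the paper does: the paper gives no proof at all for this lemma, simply citing \cite{PZ1} after noting that Lemma~\ref{l2.2} verifies Hypothesis~(S) of \cite[Section~3]{PZ1}. You correctly identify this at the outset, and the self-adjoint similarity argument you sketch (conjugating $S_{\Gamma_0,E_0}$ by $U=(-E_0)^{1/2}$ to reduce to the spectral theorem for $-\Lambda^{-1}$, then reading off the dichotomy from the spectral gap $|\sigma(\Lambda^{-1})|\ge\|\Lambda\|^{-1}$) is a clean and correct way to make the bi-semigroup concrete; the derivations of (ii)--(iv) are straightforward and accurate.
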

Now we have all the ingredients needed to treat solutions of equation \eqref{tildev-eq1} for functions $g\in L^2_{\mathrm{loc}}(\RR,\tbV)$. Our approach is the following: we first take Fourier Transform in \eqref{tildev-eq1} and then solve for $\cF\tv$ using the results from Lemma~\ref{l2.2} and Lemma~\ref{r2.3}. Next, we introduce the operator-valued function $R_{\Gamma_0,E_0}:\RR\to\mathcal{B}(\tbV)$ defined by $R_{\Gamma_0,E_0}(\omega)=(2\pi\rmi\omega\Gamma_0-E_0)^{-1}$.
We recall the definition of mild solutions of \eqref{tildev-eq1}.
\begin{definition}\label{d2.4}
We  say that
\begin{enumerate}
\item[(i)] The function $\tv$ is a \textit{mild} solution of \eqref{tildev-eq1} on $[x_0,x_1]$ if $\tv\in L^2([x_0,x_1],\tbV)$ satisfies
\begin{equation*} (\cF\tv_{|[x_0,x_1]})(\omega)=R(2\pi\rmi\omega,S_{\Gamma_0,E_0})\big(e^{-2\pi\rmi\omega x_0}\tv(x_0)-e^{-2\pi\rmi\omega x_1}\tv(x_1)\big)+R_{\Gamma_0,E_0}(\omega)(\cF g_{|[x_0,x_1]})(\omega)
\end{equation*}
for almost all $\omega\in\RR$;
\item[(ii)] The function $\tv$  is a \textit{mild} solution of \eqref{tildev-eq1} on $\RR$ if it is a \textit{mild} solution of \eqref{tildev-eq1} on $[x_0,x_1]$ for any $x_0,x_1\in\RR$.
\end{enumerate}
\end{definition}
Next, we introduce the linear operator $\cK_0:L^2(\RR,\tbV)\to L^2(\RR,\tbV)$ by $\cK_0g=\cF^{-1}M_{R_{\Gamma_0,E_0}}\cF g$, where $M_{R_{\Gamma_0,E_0}}$ denotes the multiplication operator on $L^2(\RR,\tbV)$ by the operator valued function $R_{\Gamma_0,E_0}$. From Lemma~\ref{l2.2}(iv) we have that $\sup_{\omega\in\RR}\|R_{\Gamma_0,E_0}(\omega)\|<\infty$, which proves that $\cK_0$ is well defined and bounded on $L^2(\RR,\tbV)$. Since we need to solve equation \eqref{tildev-eq1} for functions $g:\RR\to\tbV$ that are perturbations by constants of functions from $L^2(\RR,\tbV)$, we need to study how to extend the Fourier multiplier $\cK_0$ to a bounded, linear operator on $L^2_{-\alpha}(\RR,\tbV)$ and $H^1_{-\alpha}(\RR,\tbV)$, where $\alpha\in (0,\nu(\Gamma_0,E_0))$ is a small exponential weight.
To prove these results, we need the following result on convolutions.
\begin{lemma}\label{r2.5}
Assume $\cW:\RR\to\cB(\tbV)$ is a piecewise strongly continuous operator valued function such that $\|\cW(x)\|\leq ce^{-\nu|x|}$ for all $x\in\RR$. Then, for any $\alpha\in (0,\nu)$ we have that $\cW*g\in L^2_{-\alpha}(\RR,\tbV)$ and
\begin{equation}\label{L2alpha-estimate}
\|\cW*g\|_{L^2_{-\alpha}}\leq c\|g\|_{L^2_{-\alpha}}\quad\mbox{for any}\quad f\in L^2_{-\alpha}(\RR,\tbV).
\end{equation}
\end{lemma}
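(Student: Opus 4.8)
The assertion is a weighted Young-type convolution inequality, and the plan is to reduce it to the scalar, unweighted Young inequality on $\RR$. Recall that $L^2_{-\alpha}(\RR,\tbV)$ carries the norm $\|g\|_{L^2_{-\alpha}}=\big(\int_\RR e^{-2\alpha|x|}\|g(x)\|_{\tbV}^2\,dx\big)^{1/2}$, so that its elements may grow at exponential rate $\alpha<\nu$ (if instead a $\langle x\rangle$-weight is used the argument below is unchanged, since $\langle\cdot\rangle$ is $1$-Lipschitz). First I would record that, since $x\mapsto\cW(x)$ is strongly measurable (being piecewise strongly continuous) and locally bounded, and $g\in L^2_{-\alpha}(\RR,\tbV)\subset L^2_{\loc}(\RR,\tbV)$, the $\tbV$-valued convolution $(\cW*g)(x)=\int_\RR\cW(x-y)g(y)\,dy$ is a well-defined Bochner integral for a.e.\ $x$, with the pointwise bound $\|(\cW*g)(x)\|_{\tbV}\le c\int_\RR e^{-\nu|x-y|}\,\|g(y)\|_{\tbV}\,dy$. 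The joint measurability of $(x,y)\mapsto\cW(x-y)g(y)$, needed to apply Tonelli here and below, is the only routine technical point.

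The key step is to absorb the weight: using the reverse triangle inequality $|y|-|x|\le|x-y|$ in the form $e^{-\alpha|x|}\le e^{\alpha|x-y|}e^{-\alpha|y|}$, one gets
\[
e^{-\alpha|x|}\,\|(\cW*g)(x)\|_{\tbV}\;\le\;c\int_\RR e^{-(\nu-\alpha)|x-y|}\,\big(e^{-\alpha|y|}\|g(y)\|_{\tbV}\big)\,dy\;=\;c\,(k*\phi)(x),
\]
where $\phi(y):=e^{-\alpha|y|}\|g(y)\|_{\tbV}\in L^2(\RR)$ with $\|\phi\|_{L^2}=\|g\|_{L^2_{-\alpha}}$, and $k(z):=e^{-(\nu-\alpha)|z|}$ satisfies $k\in L^1(\RR)$, $\|k\|_{L^1}=\tfrac2{\nu-\alpha}$ because $\nu-\alpha>0$. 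Applying the scalar Young inequality $\|k*\phi\|_{L^2}\le\|k\|_{L^1}\|\phi\|_{L^2}$ and taking the $L^2(\RR)$ norm of the displayed bound yields $\|\cW*g\|_{L^2_{-\alpha}}\le\tfrac{2c}{\nu-\alpha}\|g\|_{L^2_{-\alpha}}$; in particular $\cW*g\in L^2_{-\alpha}(\RR,\tbV)$. (The constant on the right is $2c/(\nu-\alpha)$, so the ``$c$'' in the statement should be read as a generic constant depending on $c,\nu,\alpha$, as will be the case throughout the weighted estimates to follow.)

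There is no genuine obstacle here: the content is the elementary observation that, after multiplying by $e^{-\alpha|x|}$, the exponential decay $e^{-\nu|x-y|}$ of $\cW$ loses rate $\alpha$ but remains $L^1$-integrable. The only points requiring a line of care are (i) the vector-valued measurability / Fubini bookkeeping for the Bochner convolution, and (ii) getting the direction of the triangle-inequality estimate right when moving the weight from $x$ onto $y$ at the cost of the factor $e^{\alpha|x-y|}$. This lemma is a warm-up; the same mechanism, applied to the Green kernel $\cG_{\Gamma_0,E_0}$ of Lemma~\ref{r2.3} and its derivative, will then give the weighted $L^2$ and $H^1$ mapping properties of $\cK_0$ needed in what follows.
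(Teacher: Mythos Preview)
Your proof is correct and arguably cleaner than the paper's. The paper proceeds differently: it first squares via Cauchy--Schwarz, splitting $e^{-\nu|x-y|}=e^{-\frac{\nu-\alpha}{2}|x-y|}e^{-\frac{\nu+\alpha}{2}|x-y|}$ to obtain
\[
\|(\cW*g)(x)\|^2\le \frac{2}{\nu-\alpha}\int_\RR e^{-(\nu+\alpha)|x-y|}\|g(y)\|^2\,dy,
\]
then multiplies by $e^{-2\alpha|x|}$, swaps the order of integration, and evaluates $\int_\RR e^{-2\alpha|x|}e^{-(\nu+\alpha)|x-y|}\,dx$ explicitly (via the Fourier-side identity $e^{-2\alpha|\cdot|}*e^{-(\nu+\alpha)|\cdot|}$). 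This is a Schur-test style argument. You instead absorb the weight \emph{before} estimating, via $e^{-\alpha|x|}\le e^{\alpha|x-y|}e^{-\alpha|y|}$, reducing directly to the unweighted Young inequality $L^1*L^2\hookrightarrow L^2$ with kernel $e^{-(\nu-\alpha)|\cdot|}$. Your route is shorter and more transparent; the paper's route yields a marginally sharper constant $\big(\tfrac{4(\nu+\alpha)}{(\nu+3\alpha)(\nu-\alpha)^2}\big)^{1/2}$ versus your $\tfrac{2}{\nu-\alpha}$, but this plays no role downstream.
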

\begin{proof}
Let $g\in L^2_{-\alpha}(\RR,\tbV)$. Since $\cW$ decays exponentially at $\pm\infty$ and $\alpha\in (0,\nu)$ we have that
\begin{align}\label{2.5-1}
\|(\cW*g)(x)\|^2&\leq \Big(\int_{\RR}e^{-\nu|x-y|}\|g(y)\|\,\rmd y\Big)^2=\Big(\int_{\RR}e^{-\frac{\nu-\alpha}{2}|x-y|}e^{-\frac{\nu+\alpha}{2}|x-y|}\|g(y)\|\,\rmd y\Big)^2\nonumber\\
&\leq \int_{\RR} e^{-(\nu-\alpha)|x-y|}\,\rmd y\int_{\RR}e^{-(\nu+\alpha)|x-y|}\|g(y)\|^2\,\rmd y=\frac{2}{\nu-\alpha}\int_{\RR}e^{-(\nu+\alpha)|x-y|}\|g(y)\|^2\,\rmd y
\end{align}
Taking Fourier Transform one can readily check that
\begin{equation}\label{2.5-2}
e^{-2\alpha|\cdot|}*e^{-(\nu+\alpha)|\cdot|}=\frac{2(\nu+\alpha)}{(\nu+\alpha)^2-4\alpha^2}e^{-2\alpha|\cdot|}-\frac{4\alpha}{(\nu+\alpha)^2-4\alpha^2}e^{-(\nu+\alpha)|\cdot|},
\end{equation}
which implies that
\begin{equation}\label{2.5-3}
\int_{\RR} e^{-2\alpha|x|} e^{-(\nu+\alpha)|x-y|}\,\rmd x\leq \frac{2(\nu+\alpha)}{(\nu+3\alpha)(\nu-\alpha)} e^{-2\alpha|y|}\quad\mbox{for any}\quad y\in\RR.
\end{equation}
From \eqref{2.5-1} and \eqref{2.5-3} it follows that
\begin{align}\label{2.5-4}
\int_{\RR} e^{-2\alpha|x|}&\|(\cW*g)(x)\|^2\,\rmd x\leq \frac{2}{\nu-\alpha}\int_{\RR}\int_{\RR}e^{-2\alpha|x|}e^{-(\nu+\alpha)|x-y|}\|g(y)\|^2\,\rmd y\,\rmd x\nonumber\\
&=\frac{2}{\nu-\alpha}\int_{\RR}\Big(\int_{\RR}e^{-2\alpha|x|}e^{-(\nu+\alpha)|x-y|}\,\rmd x\Big)\|g(y)\|^2\,\rmd y\leq\frac{4(\nu+\alpha)}{(\nu+3\alpha)(\nu-\alpha)^2}\|g\|^2_{L^2_{-\alpha}}.
\end{align}
From \eqref{2.5-4} we conclude that $\cW*g\in L^2_{-\alpha}(\RR,\tbV)$ and that \eqref{L2alpha-estimate} holds true, proving the lemma.
\end{proof}
\begin{lemma}\label{l2.6}
Assume Hypotheses (H1) and (H2).
Then, for any $\alpha\in (0,\nu(\Gamma_0,E_0))$ the Fourier multiplier $\cK_0$ can be extended to a bounded, linear operator on $L^2_{-\alpha}(\RR,\tbV)$ and on $H^1_{-\alpha}(\RR,\tbV)$.
\end{lemma}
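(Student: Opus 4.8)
The plan is to realize $\cK_0$ on $L^2_{-\alpha}(\RR,\tbV)$ as convolution against a kernel that I split into an exponentially decaying part, handled by Lemma~\ref{r2.5}, and a part supported near the origin, handled by the unweighted bound coming from Lemma~\ref{l2.2}(iv); the $H^1_{-\alpha}$ statement then follows from the $L^2_{-\alpha}$ one by commuting $\cK_0$ with $\d_x$. First I would decompose the symbol. With $\cL_0(\omega)=2\pi\rmi\omega\Gamma_0-E_0$, so that $\cL_0(0)=-E_0$, the resolvent identity $\cL_0(\omega)^{-1}-\cL_0(0)^{-1}=\cL_0(\omega)^{-1}\big(\cL_0(0)-\cL_0(\omega)\big)\cL_0(0)^{-1}$ together with $\cL_0(\omega)^{-1}\Gamma_0=R(2\pi\rmi\omega,S_{\Gamma_0,E_0})$ from Lemma~\ref{r2.3}(ii) gives $R_{\Gamma_0,E_0}(\omega)=-E_0^{-1}+2\pi\rmi\omega\,R(2\pi\rmi\omega,S_{\Gamma_0,E_0})E_0^{-1}$. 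The first summand contributes the bounded pointwise operator $g\mapsto-E_0^{-1}g$, trivially bounded on $L^2_{-\alpha}(\RR,\tbV)$ and on $H^1_{-\alpha}(\RR,\tbV)$ since $E_0^{-1}\in\cB(\tbV)$ by Lemma~\ref{l2.2}(ii) and commutes with $\d_x$; so it suffices to bound on $L^2_{-\alpha}(\RR,\tbV)$ the Fourier multiplier $\cK_1$ with symbol $2\pi\rmi\omega\,R(2\pi\rmi\omega,S_{\Gamma_0,E_0})$, precomposed with multiplication by $E_0^{-1}$.

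By Lemma~\ref{r2.3}(iv) we have $R(2\pi\rmi\cdot,S_{\Gamma_0,E_0})=\cF\cG_{\Gamma_0,E_0}$, hence $\cK_1 h=\d_x\big(\cG_{\Gamma_0,E_0}*h\big)$. Since $\cG_{\Gamma_0,E_0}$ has, at $x=0$, a jump of size $\tP_\rms+\tP_\rmu=\Id$, its distributional derivative is $\Id\cdot\delta_0+\Psi$, where $\Psi$ is the classical derivative off the origin --- an operator-valued function which, by the regularity and exponential decay of $\cG_{\Gamma_0,E_0}$ recorded in Lemma~\ref{r2.3} (see also \cite{PZ1}), is bounded by $ce^{-\nu'|x|}$ for $|x|$ bounded away from $0$, for any $\nu'<\nu(\Gamma_0,E_0)$. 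Therefore $\cK_1 h=h+\Psi*h$ and, after cancelling the two $\delta_0$-terms, the kernel $\kappa$ of $\cK_0$ agrees off the origin with the exponentially decaying operator-valued function $\Psi E_0^{-1}$. Fix $\nu'\in(\alpha,\nu(\Gamma_0,E_0))$ and $\chi\in C_c^\infty(\RR)$ with $\chi\equiv1$ near $0$. The far part $(1-\chi)\Psi E_0^{-1}$ lies in $L^1(\RR,\cB(\tbV))$ with $\|(1-\chi(x))\Psi(x)E_0^{-1}\|\le ce^{-\nu'|x|}$, so Lemma~\ref{r2.5} applied with exponent $\nu'$ yields $\big\|\big((1-\chi)\Psi E_0^{-1}\big)*g\big\|_{L^2_{-\alpha}}\le c\|g\|_{L^2_{-\alpha}}$.

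The remaining distribution $\kappa_0:=\kappa-(1-\chi)\Psi E_0^{-1}$, supported in a fixed compact interval, is where the genuine difficulty lies: it carries the non-integrable singularity of $\Psi$ at $0$ produced by the derivative factor $2\pi\rmi\omega$, i.e.\ exactly the weighted-regularity obstruction emphasized in the introduction. I would not estimate $\kappa_0*(\cdot)$ by hand; instead, convolution with $\kappa$ is bounded on $L^2(\RR,\tbV)$ by Lemma~\ref{l2.2}(iv), and convolution with $(1-\chi)\Psi E_0^{-1}$ is bounded there by Young's inequality, so convolution with $\kappa_0$ is bounded on $L^2(\RR,\tbV)$. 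Since $\kappa_0$ is compactly supported and the weight $x\mapsto e^{-\alpha|x|}$ varies by at most a bounded factor over any bounded interval, convolution with $\kappa_0$ is then also bounded on $L^2_{-\alpha}(\RR,\tbV)$. Adding the three contributions gives $\cK_0\in\cB\big(L^2_{-\alpha}(\RR,\tbV)\big)$.

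For the $H^1_{-\alpha}$ claim, note that for Schwartz $g$ one has $\widehat{(\cK_0 g)'}(\omega)=2\pi\rmi\omega R_{\Gamma_0,E_0}(\omega)\hat g(\omega)=R_{\Gamma_0,E_0}(\omega)\widehat{g'}(\omega)=\widehat{\cK_0 g'}(\omega)$, i.e.\ $(\cK_0 g)'=\cK_0 g'$, and this persists for $g\in H^1_{-\alpha}(\RR,\tbV)$ by density; combined with the $L^2_{-\alpha}$ bound just obtained this gives $\|\cK_0 g\|_{H^1_{-\alpha}}\le c\big(\|g\|_{L^2_{-\alpha}}+\|g'\|_{L^2_{-\alpha}}\big)=c\|g\|_{H^1_{-\alpha}}$. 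I expect the near-origin piece $\kappa_0$ to be the main obstacle --- in particular pinning down the precise singularity of $\Psi$ at $0$ and the cancellation of the $\delta_0$-terms. An alternative route that sidesteps kernel singularities entirely is to verify that $R_{\Gamma_0,E_0}$ extends analytically and uniformly boundedly to the strip $\{\lambda:|\mathrm{Re}\,\lambda|<\nu(\Gamma_0,E_0)\}$, via $\lambda\Gamma_0-E_0=\cL_0(\omega)\big(\Id+\mu R(2\pi\rmi\omega,S_{\Gamma_0,E_0})\big)$ for $\lambda=\mu+2\pi\rmi\omega$ and the resolvent bound of Lemma~\ref{r2.3}(iii), and then to deduce the weighted estimates by shifting the contour in the Fourier representation of $\cK_0$ (Paley--Wiener).
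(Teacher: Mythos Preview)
Your main argument has a genuine gap: you assume that the classical derivative $\Psi(x)=\frac{d}{dx}\cG_{\Gamma_0,E_0}(x)$ is, for each $x\neq0$, a \emph{bounded} operator on $\tbV$ satisfying $\|\Psi(x)\|\le ce^{-\nu'|x|}$. For $x>0$ this would mean $\tT_\rms(x)(S_{\Gamma_0,E_0})_{|\tbV_\rms}\tP_\rms$ extends to a bounded operator, i.e.\ that the bi-semigroup is differentiable (in the semigroup-theoretic sense). Nothing in Lemma~\ref{r2.3} or in \cite{PZ1} asserts this; Lemma~\ref{r2.3} records only that $\cG_{\Gamma_0,E_0}$ itself decays exponentially, not its derivative. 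Since $S_{\Gamma_0,E_0}=\Gamma_0^{-1}E_0$ is genuinely unbounded (this is precisely the ``essential singularity'' of the problem), the semigroups $\tT_{\rms/\rmu}$ are a priori only $C_0$, and for a general $C_0$-semigroup $T(t)$ the map $t\mapsto T(t)$ need not be norm-differentiable at any $t>0$. Your far-field estimate via Lemma~\ref{r2.5} and your setup of the near-origin piece $\kappa_0$ both rely on this unproved bound on $\Psi$, so the argument does not go through as written.

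The paper avoids differentiating the Green function altogether by using the commutator identity $\psi\cK_0 g=\cK_0\big(\psi g+\psi'(\cG_{\Gamma_0,E_0}^**g)\big)$ from \cite[Lemma~4.10]{PZ1}, with $\psi(x)=e^{-\alpha\langle x\rangle}$. This trades the weight on the \emph{output} of $\cK_0$ for a weight on the \emph{input} plus a correction term involving only the undifferentiated Green function, which Lemma~\ref{r2.5} handles; one then invokes the unweighted $L^2$ bound on $\cK_0$. Your suggested alternative via analytic continuation of $R_{\Gamma_0,E_0}$ to the strip $|\mathrm{Re}\,\lambda|<\nu(\Gamma_0,E_0)$ is in fact workable and closer in spirit to a correct argument: the identity $(\lambda\Gamma_0-E_0)^{-1}=-E_0^{-1}+\lambda R(\lambda,S_{\Gamma_0,E_0})E_0^{-1}$ reduces matters to uniform bounds on $\lambda R(\lambda,S_{\Gamma_0,E_0})$, which follow for large $|\mathrm{Im}\,\lambda|$ from your factorization together with Lemma~\ref{r2.3}(iii), and for bounded $|\mathrm{Im}\,\lambda|$ from continuity of the resolvent on the compact strip. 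But this needs to be written out; the single sentence you give is not a proof.
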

\begin{proof}
First, we introduce $\psi:\RR\to\RR$ the function defined by $\psi(x)=e^{-\alpha\langle x\rangle}$, where $\langle x\rangle=(1+x^2)^{\frac{1}{2}}$. One can readily check that $\psi\in H^2(\RR)$. To prove that $\cK_0$ can be extended to a bounded, linear operator on $L^2_{-\alpha}(\RR,\tbV)$, it is enough to prove that $\|\cK_0g\|_{L^2_{-\alpha}}\leq c\|g\|_{L^2_{-\alpha}}$ for any $g\in L^2(\RR,\tbV)$.

Fix $g\in L^2(\RR,\tbV)$. From \cite[Lemma 4.10]{PZ1} we have that $\psi\cK_0g=\cK_0\big(\psi g+\psi'(\cG_{\Gamma_0,E_0}^**g)\big)$. Clearly, $\psi g\in L^2(\RR,\tbV)$. From Lemma~\ref{r2.3}(iv) and Lemma~\ref{r2.5} we obtain that $\cG_{\Gamma_0,E_0}^**g\in L^2_{-\alpha}(\RR,\tbV)$. Since $|\psi'(x)|\leq ce^{-\alpha|x|}$ for any $x\in\RR$, we conclude that $\psi' (\cG_{\Gamma_0,E_0}^**g)\in L^2(\RR,\tbV)$ and
\begin{align}\label{2.6-1}
\|\cK_0g\|_{L^2_{-\alpha}}&\leq c\|\psi\cK_0g\|_2=c\|\cK_0(\psi g+\psi'(\cG_{\Gamma_0,E_0}^**g))\|_2\leq c\|\psi g+\psi'(\cG_{\Gamma_0,E_0}^**g)\|_2\nonumber\\
&\leq c\|\psi g\|_2+\|\psi'(\cG_{\Gamma_0,E_0}^**g)\|_2\leq c\|g\|_{L^2_{-\alpha}}+c\|\cG_{\Gamma_0,E_0}^**g\|_{L^2_{-\alpha}}\leq c\|g\|_{L^2_{-\alpha}},
\end{align}
proving that the Fourier multiplier $\cK_0$ can be extended to a bounded, linear operator on $L^2_{-\alpha}(\RR,\tbV)$.

Next, we fix $g\in H^1_{-\alpha}(\RR,\tbV)\cap L^2(\RR,\tbV)$. Using again Lemma~\ref{r2.3}(iv) and Lemma~\ref{r2.5}, we infer that $\cG_{\Gamma_0,E_0}^**g\in L^2_{-\alpha}(\RR,\tbV)$. Because $|\psi'(x)|+|\psi''(x)|\leq ce^{-\alpha|x|}$ for any $x\in\RR$, it follows that
\begin{equation}\label{2.6-2}
\psi'(\cG_{\Gamma_0,E_0}^**g)\in L^2(\RR,\tbV)\quad\mbox{and}\quad \psi''(\cG_{\Gamma_0,E_0}^**g)\in L^2(\RR,\tbV).
\end{equation}
Since $g\in H^1_{-\alpha}(\RR,\tbV)$, from Lemma~\ref{r2.5} we have that $\cG_{\Gamma_0,E_0}^**g\in H^1_{\mathrm{loc}}(\RR,\tbV)$ and $(\cG_{\Gamma_0,E_0}^**g)'=\cG_{\Gamma_0,E_0}^**g'\in L^2_{-\alpha}(\RR,\tbV)$. Thus, $\psi'(\cG_{\Gamma_0,E_0}^**g')\in L^2(\RR,\tbV)$. From \eqref{2.6-2} we conclude that $\psi'(\cG_{\Gamma_0,E_0}^**g)\in H^1_{\mathrm{loc}}(\RR,\tbV)$ and
\begin{equation}\label{2.6-3}
\big(\psi'(\cG_{\Gamma_0,E_0}^**g)\big)'=\psi''(\cG_{\Gamma_0,E_0}^**g)+\psi'(\cG_{\Gamma_0,E_0}^**g')\in L^2(\RR,\tbV).
\end{equation}
From Lemma~\ref{r2.5}, \eqref{2.6-2} and \eqref{2.6-3} we infer that $\psi'(\cG_{\Gamma_0,E_0}^**g)\in H^1(\RR,\tbV)$ and
\begin{align}\label{2.6-4}
\|\psi'(\cG_{\Gamma_0,E_0}^**g)\|_{H^1}&\leq c\|\psi'(\cG_{\Gamma_0,E_0}^**g)\|_2+c\|\big(\psi'(\cG_{\Gamma_0,E_0}^**g)\big)'\|_2\nonumber\\
&\leq c\|\cG_{\Gamma_0,E_0}^**g\|_{L^2_{-\alpha}}+c\|\psi''(\cG_{\Gamma_0,E_0}^**g)\|_2+c\|\psi'(\cG_{\Gamma_0,E_0}^**g')\|_2\nonumber\\
&\leq c \|g\|_{L^2_{-\alpha}}+c\|\cG_{\Gamma_0,E_0}^**g\|_{L^2_{-\alpha}}+c\|\cG_{\Gamma_0,E_0}^**g'\|_{L^2_{-\alpha}}\nonumber\\
&\leq c \|g\|_{L^2_{-\alpha}}+c \|g'\|_{L^2_{-\alpha}}=c \|g\|_{H^1_{-\alpha}}.
\end{align}
Since $\sup_{\omega\in\RR}\|R_{\Gamma_0,E_0}(\omega)\|<\infty$ by Lemma~\ref{l2.2}(iv), it follows that the Fourier multiplier $\cK_0$ can be extended to a bounded, linear operator on $H^1(\RR,\tbV)$. Since $\psi g, \psi'(\cG_{\Gamma_0,E_0}^**g)\in H^1(\RR,\tbV)$ we obtain that $\psi\cK_0g=\cK_0(\psi g+\psi'(\cG_{\Gamma_0,E_0}^**g))\in H^1(\RR,\tbV)$, and thus $\cK_0g\in H^1_{-\alpha}(\RR,\tbV)$. Summarizing, from \eqref{2.6-4} we conclude that
\begin{align}\label{2.6-5}
\|\cK_0g\|_{H^1_{-\alpha}}&\leq c\|\psi\cK_0g\|_{H^1}=c\|\cK_0(\psi g+\psi'(\cG_{\Gamma_0,E_0}^**g))\|_{H^1}\leq c\|\psi g+\psi'(\cG_{\Gamma_0,E_0}^**g)\|_{H^1}\nonumber\\
&\leq c\|\psi g\|_{H^1}+\|\psi'(\cG_{\Gamma_0,E_0}^**g)\|_{H^1}\leq c\|g\|_{H^1_{-\alpha}}.
\end{align}
From \eqref{2.6-5} it follows that the Fourier multiplier $\cK_0$ can be extended to a bounded, linear operator on $H^1_{-\alpha}(\RR,\tbV)$, proving the lemma.
\end{proof}
To simplify the notation, in the sequel we denote the extensions of $\cK_0$ to $L^2_{-\alpha}(\RR,\tbV)$ and $H^1_{-\alpha}(\RR,\tbV)$ by the same symbol. Moreover, from the definition of $\cK_0$ one can readily check that
\begin{equation}\label{deriv-cK0}
(\cK_0g)'=\cK_0g'\quad\mbox{for any}\quad g\in H^1_{-\alpha}(\RR,\tbV).
\end{equation}
Next, we will study the smoothness and uniqueness of solutions of \eqref{tildev-eq1} in the case when the function $g\in H^1_{-\alpha}(\RR,\tbV)$. To prove these results, we need the following lemma. \begin{lemma}\label{l2.7}
Assume Hypotheses (H1) and (H2), $\tv\in L^2(\RR,\tbV)$, $\mu$ is an $\tbV$-valued finite Borel measure such that
$\cF\tv=M_{R_{\Gamma_0,E_0}}\cF\mu$, and $\psi\in C_0^\infty(\RR)$. Then,
\begin{equation}\label{phi-lemma}
\widehat{\psi\tv}(\omega)-R_{\Gamma_0,E_0}(\omega)\Gamma_0\widehat{\phi'\tv}(\omega)=R_{\Gamma_0,E_0}(\omega)\int_{\RR} e^{-2\pi\rmi\omega x}\phi(x)\,\rmd \mu(x)\quad\mbox{for any}\quad\omega\in\RR.
\end{equation}
\end{lemma}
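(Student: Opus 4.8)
The plan is to prove \eqref{phi-lemma} entirely on the Fourier side. The point is that the hypothesis $\cF\tv=M_{R_{\Gamma_0,E_0}}\cF\mu$ gives no information beyond the pointwise identity $\widehat{\tv}(\omega)=R_{\Gamma_0,E_0}(\omega)\widehat\mu(\omega)$ for a.e.\ $\omega$, and in particular does not make $\tv$ differentiable; so the naive derivation --- multiply $\Gamma_0\tv'-E_0\tv=\mu$ by the cutoff and use the Leibniz rule --- is not available and must be replaced by convolution identities. (Here and below I write $\phi$ for the test function denoted $\psi$ in the statement.) First I would recall, as in the proof of Lemma~\ref{l2.2}, the operator-valued function $\cL_0(\omega)=2\pi\rmi\omega\Gamma_0-E_0$, and note that by Lemma~\ref{l2.2}(iii)--(iv) it is boundedly invertible with $\cL_0(\omega)^{-1}=R_{\Gamma_0,E_0}(\omega)$; hence the hypothesis is equivalent to $\cL_0(\omega)\widehat{\tv}(\omega)=\widehat\mu(\omega)$ a.e., where $\widehat\mu\in C_{\rm b}(\RR,\tbV)$ (since $\mu$ is finite) and $\widehat{\tv}=R_{\Gamma_0,E_0}(\cdot)\widehat\mu\in L^2\cap L^\infty$.

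Next I would set up the three convolution identities that replace differentiation of $\tv$. Since $\phi\in C_0^\infty(\RR)$, its Fourier transform $\widehat\phi$ is Schwartz; consequently $\phi\tv$ and $\phi'\tv$ lie in $L^1(\RR,\tbV)\cap L^2(\RR,\tbV)$, $\phi\mu$ is a finite $\tbV$-valued Borel measure, and, by the convolution theorem, $\widehat{\phi\tv}=\widehat\phi*\widehat{\tv}$, $\widehat{\phi'\tv}=\widehat{\phi'}*\widehat{\tv}$ with $\widehat{\phi'}(\omega)=2\pi\rmi\omega\,\widehat\phi(\omega)$, and $\widehat{\phi\mu}=\widehat\phi*\widehat\mu$, the last also being $\widehat{\phi\mu}(\omega)=\int_\RR e^{-2\pi\rmi\omega x}\phi(x)\,\rmd\mu(x)$ straight from the definition. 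All these convolutions converge absolutely at every $\omega$: $\widehat\phi\in L^1\cap L^2$, $\eta\mapsto\eta\,\widehat\phi(\omega-\eta)\in L^2$, $\widehat{\tv}\in L^2$, and $\widehat\mu$ is bounded; this integrability is exactly what legitimizes the computation below.

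The core step is then the algebra. Starting from $\widehat{\phi\mu}(\omega)=\int_\RR\widehat\phi(\omega-\eta)\widehat\mu(\eta)\,\rmd\eta=\int_\RR\widehat\phi(\omega-\eta)\cL_0(\eta)\widehat{\tv}(\eta)\,\rmd\eta$, I would pull the bounded operators $\Gamma_0$ and $E_0$ out of the (Bochner) integral, split $2\pi\rmi\eta=2\pi\rmi\omega-2\pi\rmi(\omega-\eta)$ to obtain $\int_\RR 2\pi\rmi\eta\,\widehat\phi(\omega-\eta)\widehat{\tv}(\eta)\,\rmd\eta=2\pi\rmi\omega\,(\widehat\phi*\widehat{\tv})(\omega)-(\widehat{\phi'}*\widehat{\tv})(\omega)$, and collect terms to get $\widehat{\phi\mu}(\omega)=\cL_0(\omega)\widehat{\phi\tv}(\omega)-\Gamma_0\widehat{\phi'\tv}(\omega)$. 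Applying $R_{\Gamma_0,E_0}(\omega)=\cL_0(\omega)^{-1}$ to both sides and rearranging gives precisely \eqref{phi-lemma}.

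I do not anticipate a serious obstacle; the only thing requiring genuine care is the one flagged above --- that $\tv$ is merely $L^2$, so every manipulation must be carried out through the Fourier transform rather than by differentiating $\tv$, and one should verify that $\widehat{\phi\tv}$, $\widehat{\phi'\tv}$, $\widehat{\phi\mu}$ (defined as the $L^1$-theory Fourier transforms of the integrable objects $\phi\tv$, $\phi'\tv$, $\phi\mu$) really do coincide with the corresponding convolutions against $\widehat\phi$. This is classical, but it is the step it would be easy to gloss over.
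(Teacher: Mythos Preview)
Your proof is correct and follows essentially the same Fourier-side convolution argument as the paper. The only cosmetic difference is that the paper first records the resolvent-type identity $R_{\Gamma_0,E_0}(\omega_1)-R_{\Gamma_0,E_0}(\omega_2)=2\pi\rmi(\omega_2-\omega_1)R_{\Gamma_0,E_0}(\omega_1)\Gamma_0R_{\Gamma_0,E_0}(\omega_2)$ and works from the left-hand side of \eqref{phi-lemma} toward $\widehat{\phi\mu}$, whereas you start from $\widehat{\phi\mu}$, use the equivalent relation $\cL_0(\eta)\widehat{\tv}(\eta)=\widehat\mu(\eta)$, and split $2\pi\rmi\eta=2\pi\rmi\omega-2\pi\rmi(\omega-\eta)$; the two computations are algebraically dual.
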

\begin{proof}
First, we note that the function $R_{\Gamma_0,E_0}$ satisfies the equation
\begin{equation}\label{2.7-1}
R_{\Gamma_0,E_0}(\omega_1)-R_{\Gamma_0,E_0}(\omega_2)=2\pi\rmi(\omega_2-\omega_1)R_{\Gamma_0,E_0}(\omega_1)\Gamma_0R_{\Gamma_0,E_0}(\omega_2)\quad\mbox{for any}\quad\omega_1,\omega_2\in\RR.
\end{equation}
The lemma follows from \eqref{2.7-1} by a long, but fairly simple computation. Indeed,
\begin{align}\label{2.7-2}
\widehat{\psi\tv}(\omega)&-R_{\Gamma_0,E_0}(\omega)\Gamma_0\widehat{\phi'\tv}(\omega)=(\widehat{\psi}*\widehat{\tv})(\omega)-R_{\Gamma_0,E_0}(\omega)\Gamma_0(\widehat{\phi'}*\widehat{\tv})(\omega)\nonumber\\
&=\int_{\RR}\widehat{\phi}(\omega-\theta)\widehat{\tv}(\theta)\rmd\theta-R_{\Gamma_0,E_0}(\omega)\Gamma_0\int_{\RR}\widehat{\phi'}(\omega-\theta)\widehat{\tv}(\theta)\rmd\theta\nonumber\\
&=\int_{\RR}\widehat{\phi}(\omega-\theta)\widehat{\tv}(\theta)\rmd\theta-R_{\Gamma_0,E_0}(\omega)\Gamma_0\int_{\RR}2\pi\rmi(\omega-\theta)\widehat{\phi}(\omega-\theta)\widehat{\tv}(\theta)\rmd\theta\nonumber\\
&=\int_{\RR}\widehat{\phi}(\omega-\theta)\Big(I_{\tbV}-2\pi\rmi(\omega-\theta)R_{\Gamma_0,E_0}(\omega)\Gamma_0\Big)\widehat{\tv}(\theta)\rmd\theta\nonumber\\
&=\int_{\RR}\widehat{\phi}(\omega-\theta)\Big(R_{\Gamma_0,E_0}(\theta)-2\pi\rmi(\omega-\theta)R_{\Gamma_0,E_0}(\omega)\Gamma_0R_{\Gamma_0,E_0}(\theta)\Big)\widehat{\mu}(\theta)\rmd\theta\nonumber\\
&=R_{\Gamma_0,E_0}(\omega)\int_{\RR}\widehat{\phi}(\omega-\theta)\widehat{\mu}(\theta)\rmd\theta=R_{\Gamma_0,E_0}(\omega)(\widehat{\phi}*\widehat{\mu})(\omega)\nonumber\\&=R_{\Gamma_0,E_0}(\omega)\int_{\RR} e^{-2\pi\rmi\omega x}\phi(x)\rmd\mu(x)
\end{align}
for any $\omega\in\RR$, proving the lemma.
\end{proof}
\begin{lemma}\label{l2.8}
Assume Hypotheses (H1) and (H2). Then, the following assertions hold true.
\begin{enumerate}
\item[(i)] For any $g\in H^1_{-\alpha}(\RR,\tbV)$, $\cK_0g$ is a mild solution of \eqref{tildev-eq1} on $[x_0,x_1]$ for any $x_0,x_1\in\RR$;
\item[(ii)] If $g\in L^2_{-\alpha}(\RR,\tbV)$ and $\tv\in L^2_{-\alpha}(\RR,\tbV)$ is a mild solution of equation \eqref{tildev-eq1} on $\RR$, then $\tv=\cK_0g$.
\end{enumerate}
\end{lemma}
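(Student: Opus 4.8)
The plan is to prove the two assertions separately, in each case translating the mild-solution identity of Definition \ref{d2.4} into an identity between Fourier transforms and then invoking the regularity apparatus (Lemmas \ref{r2.3}, \ref{r2.5}, \ref{l2.6}, \ref{l2.7}) already established. For (i), I would first take $g\in H^1_{-\alpha}(\RR,\tbV)$ and set $\tv:=\cK_0 g$, which by Lemma \ref{l2.6} lies in $H^1_{-\alpha}(\RR,\tbV)$ and in particular (by Sobolev embedding in one variable) has a continuous representative, so the traces $\tv(x_0),\tv(x_1)$ make sense. Fix $x_0<x_1$, let $\phi\in C_0^\infty(\RR)$ be a cutoff that equals $1$ on $[x_0,x_1]$, and apply Lemma \ref{l2.7} with the measure $\mu=g\,\rmd x$ (legitimate since $\cF\tv=M_{R_{\Gamma_0,E_0}}\cF g$ by construction of $\cK_0$). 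This gives
\[
\widehat{\phi\tv}(\omega)-R_{\Gamma_0,E_0}(\omega)\Gamma_0\,\widehat{\phi'\tv}(\omega)=R_{\Gamma_0,E_0}(\omega)\int_\RR e^{-2\pi\rmi\omega x}\phi(x)g(x)\,\rmd x .
\]
On $[x_0,x_1]$ the left-hand term $\widehat{\phi'\tv}$ only sees the behaviour of $\phi$ outside $[x_0,x_1]$; integrating by parts in $x$ and using $\phi\equiv 1$ there, one reorganizes $\Gamma_0\widehat{\phi'\tv}$ into the boundary contribution $e^{-2\pi\rmi\omega x_0}\Gamma_0\tv(x_0)-e^{-2\pi\rmi\omega x_1}\Gamma_0\tv(x_1)$ plus a term absorbed by the source, and then one uses Lemma \ref{r2.3}(ii), $R(2\pi\rmi\omega,S_{\Gamma_0,E_0})=R_{\Gamma_0,E_0}(\omega)\Gamma_0$, to recognize the right-hand side of the mild-solution identity in Definition \ref{d2.4}(i). (Alternatively, and perhaps more cleanly, one differentiates the relation $\Gamma_0(\cK_0 g)'=E_0\cK_0 g+g$ — valid as an $L^2_{-\alpha}$ identity by \eqref{deriv-cK0} and the definition of $R_{\Gamma_0,E_0}$ — integrates against $e^{-2\pi\rmi\omega x}$ over $[x_0,x_1]$, and reads off the trace terms directly; I would present whichever of these bookkeeping routes is shortest.)

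For (ii), suppose $g\in L^2_{-\alpha}(\RR,\tbV)$ and $\tv\in L^2_{-\alpha}(\RR,\tbV)$ is a mild solution on all of $\RR$. Set $w:=\tv-\cK_0 g$; by (i) applied to a mollification of $g$, or directly from the defining identity in Definition \ref{d2.4}(ii), $w$ is a mild solution of the homogeneous equation $\Gamma_0 w'=E_0 w$ on every interval $[x_0,x_1]$, i.e.
\[
(\cF w_{|[x_0,x_1]})(\omega)=R(2\pi\rmi\omega,S_{\Gamma_0,E_0})\bigl(e^{-2\pi\rmi\omega x_0}w(x_0)-e^{-2\pi\rmi\omega x_1}w(x_1)\bigr).
\]
Using the Green function $\cG_{\Gamma_0,E_0}$ of Lemma \ref{r2.3}(iv) and \eqref{Green-properties}, invert the Fourier transform to get, for $x\in[x_0,x_1]$,
\[
w(x)=\tT_\rms(x-x_0)\tP_\rms w(x_0)-\tT_\rmu(x_1-x)\tP_\rmu w(x_1).
\]
Now let $x_0\to-\infty$ and $x_1\to+\infty$: since $w\in L^2_{-\alpha}$ with $\alpha<\nu(\Gamma_0,E_0)$, the trace values grow no faster than $e^{\alpha|x_j|}$ along a suitable sequence, while $\tT_{\rms}(x-x_0)$, $\tT_{\rmu}(x_1-x)$ decay like $e^{-\nu(\Gamma_0,E_0)|x_j|}$; hence both terms tend to $0$ and $w\equiv 0$, i.e. $\tv=\cK_0 g$.

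The main obstacle I expect is the trace bookkeeping in part (i): extracting exactly the boundary terms $e^{-2\pi\rmi\omega x_0}\tv(x_0)-e^{-2\pi\rmi\omega x_1}\tv(x_1)$ in the form demanded by Definition \ref{d2.4}(i) from Lemma \ref{l2.7}, since the cutoff $\phi$ contributes spurious terms supported near $x_0$ and $x_1$ that must be shown to combine with $R_{\Gamma_0,E_0}(\omega)$ and the source into the clean resolvent expression; this is where one must use the resolvent identity \eqref{2.7-1} and the factorization $R(2\pi\rmi\omega,S_{\Gamma_0,E_0})=R_{\Gamma_0,E_0}(\omega)\Gamma_0$ carefully rather than just formally. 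A secondary technical point in (ii) is justifying the passage to the limit in $x_0,x_1$: one cannot let them go to infinity arbitrarily but must choose sequences along which $\|w(x_j)\|e^{-\alpha|x_j|}$ stays bounded, which exists because $w\in L^2_{-\alpha}$ forces $\liminf_{|x|\to\infty}\|w(x)\|e^{-\alpha|x|}=0$ (after passing to the continuous representative, which $w$ has since it solves the homogeneous equation and hence lies in $H^1_{\loc}$).
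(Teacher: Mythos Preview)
Your alternative route for (i)---establish $\Gamma_0(\cK_0 g)'=E_0\cK_0 g+g$ and integrate against $e^{-2\pi\rmi\omega x}$ over $[x_0,x_1]$---is essentially the paper's argument, though the paper makes the needed approximation explicit: it takes $\phi_n\in C_0^\infty$ with $\phi_n\equiv 1$ on $[-n,n]$, verifies the ODE for $\tv_n:=\cK_0(\phi_n g)\in H^1(\RR,\tbV)$ directly from the Fourier definition, records the mild-solution identity for $\tv_n$, and passes to the limit in $H^1_{-\alpha}$ via Lemma~\ref{l2.6}. Your first route, applying Lemma~\ref{l2.7} with $\mu=g\,\rmd x$, does not work as written: that lemma requires $\tv\in L^2(\RR,\tbV)$ and $\mu$ a \emph{finite} Borel measure, and neither holds for $\tv=\cK_0 g\in H^1_{-\alpha}$, $g\in H^1_{-\alpha}$ in general.

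For (ii) there is a genuine gap. Forming $w=\tv-\cK_0 g$ and declaring it a homogeneous mild solution presupposes that $\cK_0 g$ is itself a mild solution with source $g$, but part (i) gives this only for $g\in H^1_{-\alpha}$; here $g\in L^2_{-\alpha}$, and mollification does not repair this since $\cK_0 g_n\to\cK_0 g$ only in $L^2_{-\alpha}$, which does not control the pointwise traces in Definition~\ref{d2.4}. The paper sidesteps the issue by never introducing $w$: it applies Lemma~\ref{l2.7} directly to $\tv\chi_{[-n-1,n+1]}\in L^2$ with the finite measure $\mu_n$ built from Dirac masses at $\pm(n+1)$ (supplied by the mild-solution hypothesis on $\tv$) plus $g\chi_{[-n-1,n+1]}\,\rmd x$, together with the cutoff $\phi_n$, obtaining $\phi_n\tv=\cG_{\Gamma_0,E_0}*(\phi_n'\tv)+\cK_0(\phi_n g)$ and then sending $n\to\infty$ in $L^2_{-\alpha}$. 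Your bi-semigroup decay idea can be salvaged if you invert the mild-solution identity for $\tv$ itself (not $w$), reading off $\tv(x)=\tT_\rms(x-x_0)\tP_\rms\tv(x_0)+\tT_\rmu(x_1-x)\tP_\rmu\tv(x_1)+\cK_0(g\chi_{[x_0,x_1]})(x)$ on $(x_0,x_1)$ and sending $x_0\to-\infty$, $x_1\to+\infty$ along suitable sequences.
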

\begin{proof} First, we consider a sequence of smooth functions $\phi_n\in C^\infty_0(\RR)$ such that $0\leq\phi_n\leq 1$, $\phi_n(x)=1$ for any $x\in [-n,n]$, $\phi_n(x)=0$ whenever
$|x|\geq n+1$ and $\sup_{n\in\NN}\|\phi_n'\|_\infty<\infty$ for any $n\geq 1$. One can readily check that $\phi_n\to1$ and $\phi_n'\to 0$ simple as $n\to\infty$. Moreover, from Lebesgue Dominated Convergence Theorem one can readily check that
\begin{align}\label{2.8-1}
&\phi_n'g\to 0\;\mbox{in}\;L^2_{-\alpha}(\RR,\tbV)\;{as}\;n\to\infty\;\mbox{for any}\;g\in L^2_{-\alpha}(\RR,\tbV),\nonumber\\
&\phi_ng\to g\;\mbox{in}\;L^2_{-\alpha}(\RR,\tbV)\;{as}\;n\to\infty\;\mbox{for any}\;g\in L^2_{-\alpha}(\RR,\tbV)\nonumber\\
&\phi_ng\to g\;\mbox{in}\;H^1_{-\alpha}(\RR,\tbV)\;{as}\;n\to\infty\;\mbox{for any}\;g\in H^1_{-\alpha}(\RR,\tbV).
\end{align}
\noindent\textit{Proof of (i)}. Fix $g\in H^1_{-\alpha}(\RR,\tbV)$. Since $\phi_n\in C^\infty_0(\RR)$ we have that $\phi_ng\in H^1(\RR,\tbV)$, and thus $\tv_n:=\cK_0(\phi_ng)\in H^1(\RR,\tbV)$ for any $n\geq 1$. From the definition of the Fourier multiplier $\cK_0$, we immediately obtain that
\begin{equation}\label{2.8-2}
\cF(\Gamma_0\tv_n'-E\tv_n)(\omega)=(2\pi\rmi\omega\Gamma_0-E_0)\widehat{\tv_n}(\omega)=\widehat{\phi_ng}(\omega)\quad\mbox{for any}\quad\omega\in\RR,
\end{equation}
which proves that $\Gamma_0\tv_n'=E\tv_n+\phi_ng$  for any $n\geq 1$. Since $H^1$-solutions of \eqref{tildev-eq1} are also mild solutions, we have that
\begin{equation}\label{2.8-3}
\big((\cF\tv_n)_{|[x_0,x_1]}\big)(\omega)=R(2\pi\rmi\omega,S_{\Gamma_0,E_0})\big(e^{-2\pi\rmi\omega x_0}\tv_n(x_0)-e^{-2\pi\rmi\omega x_1}\tv_n(x_1)\big)+R_{\Gamma_0,E_0}(\omega)(\cF \phi_ng_{|[x_0,x_1]})(\omega)
\end{equation}
for any $\omega\in\RR$ and $n\geq 1$. From \eqref{2.8-1} we have that $\phi_ng\to g$ in $H^1_{-\alpha}(\RR,\tbV)$ as $n\to\infty$. From Lemma~\ref{l2.6} we conclude that $\tv_n=\cK_0(\phi_ng)\to \cK_0g$ in $H^1_{-\alpha}(\RR,\tbV)$ as $n\to\infty$. It follows that $(\tv_n)_{|[x_0,x_1]}\to (\cK_0g)_{|[x_0,x_1]}$ and $(\phi_ng)_{|[x_0,x_1]}\to g_{|[x_0,x_1]}$ in $L^2(\RR,\tbV)$ and $\tv_n(x)\to(\cK_0g)(x)$ for any $x\in\RR$, as $n\to\infty$. Passing to the limit in \eqref{2.8-3} we obtain that $\cK_0g$ is a mild solution of \eqref{tildev-eq1} on $[x_0,x_1]$ for any $x_0,x_1\in\RR$, proving (i).

\noindent\textit{Proof of (ii)}. Assume $g\in L^2_{-\alpha}(\RR,\tbV)$ and $\tv\in L^2_{-\alpha}(\RR,\tbV)$ is a mild solution of \eqref{tildev-eq1}. We define the sequence of functions $z_n=\phi_n\tv$, $n\geq 1$. First, we note that $z_n=\phi_n(\tv\chi_{[-n-1,n+1]})$ for any $n\geq 1$. Since $\tv$ is a mild solution of \eqref{tildev-eq1} we have that
\begin{equation}\label{2.8-4}
\big(\cF\tv\chi_{[-n-1,n+1]}\big)(\omega)=\big(\cF\tv_{|[-n-1,n+1]}\big)(\omega)=R_{\Gamma_0,E_0}(\omega)\widehat{\mu_n}(\omega)\quad\mbox{for any}\quad\omega\in\RR, n\geq 1,
\end{equation}
where $\mu_n:\mathrm{Bor}(\RR)\to\tbV$ is the Borel measure defined by
\begin{equation}\label{2.8-5}
\mu_n(\Omega)=\mathrm{Dirac}_{-n-1}(\Omega)\Gamma_0\tv(-n-1)-\mathrm{Dirac}_{n+1}(\Omega)\Gamma_0\tv(n+1)+\int_{\Omega\cap[-n-1,n+1]}g(x)\rmd x.
\end{equation}
From Lemma~\ref{l2.7} it follows that
\begin{align}\label{2.8-6}
\widehat{z_n}(\omega)&-R_{\Gamma_0,E_0}(\omega)\Gamma_0 \big(\cF(\phi_n'\tv_{|[-n-1,n+1]})\big)(\omega)=R_{\Gamma_0,E_0}(\omega)\int_{\RR}e^{-2\pi\rmi\omega x}\phi_n(x)\rmd\mu_n(x)\nonumber\\
&=R_{\Gamma_0,E_0}(\omega)\Big(e^{2\pi\rmi\omega(n+1)}\phi_n(-n-1)\Gamma_0\tv(-n-1)-e^{-2\pi\rmi\omega(n+1)}\phi_n(n+1)\Gamma_0\tv(n+1)\Big)\nonumber\\
&\quad+R_{\Gamma_0,E_0}(\omega)\int_{\RR}e^{-2\pi\rmi\omega x}\phi_n(x)\chi_{[-n-1,n+1]}(x)g(x)\rmd x\nonumber\\
&=R_{\Gamma_0,E_0}(\omega)\widehat{\phi_ng}(\omega)\quad\mbox{for any}\quad\omega\in\RR, n\geq 1.
\end{align}
Since $\phi_n'\chi_{[-n-1,n+1]}=\phi_n'$ for any $n\geq 1$ from \eqref{Green-properties} and \eqref{2.8-6} we infer that
\begin{equation}\label{2.8-7}
z_n=\cG_{\Gamma_0,E_0}*(\phi_n'\tv)+\cK_0(\phi_ng)\quad\mbox{for any}\quad n\geq 1.
\end{equation}
Since $\tv\in L^2_{-\alpha}(\RR,\tbV)$ from Lemma~\ref{r2.5} and \eqref{2.8-1} we infer that $\cG_{\Gamma_0,E_0}*(\phi_n'\tv)\to 0$ and $z_n=\phi_n\tv\to\tv$ in $L^2_{-\alpha}(\RR,\tbV)$ as $n\to\infty$.
Moreover, since $g\in L^2_{-\alpha}(\RR,\tbV)$ from Lemma~\ref{l2.6} and \eqref{2.8-1} we have that $\cK_0(\phi_ng)\to \cK_0g$ in $L^2_{-\alpha}(\RR,\tbV)$ as $n\to\infty$. Passing to the limit in \eqref{2.8-7} we conclude that $\tv=\cK_0g$, proving the lemma.
\end{proof}
To finish this section, we use Lemma~\ref{l2.8} to prove an identity useful in the sequel. Let $\rmone$ be the function identically equal to one on the hole line.
From \eqref{deriv-cK0} we have that $\big(\cK_0(z\rmone)\big)'=\cK_0(z\rmone)'=0$, which proves that $\cK_0(z\rmone)$ is a constant function for any $z\in\tbV$. Since $z\rmone\in H^1_{-\alpha}(\RR,\tbV)$, from Lemma~\ref{l2.8}(ii) we have that $\cK_0(z\rmone)$ is the unique $H^1_{-\alpha}$ solution of equation $\Gamma_0\tv'=E_0\tv+z\rmone$. However, one can readily check that $-E_0^{-1}z\rmone$ is a solution this equation, which implies that
\begin{equation}\label{multiplier-constant}
\cK_0(z\rmone)=-E_0^{-1}z\rmone\quad\mbox{for any}\quad z\in\tbV.
\end{equation}

\subsection{Linear flow in characteristic and noncharacteristic case}\label{char}
In this subsection we prove that equation \eqref{linear} exhibits an exponential trichotomy on $\bH$ with finite dimensional center subspace. To prove this result, we solve the system \eqref{linear-sys-perturbed2} for the case when $f\equiv 0$. First, we look for the center subspace, the space of all vectors on $\bH$ that can be propagated in backward and forward time and whose associated solutions grow slower than any exponential. To define the center subspace we introduce the space
\begin{equation}\label{def-Vc}
\bV_\rmc=\{v=(v_1,\tv)\in\bV:\tv=-E_0^{-1}P_{\tbV}Ev_1\}.
\end{equation}
\begin{lemma}\label{l2.9}
Assume Hypotheses (H1) and (H2). Then, for any $w_0=(u^0_1,\tu^0,v_1^0,\tv^0)^{\mathrm{T}}\in\bH_\rmc:=\bV^\perp\oplus\bV_\rmc$ there exists a unique solution $\bu_\rmc$ of \eqref{linear} on $\RR$
such that $\bu_\rmc(0)=w_0$, given by
\begin{equation}\label{center-flow}
\bu_\rmc(x,w_0)=(u_1^0+x E_1(I_{\bH}-E_0^{-1}P_{\tbV}E)v_1^0,\tu^0,v_1^0,-E_0^{-1}P_{\tbV}Ev_1^0)^{\mathrm{T}}\in\bH_\rmc\quad\mbox{for any}\quad x\in\RR.
\end{equation}
\end{lemma}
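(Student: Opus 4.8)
The plan is to exploit the decoupling carried out in Section~\ref{s2}: under the coordinates $u_1=P_{\ker A_{11}}u$, $\tu=P_{\im A_{11}}u$, $v_1=P_{\bV_1}v$, $\tv=P_{\tbV}v$, a function $\bu\colon\RR\to\bH$ solves~\eqref{linear} if and only if $(u_1,\tu,v_1,\tv)^{\mathrm T}$ solves the reduced system~\eqref{linear-sys-perturbed2} with $f\equiv0$. So I would simply integrate that system starting from the initial datum $w_0=(u_1^0,\tu^0,v_1^0,\tv^0)^{\mathrm T}\in\bH_\rmc$ and read off~\eqref{center-flow}; the same computation yields both existence and uniqueness.

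The three finite-dimensional components are immediate once the infinite-dimensional one is pinned down. From the third equation $v_1'=0$ we get $v_1\equiv v_1^0$, so the fourth equation becomes $\Gamma_0\tv'=E_0\tv+z\rmone$ with the \emph{constant} right-hand side $z:=P_{\tbV}Ev_1^0\in\tbV$; this is precisely~\eqref{tildev-eq1} with $g=z\rmone\in H^1_{-\alpha}(\RR,\tbV)$. This is the step where one must be careful: since $\Gamma_0$ is one-to-one but not boundedly invertible (Lemma~\ref{l2.2}), there is no elementary ODE uniqueness, and I would instead invoke the whole-line mild-solution theory of Section~\ref{s:inhom}. By Lemma~\ref{l2.8}(ii) the only $L^2_{-\alpha}$ mild solution of this equation on $\RR$ is $\tv=\cK_0(z\rmone)$, and by the identity~\eqref{multiplier-constant} this equals $-E_0^{-1}z\rmone=\big(-E_0^{-1}P_{\tbV}Ev_1^0\big)\rmone$, a constant function. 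Its value at $0$ is $-E_0^{-1}P_{\tbV}Ev_1^0$, which agrees with the prescribed $\tv^0$ exactly because $w_0\in\bH_\rmc=\bV^\perp\oplus\bV_\rmc$ and $\bV_\rmc$ is defined by~\eqref{def-Vc}; conversely, were $w_0\notin\bH_\rmc$, no full-line solution through $w_0$ would exist, which is the reason the center subspace is precisely $\bH_\rmc$. Having forced $\tv'\equiv0$, the second equation gives $\tu'\equiv0$, hence $\tu\equiv\tu^0$, and the first gives $u_1'=E_1(v_1^0+\tv)=E_1\big(I_\bH-E_0^{-1}P_{\tbV}E\big)v_1^0$, a constant, so $u_1(x)=u_1^0+xE_1\big(I_\bH-E_0^{-1}P_{\tbV}E\big)v_1^0$. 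Reassembling $\bu_\rmc=(u_1+\tu)+(v_1+\tv)$ reproduces~\eqref{center-flow}.

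For existence one can alternatively argue by direct substitution: the function in~\eqref{center-flow} is affine in $x$, hence a classical (and a fortiori mild) solution, and it satisfies all four equations of~\eqref{linear-sys-perturbed2} with $f\equiv0$ — the fourth reducing to $E_0(-E_0^{-1}P_{\tbV}Ev_1^0)+P_{\tbV}Ev_1^0=0$; moreover $\bu_\rmc(0)=w_0$ and $\bu_\rmc(x)\in\bV^\perp\oplus\bV_\rmc=\bH_\rmc$ for every $x$, since $u_1(x)\in\ker A_{11}$, $\tu(x)\in\im A_{11}$, and $(v_1(x),\tv(x))$ satisfies the defining relation of $\bV_\rmc$. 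Uniqueness is exactly the content of the derivation in the previous paragraph: any solution on $\RR$ — understood in the natural class of subexponentially growing functions, i.e.\ with $\tv$-component in $L^2_{-\alpha}$ — must have these components. The main, and indeed only, substantive obstacle is the fourth equation: because of the ill-posedness encoded in the unbounded $\Gamma_0^{-1}$, its whole-line solvability and uniqueness cannot be obtained by naive integration and must be routed through Lemma~\ref{l2.8}(ii) and~\eqref{multiplier-constant}; everything else is elementary finite-dimensional integration and verification.
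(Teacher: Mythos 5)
Your proof is correct and follows essentially the same route as the paper: reduce to the decoupled system~\eqref{linear-sys-perturbed2} with $f\equiv 0$, integrate the finite-dimensional equations directly, and handle the fourth equation via Lemma~\ref{l2.8}(ii) together with the identity~\eqref{multiplier-constant}, using that $w_0\in\bH_\rmc$ makes the resulting constant $\tv$ compatible with the initial datum. You also make explicit (which the paper leaves implicit) that uniqueness holds in the $L^2_{-\alpha}$-class required to invoke Lemma~\ref{l2.8}(ii) — a worthwhile clarification, not a departure.
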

\begin{proof}
Fix $w_0=(u^0_1,\tu^0,v_1^0,\tv^0)^{\mathrm{T}}\in\bV^\perp\oplus\bV_\rmc$ and assume that $\bu=(u_1,\tu,v_1,\tv)^{\mathrm{T}}$ is a mild solution of \eqref{linear} such that $\bu(0)=w_0\in\bH_\rmc$.
Since $\bu(0)\in\bH_\rms$, we obtain that $\tv(0)=-E_0^{-1}P_{\tbV}Ev_1(0)$. Using that equation \eqref{linear} is equivalent to
\eqref{linear-sys-perturbed2} with $f\equiv0$, from the third equation we conclude that $v_1(x)=v_1(0)=v_1^0$ for any $x\in\RR$. Since any constant function belongs to $H^1_{-\alpha}(\RR,\tbV)$, from Lemma~\ref{l2.8} and \eqref{multiplier-constant} we infer that $\tv=\cK_0(P_{\tbV}Ev_1(0)\rmone)=-E_0^{-1}P_{\tbV}Ev_1(0)\rmone=-E_0^{-1}P_{\tbV}Ev_1^0\rmone\in H^1_{-\alpha}(\RR,\tbV)$.
Integrating in the first two equations of \eqref{linear-sys-perturbed2}, we obtain that
\begin{equation}\label{2.9-1}
u_1(x)=u_1^0+x E_1(I_{\bH}-E_0^{-1}P_{\tbV}E)v_1^0,\quad\tu(x)=\tu^0\quad\mbox{for any}\quad x\in\RR,
\end{equation}
which shows that $\bu=\bu_\rmc(\cdot,w_0)$. On the other hand, one can readily check that $\bu_\rmc(\cdot,w_0)$ is a solution of \eqref{linear} and $\bu_\rmc(x,w_0)\in\bH_\rmc$ for any $x\in\RR$, proving the lemma.
\end{proof}
We note that an alternative way of constructing the center subspace of equation \eqref{linear} is the following: first, we note that any vector from $\bV^\perp$ is a \textit{stationary mode}. Next, we show that since $E=Q'(\obu)_{|\bV}$ is negative definite, we have \textit{generalized zero-modes} of height one, but no \textit{generalized zero-modes} of height two. Moreover, all the remaining modes are hyperbolic.

Next, we prove that there exists a direct complement of $\bH_\rmc$, not necessarily orthogonal, on which equation \eqref{linear} has an exponential dichotomy. To define the dichotomy decomposition, we use that the operator $S_{\Gamma_0,E_0}=\Gamma_0^{-1}E_0$ generates a bi-semigroup on $\tbV$ and the decomposition $\tbV=\tbV_\rms\oplus\tbV_\rmu$ from Lemma~\ref{r2.3}.
\begin{lemma}\label{l2.10}
Assume Hypotheses (H1) and (H2). Then, the following assertions hold true.
\begin{enumerate}
\item[(i)] Assume that $\bu=(u_1,\tu,v_1,\tv)^{\mathrm{T}}$ is a solution of \eqref{linear} such that $v_1(0)=0$. Then,
\begin{equation}\label{inter-dich}
u_1'=(\Gamma_1+E_1E_0^{-1}\Gamma_0)\tv',\;\tu'=-\tA_{11}^{-1}\tA_{12}\tv',\;v_1\equiv0,\;\tv'=S_{\Gamma_0,E_0}\tv;
\end{equation}
\item[(ii)] Equation \eqref{linear} has an exponential dichotomy on $\RR$ on a direct complement of $\bH_\rmc$, with dichotomy subspaces given by
\begin{equation}\label{dichtomy-subspaces}
\bH_{\rms/\rmu}=\Big\{\big((\Gamma_1+E_1E_0^{-1}\Gamma_0)\tv,-\tA_{11}^{-1}\tA_{12}\tv,0,\tv\big)^{\mathrm{T}}:\tv\in\tbV_{\rms/\rmu}\Big\};
\end{equation}
\item[(iii)] The Hilbert space $\bH$ decomposes as follows: $\bH=\bH_\rmc\oplus\bH_\rms\oplus\bH_\rmu$. Moreover, the trichotomy projection onto $\bH_\rmc$ parallel to $\bH_\rms\oplus\bH_\rmu$ is given
for any $\bu=(u_1,\tu,v_1,\tv)^{\mathrm{T}}\in\bH$ by
\begin{equation}\label{center-proj}
P_\rmc\bu=\Big(u_1-(\Gamma_1+E_1E_0^{-1}\Gamma_0)(\tv+E_0^{-1}P_{\tbV}Ev_1),\tu+\tA_{11}^{-1}\tA_{12}(\tv+E_0^{-1}P_{\tbV}Ev_1),v_1,-E_0^{-1}P_{\tbV}Ev_1\Big)^{\mathrm{T}}.
\end{equation}
\end{enumerate}
\end{lemma}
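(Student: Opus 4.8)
\emph{Part (i).} Under the hypothesis $v_1(0)=0$, the plan is to invoke the equivalence, established earlier in Section~\ref{s2}, of \eqref{linear} with the decoupled system \eqref{linear-sys-perturbed2} taken with $f\equiv0$. The third equation there reads $v_1'\equiv0$, so $v_1\equiv v_1(0)=0$; substituting into the fourth equation gives $\Gamma_0\tv'=E_0\tv$. Because $\Gamma_0$ is one-to-one (Lemma~\ref{l2.2}(i)) and $E_0$ boundedly invertible (Lemma~\ref{l2.2}(ii)), I conclude $E_0\tv=\Gamma_0\tv'\in\im\Gamma_0$, hence $\tv\in\dom(S_{\Gamma_0,E_0})$, $\tv'=\Gamma_0^{-1}E_0\tv=S_{\Gamma_0,E_0}\tv$, and $\tv=E_0^{-1}\Gamma_0\tv'$. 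Inserting $v_1=0$ and this last identity into the first equation of \eqref{linear-sys-perturbed2} turns $u_1'=\Gamma_1\tv'+E_1(v_1+\tv)$ into $u_1'=(\Gamma_1+E_1E_0^{-1}\Gamma_0)\tv'$, while the second equation is untouched; this is precisely \eqref{inter-dich}.

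\emph{Parts (ii) and (iii).} I would introduce the bounded linear lifting $\Phi:\tbV\to\bH$, $\Phi\tv:=\big((\Gamma_1+E_1E_0^{-1}\Gamma_0)\tv,\,-\tA_{11}^{-1}\tA_{12}\tv,\,0,\,\tv\big)^{\mathrm{T}}$, so that $\bH_{\rms/\rmu}=\Phi(\tbV_{\rms/\rmu})$ for the bi-semigroup dichotomy $\tbV=\tbV_\rms\oplus\tbV_\rmu$ of Lemma~\ref{r2.3}(i); since the last component of $\Phi\tv$ is $\tv$, the map $\Phi$ is bounded below, hence injective with closed range, so $\bH_{\rms/\rmu}$ are closed. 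The crux is that $\Phi$ intertwines the flows: if $\tv_0\in\tbV_\rms$, then $\bu(x):=\Phi\big(\tT_\rms(x)\tv_0\big)$, $x\geq0$, satisfies all four equations of \eqref{linear-sys-perturbed2} with $f\equiv0$ — this is the computation of part (i) read backwards — hence solves \eqref{linear} with $\bu(0)=\Phi\tv_0$; conversely, part (i) together with uniqueness for the bi-semigroup shows any solution starting in $\bH_\rms$ coincides with this one, the integration constants of $u_1,\tu$ being pinned by the trace $\bu(0)=\Phi\tv_0$. Therefore $\bH_\rms$ is forward invariant and, by Lemma~\ref{r2.3}(i), $\|\bu(x)\|\leq\|\Phi\|\,\|\tT_\rms(x)\tv_0\|\leq Ce^{-\nu(\Gamma_0,E_0)x}\|\bu(0)\|$ for $x\geq0$ (using $\|\tv_0\|\leq\|\bu(0)\|$); the mirror-image argument with $\tT_\rmu$ gives backward invariance and decay on $\bH_\rmu$. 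Since $\tbV_\rms\cap\tbV_\rmu=\{0\}$ forces $\bH_\rms\cap\bH_\rmu=\{0\}$, this is an exponential dichotomy on $\bH_\rms\oplus\bH_\rmu$.

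For the trichotomy in (iii), I would take an arbitrary $\bu=(u_1,\tu,v_1,\tv)^{\mathrm{T}}\in\bH$, set $\hat\tv:=\tv+E_0^{-1}P_{\tbV}Ev_1\in\tbV$, split $\hat\tv=\hat\tv_\rms+\hat\tv_\rmu$ along $\tbV=\tbV_\rms\oplus\tbV_\rmu$, and put $\bu_{\rms/\rmu}:=\Phi\hat\tv_{\rms/\rmu}\in\bH_{\rms/\rmu}$, $\bu_\rmc:=\bu-\bu_\rms-\bu_\rmu$. A routine component check shows the third component of $\bu_\rmc$ is $v_1$ and the fourth is $\tv-\hat\tv=-E_0^{-1}P_{\tbV}Ev_1$, so $\bu_\rmc\in\bV^\perp\oplus\bV_\rmc=\bH_\rmc$ by \eqref{def-Vc}; thus $\bH=\bH_\rmc+\bH_\rms+\bH_\rmu$. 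Because the $v_1$-components of $\bu_{\rms/\rmu}$ vanish while those in $\bH_\rmc$ have $\tv$ slaved to $v_1$, and $\hat\tv_{\rms/\rmu}$ are uniquely determined, the splitting is forced; this yields both the direct sum $\bH=\bH_\rmc\oplus\bH_\rms\oplus\bH_\rmu$ and, on reading off the components of $\bu_\rmc$, the claimed formula \eqref{center-proj} for $P_\rmc$. Boundedness of $P_\rmc$ is evident from that formula, confirming that $\ker P_\rmc=\bH_\rms\oplus\bH_\rmu$ is a closed direct complement of $\bH_\rmc$.

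\emph{Main obstacle.} The hard part is the intertwining claim in (ii): one must check carefully that a solution of \eqref{linear} with $v_1(0)=0$ is entirely recovered from its $\tv$-component together with the trace $\bu(0)$ — this is where the precise structure of \eqref{linear-sys-perturbed2} and the bi-semigroup theory of Lemma~\ref{r2.3} are needed — and that the solutions in $\bH_\rms$ (resp.\ $\bH_\rmu$) are forward- (resp.\ backward-) complete and decay at the rate $\nu(\Gamma_0,E_0)$. Once this conjugacy is secured, parts (i) and (iii) are essentially bookkeeping with the already-established linear algebra of the projections $P_{\tbV}$, $P_{\bV_1}$ and the operators $\Gamma_1,E_1,\tA_{11},\tA_{12}$.
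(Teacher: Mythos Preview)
Your proposal is correct and follows essentially the same route as the paper: part (i) is argued identically via the reduction to \eqref{linear-sys-perturbed2} with $f\equiv0$, and for (ii)--(iii) your lifting map $\Phi$ is exactly the map implicit in the paper's formula \eqref{2.10-1}--\eqref{2.10-2}, while your decomposition via $\hat\tv=\tv+E_0^{-1}P_{\tbV}Ev_1$ is precisely the computation \eqref{2.10-3}. Your treatment is slightly more explicit in verifying closedness of $\bH_{\rms/\rmu}$ and in isolating the intertwining step, but the argument is the same.
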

\begin{proof} (i) Since \eqref{linear} is equivalent to the system \eqref{linear-sys-perturbed2} with $f\equiv0$, from the third equation we immediately conclude that $v_1(x)=v_1(0)=0$ for any $x\in\RR$. Plugging in $v_1\equiv0$, from the fourth equation we obtain that $\tv'=S_{\Gamma_0,E_0}\tv$.  Since $\tv=E_0^{-1}\Gamma_0\tv'$ from the first equation we conclude that $u_1'=\Gamma_1\tv'+E_1\tv=(\Gamma_1+E_1E_0^{-1}\Gamma_0)\tv'$, proving (i). Assertion (ii) follows from (i) and the fact that the linear operator $S_{\Gamma_0,E_0}$ generates an exponentially stable bi-semigroup on $\tbV$. Indeed, from equation \eqref{inter-dich} and Lemma~\ref{r2.3}(i), one can readily check that solutions of \eqref{linear} that decay exponentially at $\pm\infty$ are given by
\begin{equation}\label{2.10-1}
\bu_{\rms/\rmu}(x)=\Big((\Gamma_1+E_1E_0^{-1}\Gamma_0)\tT_{\rms/\rmu}(\pm x)\tv(0),-\tA_{11}^{-1}\tA_{12}\tT_{\rms/\rmu}(\pm x)\tv(0),0,\tT_{\rms/\rmu}(\pm x)\tv(0)\Big)^{\mathrm{T}}\
\end{equation}
for any $x\in\RR_\pm$. Since $\tbV=\tbV_\rms\oplus\tbV_\rmu$ from \eqref{dichtomy-subspaces} we obtain that
\begin{equation}\label{2.10-2}
\bH_\rms\oplus\bH_\rmu=\Big\{\big((\Gamma_1+E_1E_0^{-1}\Gamma_0)\tv,-\tA_{11}^{-1}\tA_{12}\tv,0,\tv\big)^{\mathrm{T}}:\tv\in\tbV\Big\}.
\end{equation}
From \eqref{def-Vc} and \eqref{2.10-2} we immediately conclude that $\bH_\rmc\cap(\bH_\rms\oplus\bH_\rmu)=\{0\}$. Moreover,
\begin{align}\label{2.10-3}
(u_1,&\tu,v_1,\tv)^{\mathrm{T}}=(u_1,\tu,v_1,-E_0^{-1}P_{\tbV}Ev_1)^{\mathrm{T}}+(0,0,0,\tv+E_0^{-1}P_{\tbV}Ev_1)^{\mathrm{T}}\nonumber\\
&=\Big(u_1-(\Gamma_1+E_1E_0^{-1}\Gamma_0)(\tv+E_0^{-1}P_{\tbV}Ev_1),\tu+\tA_{11}^{-1}\tA_{12}(\tv+E_0^{-1}P_{\tbV}Ev_1),v_1,-E_0^{-1}P_{\tbV}Ev_1\Big)^{\mathrm{T}}\nonumber\\
&+\Big((\Gamma_1+E_1E_0^{-1}\Gamma_0)(\tv+E_0^{-1}P_{\tbV}Ev_1),-\tA_{11}^{-1}\tA_{12}(\tv+E_0^{-1}P_{\tbV}Ev_1),0,\tv+E_0^{-1}P_{\tbV}Ev_1\Big)^{\mathrm{T}},
\end{align}
proving that $\bH=\bH_\rmc\oplus\bH_\rms\oplus\bH_\rmu$. The formula \eqref{center-proj} follows shortly from \eqref{2.10-3}, proving the lemma.
\end{proof}

\subsection{Solutions of the inhomogeneous equation}\label{perturbed-eq}
Having established the exponential trichotomy of the linear flow of equation \eqref{linear}, we conclude this section by analyzing the solutions of equation \eqref{linear-sys-perturbed} for general functions $f\in L^2_{-\alpha}(\RR,\bV)$.
To prove such a result, we first recall the boundedness property of the Volterra operator on weighted spaces with negative weight. For the sake of completeness we give the details below.
\begin{lemma}\label{r2.11}
The Volterra operator $\cV:L^2_{-\alpha}(\RR,\bH)\to H^1_{-\alpha}(\RR,\bH)$ defined by $(\cV g)=\displaystyle\int_0^x g(y)\rmd y$ is bounded for any $\alpha>0$.
\end{lemma}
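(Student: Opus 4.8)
The plan is to split the $H^1_{-\alpha}$ estimate into a trivial derivative part and one substantive $L^2_{-\alpha}$ bound, the latter obtained by recognizing $\cV$---after spending half of the exponential weight---as convolution against an integrable kernel. First observe that $L^2_{-\alpha}(\RR,\bH)\subset L^1_{\loc}(\RR,\bH)$, so for $g\in L^2_{-\alpha}(\RR,\bH)$ the function $(\cV g)(x)=\int_0^x g(y)\,\rmd y$ is locally absolutely continuous with weak derivative $(\cV g)'=g$ a.e.; in particular $\cV g\in H^1_{\loc}(\RR,\bH)$ and $\|(\cV g)'\|_{L^2_{-\alpha}}=\|g\|_{L^2_{-\alpha}}$. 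Hence it suffices to prove $\|\cV g\|_{L^2_{-\alpha}}\le c(\alpha)\|g\|_{L^2_{-\alpha}}$, since then $\|\cV g\|_{H^1_{-\alpha}}=\big(\|\cV g\|_{L^2_{-\alpha}}^2+\|g\|_{L^2_{-\alpha}}^2\big)^{1/2}\le\sqrt{1+c(\alpha)^2}\,\|g\|_{L^2_{-\alpha}}$.

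For the $L^2_{-\alpha}$ bound I would set $\tilde h(x):=e^{-\alpha|x|}\|g(x)\|_{\bH}$, a nonnegative scalar function with $\tilde h\in L^2(\RR)$ and $\|\tilde h\|_{L^2}=\|g\|_{L^2_{-\alpha}}$, and $k(t):=e^{-\alpha|t|}$, so $k\in L^1(\RR)$ with $\|k\|_{L^1}=2/\alpha$. The crucial elementary observation is that whenever $y$ lies between $0$ and $x$ one has $|x|=|y|+|x-y|$, hence $e^{-\alpha|x|}e^{\alpha|y|}=e^{-\alpha|x-y|}$. Using the triangle inequality for the Bochner integral, for $x\ge 0$ this gives
\[
e^{-\alpha|x|}\|(\cV g)(x)\|_{\bH}\le e^{-\alpha x}\int_0^x e^{\alpha y}\tilde h(y)\,\rmd y=\int_0^x e^{-\alpha|x-y|}\tilde h(y)\,\rmd y\le (k*\tilde h)(x),
\]
and the mirror-image computation on $[x,0]$ for $x<0$ yields the same bound, so $e^{-\alpha|\cdot|}\|(\cV g)(\cdot)\|_{\bH}\le k*\tilde h$ pointwise on $\RR$. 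Young's convolution inequality then gives
\[
\|\cV g\|_{L^2_{-\alpha}}=\big\|\,e^{-\alpha|\cdot|}\|(\cV g)(\cdot)\|_{\bH}\,\big\|_{L^2}\le\|k*\tilde h\|_{L^2}\le\|k\|_{L^1}\|\tilde h\|_{L^2}=\frac{2}{\alpha}\|g\|_{L^2_{-\alpha}},
\]
which closes the estimate with $c(\alpha)=2/\alpha$; note that no restriction on $\alpha$ beyond $\alpha>0$ is used.

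The one point to watch---and really the entire content of the lemma---is this weight bookkeeping. One must not discard the full outer weight $e^{-2\alpha|x|}$ (for instance by a Cauchy--Schwarz splitting that uses all of it), since that leaves no decay in $x$ and the resulting iterated integral diverges; instead exactly half of the weight is spent to present $\cV$ as convolution with the integrable kernel $e^{-\alpha|\cdot|}$, the other half being absorbed into the factor $e^{\alpha|y|}$ produced by $\|g(y)\|_{\bH}=e^{\alpha|y|}\tilde h(y)$ over the range $0\le|y|\le|x|$. (One could alternatively invoke Lemma~\ref{r2.5}, but not verbatim, since that result loses a sliver of exponential weight; the direct reduction to an $L^1$ kernel is cleaner and yields the explicit constant.)
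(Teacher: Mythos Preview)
Your proof is correct and follows essentially the same approach as the paper's: both arguments observe that $(\cV g)'=g$ handles the derivative term trivially, then push the weight $e^{-\alpha|x|}$ inside the integral using $|x|=|y|+|x-y|$ for $y$ between $0$ and $x$, recognize the result as convolution against an exponentially decaying kernel, and conclude by Young's inequality. The only cosmetic difference is that the paper splits into half-lines with one-sided kernels $\varphi_\pm$ (each of $L^1$-norm $1/\alpha$) and keeps the vector-valued $h=e^{-\alpha|\cdot|}g$, obtaining the slightly sharper constant $1/\alpha$ in place of your $2/\alpha$; this has no bearing on the lemma.
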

\begin{proof}
Fix $g\in L^2_{-\alpha}(\RR,\bH)$ and let $h:\RR\to\bH$ be defined by $h(x)=e^{-\alpha|x|}g(x)$. One can readily check that
\begin{equation}\label{2.11-1}
e^{-\alpha x}(\cV g)(x)=\int_0^x e^{-\alpha(x-y)}h(y)\rmd y=\big(\varphi_+*(\chi_{\RR_+}h)\big)(x)\quad\mbox{for any}\quad x\geq 0,
\end{equation}
where $\varphi_+:\RR\to\RR$ is defined by $\varphi_+(x)=\left\{\begin{array}{l l}
e^{-\alpha x} & \; \mbox{if $x\geq0$ }\\
0 & \; \mbox{if $x<0$}\\
\end{array} \right.$.
Similarly, we have that
\begin{equation}\label{2.11-2}
e^{\alpha x}(\cV g)(x)=-\int_x^0 e^{\alpha(x-y)}h(y)\rmd y=\big(\varphi_-*(\chi_{\RR_-}h)\big)(x)\quad\mbox{for any}\quad x\leq 0,
\end{equation}
where $\varphi_-:\RR\to\RR$ is defined by $\varphi_-(x)=\left\{\begin{array}{l l}
0 & \; \mbox{if $x\geq0$ }\\
e^{\alpha x} & \; \mbox{if $x<0$}\\
\end{array} \right.$. Summarizing, from \eqref{2.11-1} and \eqref{2.11-2} we obtain that
\begin{equation}\label{2.11-3}
e^{-\alpha|x|}(\cV g)(x)=\chi_{\RR_+}(x)\big(\varphi_+*(\chi_{\RR_+}h)\big)(x)+\chi_{\RR_-}(x)\big(\varphi_-*(\chi_{\RR_-}h)\big)(x)\quad\mbox{for any}\quad x\in\RR.
\end{equation}
Since $|\varphi_\pm(x)|\leq e^{-\alpha|x|}$ for any $x\in\RR$ and $h\in L^2(\RR,\bH)$, we infer that $\varphi_\pm*f\in L^2(\RR,\bH)$ and $\|\varphi_\pm*f\|_2\leq\frac{1}{\alpha}\|f\|_2$ for any $f\in L^2(\RR,\bH)$. From \eqref{2.11-3} it follows that $\cV g\in L^2_{-\alpha}(\RR,\bH)$ and
\begin{align}\label{2.11-4}
\|\cV g\|_{L^2_{-\alpha}}^2&=\|\chi_{\RR_+}\big(\varphi_+*(\chi_{\RR_+}h)\big)\|_2^2+\|\chi_{\RR_-}\big(\varphi_-*(\chi_{\RR_-}h)\big)\|_2^2\nonumber\\
&\leq \frac{1}{\alpha^2}\|\chi_{\RR_+}h\|_2^2+\frac{1}{\alpha^2}\|\chi_{\RR_-}h\|_2^2=\frac{1}{\alpha^2}\|h\|_2^2=\frac{1}{\alpha^2}\|g\|_{L^2_{-\alpha}}^2.
\end{align}
Furthermore, we have that $\cV g\in H^1_{\mathrm{loc}}(\RR,\bH)$ and $(\cV g)'=g\in L^2_{-\alpha}(\RR,\bH)$. From \eqref{2.11-4} we conclude that $\cV g\in H^1_{-\alpha}(\RR,\bH)$ and $\|\cV g\|_{H^1_{-\alpha}(\RR,\bH)}\leq (1+\alpha^{-2})^{1/2}\|g\|_{L^2_{-\alpha}(\RR,\bH)}$ for any $g\in L^2_{-\alpha}(\RR,\bH)$.
\end{proof}
Now we have all the ingredients needed to analyze the solutions of \eqref{linear-sys-perturbed}. We summarize our results in the following lemma.
\begin{lemma}\label{l2.11}
Assume Hypotheses (H1) and (H2). Then, the following assertions hold true.
\begin{enumerate}
\item[(i)] If $f\in L^2_{-\alpha}(\RR,\bV)$ and $\bu=(u_1,\tu,v_1,\tv)\in L^2_{-\alpha}(\RR,\bH)$ is a mild solution of \eqref{linear-sys-perturbed}, then
\begin{equation}\label{bu-fixed}
\bu(x)=P_\rmc\bu(0)+f_\rmc(x,v_1(0))+(\cV\Gamma_3f)(x)+\Gamma_4(\cK_0P_{\tbV}f)(x)\quad\mbox{for any}\quad x\in\RR,
\end{equation}
where, the linear operators $\Gamma_3,\Gamma_4:\bH\to\bH$ are defined by
\begin{equation}\label{def-Gamma-3-4}
\Gamma_3=(T_{12}^*)^{-1}P_{\bV_1}-E_1E_0^{-1}P_{\tbV}\in\cB(\bH),\quad\Gamma_4=\Big(\Gamma_1+E_1E_0^{-1}\Gamma_0-\tA_{11}^{-1}\tA_{12}+P_{\tbV}\Big)P_{\tbV}\in\cB(\bH).
\end{equation}
and the function $f_\rmc:\RR\times\bV_1\to\bH$ is defined by
\begin{equation}\label{def-f-star}
f_\rmc(x,v_1)=x E_1(I-E_0^{-1}P_{\tbV}E)v_1.
\end{equation}
\item[(ii)] The function $\bu=w_0+f_\rmc(\cdot,P_{\bV_1}w_0)+\cV\Gamma_3f+\Gamma_4\cK_0P_{\tbV}f$ is a $H^1_{-\alpha}$-solution of \eqref{linear-sys-perturbed} for any $f\in H^1_{-\alpha}(\RR,\bV)$ and $w_0\in\bH_\rmc$.
\end{enumerate}
\end{lemma}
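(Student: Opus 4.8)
The plan is to derive both assertions from the decoupled system \eqref{linear-sys-perturbed2}, which by the computations of Section~\ref{s2} is equivalent to \eqref{linear-sys-perturbed}, treating (i) and (ii) as the two directions of one computation. For (i), let $\bu=(u_1,\tu,v_1,\tv)$ be a mild solution; the third equation of \eqref{linear-sys-perturbed2} forces $v_1\equiv v_1(0)$, so the fourth equation is $\Gamma_0\tv'=E_0\tv+g$ with source $g=P_{\tbV}Ev_1(0)\rmone+P_{\tbV}f\in L^2_{-\alpha}$, and Lemma~\ref{l2.8}(ii) together with linearity of $\cK_0$ and the constant-source identity \eqref{multiplier-constant} gives $\tv=-E_0^{-1}P_{\tbV}Ev_1(0)\rmone+\cK_0P_{\tbV}f$; in particular $\tv(x)-\tv(0)=(\cK_0P_{\tbV}f)(x)-(\cK_0P_{\tbV}f)(0)$ and $\tv(0)+E_0^{-1}P_{\tbV}Ev_1(0)=(\cK_0P_{\tbV}f)(0)$. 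Next I would integrate the second equation of \eqref{linear-sys-perturbed2} to get the $\tu$-component, and, in the first equation, use the fourth to express the undifferentiated $\tv$, recasting it as $u_1'=(\Gamma_1+E_1E_0^{-1}\Gamma_0)\tv'+E_1(I-E_0^{-1}P_{\tbV}E)v_1(0)+\Gamma_3f$ with $\Gamma_3$ as in \eqref{def-Gamma-3-4}; integrating this (Lemma~\ref{r2.11} handles the $\cV\Gamma_3f$ term) and inserting the two relations for $\tv(x)-\tv(0)$ and $\tv(0)$ just recorded, the $x$-independent terms of all four components assemble --- by comparison with \eqref{center-proj} --- into $P_\rmc\bu(0)$, the $x$-linear term is $f_\rmc(x,v_1(0))$ of \eqref{def-f-star}, and the leftover $\cK_0P_{\tbV}f$-terms collect into $\Gamma_4(\cK_0P_{\tbV}f)(x)$ by the definition of $\Gamma_4$ in \eqref{def-Gamma-3-4}, giving \eqref{bu-fixed}.

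For (ii), given $w_0\in\bH_\rmc$ and $f\in H^1_{-\alpha}(\RR,\bV)$, I would first check that the stated $\bu$ lies in $H^1_{-\alpha}(\RR,\bH)$ term by term: the constant $w_0$ and the affine function $x\mapsto xE_1(I-E_0^{-1}P_{\tbV}E)P_{\bV_1}w_0$ have finite $H^1_{-\alpha}$ norm by the exponential decay of the weight, $\cV\Gamma_3f\in H^1_{-\alpha}$ by boundedness of $\Gamma_3$ and Lemma~\ref{r2.11}, and $\Gamma_4\cK_0P_{\tbV}f\in H^1_{-\alpha}$ by boundedness of $\Gamma_4$ and Lemma~\ref{l2.6}. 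Then I would verify that $\bu$ solves \eqref{linear-sys-perturbed2} by differentiating componentwise, using $(\cV h)'=h$, the identity \eqref{deriv-cK0}, and the Fourier-multiplier relation $\Gamma_0(\cK_0h)'=E_0\cK_0h+h$ valid on $H^1_{-\alpha}$ (immediate from $\cF\cK_0h=R_{\Gamma_0,E_0}\cF h$). With the $\tv$-component equal to $-E_0^{-1}P_{\tbV}Ev_1(0)\rmone+\cK_0P_{\tbV}f$, the $v_1$- and $\tv$-equations are immediate and the $\tu$-equation follows by differentiating $-\tA_{11}^{-1}\tA_{12}\cK_0P_{\tbV}f$; the $u_1$-equation is the delicate one and closes after replacing $E_0\cK_0P_{\tbV}f$ by $\Gamma_0(\cK_0P_{\tbV}f)'-P_{\tbV}f$ and using the elementary relation $\Gamma_3+E_1E_0^{-1}P_{\tbV}=(T_{12}^*)^{-1}P_{\bV_1}$ read off from \eqref{def-Gamma-3-4}. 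As a consistency check, $\Gamma_4$ sends $\tbV$ into $\bH_\rms\oplus\bH_\rmu$ by \eqref{2.10-2}, so that $P_\rmc\bu(0)=w_0$, matching part (i).

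I expect the main obstacle to be bookkeeping rather than analysis: all the hard estimates live in Lemmas~\ref{l2.2}--\ref{l2.10} and \ref{r2.11}, and what remains is to match the differentiated closed form against all four equations of \eqref{linear-sys-perturbed2} without sign or projection slips, the $u_1$-equation being the place where the definitions of $\Gamma_3$, $\Gamma_4$, $f_\rmc$ must conspire. The one genuine subtlety is regularity: a mild solution only inherits the smoothness of $\cK_0$ applied to an $L^2_{-\alpha}$ source, so in part (i) the $\tv'$-terms should be carried through the integrated (substituted) form of the fourth equation rather than differentiated pointwise. The conceptual content buried in the algebra is simply that $\Gamma_3$, $\Gamma_4$, $f_\rmc$ are precisely the operators making the constant/center part of $\bu$ coincide with the trichotomy projection $P_\rmc$ of \eqref{center-proj} while the remaining part lands in $\bH_\rms\oplus\bH_\rmu$.
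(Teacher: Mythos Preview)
Your proposal is correct and follows essentially the same route as the paper: both directions go through the decoupled system \eqref{linear-sys-perturbed2}, solving the third equation first, then the fourth via Lemma~\ref{l2.8} and \eqref{multiplier-constant}, then integrating the first two, and finally matching the constant part against the formula \eqref{center-proj} for $P_\rmc$. Your recasting of the $u_1'$-equation as $(\Gamma_1+E_1E_0^{-1}\Gamma_0)\tv'+E_1(I-E_0^{-1}P_{\tbV}E)v_1(0)+\Gamma_3f$ is exactly the paper's \eqref{inter-pert-u1}, and your verification strategy for (ii) --- check $H^1_{-\alpha}$ membership term by term via Lemmas~\ref{l2.6} and \ref{r2.11}, then differentiate componentwise using Lemma~\ref{l2.8}(i) for the $\tv$-equation --- is what the paper does in \eqref{2.12-1}--\eqref{2.12-6}.
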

\begin{proof}
Since \eqref{linear-sys-perturbed} is equivalent to \eqref{linear-sys-perturbed2}, we immediately conclude that $v_1(x)=v_1(0)$ for any $x\in\RR$, just like in the case of equation \eqref{linear}. Since any constant function belongs to $H^1_{-\alpha}(\RR)$, from the fourth equation of \eqref{linear-sys-perturbed2} and Lemma~\ref{l2.8}(ii) we obtain that
\begin{equation}\label{pert-tildev}
\tv=\cK_0(P_{\tbV}Ev_1(0)\rmone)+\cK_0P_{\tbV}f.
\end{equation}
Next, we integrate the second equation of \eqref{linear-sys-perturbed2} to find that
\begin{align}\label{pert-tildeu}
\tu(x)&=\tu(0)+\tA_{11}^{-1}\tA_{12}\big(\tv(0)-\tv(x)\big)=\tu(0)+\tA_{11}^{-1}\tA_{12}\tv(0)-\tA_{11}^{-1}\tA_{12}\cK_0(P_{\tbV}Ev_1(0)\rmone)(x)\nonumber\\
&\qquad\qquad-\tA_{11}^{-1}\tA_{12}(\cK_0P_{\tbV}f)(x)\quad\mbox{for any}\quad x\in\RR
\end{align}
Since $v_1\equiv v_1(0)\rmone$, combining the first and fourth equations we have that
\begin{equation}\label{inter-pert-u1}
u_1'=(\Gamma_1+E_1E_0^{-1}\Gamma_0)\tv'+E_1v_1(0)\rmone+(T_{12}^*)^{-1}P_{\bV_1}f-E_1E_0^{-1}P_{\tbV}(Ev_1(0)\rmone+f).
\end{equation}
Integrating, from \eqref{inter-pert-u1}, we conclude that
\begin{align}\label{pert-u1}
&u_1(x)=u_1(0)-(\Gamma_1+E_1E_0^{-1}\Gamma_0)\tv(0)+(\Gamma_1+E_1E_0^{-1}\Gamma_0)\cK_0(P_{\tbV}Ev_1(0)\rmone)(x)\nonumber\\
&+x E_1(I-E_0^{-1}P_{\tbV}E)v_1(0)+(\Gamma_1+E_1E_0^{-1}\Gamma_0)(\cK_0P_{\tbV}f)(x)+\cV\Big((T_{12}^*)^{-1}P_{\bV_1}f-E_1E_0^{-1}P_{\tbV}f\Big)(x)
\end{align}
for any $x\in\RR$.
Here $\cV$ denotes the Volterra operator defined by $(\cV g)(x)=\displaystyle\int_0^x g(y)\rmd y$.
From \eqref{pert-tildev}, \eqref{pert-tildeu}, \eqref{pert-u1} and \eqref{multiplier-constant} we conclude that
\begin{align}\label{solution-pert-linear}
\bu(x)&=u_1(0)-(\Gamma_1+E_1E_0^{-1}\Gamma_0)\tv(0)+\tu(0)+\tA_{11}^{-1}\tA_{12}\tv(0)+v_1(0)+x E_1(I-E_0^{-1}P_{\tbV}E)v_1(0)\nonumber\\
&\quad-\Big(\Gamma_1+E_1E_0^{-1}\Gamma_0-\tA_{11}^{-1}\tA_{12}+P_{\tbV}\Big)E_0^{-1}P_{\tbV}Ev_1(0)+\cV\Big((T_{12}^*)^{-1}P_{\bV_1}f-E_1E_0^{-1}P_{\tbV}f\Big)(x)\nonumber\\
&\quad+\Big(\Gamma_1+E_1E_0^{-1}\Gamma_0-\tA_{11}^{-1}\tA_{12}+P_{\tbV}\Big)(\cK_0P_{\tbV}f)(x)\nonumber\\
&=P_\rmc\bu(0)+f_\rmc(x,v_1(0))+(\cV\Gamma_3f)(x)+\Gamma_4(\cK_0P_{\tbV}f)(x)\quad\mbox{for any}\quad x\in\RR.
\end{align}
\noindent\textit{Proof of (ii)}. Fix $w_0\in\bH_\rmc$ and $f\in H^1_{-\alpha}(\RR,\bH)$. From Lemma~\ref{l2.6} and Lemma~\ref{r2.11} we have that $\cV\in\cB(L^2_{-\alpha}(\RR,\bV),H^1_{-\alpha}(\RR,\bV))$ and $\cK_0\in\cB(H^1_{-\alpha}(\RR,\tbV))$, which implies that $\bu=w_0+f_\rmc(\cdot,P_{\bV_1}w_0)+\cV\Gamma_3f+\Gamma_4\cK_0P_{\tbV}f\in H^1_{-\alpha}(\RR,\bH)$. Using the usual notation, we denote by $u_1=P_{\ker A_{11}}\bu$, $\tu=P_{\im A_{11}}\bu$, $v_1=P_{\bV_1}\bu$ and $\tv=P_{\tbV}\bu$. To prove assertion (ii) we will prove that $\bu=(u_1,\tu,v_1,\tv)^{\mathrm{T}}$ satisfies the system \eqref{linear-sys-perturbed2}.
From \eqref{def-Gamma-3-4} we obtain that
\begin{equation}\label{2.12-1}
P_{\ker A_{11}}\Gamma_3=(T_{12}^*)^{-1}P_{\bV_1}-E_1E_0^{-1}P_{\tbV},\quad P_{\im A_{11}}\Gamma_3=0,\quad P_{\bV_1}\Gamma_3=0,\quad P_{\tbV}\Gamma_3=0,
\end{equation}
\begin{equation}\label{2.12-2}
P_{\ker A_{11}}\Gamma_4=\Gamma_1+E_1E_0^{-1}\Gamma_0,\quad P_{\im A_{11}}\Gamma_4=-\tA_{11}^{-1}\tA_{12}P_{\tbV},\quad P_{\bV_1}\Gamma_4=0,\quad P_{\tbV}\Gamma_4=P_{\tbV}.
\end{equation}
Next, we multiply $\bu$ by the respective projections onto the orthogonal decomposition $\bH=\ker A_{11}\oplus\im A_{11}\oplus\bV_1\oplus\tbV$ and use \eqref{def-f-star}, \eqref{2.12-1} and \eqref{2.12-2} repeatedly. One can readily check that $v_1\equiv P_{\bV_1}w_0$, and thus $v_1'=0$. From \eqref{def-Vc} we have that $P_{\tbV}w_0=-E_0^{-1}P_{\tbV}EP_{\bV_1}w_0$. Hence, from \eqref{multiplier-constant} we obtain that
\begin{equation}\label{2.12-3}
\tv=-E_0^{-1}P_{\tbV}E P_{\bV_1}w_0\rmone+P_{\tbV}\cK_0P_{\tbV}f=\cK_0\big(P_{\tbV}EP_{\bV_1}w_0\rmone+P_{\tbV}f\big).
\end{equation}
Since $v_1\equiv P_{\bV_1}w_0$ and $P_{\tbV}EP_{\bV_1}w_0\rmone+P_{\tbV}f\in H^1_{-\alpha}(\RR,\tbV)$ from Lemma~\ref{l2.8}(i) we infer that $\Gamma_0\tv'=E_0\tv+P_{\tbV}E_1v_1+P_{\tbV}f$. In addition, we have that
\begin{align}\label{2.12-4}
\tu&=P_{\im A_{11}}w_0\rmone-\tA_{11}^{-1}\tA_{12}\cK_0P_{\tbV}f\nonumber\\
&=P_{\im A_{11}}w_0\rmone-\tA_{11}^{-1}\tA_{12}E_0^{-1}P_{\tbV}E P_{\bV_1}w_0\rmone-\tA_{11}^{-1}\tA_{12}\big(-E_0^{-1}P_{\tbV}E P_{\bV_1}w_0\rmone+\cK_0P_{\tbV}f\big)\nonumber\\
&=P_{\im A_{11}}w_0\rmone-\tA_{11}^{-1}\tA_{12}E_0^{-1}P_{\tbV}E P_{\bV_1}w_0\rmone-\tA_{11}^{-1}\tA_{12}\tv,
\end{align}
which implies that $\tu'=-\tA_{11}^{-1}\tA_{12}\tv'$. Finally, we have that
\begin{align}\label{2.12-5}
u_1(x)&=P_{\ker A_{11}}w_0+x  E_1(I-E_0^{-1}P_{\tbV}E)P_{\bV_1}w_0+\big((T_{12}^*)^{-1}P_{\bV_1}-E_1E_0^{-1}P_{\tbV}\big)(\cV f)(x)\nonumber\\
&\qquad\qquad+(\Gamma_1+E_1E_0^{-1}\Gamma_0)(\cK_0P_{\tbV}f)(x)\nonumber\\
&=P_{\ker A_{11}}w_0+x E_1(I-E_0^{-1}P_{\tbV}E)P_{\bV_1}w_0+\big((T_{12}^*)^{-1}P_{\bV_1}-E_1E_0^{-1}P_{\tbV}\big)(\cV f)(x)\nonumber\\
&\qquad\qquad+(\Gamma_1+E_1E_0^{-1}\Gamma_0)\tv(x)+(\Gamma_1+E_1E_0^{-1}\Gamma_0)E_0^{-1}P_{\tbV}E P_{\bV_1}w_0
\end{align}
for any $x\in\RR$. Differentiating in \eqref{2.12-5} and since $\tv=E_0^{-1}\Gamma_0\tv'-E_0^{-1}P_{\tbV}f-E_0^{-1}P_{\tbV}E_1v_1$ it follows that
\begin{align}\label{2.12-6}
u_1'&=E_1(I-E_0^{-1}P_{\tbV}E)P_{\bV_1}w_0\rmone+(\Gamma_1+E_1E_0^{-1}\Gamma_0)\tv'+\big((T_{12}^*)^{-1}P_{\bV_1}-E_1E_0^{-1}P_{\tbV}\big) f\nonumber\\
&=\Gamma_1\tv'+E_1\Big(v_1+E_0^{-1}\Gamma_0\tv'-E_0^{-1}P_{\tbV}E v_1-E_0^{-1}P_{\tbV}f\Big)+(T_{12}^*)^{-1}P_{\bV_1} f\nonumber\\
&=\Gamma_1\tv'+E_1(v_1+\tv)+(T_{12}^*)^{-1}P_{\bV_1} f,
\end{align}
proving the lemma.
\end{proof}

\section{Center Manifold construction}\label{CM}
In this section we construct a center manifold of solutions of equation \eqref{nonlinear} tangent to the center subspace $\bH_\rmc$ at $\obu$ expressible in coordinates $\bw=\bu-\obu$. Throughout this section we assume Hypotheses (H1) and (H2).
Making the change of variables $\bw=\bu-\obu$ in \eqref{nonlinear} and since $Q(\bu)=B(\bu,\bu)$ is a bilinear map on $\bH$, we obtain the equation
\begin{equation}\label{center-nonlinear}
A\bw'=2B(\obu,\bw)+Q(\bw).
\end{equation}
Denoting by $u=P_{\bV^\perp}\bw$ and $v=P_{\bV}\bw$, since $E=Q'(\obu)_{|\bV}=2B(\cdot,\obu)_{|\bV}$ we have that equation \eqref{center-nonlinear} is equivalent to the system
\begin{equation}\label{nonlinear-sys-perturbed}
\left\{\begin{array}{ll} A_{11}u'+A_{12}v'=0,\\
A_{21}u'+A_{22}v'=Ev+B(u+v,u+v). \end{array}\right.
\end{equation}
Using Lemma~\ref{l2.11}(i), we immediately conclude that system \eqref{nonlinear-sys-perturbed} is equivalent to
\begin{equation}\label{fix-point}
\bw=P_\rmc\bw(0)+f_\rmc(\cdot,P_{\bV_1}\bw(0))+\cK B(\bw,\bw),
\end{equation}
where $f_\rmc$ is defined in \eqref{def-f-star} and $\cK f=\cV\Gamma_3f+\Gamma_4\cK_0P_{\tbV}f$. From Lemma~\ref{l2.6} and Lemma~\ref{r2.11} we have that $\cK$ is a bounded linear operator on $L^2_{-\alpha}(\RR,\bH)$ and
$H^1_{-\alpha}(\RR,\bH)$ for any $\alpha\in(0,\nu(\Gamma_0,E_0))$, where $-\nu(\Gamma_0,E_0)$ is the decay rate of the bi-semigroup generated by $S_{\Gamma_0,E_0}=\Gamma_0^{-1}E_0$. To prove the existence of our center manifold we will prove that equation \eqref{fix-point} has a unique solution in $H^1_{-\alpha}(\RR,\bH)$ for any $w_0=P_\rmc\bw(0)\in\bH_\rmc$ small enough.
Unlike the stable manifold construction done
in the earlier paper \cite{PZ1}, we will carry this out on all of $\RR$.

We point out that $B(\bw,\bw)$ might not belong to $H^1_{-\alpha}(\RR,\bH)$ for all $\bw\in H^1_{-\alpha}(\RR,\bH)$. Therefore, we cannot use the Contraction Mapping Theorem to prove existence and uniqueness of solutions of \eqref{fix-point} right away. To overcome this difficulty, we localize the problem by using the truncation of the nonlinearity technique, which is used in a variety of situations such as the construction of finite-dimensional center manifolds or the Hartman-Grobman Theorem. Unlike the
finite-dimensional case where the fixed point argument is done in the space of continuous functions growing slower than $e^{\alpha|\cdot|}$ with the usual negatively weighted supremum norm, in our case it would be ideal to work on $H^1_{-\alpha}(\RR,\bH)$ with its usual norm. However, it is not clear if the estimates needed to prove that the cutoff nonlinearity is a strict contraction are possible, since some of the terms obtained by differentiating the cutoff nonlinearity are neither small nor in the correct weighted space. Instead it seems one must prove existence of solutions of equation \eqref{fix-point} using an approximation argument, and then prove uniqueness by invoking the weak-star compactness of unit balls in Hilbert spaces.
This may be recognized as the standard variant of Picard iteration used in quasilinear hyperbolic theory, e.g.,
a ``bounded high norm/contractive low norm'' version of the Banach fixed-point theorem \cite{Ma}.

\subsection{Existence of a center manifold of solutions}\label{CM-existence}
We note that equation \eqref{fix-point} is equivalent to
\begin{equation}\label{augmented-var}
\bw=F_\rmc(\cdot,P_\rmc\bw(0))+\cK N(\bw),
\end{equation}
where the functions $F_\rmc:\RR\times\bH_\rmc\to\bH$ and $N:\bH\to\bH$ are defined by
\begin{equation}\label{def-F-star}
F_\rmc(x,w_0)=w_0+f_\rmc(x,P_{\bV_1}w_0),\quad N(h)=B(h,h).
\end{equation}
Next, we introduce the truncated system which is better suited to apply a contraction mapping like argument. Let $\rho\in C^\infty_0(\RR)$ be a smooth function such that $\rho(s)=1$ for any $s\in [-1,1]$ and $\rho(s)=0$ whenever $|s|\geq 2$. In this section we will prove existence and uniqueness properties of solutions of equation
\begin{equation}\label{fix-point-truncated}
\bw=F_\rmc(\cdot,w_0)+\cN_\eps(\bw),\quad\mbox{for}\quad w_0\in\bH_\rmc,\;\eps>0
\end{equation}
where the function $\cN_\eps: H^1_{-\alpha}(\RR)\to L^2_{-\alpha}(\RR,\bH)$ is defined by
\begin{equation}\label{def-cNeps}
\cN_\eps(\bw)=\cK\Big[\rho\Big(\frac{\|\bw\|^2}{\eps^2}\Big)B(\bw,\bw)\Big].
\end{equation}
We note that any solution $\bw$ of \eqref{fix-point-truncated}, small enough, is a solution of \eqref{fix-point}. To check that we can apply a contraction mapping-type
argument on \eqref{fix-point-truncated}, we need to check the properties of the function $N_\eps:\bH\to\bH$ defined by
\begin{equation}\label{def-Neps}
N_\eps(h)=\rho\Big(\frac{\|h\|^2}{\eps^2}\Big)B(h,h).
\end{equation}
\begin{remark}\label{r3.1}
Since $B$ is a bilinear map on $\bH$ and $\rho\in C^\infty_0(\RR)$ we have that $N_\eps$ is of class $C^\infty$ on $\bH$ for any $\eps>0$. Moreover, one can readily check that
\begin{equation}\label{partial-Neps}
N_\eps'(h)g=\frac{2}{\eps^2}\rho'\Big(\frac{\|h\|^2}{\eps^2}\Big)\langle h,g\rangle B(h,h)+2\rho\Big(\frac{\|h\|^2}{\eps^2}\Big)B(h,g)
\end{equation}
for any $h,g\in\bH$. Since $\rho(s)=0$ whenever $|s|\geq 2$, from \eqref{partial-Neps} it follows that there exists $c>0$ such that
\begin{equation}\label{deriv-Neps-est}
\|N_\eps'(h)\|\leq c\eps,\quad \|N_\eps''(h)\|\leq c\quad\mbox{for any}\quad h\in\bH,\;\eps>0.
\end{equation}
\end{remark}

A crucial role in our construction is played by the following mixed-norm function space: for any $\beta>\gamma>0$ we define $\cZ_{\gamma,\beta}(\bH)=L^2_{-\gamma}(\RR,\bH)\cap H^1_{-\beta}(\RR,\bH)$ endowed with the norm
\begin{equation}\label{mixed-norm}
\|f\|_{\cZ_{\gamma,\beta}}=\Big(\|f\|_{L^2_{-\gamma}}^2+\|f'\|_{L^2_{-\beta}}^2\Big)^{1/2}.
\end{equation}
We note that this norm is equivalent to the $(\|\cdot\|_{L^2_{-\gamma}}^2+\|\cdot\|_{H^1_{-\beta}}^2)^{1/2}$ norm on $\cZ_{\gamma,\beta}(\bH)$, therefore it induces a Hilbert space structure on $\cZ_{\gamma,\beta}(\bH)$.
We note also the following $L^\infty$ Sobolev embedding estimate.\footnote{
	Though we shall not use it, this implies evidently the $L^2$ embedding
$\cZ_{\gamma,\beta}(\bH)\hookrightarrow L^2_{-\nu}(\bH)$ for any $\nu> (\beta+\gamma)/2$. }

\begin{lemma}\label{lsob}
$\cZ_{\gamma,\beta}(\bH)\hookrightarrow L^\infty_{-(\beta +\gamma)/2}(\bH)$;
	equivalently, for any $x\in\RR$, $f\in\cZ_{\gamma,\beta}(\bH)$,
\begin{align}\label{sob}
e^{-2\beta|x|}\|f(x)\|^2
\leq c e^{-(\beta-\gamma)|x|}\|f\|_{\cZ_{\gamma,\beta}}^2.
\end{align}
\end{lemma}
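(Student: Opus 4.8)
The plan is to prove the weighted $L^\infty$ bound \eqref{sob} by a direct one-dimensional Sobolev-type estimate, working on each half-line $x\ge 0$ and $x\le 0$ separately and exploiting that the weight on $f$ is milder than the weight on $f'$. Fix $f\in\cZ_{\gamma,\beta}(\bH)$ and $x\ge 0$; the case $x\le 0$ is symmetric. The starting point is the identity, valid for $y\ge x$,
\[
e^{-2\beta y}\|f(y)\|^2 - e^{-2\beta x}\|f(x)\|^2
= \int_x^y \frac{\dif}{\dif t}\Big(e^{-2\beta t}\|f(t)\|^2\Big)\,\dif t
= \int_x^y e^{-2\beta t}\big(-2\beta\|f(t)\|^2 + 2\,\mathrm{Re}\langle f(t),f'(t)\rangle\big)\,\dif t .
\]
Since $f\in L^2_{-\gamma}\subset L^2_{-\beta}$ and $f'\in L^2_{-\beta}$, the integrand is in $L^1(\RR)$, so letting $y\to+\infty$ along a sequence on which $e^{-2\beta y}\|f(y)\|^2\to 0$ (such a sequence exists because the function is integrable on $\RR_+$) yields
\[
e^{-2\beta x}\|f(x)\|^2 \le \int_x^\infty e^{-2\beta t}\big(2\beta\|f(t)\|^2 + 2\|f(t)\|\,\|f'(t)\|\big)\,\dif t .
\]

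Next I would bound the right-hand side by inserting the weight split $e^{-2\beta t} = e^{-(\beta-\gamma)t}\,e^{-(\beta+\gamma)t}$ for $t\ge x\ge 0$, together with the elementary bound $e^{-(\beta-\gamma)t}\le e^{-(\beta-\gamma)x}$ on the range of integration. This gives
\[
e^{-2\beta x}\|f(x)\|^2 \le e^{-(\beta-\gamma)x}\int_x^\infty e^{-(\beta+\gamma)t}\big(2\beta\|f(t)\|^2 + 2\|f(t)\|\,\|f'(t)\|\big)\,\dif t .
\]
For the first term in the integral, $e^{-(\beta+\gamma)t}\|f(t)\|^2 = e^{-2\gamma t}\|f(t)\|^2\cdot e^{-(\beta-\gamma)t}\le e^{-2\gamma t}\|f(t)\|^2$, which is integrable with integral $\le\|f\|_{L^2_{-\gamma}}^2$. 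For the cross term, write $e^{-(\beta+\gamma)t}\|f(t)\|\,\|f'(t)\| = \big(e^{-\gamma t}\|f(t)\|\big)\big(e^{-\beta t}\|f'(t)\|\big)$ and apply Cauchy–Schwarz over $t\in(x,\infty)$, obtaining $\le \|f\|_{L^2_{-\gamma}}\|f'\|_{L^2_{-\beta}}\le\frac12\|f\|_{\cZ_{\gamma,\beta}}^2$. Combining, $e^{-2\beta x}\|f(x)\|^2\le c\,e^{-(\beta-\gamma)x}\|f\|_{\cZ_{\gamma,\beta}}^2$ with $c$ depending only on $\beta$; running the mirror argument on $\RR_-$ gives the same bound with $|x|$, which is \eqref{sob}. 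The embedding $\cZ_{\gamma,\beta}(\bH)\hookrightarrow L^\infty_{-(\beta+\gamma)/2}(\bH)$ is then immediate, since $e^{-2\beta|x|}\|f(x)\|^2\le c\|f\|_{\cZ_{\gamma,\beta}}^2$ uniformly, i.e. $e^{-(\beta+\gamma)|x|}\|f(x)\|\le c^{1/2}\|f\|_{\cZ_{\gamma,\beta}}$; one should also note $f$ is (a.e. equal to) a continuous $\bH$-valued function since $f\in H^1_{\mathrm{loc}}(\RR,\bH)$, so the pointwise statement is meaningful.

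The only genuinely delicate point is the limiting step $e^{-2\beta y}\|f(y)\|^2\to 0$ along a subsequence: it does not hold for \emph{all} $y\to\infty$ a priori, only that the liminf is $0$, which is all that is needed since the left side $e^{-2\beta x}\|f(x)\|^2$ is fixed and the integral tail is nonnegative. I do not expect any real obstacle here beyond being careful that $\|f(\cdot)\|^2$ is absolutely continuous on compact intervals (it is, as the product of two $H^1_{\loc}$ scalar-type quantities, or directly since $t\mapsto\|f(t)\|^2$ has weak derivative $2\,\mathrm{Re}\langle f,f'\rangle\in L^1_{\loc}$) so that the fundamental theorem of calculus applies. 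Everything else is a routine weight bookkeeping and a single Cauchy–Schwarz.
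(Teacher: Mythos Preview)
Your proof is correct and follows essentially the same route as the paper: write $e^{-2\beta x}\|f(x)\|^2$ via the fundamental theorem of calculus as an integral over $[x,\infty)$, split the weight to extract the factor $e^{-(\beta-\gamma)|x|}$, and close with Cauchy--Schwarz on the cross term $\int (e^{-\gamma t}\|f(t)\|)(e^{-\beta t}\|f'(t)\|)\,\dif t$. The only cosmetic difference is that the paper disposes of the boundary term at infinity by reducing to $e^{-\beta\cdot}f(\cdot)$ in the Schwartz class (density), whereas you argue directly via a $\liminf$ along a subsequence; your version is arguably more self-contained.
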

\begin{proof}
Take without loss of generality $x\geq 0$, and $e^{-\beta \cdot} f(\cdot)$ in the Schwartz class on $\R_+$.  Then,
$$
\begin{aligned}
	e^{-2\beta x} \|f(x)\|^2 &=\int_x^\infty (e^{-2\beta y} \|f(y)\|^2)'\rmd y
=
-2\beta \int_x^\infty e^{-2\beta y} \|f(y)\|^2 \rmd y
+
2\int_x^\infty e^{-2\beta y} \langle f(y), f'(y)\rangle \rmd y\\
&\leq c
e^{-2(\beta-\gamma) x}\int_x^\infty e^{-2\gamma y} \|f(y)\|^2 \rmd y
+
e^{-(\beta-\gamma) x} \int_x^\infty (e^{-\gamma y} \|f(y)\|)(e^{-\beta y} \|f'(y)\|) \rmd y,
\end{aligned}
$$
from which the result follows by Cauchy-Schwarz/Young's inequality.
\end{proof}


In the next lemma we collect
some
estimates satisfied by the map $\cN_\eps$ on $H^1_{-\alpha}(\RR,\bH)$ and $\cZ_{\gamma,\beta}(\bH)$.
\begin{lemma}\label{l3.2}
Assume Hypotheses (H1) and (H2) and let $0<\alpha<\gamma<\beta<\frac{1}{2}\nu(\gamma_0,E_0)$. Then, the following estimates hold true:
\begin{enumerate}
\item[(i)] $F_\rmc(\cdot,w_0)\in H^1_{-\alpha}(\RR,\bH)$ and $\|F_\rmc(\cdot,w_0)\|_{H^1_{-\alpha}}\leq c\|w_0\|$ for any $w_0\in\bH_\rmc$;
\item[(ii)] $\cN_\eps(\bw)\in H^1_{-\alpha}(\RR,\bH)$ for any $\bw\in H^1_{-\alpha}(\RR,\bH)$ and $\eps>0$. Moreover,
\begin{equation}\label{cNeps-H1-bound}
\|\cN_\eps(\bw)\|_{H^1_{-\alpha}}\leq c\eps \|\bw\|_{H^1_{-\alpha}}\quad\mbox{for any}\quad\bw\in H^1_{-\alpha}(\RR,\bH);
\end{equation}
\item[(iii)] $\cN_\eps(\bw)\in \cZ_{\gamma,\beta}(\bH)$ for any $\bw\in \cZ_{\gamma,\beta}(\bH)$ and $\eps>0$. In addition,
\begin{equation}\label{cNeps-cZ-bound}
\|\cN_\eps(\bw)\|_{\cZ_{\gamma,\beta}}\leq c\eps \|\bw\|_{\cZ_{\gamma,\beta}}\quad\mbox{for any}\quad\bw\in \cZ_{\gamma,\beta}(\bH);
\end{equation}
\item[(iv)] If $0<2\alpha<\beta-\gamma$, then for any $\delta>0$ and any $\bw_1,\bw_2\in H^1_{-\alpha}(\RR,\bH)\subset\cZ_{\gamma,\beta}(\bH)$ such that $\|\bw_1\|_{H^1_{-\alpha}}\leq \delta$ and $\|\bw_2\|_{H^1_{-\alpha}}\leq \delta$ we have that
\begin{equation}\label{cNeps-Lipschitz}
\|\cN_\eps(\bw_1)-\cN_\eps(\bw_2)\|_{\cZ_{\gamma,\beta}}\leq c(\eps+\delta) \|\bw_1-\bw_2\|_{\cZ_{\gamma,\beta}}.
\end{equation}
\end{enumerate}
\end{lemma}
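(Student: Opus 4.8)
\emph{Plan.} The idea is to reduce (ii)--(iv) to pointwise (in $x$) estimates on the substitution operator induced by $N_\eps$, combined with the mapping properties of $\cK,\cV,\cK_0$ already at hand. Four preliminary observations make this work. (a) Since $B(0,0)=0$ and $\rho$ is supported in $[-2,2]$, the bounds of Remark~\ref{r3.1} also give $\|N_\eps(h)\|\le c\eps\|h\|$ for every $h\in\bH$, $\eps>0$ (directly from the cutoff, or by the mean value theorem from $N_\eps(0)=0$ and $\|N_\eps'\|\le c\eps$). (b) By the smoothness theory of substitution operators (see Appendix~\ref{appendix}), for $\bw\in H^1_{\loc}(\RR,\bH)$ the function $N_\eps(\bw(\cdot))$ lies in $H^1_{\loc}(\RR,\bH)$ with chain rule $\big(N_\eps(\bw)\big)'=N_\eps'(\bw)\,\bw'$ a.e. (c) The operator $\cK f=\cV\Gamma_3 f+\Gamma_4\cK_0P_{\tbV}f$ is bounded on $\cZ_{\gamma,\beta}(\bH)$ whenever $0<\gamma<\beta<\nu(\Gamma_0,E_0)$: this follows from Lemma~\ref{r2.11} (for $\cV$), Lemma~\ref{l2.6} (for $\cK_0$), the identity $(\cK f)'=\Gamma_3 f+\Gamma_4\cK_0P_{\tbV}f'$ coming from \eqref{deriv-cK0}, and the elementary inequality $\|f\|_{L^2_{-\beta}}\le\|f\|_{L^2_{-\gamma}}$. (d) $H^1_{-\alpha}(\RR,\bH)\hookrightarrow\cZ_{\gamma,\beta}(\bH)$ continuously since $\alpha<\gamma<\beta$. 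Granting these, part (i) is immediate: $F_\rmc(\cdot,w_0)$ is the constant function $w_0$ plus $x$ times the fixed vector $E_1(I-E_0^{-1}P_{\tbV}E)P_{\bV_1}w_0$, and both $x\mapsto 1$ and $x\mapsto x$ belong to $H^1_{-\alpha}(\RR)$ with norms depending only on $\alpha$, while $E_1,E_0^{-1},P_{\tbV},E,P_{\bV_1}\in\cB(\bH)$, so $\|F_\rmc(\cdot,w_0)\|_{H^1_{-\alpha}}\le c\|w_0\|$.

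\emph{Parts (ii), (iii).} Since $\cK$ is bounded on $H^1_{-\alpha}$ (Lemma~\ref{l2.6}, Lemma~\ref{r2.11}) and, by (c), on $\cZ_{\gamma,\beta}$, it suffices to prove the stated bounds with $\cN_\eps$ replaced by the substitution operator $\bw\mapsto N_\eps(\bw(\cdot))$. From (a) and (b), $\|N_\eps(\bw(x))\|\le c\eps\|\bw(x)\|$ and $\|\big(N_\eps(\bw)\big)'(x)\|\le\|N_\eps'(\bw(x))\|\,\|\bw'(x)\|\le c\eps\|\bw'(x)\|$. Integrating both against $e^{-2\alpha|x|}$ yields $\|N_\eps(\bw)\|_{H^1_{-\alpha}}\le c\eps\|\bw\|_{H^1_{-\alpha}}$; integrating $\|N_\eps(\bw)\|^2$ against $e^{-2\gamma|x|}$ and $\|\big(N_\eps(\bw)\big)'\|^2$ against $e^{-2\beta|x|}$ yields $\|N_\eps(\bw)\|_{\cZ_{\gamma,\beta}}\le c\eps\|\bw\|_{\cZ_{\gamma,\beta}}$. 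Composing with $\cK$ finishes (ii) and (iii).

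\emph{Part (iv).} Set $\bw:=\bw_1-\bw_2\in\cZ_{\gamma,\beta}(\bH)$; by (c) it suffices to estimate $N_\eps(\bw_1)-N_\eps(\bw_2)$ in $\cZ_{\gamma,\beta}$. By the mean value theorem and $\|N_\eps'\|\le c\eps$, the $L^2_{-\gamma}$-part is $\le c\eps\|\bw\|_{L^2_{-\gamma}}$. For the derivative, use (b) and split
\[
\big(N_\eps(\bw_1)-N_\eps(\bw_2)\big)'=N_\eps'(\bw_1)\,\bw'+\big(N_\eps'(\bw_1)-N_\eps'(\bw_2)\big)\bw_2'.
\]
The first summand is bounded in $L^2_{-\beta}$ by $c\eps\|\bw'\|_{L^2_{-\beta}}\le c\eps\|\bw\|_{\cZ_{\gamma,\beta}}$. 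For the second, $\|N_\eps''\|\le c$ gives the pointwise bound $c\|\bw(x)\|\,\|\bw_2'(x)\|$, so its squared $L^2_{-\beta}$-norm is $\le c\int_\RR e^{-2\beta|x|}\|\bw(x)\|^2\|\bw_2'(x)\|^2\,\rmd x$. Applying Lemma~\ref{lsob} to move all the weight onto $\bw$, $e^{-2\beta|x|}\|\bw(x)\|^2\le c\,e^{-(\beta-\gamma)|x|}\|\bw\|_{\cZ_{\gamma,\beta}}^2$, leaves $\le c\|\bw\|_{\cZ_{\gamma,\beta}}^2\int_\RR e^{-(\beta-\gamma)|x|}\|\bw_2'(x)\|^2\,\rmd x$; since $2\alpha<\beta-\gamma$ we have $e^{-(\beta-\gamma)|x|}\le e^{-2\alpha|x|}$, so this last integral is $\le\|\bw_2'\|_{L^2_{-\alpha}}^2\le\|\bw_2\|_{H^1_{-\alpha}}^2\le\delta^2$. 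Collecting, $\|N_\eps(\bw_1)-N_\eps(\bw_2)\|_{\cZ_{\gamma,\beta}}\le c(\eps+\delta)\|\bw\|_{\cZ_{\gamma,\beta}}$, and composing with $\cK$ gives \eqref{cNeps-Lipschitz}.

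\emph{Expected obstacle.} The one genuinely delicate step is the cross term $\big(N_\eps'(\bw_1)-N_\eps'(\bw_2)\big)\bw_2'$ in (iv): it is quadratic in objects that are small only in weak norms ($\bw_1-\bw_2$ controlled in $\cZ_{\gamma,\beta}$, $\bw_2'$ controlled in $L^2_{-\alpha}$) and carries no factor of $\eps$, so it cannot be absorbed by smallness of the cutoff. This is precisely where the mixed norm with $\beta>\gamma$ and the hypothesis $2\alpha<\beta-\gamma$ pay off: the Sobolev gain $e^{-(\beta-\gamma)|x|}$ in Lemma~\ref{lsob} supplies exactly the exponential-decay margin needed to pair an $L^\infty$-type bound on $\bw_1-\bw_2$ against the $L^2_{-\alpha}$-bound on $\bw_2'$. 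Keeping track of which weight is assigned to which factor is the only place where care is required; the rest is routine.
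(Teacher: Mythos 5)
Your proposal is correct and follows essentially the same route as the paper: pointwise estimates $\|N_\eps(h)\|\leq c\eps\|h\|$, $\|N_\eps'\|\leq c\eps$, $\|N_\eps''\|\leq c$ on the cutoff nonlinearity, chain rule for the substitution operator, boundedness of $\cK$ on $H^1_{-\alpha}$ and $\cZ_{\gamma,\beta}$, the splitting $(N_\eps\circ\bw_1-N_\eps\circ\bw_2)'=(N_\eps'\circ\bw_1)(\bw_1'-\bw_2')+(N_\eps'\circ\bw_1-N_\eps'\circ\bw_2)\bw_2'$, and the Sobolev embedding of Lemma~\ref{lsob} together with $2\alpha<\beta-\gamma$ to absorb the cross term with a factor $\delta$. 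You correctly identify the cross term as the crux and supply the same weight-allocation trick the paper uses; the one minor value-added in your write-up is that you spell out explicitly why $\cK$ is bounded on $\cZ_{\gamma,\beta}(\bH)$, which the paper asserts without detail.
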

\begin{proof} From \eqref{def-f-star} and \eqref{def-F-star} we have that $F_\rmc$ is an affine function. Moreover, from \eqref{center-proj} it follows that the projection $P_\rmc$ is bounded, which proves (i). Since $B$ is a bilinear map on $\bH$, from \eqref{def-Neps} we have that
\begin{equation}\label{3.2-1}
\|N_\eps(h)\|\leq \|B\|\rho\Big(\frac{\|h\|^2}{\eps^2}\Big)\|h\|^2\leq (2\|B\|\eps)\|h\|\quad\mbox{for any}\quad h\in\bH.
\end{equation}
Since the linear operator $\cK$ can be extended to a bounded linear operator on $L^2_{-\varkappa}(\RR,\bH)$ for $\varkappa=\alpha,\gamma$ by Lemma~\ref{l2.6} and Lemma~\ref{r2.11}, from \eqref{3.2-1} we have that
\begin{equation}\label{3.2-2}
\|\cN_\eps(\bw)\|_{L^2_{-\varkappa}}\leq c\eps \|\bw\|_{L^2_{-\varkappa}}\quad\mbox{for any}\quad\bw\in L^2_{-\varkappa}(\RR,\bH),\quad\mbox{for}\quad\varkappa=\alpha,\gamma.
\end{equation}
Since $N_\eps$ is of class $C^\infty$ by Remark~\ref{r3.1}, we obtain that $N_\eps\circ\bw\in H^1_{\mathrm{loc}}(\RR,\bH)$ and
$\big(N_\eps\circ\bw\big)'=(N_\eps'\circ\bw)\bw'$ for any $\bw\in H^1_{\mathrm{loc}}(\RR,\bH)$. From \eqref{deriv-Neps-est} it follows that
\begin{equation}\label{3.2-3}
\big\|\big(\cN_\eps(\bw)\big)'\big\|_{L^2_{-\varkappa}}=\|(N_\eps'\circ\bw)\bw'\|_{L^2_{-\varkappa}}\leq
\|N_\eps'\circ\bw\|_\infty\|\bw'\|_{L^2_{-\varkappa}}\leq c\eps\|\bw'\|_{L^2_{-\varkappa}}.
\end{equation}
for any $\bw\in H^1_{-\varkappa}(\RR,\bH)$ for $\varkappa=\alpha,\beta$. Using again Lemma~\ref{l2.6} and Lemma~\ref{r2.11}, one can readily check that the linear operator $\cK$ can be extended to a bounded linear operator on $H^1_{-\alpha}(\RR,\bH)$ and $\cZ_{\gamma,\beta}(\bH)$. Assertions (ii) and (iii) follow shortly from \eqref{3.2-2} and \eqref{3.2-3}.

To start the proof of (iv), we fix $\bw_1,\bw_2\in H^1_{-\alpha}(\RR,\bH)$,  $\|\bw_1\|_{H^1_{-\alpha}}\leq \delta$ and $\|\bw_2\|_{H^1_{-\alpha}}\leq \delta$. Since $N_\eps$ is of class $C^\infty$ on $\bH$ by Remark~\ref{r3.1}, from \eqref{deriv-Neps-est} it follows that there exists a constant $c>0$ independent of $\eps>0$ such that
\begin{equation}\label{3.2-4}
\|N_\eps(h_1)-N_\eps(h_2)\|\leq c\eps \|h_1-h_2\|\;\mbox{and}\;\|N_\eps'(h_1)-N_\eps'(h_2)\|\leq c\|h_1-h_2\|\;\mbox{for any}\; h_1,h_2\in\bH,
\end{equation}
which implies that
\begin{equation}\label{3.2-5}
\|N_\eps\circ\bw_1-N_\eps\circ\bw_2\|_{L^2_{-\gamma}}\leq c\eps\|\bw_1-\bw_2\|_{L^2_{-\gamma}}\leq c\eps\|\bw_1-\bw_2\|_{\cZ_{\gamma,\beta}}.
\end{equation}
Since $2\alpha<\beta-\gamma$, from \eqref{sob} and \eqref{3.2-4} we obtain that
\begin{align}\label{3.2-6}
\Big\|\Big(N_\eps\circ\bw_1&-N_\eps\circ\bw_2\Big)'\Big\|_{L^2_{-\beta}}^2\leq 2\|(N_\eps'\circ\bw_1)(\bw_1'-\bw_2')\|_{L^2_{-\beta}}^2+2\|(N_\eps'\circ\bw_1-N_\eps'\circ\bw_2)\bw_2'\|_{L^2_{-\beta}}^2\nonumber\\
&\leq c^2\eps^2\|\bw_1'-\bw_2'\|_{L^2_{-\beta}}^2+c^2\int_\RR e^{-2\beta|x|} \|\big(N_\eps'(\bw_1(x))-N_\eps'(\bw_2(x))\big)\bw_2'(x)\|^2\,\rmd x\nonumber\\
&\leq c^2\eps^2\|\bw_1-\bw_2\|_{\cZ_{\gamma,\beta}}^2+c^2\int_\RR e^{-2\beta|x|} \|\bw_1(x)-\bw_2(x)\|^2\|\bw_2'(x)\|^2\,\rmd x\nonumber\\
&\leq c^2\eps^2\|\bw_1-\bw_2\|_{\cZ_{\gamma,\beta}}^2+c^2\Big(\int_\RR e^{-(\beta-\gamma)|x|} \|\bw_2'(x)\|^2\,\rmd x\Big)\|\bw_1-\bw_2\|_{\cZ_{\gamma,\beta}}^2\nonumber\\
&\leq c^2\eps^2\|\bw_1-\bw_2\|_{\cZ_{\gamma,\beta}}^2+c^2\Big(\int_\RR e^{-2\alpha|x|} \|\bw_2'(x)\|^2\,\rmd x\Big)\|\bw_1-\bw_2\|_{\cZ_{\gamma,\beta}}^2\nonumber\\
&\leq c^2\eps^2\|\bw_1-\bw_2\|_{\cZ_{\gamma,\beta}}^2+c^2\|\bw_2\|_{H^1_{-\alpha}}^2\|\bw_1-\bw_2\|_{\cZ_{\gamma,\beta}}^2\leq c^2(\eps^2+\delta^2)\|\bw_1-\bw_2\|_{\cZ_{\gamma,\beta}}^2.
\end{align}
Since $\cK$ can be extended to a bounded linear operator on $\cZ_{\gamma,\beta}(\bH)$ and
$$\sup_{0<\alpha<\frac{1}{2}\nu(\Gamma_0,E_0)}\|\cK\|_{\cB(H^1_{-\alpha}(\RR,\bH))}<\infty,$$
assertion (iv) follows from \eqref{3.2-5} and \eqref{3.2-6}, proving the lemma.
\end{proof}
\begin{remark}\label{keyrmk}
We note that the corresponding $H^1_{-\beta}$ version of \eqref{cNeps-Lipschitz} {\it does not} hold. Indeed,
$$
\begin{aligned}
&\|(N_\eps\circ\bw_1-N_\eps\circ\bw_2)'\|_{L^2_{-\beta}}^2 + 2\|(N_\eps'\circ\bw_1)(\bw_1'-\bw_2')\|_{L^2_{-\beta}}^2
\geq 2\|(N_\eps'\circ\bw_1-N_\eps'\circ\bw_2)\bw_2'\|_{L^2_{-\beta}}^2 \\
&=  2\int_\RR e^{-2\beta|x|} \| \Big(\big(\int_0^1N_\eps''(s\bw_1(x)+(1-s)\bw_2(x))\rmd s\big)(\bw_1(x)- \bw_2(x))\Big)\bw_2'(x)\|^2 \,\rmd x  \\
\end{aligned}
$$
with $\|(N_\eps'\circ\bw_1)(\bw_1'-\bw_2')\|_{L^2_{-\beta}}^2 \leq c\eps \|\bw_1-\bw_2\|_{H^1_{-\beta}}$ shows
that $\cN_\eps$ is in general neither contractive in $H^1_{-\beta}(\RR,\bH)$ (since $ N_\eps''$ is merely bounded, not small)
nor even Lipschitz (since $e^{-2\beta|x|}$ can compensate for growth of either
$ \|\bw_1(x)- \bw_2(x)\|^2$ or $\|\bw_2'(x)\|^2$, but not both, in the integral on the righthand side).
\end{remark}
At this point we fix $0<\alpha\ll\frac{1}{2}\nu(\gamma_0,E_0)$ and $\gamma<\beta<\frac{1}{2}\nu(\gamma_0,E_0)$ such that $0<2\alpha<\beta-\gamma$.
Next, we introduce the function $\cT_\eps:\bH_\rmc\times H^1_{-\alpha}(\RR,\bH)\to H^1_{-\alpha}(\RR,\bH)$ defined by $\cT_\eps(w_0,\bw)=F_\rmc(\cdot,w_0)+\cN_\eps(\bw)$. We note that the function $\cT_\eps$ is well-defined by Lemma~\ref{l3.2}(ii). We are now ready to state our existence and uniqueness result.
\begin{lemma}\label{l3.3}
Assume Hypotheses (H1) and (H2). Then, there exists $\eps_0>0$ such that for any $\delta>0$ small enough there exists $\eps_1:=\eps_1(\delta)>0$ such that for any $w_0\in\overline{B}_{\bH_\rmc}(0,\eps_1)$ the equation $\bw=\cT_{\eps_0}(w_0,\bw)$ has a unique solution in $H^1_{-\alpha}(\RR,\bH)$, denoted $\obw(\cdot,w_0)$, satisfying the condition
\begin{equation}\label{obu-delta-estimate}
\|\obw(\cdot,w_0)\|_{H^1_{-\alpha}(\RR,\bH)}\leq\delta\quad\mbox{for any}\quad w_0\in\overline{B}_{\bH_\rmc}(0,\eps_1).
\end{equation}
\end{lemma}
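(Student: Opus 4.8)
The plan is to set up a Picard iteration for the truncated equation in the ``high'' space $H^1_{-\alpha}(\RR,\bH)$, where Lemma~\ref{l3.2}(i)--(ii) provide a self-map of a small ball but no contraction, and then to extract convergence from the ``low-norm'' Lipschitz estimate of Lemma~\ref{l3.2}(iv) on $\cZ_{\gamma,\beta}(\bH)$. After enlarging the constant $c$ of Lemma~\ref{l3.2} so that one $c$ serves simultaneously in (i), (ii) and (iv), I would fix $\eps_0>0$ with $c\eps_0\le\tfrac14$, then take $\delta>0$ small enough that $c\delta\le\tfrac14$ (so that $\theta:=c(\eps_0+\delta)\le\tfrac12<1$), and finally set $\eps_1:=\eps_1(\delta):=\delta/(2c)$. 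For $w_0\in\overline B_{\bH_\rmc}(0,\eps_1)$ put $\bw_0:=0$ and $\bw_{n+1}:=\cT_{\eps_0}(w_0,\bw_n)=F_\rmc(\cdot,w_0)+\cN_{\eps_0}(\bw_n)$; by Lemma~\ref{l3.2}(i),(ii) each $\bw_n$ lies in $H^1_{-\alpha}(\RR,\bH)$ and, inductively, $\|\bw_{n+1}\|_{H^1_{-\alpha}}\le c\|w_0\|+c\eps_0\|\bw_n\|_{H^1_{-\alpha}}\le \tfrac12\delta+\tfrac14\delta\le\delta$, so the whole orbit stays in $\overline B_{H^1_{-\alpha}}(0,\delta)$.

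Since every iterate satisfies the hypothesis $\|\cdot\|_{H^1_{-\alpha}}\le\delta$ of Lemma~\ref{l3.2}(iv), the consecutive differences obey $\|\bw_{n+1}-\bw_n\|_{\cZ_{\gamma,\beta}}=\|\cN_{\eps_0}(\bw_n)-\cN_{\eps_0}(\bw_{n-1})\|_{\cZ_{\gamma,\beta}}\le\theta\|\bw_n-\bw_{n-1}\|_{\cZ_{\gamma,\beta}}$ with $\theta<1$, so $(\bw_n)_n$ is Cauchy, hence convergent, in the Hilbert space $\cZ_{\gamma,\beta}(\bH)$; call the limit $\obw=\obw(\cdot,w_0)$. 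At the same time $(\bw_n)_n$ is bounded in the Hilbert space $H^1_{-\alpha}(\RR,\bH)$, so some subsequence converges weakly there; because the inclusion $H^1_{-\alpha}(\RR,\bH)\hookrightarrow\cZ_{\gamma,\beta}(\bH)$ is bounded (as $\alpha<\gamma<\beta$), hence weak-to-weak continuous, this weak limit coincides with $\obw$, and, $\overline B_{H^1_{-\alpha}}(0,\delta)$ being closed and convex hence weakly closed, we conclude $\obw\in H^1_{-\alpha}(\RR,\bH)$ with $\|\obw\|_{H^1_{-\alpha}}\le\delta$, which is \eqref{obu-delta-estimate}.

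It then remains to identify $\obw$ as the unique fixed point. Passing to the limit in $\bw_{n+1}=F_\rmc(\cdot,w_0)+\cN_{\eps_0}(\bw_n)$ is legitimate in the $\cZ_{\gamma,\beta}$ topology: $F_\rmc(\cdot,w_0)$ is fixed, and since $\bw_n,\obw$ all lie in $\overline B_{H^1_{-\alpha}}(0,\delta)$, Lemma~\ref{l3.2}(iv) gives $\|\cN_{\eps_0}(\bw_n)-\cN_{\eps_0}(\obw)\|_{\cZ_{\gamma,\beta}}\le\theta\|\bw_n-\obw\|_{\cZ_{\gamma,\beta}}\to 0$; hence $\obw=F_\rmc(\cdot,w_0)+\cN_{\eps_0}(\obw)=\cT_{\eps_0}(w_0,\obw)$, the right-hand side lying in $H^1_{-\alpha}(\RR,\bH)$ by Lemma~\ref{l3.2}(ii). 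For uniqueness, note first that \emph{any} $H^1_{-\alpha}$-solution $\bw$ automatically satisfies $\|\bw\|_{H^1_{-\alpha}}\le c\|w_0\|+c\eps_0\|\bw\|_{H^1_{-\alpha}}$, so $(1-c\eps_0)\|\bw\|_{H^1_{-\alpha}}\le c\eps_1$ forces $\|\bw\|_{H^1_{-\alpha}}\le 2c\eps_1=\delta$; thus any two solutions $\bw,\bw'$ meet the hypothesis of Lemma~\ref{l3.2}(iv), whence $\|\bw-\bw'\|_{\cZ_{\gamma,\beta}}\le\theta\|\bw-\bw'\|_{\cZ_{\gamma,\beta}}$ with $\theta<1$, forcing $\bw=\bw'$ a.e., hence in $H^1_{-\alpha}(\RR,\bH)$.

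The main obstacle, and the reason for the two-norm construction, is precisely that $\cN_{\eps_0}$ is \emph{bounded but not contractive} on $H^1_{-\alpha}(\RR,\bH)$ (its derivative carries the merely bounded, not small, factor $N''_{\eps_0}$, cf.\ Remark~\ref{keyrmk}), so one cannot run the contraction mapping theorem directly in the space where the solution must live; the device of contracting in the weaker norm $\|\cdot\|_{\cZ_{\gamma,\beta}}$, keeping a uniform bound in the stronger norm $\|\cdot\|_{H^1_{-\alpha}}$, and then using weak compactness and weak closedness of balls to return the limit to $H^1_{-\alpha}(\RR,\bH)$, is the ``bounded high norm/contractive low norm'' variant of the fixed-point argument. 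The only point demanding care is the verification that the $\cZ_{\gamma,\beta}$-Lipschitz estimate of Lemma~\ref{l3.2}(iv) genuinely applies along the entire Picard orbit and to the limit, which is exactly what the a priori bound $\|\bw_n\|_{H^1_{-\alpha}}\le\delta$ secures.
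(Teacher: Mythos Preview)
Your proof is correct and follows essentially the same ``bounded high norm/contractive low norm'' strategy as the paper: Picard iteration staying in $\overline{B}_{H^1_{-\alpha}}(0,\delta)$ via Lemma~\ref{l3.2}(i)--(ii), Cauchy in $\cZ_{\gamma,\beta}(\bH)$ via Lemma~\ref{l3.2}(iv), and weak compactness to place the limit back in $H^1_{-\alpha}$. Your version is in fact slightly more complete than the paper's on two points: you explicitly pass to the limit in the iteration to verify the fixed-point equation, and in the uniqueness argument you first observe that \emph{any} $H^1_{-\alpha}$-solution is forced a priori into $\overline{B}_{H^1_{-\alpha}}(0,\delta)$ before invoking the $\cZ_{\gamma,\beta}$-contraction, whereas the paper simply cites \eqref{3.3-3}.
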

\begin{proof} From Lemma~\ref{l3.2}(i) and (ii) have that
\begin{equation}\label{3.3-1}
\|\cT_\eps(w_0,\bw)\|_{H^1_{-\alpha}}\leq c\|F_\rmc(\cdot,w_0)\|_{H^1_{-\alpha}}+\|\cN_{\eps}(\bw)\|_{H^1_{-\alpha}}\leq c\|w_0\|+c\eps \|\bw\|_{H^1_{-\alpha}}\leq c\eps_1+c\eps\delta
\end{equation}
for any $\eps>0$, $\delta>0$, $w_0\in\overline{B}_{\bH_\rmc}(0,\eps_1)$ and $\bw\in\overline{B}_{H^1_{-\alpha}(\RR,\bH)}(0,\delta)$. Here the constant $c>0$ is independent of $\alpha>0$.
We choose $\eps_0>0$ and $\eps_1=\eps_1(\delta)$ such that $c\eps_0<\frac{1}{4}$ and $c\eps_1<\frac{\delta}{2}$. From \eqref{3.3-1} we obtain that
\begin{equation}\label{3.3-2}
\|\cT_{\eps_0}(w_0,\bw)\|_{H^1_{-\alpha}}\leq\delta\quad\mbox{for any}\quad w_0\in\overline{B}_{\bH_\rmc}(0,\eps_1),\;\bw\in\overline{B}_{H^1_{-\alpha}(\RR,\bH)}(0,\delta).
\end{equation}
Moreover, from Lemma~\ref{l3.2}(iv) we have that
\begin{align}\label{3.3-3}
\|\cT_{\eps_0}(w_0,\bw_1)&-\cT_{\eps_0}(w_0,\bw_2)\|_{\cZ_{\gamma,\beta}}=\|\cN_{\eps_0}(\bw_1)-\cN_{\eps_0}(\bw_2)\|_{\cZ_{\gamma,\beta}}\leq c(\eps_0+\delta) \|\bw_1-\bw_2\|_{\cZ_{\gamma,\beta}}\nonumber\\&\leq \frac{1}{2} \|\bw_1-\bw_2\|_{\cZ_{\gamma,\beta}}\quad\mbox{for any}\quad w_0\in\overline{B}_{\bH_\rmc}(0,\eps_1),\;\bw_1,\bw_2\in\overline{B}_{H^1_{-\alpha}(\RR,\bH)}(0,\delta)
\end{align}
and any $\delta>0$ such that $c\delta<\frac{1}{4}$. To prove the existence of solutions of equation $\bw=\cT_{\eps_0}(w_0,\bw)$ in $H^1_{-\alpha}(\RR,\bH)$ we fix $\delta>0$ small enough and $w_0\in\overline{B}_{\bH_\rmc}(0,\eps_1(\delta))$. Also,  we introduce the sequence $(\obw_n)_{n\geq1}$ defined recursively by the equation
\begin{equation}\label{3.3-4}
\obw_{n+1}=\cT_{\eps_0}(w_0,\obw_n),\quad n\geq 1,\quad\mbox{and}\quad \obw_1=0.
\end{equation}
Using induction, from \eqref{3.3-2} we obtain that
\begin{equation}\label{3.3-5}
\sup_{n\geq 1}\|\obw_n\|_{H^1_{-\alpha}}\leq\delta.
\end{equation}
Moreover, from \eqref{3.3-3} it follows that
\begin{equation}\label{3.3-6}
\|\obw_{n+1}-\obw_n\|_{\cZ_{\gamma,\beta}}=\|\cT_{\eps_0}(w_0,\bw_n)-\cT_{\eps_0}(w_0,\bw_{n-1})\|_{\cZ_{\gamma,\beta}}\leq\frac{1}{2}\|\bw_n-\bw_{n-1}\|_{\cZ_{\gamma,\beta}}\;\mbox{for any}\;n\geq 1.
\end{equation}
Using induction again, we conclude that $\|\obw_{n+1}-\obw_n\|_{\cZ_{\gamma,\beta}}\leq 2^{-n}\|\obw_2\|_{\cZ_{\gamma,\beta}}$ for any $n\geq 1$, which implies that there exists $\obw(\cdot,w_0)\in\cZ_{\gamma,\beta}(\bH)$ such that
\begin{equation}\label{3.3-7}
\obw_n\to\obw(\cdot,w_0)\quad\mbox{in}\quad \cZ_{\gamma,\beta}(\bH)\quad\mbox{as}\quad n\to\infty.
\end{equation}
Since the closed ball of any Hilbert space is weakly compact, from \eqref{3.3-5} we infer that there exists a subsequence $(\obw_{n_k})_{k\geq 1}$ that is weakly convergent to an element of $\overline{B}_{H^1_{-\alpha}(\RR,\bH)}(0,\delta)$. From \eqref{3.3-7} it follows that $\obw(\cdot,w_0)\in \overline{B}_{H^1_{-\alpha}(\RR,\bH)}(0,\delta)$, proving the existence and the estimate \eqref{obu-delta-estimate}. From \eqref{3.3-3} it follows that  equation $\bw=\cT_{\eps_0}(w_0,\bw)$ cannot have more than one solution in $H^1_{-\alpha}(\RR,\bH)$, proving the lemma.
\end{proof}
\begin{lemma}\label{l3.4}
Assume Hypotheses (H1) and (H2). Then, there exists $\eta_0>0$ such that $\obw(\cdot,w_0)$ is a solution of equation \eqref{center-nonlinear} on $(-\eta_0,\eta_0)$ for any $w_0\in\overline{B}_{\bH_\rmc}(0,\eps_1(\delta_0))$.
\end{lemma}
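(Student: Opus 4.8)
The plan is to exploit the fact that the cutoff $\rho(\|\cdot\|^2/\eps_0^2)$ appearing in \eqref{def-cNeps}--\eqref{def-Neps} is inactive for $x$ near $0$, so that on a small interval around the origin the truncated fixed-point identity satisfied by $\obw(\cdot,w_0)$ collapses to the untruncated identity \eqref{fix-point}; by Lemma~\ref{l2.11} this identity is equivalent to the system \eqref{nonlinear-sys-perturbed}, which in turn is equivalent to \eqref{center-nonlinear}. So the whole argument reduces to (a) pinning down a uniform interval on which the cutoff equals $1$, and (b) reading off the ODE from the form of the fixed point.

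First I would record a pointwise bound on $\obw:=\obw(\cdot,w_0)$ that is uniform in $w_0$. Since $\obw\in H^1_{-\alpha}(\RR,\bH)\subset H^1_{\mathrm{loc}}(\RR,\bH)$, it has a continuous representative, and the weighted Sobolev embedding $H^1_{-\alpha}(\RR,\bH)\hookrightarrow L^\infty_{-\alpha}(\RR,\bH)$ — the computation proving Lemma~\ref{lsob} goes through verbatim in the degenerate case $\gamma=\beta=\alpha$, where $\cZ_{\alpha,\alpha}$ is $H^1_{-\alpha}$ — furnishes a constant $C_0>0$, independent of $w_0$, with $\|\obw(x)\|\le C_0e^{\alpha|x|}\|\obw\|_{H^1_{-\alpha}}\le C_0\delta_0 e^{\alpha|x|}$ for all $x\in\RR$, the last step using \eqref{obu-delta-estimate}. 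Taking $\delta_0$ small enough that $C_0\delta_0<\eps_0$ (legitimate since $\eps_0$ is the fixed number produced by Lemma~\ref{l3.3}, whereas $\delta_0$ may be chosen as small as we wish) and setting $\eta_0:=\alpha^{-1}\ln\bigl(\eps_0/(C_0\delta_0)\bigr)>0$, we obtain $\|\obw(x)\|<\eps_0$ whenever $|x|<\eta_0$, for every $w_0\in\overline{B}_{\bH_\rmc}(0,\eps_1(\delta_0))$. Hence $\rho(\|\obw(x)\|^2/\eps_0^2)=1$ there, so with $N_{\eps_0}$ as in \eqref{def-Neps},
\[
N_{\eps_0}(\obw(x))=B\bigl(\obw(x),\obw(x)\bigr)\qquad\text{for }|x|<\eta_0 .
\]

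Next I would apply Lemma~\ref{l2.11}(ii) with forcing term $f:=N_{\eps_0}(\obw)$. This $f$ belongs to $H^1_{-\alpha}(\RR,\bV)$: it is $\bV$-valued because $B$ maps into $\bV$ by Hypothesis (H1)(ii), and from Remark~\ref{r3.1}, the estimate $\|N_{\eps_0}(h)\|\le 2\|B\|\eps_0\|h\|$ in \eqref{3.2-1}, and the chain rule $(N_{\eps_0}\circ\obw)'=(N_{\eps_0}'\circ\obw)\obw'$ together with \eqref{deriv-Neps-est}, one gets $\|f\|_{H^1_{-\alpha}}\le c\eps_0\|\obw\|_{H^1_{-\alpha}}<\infty$. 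Now $F_\rmc(\cdot,w_0)=w_0+f_\rmc(\cdot,P_{\bV_1}w_0)$ by \eqref{def-F-star}, $\cK f=\cV\Gamma_3f+\Gamma_4\cK_0P_{\tbV}f$ by the definition of $\cK$, and $\cN_{\eps_0}(\obw)=\cK N_{\eps_0}(\obw)=\cK f$ by \eqref{def-cNeps}, so the function produced by Lemma~\ref{l2.11}(ii) is exactly
\[
F_\rmc(\cdot,w_0)+\cK f=F_\rmc(\cdot,w_0)+\cN_{\eps_0}(\obw)=\cT_{\eps_0}(w_0,\obw)=\obw ,
\]
the last equality being the fixed-point identity from Lemma~\ref{l3.3}. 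Therefore $\obw$ is an $H^1_{-\alpha}$-solution of \eqref{linear-sys-perturbed} with this $f$: writing $u=P_{\bV^\perp}\obw$ and $v=P_{\bV}\obw$, we have $A_{11}u'+A_{12}v'=0$ and $A_{21}u'+A_{22}v'=Ev+N_{\eps_0}(\obw)$ a.e.\ on $\RR$. Restricting these identities to $(-\eta_0,\eta_0)$ and inserting $N_{\eps_0}(\obw)=B(\obw,\obw)=B(u+v,u+v)$ from the previous step, $\obw$ satisfies \eqref{nonlinear-sys-perturbed} on $(-\eta_0,\eta_0)$; since Hypothesis (H1)(ii) (with $E=2B(\cdot,\obu)_{|\bV}$ and $Q(\bw)=B(\bw,\bw)\in\bV$) makes \eqref{nonlinear-sys-perturbed} equivalent to \eqref{center-nonlinear}, this shows $\obw(\cdot,w_0)$ is a solution of \eqref{center-nonlinear} on $(-\eta_0,\eta_0)$ for every $w_0\in\overline{B}_{\bH_\rmc}(0,\eps_1(\delta_0))$, as claimed.

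I do not anticipate a serious obstacle here: once Lemma~\ref{l2.11}(ii) is available, everything follows from the standard truncation mechanism. The one delicate point is that the pointwise bound on $\obw(\cdot,w_0)$ used to locate the region where the cutoff is trivial must be \emph{uniform in} $w_0$, so that a single $\eta_0$ works for all $w_0$; this is precisely what the weighted Sobolev embedding of Lemma~\ref{lsob} (in the degenerate case $\beta=\gamma=\alpha$) delivers once combined with the $H^1_{-\alpha}$-bound \eqref{obu-delta-estimate} that was deliberately built into Lemma~\ref{l3.3}, which is why that estimate, rather than a bound in a weaker norm, was retained there.
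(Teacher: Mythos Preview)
Your proof is correct and follows essentially the same approach as the paper: both use the weighted Sobolev embedding $H^1_{-\alpha}\hookrightarrow L^\infty_{-\alpha}$ together with \eqref{obu-delta-estimate} and a suitable smallness choice of $\delta_0$ to find a uniform interval on which the cutoff is inactive, then invoke Lemma~\ref{l2.11}(ii) with $f=N_{\eps_0}\circ\obw$ to pass from the fixed-point identity to the ODE \eqref{center-nonlinear}. Your version is slightly more explicit (you give a formula for $\eta_0$ and spell out why $f$ is $\bV$-valued), but the argument is the same.
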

\begin{proof}
First, we recall the conditions satisfied by $\eps_0>0$ and $\eps_1=\eps_1(\delta)$ imposed in the proof of Lemma~\ref{l3.2}(iii): $c\eps_0<\frac{1}{4}$, $c\eps_1(\delta)<\frac{\delta}{2}$ and $c\delta<\frac{1}{4}$. Moreover, we have that the constant $c>0$ can be chosen big enough such that
\begin{equation}\label{3.4-1}
\|f(x)\|\leq ce^{\alpha|x|}\|f\|_{H^1_{-\alpha}}\quad\mbox{for any}\quad x\in\RR,\;f\in H^1_{-\alpha}(\RR,\bH).
\end{equation}
Next, we choose $\delta_0>0$ such that $c\delta_0<\frac{\eps_0}{2}$. From \eqref{obu-delta-estimate} and \eqref{3.4-1} we obtain that
\begin{equation}\label{3.4-2}
\|\obw(x,w_0)\|\leq \frac{\eps_0}{2}e^{\alpha|x|}\quad\mbox{for any}\quad x\in\RR, w_0\in\overline{B}_{\bH_\rmc}(0,\eps_1(\delta_0)).
\end{equation}
It follows that there exists $\eta_0>0$ such that
\begin{equation}\label{3.4-3}
\|\obw(x,w_0)\|\leq\eps_0\quad\mbox{for any}\quad x\in (-\eta_0,\eta_0), w_0\in\overline{B}_{\bH_\rmc}(0,\eps_1(\delta_0)).
\end{equation}
From Lemma~\ref{l3.3} and the definition of $\cN_{\eps_0}$ in \eqref{def-cNeps} we have that
\begin{equation}\label{3.4-5}
\obw(\cdot,w_0)=w_0+f_\rmc(\cdot,P_{\bV_1}w_0)+\cV\Gamma_3\obf+\Gamma_4\cK_0P_{\tbV}\obf,\quad\mbox{where}\quad\obf:=N_{\eps_0}\circ\obw(\cdot,w_0).
\end{equation}
Since $\obw(\cdot,w_0)\in H^1_{-\alpha}(\RR,\bH)$, and hence $\obf\in H^1_{-\alpha}(\RR,\bH)$, from Lemma~\ref{l2.11}(ii) we infer that $\obw(\cdot,w_0)$ satisfies the equation
\begin{equation}\label{3.4-6}
A\bw'(x)=Q'(\obu)\bw(x)+\obf(x)\quad\mbox{for any}\quad x\in\RR.
\end{equation}
Since the cut-off function $\rho$ is identically equal to one on $[-1,1]$, from \eqref{def-Neps} and \eqref{3.4-3} we have that $\obf(x)=B(\obw(x,w_0),\obw(x,w_0))$ for any $x\in (-\eta_0,\eta_0)$. From \eqref{3.4-6} we conclude that $\obw(\cdot,w_0)$ is a solution of equation \eqref{center-nonlinear} on $(-\eta_0,\eta_0)$ for any $w_0\in\overline{B}_{\bH_\rmc}(0,\eps_1(\delta_0))$, proving the lemma.
\end{proof}
We are now ready to introduce the center manifold defined by the trace at $x=0$ of the fixed point solution $\obw(\cdot,w_0)$:
\begin{equation}\label{center-manifold}
\cM_\rmc=\{\obw(0,w_0):w_0\in\overline{B}_{\bH_\rmc}(0,\eps_1(\delta_0))\}.
\end{equation}
\begin{lemma}\label{r3.5} Assume Hypotheses (H1) and (H2). Then, the following assertions hold true:
\begin{enumerate}
\item[(i)] $P_\rmc\obw(0,w_0)=w_0$ for any $w_0\in\overline{B}_{\bH_\rmc}(0,\eps_1(\delta_0))$;
\item[(ii)]
	$\cM_\rmc=\mathrm{Graph}(\cJ_\rmc)$, where $\cJ_\rmc:\overline{B}_{\bH_\rmc}(0,\eps_1(\delta_0))\to\bH_\rms\oplus\bH_\rmu$ is the function defined by
\begin{equation}\label{def-Jc}
\cJ_\rmc(w_0)=(I_{\bH}-P_\rmc)\obw(0,w_0).
\end{equation}
\end{enumerate}
\end{lemma}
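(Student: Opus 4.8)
The plan is to extract both assertions directly from the fixed-point identity
\begin{equation*}
\obw(\cdot,w_0)=\cT_{\eps_0}(w_0,\obw(\cdot,w_0))=F_\rmc(\cdot,w_0)+\cN_{\eps_0}(\obw(\cdot,w_0))
\end{equation*}
satisfied by the solution produced in Lemma~\ref{l3.3}, evaluated at $x=0$ and then composed with the trichotomy projection $P_\rmc$ of Lemma~\ref{l2.10}(iii).

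\emph{Assertion (i).} First I would record that $F_\rmc(0,w_0)=w_0+f_\rmc(0,P_{\bV_1}w_0)=w_0$, since $f_\rmc(x,v_1)=xE_1(I-E_0^{-1}P_{\tbV}E)v_1$ vanishes at $x=0$ by \eqref{def-f-star}. Next, writing $\obf:=N_{\eps_0}\circ\obw(\cdot,w_0)\in H^1_{-\alpha}(\RR,\bH)$ as in \eqref{3.4-5} and recalling $\cK g=\cV\Gamma_3 g+\Gamma_4\cK_0 P_{\tbV}g$, I would note that $\cN_{\eps_0}(\obw(\cdot,w_0))(0)=(\cV\Gamma_3\obf)(0)+\Gamma_4\big((\cK_0 P_{\tbV}\obf)(0)\big)$, and that the Volterra term vanishes at the base point because $(\cV h)(0)=\int_0^0 h=0$. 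The one step that is not purely formal is the identity $\Gamma_4\bH\subseteq\bH_\rms\oplus\bH_\rmu=\ker P_\rmc$: from \eqref{def-Gamma-3-4}, writing $z:=P_{\tbV}h$ for $h\in\bH$, one has in the decomposition $\bH=\ker A_{11}\oplus\im A_{11}\oplus\bV_1\oplus\tbV$ that
\begin{equation*}
\Gamma_4 h=\big((\Gamma_1+E_1E_0^{-1}\Gamma_0)z,\,-\tA_{11}^{-1}\tA_{12}z,\,0,\,z\big)^{\mathrm{T}},
\end{equation*}
which is exactly the generic form of an element of $\bH_\rms\oplus\bH_\rmu$ according to \eqref{2.10-2}, so $P_\rmc\Gamma_4=0$. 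Applying $P_\rmc$ to the fixed-point identity at $x=0$ and using $P_\rmc|_{\bH_\rmc}=I_{\bH_\rmc}$ then gives $P_\rmc\obw(0,w_0)=P_\rmc F_\rmc(0,w_0)+0=w_0$.

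\emph{Assertion (ii).} This will be immediate from (i). For each $w_0\in\overline{B}_{\bH_\rmc}(0,\eps_1(\delta_0))$, the decomposition $\bH=\bH_\rmc\oplus(\bH_\rms\oplus\bH_\rmu)$ together with (i) yields $\obw(0,w_0)=P_\rmc\obw(0,w_0)+(I_\bH-P_\rmc)\obw(0,w_0)=w_0+\cJ_\rmc(w_0)$, where $\cJ_\rmc(w_0)=(I_\bH-P_\rmc)\obw(0,w_0)\in\bH_\rms\oplus\bH_\rmu$ is well defined because $\obw(\cdot,w_0)$ is, by Lemma~\ref{l3.3}, the \emph{unique} element of $H^1_{-\alpha}(\RR,\bH)$ solving $\bw=\cT_{\eps_0}(w_0,\bw)$. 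Hence $\cM_\rmc=\{w_0+\cJ_\rmc(w_0):w_0\in\overline{B}_{\bH_\rmc}(0,\eps_1(\delta_0))\}=\mathrm{Graph}(\cJ_\rmc)$; moreover $w_0\mapsto\obw(0,w_0)$ is injective on $\overline{B}_{\bH_\rmc}(0,\eps_1(\delta_0))$, its $\bH_\rmc$-component recovering $w_0$, so this is a genuine graph.

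I expect no serious obstacle: the content is bookkeeping. The only place that needs attention is the containment $\Gamma_4\bH\subseteq\bH_\rms\oplus\bH_\rmu$, i.e., matching the explicit formula \eqref{def-Gamma-3-4} for $\Gamma_4$ against the description \eqref{dichtomy-subspaces}--\eqref{2.10-2} of the hyperbolic subspaces; combined with the trivial vanishing of the Volterra term at $x=0$, this is precisely what renders the $\cN_{\eps_0}$-contribution invisible to $P_\rmc$. Everything else is applying $P_\rmc$ to the fixed-point equation and invoking uniqueness from Lemma~\ref{l3.3}.
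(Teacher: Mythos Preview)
Your proof is correct and follows essentially the same approach as the paper: both arguments apply $P_\rmc$ to the fixed-point representation \eqref{3.4-5}, use that the Volterra term vanishes at $x=0$, and verify $P_\rmc\Gamma_4=0$ by matching the block form of $\Gamma_4$ against the description \eqref{2.10-2} of $\bH_\rms\oplus\bH_\rmu$. The paper records the three identities $P_\rmc\Gamma_3=\Gamma_3$, $P_\rmc\Gamma_4|_{\tbV}=0$, $P_\rmc f_\rmc(x,v_1)=xE_1(I-E_0^{-1}P_{\tbV}E)v_1$ and then evaluates at $x=0$, whereas you evaluate at $x=0$ first and then project; this is a purely cosmetic difference.
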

\begin{proof} (i)
From \eqref{center-proj} and \eqref{def-Gamma-3-4} one can readily check that
\begin{equation}\label{3.5-1}
P_\rmc\Gamma_3=\Gamma_3,\;P_\rmc\Gamma_4\tv=0\;\mbox{for any}\;\tv\in\tbV,\;P_\rmc f_\rmc(x,v_1)=x E_1(I-E_0^{-1}P_{\tbV}E)v_1
\end{equation}
for any $x\in\RR$, $v_1\in\bV_1$. Multiplying by $P_\rmc$ in \eqref{3.4-5} and since $(\cV f)(0)=0$ for any $f\in L^2_{-\alpha}(\RR,\bH)$, we conclude that $P_\rmc\obw(0,w_0)=w_0$ for any $w_0\in\overline{B}_{\bH_\rmc}(0,\eps_1(\delta_0))$. Assertion (ii) follows immediately from (i).
\end{proof}
\begin{remark}\label{holderrmk}
Up to this point we could as well have used contraction of the fixed-point mapping in $L^2_{-\alpha}(\RR,\bH)$
and boundedness in $H^1_{-\alpha}(\RR,\bH)$.  This gives (by continuous dependence on parameters of contraction-mapping solutions)
$L^2_{-\alpha}(\RR,\bH)$ Lipschitz continuity of $\obw$ with respect to $w_0$, and
boundedness in $H^1_{-\alpha}(\RR,\bH)$,
yielding $C_{\mathrm{b}}^{1/2}(\RR,\bH)$ H\"older continuity of $\cJ_\rmc$ by the Sobolev embedding
$\|f(0)\|^2\lesssim \|f\|_{L^2_{-\alpha}} \|f'\|_{L^2_{-\alpha}}$.
Contraction in $\cZ_{\gamma,\beta}(\bH)$,
based on Lemma \ref{l3.2}(iv), is used to obtain Lipschitz and higher regularity.
\end{remark}

\subsection{$C^k$ smoothness of the center manifold}\label{CM-smoothness}
Our next task is to prove that the manifold $\cM_\rmc$ is smooth by showing that the function
\begin{equation}\label{def-Sigma-c}
\Sigma_\rmc:\overline{B}_{\bH_\rmc}(0,\eps_1(\delta_0))\to\overline{B}_{H^1_{-\alpha}(\RR,\bH)}(0,\delta_0)\;\mbox{defined by}\;\Sigma_\rmc(h)=\obw(\cdot,h)
\end{equation}
is of class $C^k$ in the $\cZ_{\gamma,\beta}(\bH)$ topology, for some appropriate weights $\gamma$ and $\beta$. To prove this result, we first prove that the function $\Sigma_\rmc$ is of class $C^1$ and we find a formula for its first order partial derivatives.
Building on this result, we then prove
higher order differentiability using the smoothness properties of substitution operators studied in Appendix~\ref{appendix}.

Our argument follows that used in \cite{Z1} (\cite{Z2}) to establish smoothness of center-stable (center) manifolds in the usual
$C_{\mathrm{b}}$ setting, using a general result on smooth dependence with respect to parameters of
a fixed point mapping $y=T(x,y)$ that is Fr\'echet differentiable in $y$ from a stronger to a weaker Banach space, with
differential $T_y$ extending to a bounded, contractive map on the weaker space \cite[Lemma 2.5, p. 53]{Z1} (\cite[Lemma 3,p. 132]{Z2}).
As the details are sufficiently different in the present $H^1$ setting, particularly
for higher regularity, we carry out the argument here in full.

\medskip
First, we note that the function $\Sigma_\rmc$ is Lipschitz in the $Z_{\gamma,\beta}(\bH)$-norm.
\begin{lemma}\label{r3.6}
Assume Hypotheses (H1) and (H2). Then, the function $\Sigma_\rmc$ satisfies
\begin{equation}\label{3.6-1}
\|\Sigma_\rmc(h_1)-\Sigma_\rmc(h_2)\|_{\cZ_{\gamma,\beta}}\leq c\|h_1-h_2\|\quad\mbox{for any}\quad h_1,h_2\in\overline{B}_{\bH_\rmc}(0,\eps_1(\delta_0)).
\end{equation}
\end{lemma}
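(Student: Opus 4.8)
The plan is to exploit the fixed-point characterization of $\obw(\cdot,w_0)$ together with the mixed-norm contraction estimate Lemma~\ref{l3.2}(iv), which was designed for exactly this purpose. Write $\bw_i:=\Sigma_\rmc(h_i)=\obw(\cdot,h_i)$ for $i=1,2$; each solves $\bw_i=F_\rmc(\cdot,h_i)+\cN_{\eps_0}(\bw_i)$ in $H^1_{-\alpha}(\RR,\bH)$ and satisfies $\|\bw_i\|_{H^1_{-\alpha}}\le\delta_0$ by \eqref{obu-delta-estimate}. Subtracting the two identities gives
\[
\bw_1-\bw_2=\big(F_\rmc(\cdot,h_1)-F_\rmc(\cdot,h_2)\big)+\big(\cN_{\eps_0}(\bw_1)-\cN_{\eps_0}(\bw_2)\big),
\]
and I would estimate each term on the right in the $\cZ_{\gamma,\beta}(\bH)$-norm.

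For the first term, note that $F_\rmc(x,w_0)=w_0+f_\rmc(x,P_{\bV_1}w_0)$ is \emph{linear} in $w_0$ (since $f_\rmc(x,v_1)=xE_1(I-E_0^{-1}P_{\tbV}E)v_1$ is linear in $v_1$ and $P_{\bV_1}$ is linear), so $F_\rmc(\cdot,h_1)-F_\rmc(\cdot,h_2)=F_\rmc(\cdot,h_1-h_2)$; by Lemma~\ref{l3.2}(i) this has $H^1_{-\alpha}$-norm at most $c\|h_1-h_2\|$. Since $\alpha<\gamma<\beta$, one has the continuous embedding $H^1_{-\alpha}(\RR,\bH)\hookrightarrow\cZ_{\gamma,\beta}(\bH)$ with constant $\le 1$ (because $e^{-2\gamma|x|}\le e^{-2\alpha|x|}$ and $e^{-2\beta|x|}\le e^{-2\alpha|x|}$), whence $\|F_\rmc(\cdot,h_1)-F_\rmc(\cdot,h_2)\|_{\cZ_{\gamma,\beta}}\le c\|h_1-h_2\|$. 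The same embedding, applied to $\bw_1$ and $\bw_2$, also shows $\bw_1-\bw_2\in\cZ_{\gamma,\beta}(\bH)$ with finite norm, a fact I will need below. For the second term, since $\bw_1,\bw_2\in H^1_{-\alpha}(\RR,\bH)$ with $\|\bw_i\|_{H^1_{-\alpha}}\le\delta_0$ and the weights satisfy $0<2\alpha<\beta-\gamma$, Lemma~\ref{l3.2}(iv) yields
\[
\|\cN_{\eps_0}(\bw_1)-\cN_{\eps_0}(\bw_2)\|_{\cZ_{\gamma,\beta}}\le c(\eps_0+\delta_0)\,\|\bw_1-\bw_2\|_{\cZ_{\gamma,\beta}}.
\]

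To conclude, I would recall from the proofs of Lemmas~\ref{l3.3} and~\ref{l3.4} that $\eps_0$ and $\delta_0$ were fixed with $c\eps_0<\tfrac14$ and $c\delta_0<\tfrac14$, so $c(\eps_0+\delta_0)\le\tfrac12$. Inserting the two estimates into the displayed difference of fixed-point identities and using the finiteness of $\|\bw_1-\bw_2\|_{\cZ_{\gamma,\beta}}$ to absorb the nonlinear contribution into the left-hand side, one obtains $\tfrac12\|\bw_1-\bw_2\|_{\cZ_{\gamma,\beta}}\le c\|h_1-h_2\|$, which is precisely \eqref{3.6-1}. There is no serious obstacle here: the only point requiring care is the legitimacy of the absorption step, guaranteed by the embedding $H^1_{-\alpha}(\RR,\bH)\hookrightarrow\cZ_{\gamma,\beta}(\bH)$, and the quantitative smallness of $\eps_0,\delta_0$, which was already arranged earlier; all the real content has been front-loaded into Lemma~\ref{l3.2}(iv).
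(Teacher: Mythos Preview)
Your proposal is correct and follows essentially the same route as the paper: subtract the two fixed-point identities, bound the $F_\rmc$-difference by $c\|h_1-h_2\|$, apply Lemma~\ref{l3.2}(iv) to the $\cN_{\eps_0}$-difference, and absorb using $c(\eps_0+\delta_0)\le\tfrac12$. Your version is slightly more explicit about the linearity of $F_\rmc$, the embedding $H^1_{-\alpha}\hookrightarrow\cZ_{\gamma,\beta}$, and the finiteness needed for absorption, but these are exactly the ingredients the paper uses implicitly.
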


\begin{proof}
Since $\Sigma_\rmc(h)=F_\rmc(\cdot,h)+\cN_{\eps_0}(\Sigma_\rmc(h))$ for any $h\in\overline{B}_{\bH_\rmc}(0,\eps_1(\delta_0))$,
from Lemma~\ref{l3.2}(iv) we obtain that
\begin{align}\label{3.6-2}
\|\Sigma_\rmc(h_1)-\Sigma_\rmc(h_2)\|_{\cZ_{\gamma,\beta}}&\leq \|F_\rmc(\cdot,h_1)-F_\rmc(\cdot,h_2)\|_{\cZ_{\gamma,\beta}}+\|\cN_\eps(\Sigma_\rmc(h_1))-\cN_\eps(\Sigma_\rmc(h_1))\|_{\cZ_{\gamma,\beta}}\nonumber\\
&\leq c\|h_1-h_2\|+c(\eps_0+\delta_0) \|\Sigma_\rmc(h_1)-\Sigma_\rmc(h_2)\|_{\cZ_{\gamma,\beta}}
\end{align}
for any $h_1,h_2\in\overline{B}_{\bH_\rmc}(0,\eps_1(\delta_0))$. Since $c\eps_0<\frac{1}{4}$ and $c\delta_0\leq\frac{1}{4}$,
\eqref{3.6-1} follows shortly from \eqref{3.6-2}.
\end{proof}

\begin{lemma}\label{l3.7}
Assume Hypotheses (H1) and (H2) and let $0<\alpha<\gamma<\beta<\frac{1}{2}\nu(\gamma_0,E_0)$ be three positive weights satisfying the condition $2\alpha<\beta-\gamma$. Then, the function $\Sigma_\rmc$ is of class $C^1$ in the $\cZ_{2\gamma,2\beta}(\bH)$ topology.
\end{lemma}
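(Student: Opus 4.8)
The plan is to differentiate the fixed-point relation $\Sigma_\rmc(h)=F_\rmc(\cdot,h)+\cN_{\eps_0}(\Sigma_\rmc(h))$ with respect to the parameter $h\in\bH_\rmc$. First I would write down the candidate derivative: for each $h$, the formal differentiation yields that $D\Sigma_\rmc(h)\in\cB(\bH_\rmc,\cZ_{2\gamma,2\beta}(\bH))$ should satisfy the linear fixed-point equation
\begin{equation*}
D\Sigma_\rmc(h)=D_hF_\rmc(\cdot,h)+\cK\big[(N_{\eps_0}'\circ\Sigma_\rmc(h))\,D\Sigma_\rmc(h)\big],
\end{equation*}
where $N_{\eps_0}'$ acts as a substitution (multiplication) operator. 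Since $\|N_{\eps_0}'(\cdot)\|\leq c\eps_0$ by \eqref{deriv-Neps-est} with $c\eps_0<\tfrac14$, the linear operator $\xi\mapsto\cK[(N_{\eps_0}'\circ\Sigma_\rmc(h))\xi]$ is a strict contraction on $\cZ_{2\gamma,2\beta}(\bH)$ — here the point of doubling the weights to $2\gamma,2\beta$ is that the derivative loses a factor of two in the growth rate relative to $\Sigma_\rmc$ itself (because $F_\rmc$ contains the linear-in-$x$ term $f_\rmc(x,\cdot)$, and differentiating/estimating products of $H^1$ functions via the Sobolev embedding Lemma~\ref{lsob} consumes weight). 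So the linearized equation has a unique solution $\Phi(h)\in\cZ_{2\gamma,2\beta}(\bH)$ depending continuously (in fact Lipschitz, by the same contraction argument applied to differences) on $h$.

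Next I would verify that $\Phi(h)$ is genuinely the Fréchet derivative of $\Sigma_\rmc$ in the $\cZ_{2\gamma,2\beta}(\bH)$ topology. This is the standard argument: set $r(h,k):=\Sigma_\rmc(h+k)-\Sigma_\rmc(h)-\Phi(h)k$; subtract the fixed-point equations and insert a first-order Taylor expansion of $N_{\eps_0}$ with integral remainder, using that $N_{\eps_0}$ is $C^\infty$ with $\|N_{\eps_0}''\|\leq c$ (Remark~\ref{r3.1}). One obtains
\begin{equation*}
r(h,k)=\cK\big[(N_{\eps_0}'\circ\Sigma_\rmc(h))\,r(h,k)\big]+\text{(quadratic remainder)},
\end{equation*}
so by the contraction property $\|r(h,k)\|_{\cZ_{2\gamma,2\beta}}$ is bounded by the remainder, which must be shown to be $o(\|k\|)$. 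The remainder involves the difference $\Sigma_\rmc(h+k)-\Sigma_\rmc(h)$, which is $O(\|k\|)$ in $\cZ_{\gamma,\beta}$ by Lemma~\ref{r3.6}, multiplied against $N_{\eps_0}''$ applied along the segment and against derivative terms; estimating this product in $L^2_{-2\beta}$ requires moving one factor to $L^\infty$ via Lemma~\ref{lsob} — this is exactly why we land in the doubled-weight space $\cZ_{2\gamma,2\beta}$ rather than $\cZ_{\gamma,\beta}$. Continuity of $h\mapsto\Phi(h)$ in operator norm then gives that $\Sigma_\rmc\in C^1$ in the $\cZ_{2\gamma,2\beta}(\bH)$ topology.

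\textbf{Main obstacle.} The delicate point — and the reason the paper emphasizes the mixed-norm spaces — is the $H^1$ (i.e.\ $L^2_{-2\beta}$-on-the-derivative) estimate of the remainder term, which schematically contains factors of the form $\big(N_{\eps_0}''(\cdot)\big)(\Sigma_\rmc(h+k)-\Sigma_\rmc(h))\,\Sigma_\rmc(h)'$ and similar products of a ``difference'' function with a ``derivative'' function. As flagged in Remark~\ref{keyrmk}, the naive $H^1_{-\beta}$ estimate fails because the exponential weight cannot absorb growth of both $\|\Sigma_\rmc(h+k)-\Sigma_\rmc(h)\|$ and $\|\Sigma_\rmc(h)'\|$ simultaneously. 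The resolution is to split the weight: put the difference factor into $L^\infty_{-(\gamma+\beta)}$ via Lemma~\ref{lsob} (gaining smallness $O(\|k\|)$ from Lemma~\ref{r3.6}) while keeping the derivative factor in $L^2_{-\beta}$ (controlled by $\|\Sigma_\rmc(h)\|_{H^1_{-\alpha}}\leq\delta_0$ since $\alpha<\beta$), so that the product lives in $L^2_{-(\gamma+2\beta)}\subset L^2_{-2\beta}$. Getting the bookkeeping of weights consistent — in particular checking that the residual contraction estimate for the $r$-equation still closes in $\cZ_{2\gamma,2\beta}$ under the standing inequalities $0<\alpha<\gamma<\beta<\tfrac12\nu(\Gamma_0,E_0)$ and $2\alpha<\beta-\gamma$ — is the technical heart of the proof, and is the template that will be iterated (via the substitution-operator smoothness results of Appendix~\ref{appendix}) to obtain $C^k$ regularity in the subsequent lemmas.
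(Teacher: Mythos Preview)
Your outline is correct and matches the paper's argument closely: write the candidate derivative as $(I-\cN_{\eps_0}'(\Sigma_\rmc(h)))^{-1}\partial_hF_\rmc(\cdot,h)$, check that $\cN_{\eps_0}'(\Sigma_\rmc(h))$ is a strict contraction on $\cZ_{2\gamma,2\beta}(\bH)$ (this uses not only $\|N_{\eps_0}'\|\le c\eps_0$ but also $\|N_{\eps_0}''\|\le c$ together with $\|\Sigma_\rmc(h)\|_{H^1_{-\alpha}}\le\delta_0$, cf.\ \eqref{A1-3}), and then show the remainder is $o(\|k\|)$ in $\cZ_{2\gamma,2\beta}$ via the second-order Taylor expansion of $N_{\eps_0}$; the paper packages that last step as Lemma~\ref{A1}.

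Two small corrections to your heuristics, which matter when you fill in details. First, the linear-in-$x$ term in $F_\rmc$ is \emph{not} why the weights double: $\partial_hF_\rmc(\cdot,h)$ already lies in $H^1_{-\alpha}\subset\cZ_{\gamma,\beta}$, and the linearized operator is contractive on $\cZ_{\gamma,\beta}$ as well as on $\cZ_{2\gamma,2\beta}$. The doubling is forced purely by the remainder estimate for the substitution operator --- $\cN_{\eps_0}$ is $C^1$ only from $\cZ_{\gamma,\beta}$ to $\cZ_{2\gamma,2\beta}$, because the quadratic remainder involves products that each consume one Sobolev embedding. Second, your explicit weight count is off by a factor of two: Lemma~\ref{lsob} gives $\cZ_{\gamma,\beta}\hookrightarrow L^\infty_{-(\gamma+\beta)/2}$, not $L^\infty_{-(\gamma+\beta)}$, so the product of the difference with the derivative lands in $L^2_{-(\gamma+3\beta)/2}\subset L^2_{-2\beta}$ (using $\gamma<\beta$); with your stated exponent the inclusion $L^2_{-(\gamma+2\beta)}\subset L^2_{-2\beta}$ would fail.
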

\begin{proof} Since the function $f_\rmc$ is a bilinear map from $\RR\times\bH$ to $\bH$, from \eqref{def-F-star} we infer that
\begin{equation}\label{3.7-1}
\mbox{the function}\;h\to F_\rmc(\cdot,h):\bH_\rmc\to H^1_{-\alpha}(\RR,\bH)\;\mbox{is of class}\;C^\infty.
\end{equation}
Since the cutoff function $\rho$ is a function of class $C^\infty$ with compact support and $B$ is a bilinear map on $\bH$, from \eqref{def-Neps} we have that the function $N_{\eps_0}$ is of class $C^\infty$ on $\bH$ and $\sup_{h\in\bH}\|N_{\eps_0}^{(j)}(h)\|<\infty$ for any $j\geq 0$. Since $\cK\in\cB(\cZ_{2\gamma,2\beta}(\bH))$ by Lemma~\ref{l2.6}, from Lemma~\ref{A1} it follows that
the function $\cN_{\eps_0}$ is of class $C^1$ from $\cZ_{\gamma,\beta}(\bH)$ to $\cZ_{2\gamma,2\beta}(\bH)$. Moreover, the linear operator $\cN_{\eps_0}'(f)$ can be extended to a bounded linear operator on $\cZ_{\gamma,\beta}(\bH)$ for any $f\in\overline{B}_{H^1_{-\alpha}(\RR,\bH)}(0,\delta_0)$. Since $\Sigma_\rmc(h)=\obw(\cdot,h)\in H^1_{-\alpha}(\RR,\bH)$ and  $\|\Sigma_\rmc(h)\|_{H^1_{-\alpha}}\leq\delta_0$ for any $h\in\overline{B}_{\bH_\rmc}(0,\eps_1(\delta_0))$ by Lemma~\ref{l3.3}, from \eqref{A1-2} and \eqref{A1-3} we have that
\begin{equation}\label{3.7-2}
\|\cN_{\eps_0}'(\Sigma_\rmc(h))\|_{\cB(\cZ_{\gamma,\beta}(\bH))}=\delta_0\mathcal{O}(\sup_{h\in\bH}\|N_{\eps_0}''(h)\|)+\mathcal{O}(\sup_{h\in\bH}\|N_{\eps_0}'(h)\|)\leq c(\eps_0+\delta_0)\leq\frac{1}{2}
\end{equation}
for any $h\in\overline{B}_{\bH_\rmc}(0,\eps_1(\delta_0))$. Doubling the weights $\gamma$ and $\beta$, we note that the linear operator $\cN_{\eps_0}'(\Sigma_\rmc(h))$ can be extended to a bounded linear operator on $\cZ_{2\gamma,2\beta}(\bH)$ and
\begin{equation}\label{3.7-3}
\|\cN_{\eps_0}'(\Sigma_\rmc(h))\|_{\cB(\cZ_{2\gamma,2\beta}(\bH))}\leq\frac{1}{2}\;\mbox{for any}\;h\in\overline{B}_{\bH_\rmc}(0,\eps_1(\delta_0)).
\end{equation}
\noindent{\bf Claim 1.} $\Sigma_\rmc$ is differentiable in the $\overline{B}_{\bH_\rmc}(0,\eps_1(\delta_0))\to\cZ_{2\gamma,2\beta}(\bH)$ topology and
\begin{equation}\label{3.7-4}
\Sigma_\rmc'(h)=\Big(I-\cN_{\eps_0}'(\Sigma_\rmc(h))\Big)^{-1}\pa_hF_\rmc(\cdot,h)\quad\mbox{for any}\quad h\in\overline{B}_{\bH_\rmc}(0,\eps_1(\delta_0)).
\end{equation}
First, we fix $h_0\in\overline{B}_{\bH_\rmc}(0,\eps_1(\delta_0))$. Since $\cN_{\eps_0}$ is of class $C^1$ from $\cZ_{\gamma,\beta}(\bH)$ to $\cZ_{2\gamma,2\beta}(\bH)$ by Lemma~\ref{A1}, we have that there exists $\cR_{\eps_0}:H^1_{-\alpha}(\RR,\bH)\to\cZ_{2\gamma,2\beta}(\bH)$ such that
\begin{align}\label{3.7-5}
\cR_{\eps_0}(f)\to 0\;\mbox{in}\;\cZ_{2\gamma,2\beta}(\bH)\;\mbox{as}\;&H^1_{-\alpha}(\RR,\bH)\ni f\to\Sigma_\rmc(h_0)\;\mbox{in}\;\cZ_{\gamma,\beta}(\bH)\nonumber\\
\cN_{\eps_0}(f)=\cN_{\eps_0}(\Sigma_\rmc(h_0))+\cN_{\eps_0}'(\Sigma_\rmc(h_0))&\big(f-\Sigma_\rmc(h_0)\big)+\|f-\Sigma_\rmc(h_0)\|_{\cZ_{\gamma,\beta}}\cR_{\eps_0}(f)
\end{align}
for any $f\in H^1_{-\alpha}(\RR,\bH)$. Since $\Sigma_\rmc(h)\in H^1_{-\alpha}(\RR,\bH)$ is a solution of equation $\bw=\cT_{\eps_0}(h,\bw)$ for any $h\in\overline{B}_{\bH_\rmc}(0,\eps_1(\delta_0))$ we have that
\begin{align}\label{3.7-6}
&\qquad\qquad\qquad\Sigma_\rmc(h)-\Sigma_\rmc(h_0)=F_\rmc(\cdot,h)+\cN_{\eps_0}(\Sigma_\rmc(h))-F_\rmc(\cdot,h_0)-\cN_{\eps_0}(\Sigma_\rmc(h_0))\nonumber\\
&=F_\rmc(\cdot,h)-F_\rmc(\cdot,h_0)+\cN_{\eps_0}(\Sigma_\rmc(h_0))\big(\Sigma_\rmc(h)-\Sigma_\rmc(h_0)\big)+\|\Sigma_\rmc(h)-\Sigma_\rmc(h_0)\|_{\cZ_{\gamma,\beta}}\cR_{\eps_0}(\Sigma_\rmc(h))
\end{align}
for any $h\in\overline{B}_{\bH_\rmc}(0,\eps_1(\delta_0))$. From \eqref{3.7-2} and \eqref{3.7-3} we infer that $I-\cN_{\eps_0}'(\Sigma_\rmc(h))$ is invertible on $\cZ_{2\gamma,2\beta}(\bH)$ and $\Big(I-\cN_{\eps_0}'(\Sigma_\rmc(h))\Big)^{-1}\cZ_{\gamma,\beta}(\bH)=\cZ_{\gamma,\beta}(\bH)$ for any $h\in\overline{B}_{\bH_\rmc}(0,\eps_1(\delta_0))$. From \eqref{3.7-6} we obtain that
\begin{align}\label{3.7-7}
\Sigma_\rmc(h)-\Sigma_\rmc(h_0)&=\Big(I-\cN_{\eps_0}'(\Sigma_\rmc(h_0))\Big)^{-1}\Big(F_\rmc(\cdot,h)-F_\rmc(\cdot,h_0)\Big)\nonumber\\
&+\|\Sigma_\rmc(h)-\Sigma_\rmc(h_0)\|_{\cZ_{\gamma,\beta}}\Big(I-\cN_{\eps_0}'(\Sigma_\rmc(h_0))\Big)^{-1}\cR_{\eps_0}(\Sigma_\rmc(h))
\end{align}
for any $h\in\overline{B}_{\bH_\rmc}(0,\eps_1(\delta_0))$. From Lemma~\ref{r3.6}, \eqref{3.7-2}, \eqref{3.7-3} and \eqref{3.7-7} we obtain that
\begin{align}\label{3.7-8}
\Big\|\Sigma_\rmc(h)&-\Sigma_\rmc(h_0)-\Big(I-\cN_{\eps_0}'(\Sigma_\rmc(h_0))\Big)^{-1}\pa_hF_\rmc(\cdot,h_0)(h-h_0)\Big\|_{\cZ_{2\gamma,2\beta}}\nonumber\\
&\leq\Big\|\Big(I-\cN_{\eps_0}'(\Sigma_\rmc(h_0))\Big)^{-1}\Big\|_{\cB(\cZ_{\gamma,\beta})}\Big\|F_\rmc(\cdot,h)-F_\rmc(\cdot,h_0)-\pa_hF_\rmc(\cdot,h_0)(h-h_0)\Big\|_{\cZ_{\gamma,\beta}}\nonumber\\
&+\Big\|\Sigma_\rmc(h)-\Sigma_\rmc(h_0)\Big\|_{\cZ_{\gamma,\beta}}\Big\|\Big(I-\cN_{\eps_0}'(\Sigma_\rmc(h_0))\Big)^{-1}\Big\|_{\cB(\cZ_{2\gamma,2\beta})}\Big\|\cR_{\eps_0}(\Sigma_\rmc(h))
\Big\|_{\cZ_{2\gamma,2\beta}}\nonumber\\
&\leq2\|F_\rmc(\cdot,h)-F_\rmc(\cdot,h_0)-\pa_hF_\rmc(\cdot,h_0)(h-h_0)\|_{\cZ_{\gamma,\beta}}+2\|h-h_0\|\|\cR_{\eps_0}(\Sigma_\rmc(h))\|_{\cZ_{2\gamma,2\beta}}
\end{align}
for any $h\in\overline{B}_{\bH_\rmc}(0,\eps_1(\delta_0))$. From \eqref{3.6-1} and \eqref{3.7-5} we infer that $\lim_{h\to h_0}\|\cR_{\eps_0}(\Sigma_\rmc(h))\|_{\cZ_{2\gamma,2\beta}}=0$. From \eqref{3.7-1} and \eqref{3.7-8} we conclude that $\Sigma_\rmc$ is differentiable at $h_0$ in the $\overline{B}_{\bH_\rmc}(0,\eps_1(\delta_0))\to\cZ_{2\gamma,2\beta}(\bH)$ topology, proving Claim 1.

\noindent{\bf Claim 2.} $\Sigma_\rmc'$ is continuous from $\overline{B}_{\bH_\rmc}(0,\eps_1(\delta_0))$ to $\cB\big(\bH_\rmc,\cZ_{2\gamma,2\beta}(\bH)\big)$.

We fix again $h_0\in\overline{B}_{\bH_\rmc}(0,\eps_1(\delta_0))$. From \eqref{3.7-4} we have that
\begin{align}\label{3.7-9}
\Sigma_\rmc'(h)&-\Sigma_\rmc'(h_0)=\Big(\Big(I-\cN_{\eps_0}'(\Sigma_\rmc(h))\Big)^{-1}-\Big(I-\cN_{\eps_0}'(\Sigma_\rmc(h_0))\Big)^{-1}\Big)\pa_hF_\rmc(\cdot,h_0)\nonumber\\
&\qquad\qquad+\Big(I-\cN_{\eps_0}'(\Sigma_\rmc(h))\Big)^{-1}\Big(\pa_hF_\rmc(\cdot,h)-\pa_hF_\rmc(\cdot,h_0)\Big)\nonumber\\
&=\Big(I-\cN_{\eps_0}'(\Sigma_\rmc(h))\Big)^{-1}\Big(\cN_{\eps_0}'(\Sigma_\rmc(h))-\cN_{\eps_0}'(\Sigma_\rmc(h_0))\Big)\Big(I-\cN_{\eps_0}'(\Sigma_\rmc(h_0))\Big)^{-1}\pa_hF_\rmc(\cdot,h_0)\nonumber\\
&\qquad\qquad+\Big(I-\cN_{\eps_0}'(\Sigma_\rmc(h))\Big)^{-1}\Big(\pa_hF_\rmc(\cdot,h)-\pa_hF_\rmc(\cdot,h_0)\Big)
\end{align}
for any $h\in\overline{B}_{\bH_\rmc}(0,\eps_1(\delta_0))$. From \eqref{3.7-2}, \eqref{3.7-3} and \eqref{3.7-9} we obtain that
\begin{align}\label{3.7-10}
\|\Sigma_\rmc'(h)&-\Sigma_\rmc'(h_0)\|_{\cB(\bH_\rmc,\cZ_{2\gamma,2\beta})}\leq 2\|\pa_hF_\rmc(\cdot,h)-\pa_hF_\rmc(\cdot,h_0)\|_{\cB(\bH_\rmc,\cZ_{2\gamma,2\beta})}\nonumber\\
&+2\Big\|\Big(\cN_{\eps_0}'(\Sigma_\rmc(h))-\cN_{\eps_0}'(\Sigma_\rmc(h_0))\Big)\Big(I-\cN_{\eps_0}'(\Sigma_\rmc(h_0))\Big)^{-1}\pa_hF_\rmc(\cdot,h_0)\Big\|_{\cB(\bH_\rmc,\cZ_{2\gamma,2\beta})}\nonumber\\
&\leq 4\Big\|\cN_{\eps_0}'(\Sigma_\rmc(h))-\cN_{\eps_0}'(\Sigma_\rmc(h_0))\Big\|_{\cB(\cZ_{\gamma,\beta},\cZ_{2\gamma,2\beta})}
\Big\|\pa_hF_\rmc(\cdot,h_0)\Big\|_{\cB(\bH_\rmc,\cZ_{\gamma,\beta})}\nonumber\\
&+2\|\pa_hF_\rmc(\cdot,h)-\pa_hF_\rmc(\cdot,h_0)\|_{\cB(\bH_\rmc,\cZ_{2\gamma,2\beta})}\quad\mbox{for any}\quad h\in\overline{B}_{\bH_\rmc}(0,\eps_1(\delta_0)).
\end{align}
Since the function $\cN_{\eps_0}$ is of class $C^1$ by Lemma~\ref{A1}, from \eqref{3.6-1} it follows that
\begin{equation}\label{3.7-11}
\lim_{h\to h_0}\Big\|\cN_{\eps_0}'(\Sigma_\rmc(h))-\cN_{\eps_0}'(\Sigma_\rmc(h_0))\Big\|_{\cB(\cZ_{\gamma,\beta},\cZ_{2\gamma,2\beta})}=0.
\end{equation}
From \eqref{3.7-1}, \eqref{3.7-10} and \eqref{3.7-11} we conclude that $\Sigma_\rmc'$ is continuous at $h_0$ in the $\overline{B}_{\bH_\rmc}(0,\eps_1(\delta_0))\to\cB\big(\bH_\rmc,\cZ_{2\gamma,2\beta}(\bH)\big)$ topology, proving Claim 2 and the lemma.
\end{proof}
Next, we focus on proving the higher order smoothness of $\Sigma_\rmc$ using \eqref{3.7-4} and Lemma~\ref{A2}. Since the function $h\to F_\rmc(\cdot,h):\bH_\rmc\to H^1_{-\alpha}(\RR,\bH)$ is of class $C^\infty$ as pointed out in \eqref{3.7-1}, we need to study the smoothness properties of the operator valued function
$(I-\cN_{\eps_0}'\circ\Sigma_\rmc)^{-1}$.
\begin{lemma}\label{l3.8}
Assume Hypotheses (H1) and (H2). Then, there exist $\tgamma<\tbeta<\frac{1}{2}\nu(\gamma_0,E_0)$, such that the function $\cN_{\eps_0}'\circ\Sigma_\rmc$ and the function
\begin{equation}\label{diff-inverse}
h\to \Big(I-\cN_{\eps_0}'(\Sigma_\rmc(h))\Big)^{-1}:\overline{B}_{\bH_\rmc}(0,\eps_1(\delta_0))\to\cB\big(\cZ_{2\gamma,2\beta}(\bH),\cZ_{\tgamma,\tbeta}(\bH)\big)\;\mbox{are of class}\;C^1.
\end{equation}
Moreover, the following formula holds true:
\begin{equation}\label{partial-deriv-inverse}
\pa_{h_\ell} \Big(I-\cN_{\eps_0}'(\Sigma_\rmc(h))\Big)^{-1}=\Big(I-\cN_{\eps_0}'(\Sigma_\rmc(h))\Big)^{-1} \pa_{h_\ell}\cN_{\eps_0}'(\Sigma_\rmc(h))\Big(I-\cN_{\eps_0}'(\Sigma_\rmc(h))\Big)^{-1}.
\end{equation}
\end{lemma}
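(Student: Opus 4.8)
The plan is to bootstrap from Lemma~\ref{l3.7}, which already provides that $\Sigma_\rmc$ is $C^1$ as a map $\overline{B}_{\bH_\rmc}(0,\eps_1(\delta_0))\to\cZ_{2\gamma,2\beta}(\bH)$ with values in $\overline{B}_{H^1_{-\alpha}}(0,\delta_0)$ and derivative $\Sigma_\rmc'(h)=\big(I-\cN_{\eps_0}'(\Sigma_\rmc(h))\big)^{-1}\pa_hF_\rmc(\cdot,h)$ as in \eqref{3.7-4}. Recall that $\cN_{\eps_0}=\cK\circ(N_{\eps_0}\circ\,\cdot\,)$, with $N_{\eps_0}$ of class $C^\infty$ on $\bH$ having all derivatives bounded (Remark~\ref{r3.1}) and $\cK$ bounded on $\cZ_{\varkappa_1,\varkappa_2}(\bH)$ for $\varkappa_1<\varkappa_2<\nu(\Gamma_0,E_0)$ (Lemma~\ref{l2.6}, Lemma~\ref{r2.11}). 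The first step is to show that $h\mapsto\cN_{\eps_0}'(\Sigma_\rmc(h))$ is $C^1$ from $\overline{B}_{\bH_\rmc}(0,\eps_1(\delta_0))$ into $\cB\big(\cZ_{2\gamma,2\beta}(\bH),\cZ_{\tgamma,\tbeta}(\bH)\big)$ for a suitable pair $\tgamma<\tbeta$. For this I would apply the smoothness results for substitution operators of Appendix~\ref{appendix} (Lemmas~\ref{A1} and~\ref{A2}) to the $C^\infty$ maps $N_{\eps_0}'$ and $N_{\eps_0}''$: they show that $f\mapsto N_{\eps_0}'\circ f$, viewed as valued in the multiplication operators between the relevant weighted spaces, is $C^1$, each differentiation enlarging the weights by the fixed amount prescribed there, with derivative the multiplication operator by $N_{\eps_0}''\circ f$ (applied to the increment). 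Composing with $\cK$ on the left and with the $C^1$ map $\Sigma_\rmc$ on the right and applying the chain rule yields the claim, with $\pa_{h_\ell}\cN_{\eps_0}'(\Sigma_\rmc(h))$ equal to $\cK$ composed with multiplication by $N_{\eps_0}''(\Sigma_\rmc(h))\,\pa_{h_\ell}\Sigma_\rmc(h)$. Here $\tgamma,\tbeta$ are the multiples of $\gamma,\beta$ forced by this cascade; since $\alpha,\gamma,\beta$ were only required to be small, we may assume them small enough from the outset that still $\tgamma<\tbeta<\tfrac12\nu(\Gamma_0,E_0)$, and the passage from $\cZ_{2\gamma,2\beta}(\bH)$ to the weaker $\cZ_{\tgamma,\tbeta}(\bH)$ --- which is what makes the derivative terms $\pa_{h_\ell}\Sigma_\rmc(h)\,(\Sigma_\rmc(h))'$ absorbable --- rests on the Sobolev embedding $\cZ_{\gamma,\beta}(\bH)\hookrightarrow L^\infty_{-(\beta+\gamma)/2}(\bH)$ of Lemma~\ref{lsob}, used exactly as in the proof of Lemma~\ref{l3.2}(iv) (cf. Remark~\ref{keyrmk}).

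For the inverse, set $A(h):=I-\cN_{\eps_0}'(\Sigma_\rmc(h))$. By \eqref{3.7-2} and \eqref{3.7-3} the operator $\cN_{\eps_0}'(\Sigma_\rmc(h))$ has operator norm $\le\tfrac12$ on $\cZ_{\gamma,\beta}(\bH)$ and on $\cZ_{2\gamma,2\beta}(\bH)$, and since that bound comes only from the uniform bounds on $N_{\eps_0}'$ and $N_{\eps_0}''$ together with the smallness of $\eps_0,\delta_0$ --- and is insensitive to the weight --- the same estimate holds on $\cZ_{\tgamma,\tbeta}(\bH)$. Hence $A(h)$ is boundedly invertible, via the Neumann series, on each of these spaces, with inverses bounded uniformly in $h$. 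Now the resolvent identity
\[
A(h)^{-1}-A(h_0)^{-1}=A(h)^{-1}\big(\cN_{\eps_0}'(\Sigma_\rmc(h))-\cN_{\eps_0}'(\Sigma_\rmc(h_0))\big)A(h_0)^{-1},
\]
read as the composition $A(h_0)^{-1}\in\cB(\cZ_{2\gamma,2\beta}(\bH))$, then the increment $\in\cB(\cZ_{2\gamma,2\beta}(\bH),\cZ_{\tgamma,\tbeta}(\bH))$, which is $C^1$ in $h$ by the first step, then $A(h)^{-1}\in\cB(\cZ_{\tgamma,\tbeta}(\bH))$, shows first continuity of $h\mapsto A(h)^{-1}$ into $\cB(\cZ_{2\gamma,2\beta}(\bH),\cZ_{\tgamma,\tbeta}(\bH))$, and then, after dividing by $h-h_0$ and letting $h\to h_0$, its differentiability there with derivative exactly \eqref{partial-deriv-inverse}, understood as an identity of bounded operators $\cZ_{2\gamma,2\beta}(\bH)\to\cZ_{\tgamma,\tbeta}(\bH)$. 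Continuity of this derivative then follows from the continuity of $h\mapsto\cN_{\eps_0}'(\Sigma_\rmc(h))$ and of $h\mapsto A(h)^{-1}$ already obtained, which establishes \eqref{diff-inverse}.

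The main obstacle is precisely the weight bookkeeping in the substitution-operator step. Differentiating $\bw\mapsto N_{\eps_0}\circ\bw$ in the $H^1$-component of the $\cZ$-norm produces terms containing $(\Sigma_\rmc(h))'$, which lies only in a weaker negatively weighted $L^2$ space and, as Remark~\ref{keyrmk} makes explicit, cannot be controlled in the naive norm; one is therefore forced to enlarge the weights at each differentiation and to use Lemma~\ref{lsob} to trade the $L^\infty$-decay of $\pa_{h_\ell}\Sigma_\rmc(h)$ against the $L^2$-growth of $(\Sigma_\rmc(h))'$. Checking that this chain of spaces closes up at first order --- and, for the subsequent $C^k$ statement, at every order $\le k$ --- while keeping all weights below $\tfrac12\nu(\Gamma_0,E_0)$, and that \eqref{partial-deriv-inverse} is a legitimate identity of bounded operators between the stated spaces, is the technical core; once the correct hierarchy of weighted spaces is fixed, the differentiations themselves are the routine chain-rule and Neumann-series manipulations sketched above, carried out in detail via Appendix~\ref{appendix}.
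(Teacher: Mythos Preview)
Your overall strategy matches the paper's: use the substitution-operator smoothness of Appendix~\ref{appendix} (Lemma~\ref{A2} with $p=0$) together with the $C^1$ regularity of $\Sigma_\rmc$ from Lemma~\ref{l3.7} to show $\cN_{\eps_0}'\circ\Sigma_\rmc$ is $C^1$ into a weaker $\cZ$-space, and then pass to the inverse via the resolvent identity. The ingredients and the derivative formula are correct.

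However, the two-level scheme $(\cZ_{2\gamma,2\beta},\cZ_{\tgamma,\tbeta})$ you describe for the inverse step does not close. When you divide the resolvent identity by $s$ and try to pass to the limit in
\[
A(h_0+s\mathrm{e}_\ell)^{-1}\,\frac{\cN_{\eps_0}'(\Sigma_\rmc(h_0+s\mathrm{e}_\ell))-\cN_{\eps_0}'(\Sigma_\rmc(h_0))}{s}\,A(h_0)^{-1},
\]
you need convergence of the \emph{outer} factor $A(h_0+s\mathrm{e}_\ell)^{-1}$ as well; uniform boundedness on $\cZ_{\tgamma,\tbeta}$ is not enough. But the only continuity you have established for $A(\cdot)^{-1}$ is in the weaker topology $\cB(\cZ_{2\gamma,2\beta},\cZ_{\tgamma,\tbeta})$, not in $\cB(\cZ_{\tgamma,\tbeta})$. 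The paper handles this by introducing a cascade of \emph{four} weight pairs $(\gamma_j,\beta_j)$, $j=0,1,2,3$, with $(\gamma_0,\beta_0)=(2\gamma,2\beta)$: continuity of $A(\cdot)^{-1}$ is proved in $\cB(\cZ_{\gamma_j,\beta_j},\cZ_{\gamma_{j+1},\beta_{j+1}})$ for each $j$ (one step up), existence of partial derivatives then follows in $\cB(\cZ_{\gamma_j,\beta_j},\cZ_{\gamma_{j+2},\beta_{j+2}})$ (two steps, since both the middle difference quotient and the outer inverse cost one level each), and continuity of those derivatives lands in $\cB(\cZ_{\gamma_0,\beta_0},\cZ_{\gamma_3,\beta_3})$, so that finally $\tgamma=\gamma_3$, $\tbeta=\beta_3$. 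You correctly identify the weight bookkeeping as the technical core, but your sketch underestimates by a factor of two how many intermediate levels are actually required.
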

\begin{proof}
Since $N_{\eps_0}$ is a function of class $C^\infty$ on $\bH$, we have that $N_{\eps_0}':\bH\to\cB(\bH)$ is of class $C^\infty$. Moreover, since the cutoff function $\rho$ has compact support,
from \eqref{def-Neps} we obtain that all the derivatives of $N_{\eps_0}$ are bounded. Hence, the function $N_{\eps_0}$ satisfies the conditions of Lemma~\ref{A2} for $p=0$. It follows that the function $\cW_1:\cZ_{2\gamma,2\beta}(\bH)\to\cB\big(\cZ_{2\gamma,2\beta}(\bH),\cZ_{\gamma_1,\beta_1}(\bH)\big)$ defined by
\begin{equation}\label{3.8-1}
(\cW_1(f)z\big)(x)=N_{\eps_0}'(f(x))z(x)\;\mbox{for}\;f,z\in\cZ_{\gamma,\beta}(\bH),\,x\in\RR
\end{equation}
is of class $C^1$, where $\gamma_1=4\beta+2\gamma$ and $\beta_1=8\beta$. We note that $\beta_1-\gamma_1>2\beta-2\gamma>2\alpha$. Moreover, the weights $\alpha$, $\beta$ and $\gamma$ can be chosen small enough such that $\beta_1<\frac{1}{2}\nu(\Gamma_0,E_0)$. Since $\cN_{\eps_0}'(\Sigma_\rmc(h))=\cK\cW_1(\Sigma_\rmc(h))$ for any $h\in\overline{B}_{\bH_\rmc}(0,\eps_1(\delta_0))$,  $\cK\in\cB(\cZ_{\gamma_1,\beta_1}(\bH))$ by Lemma~\ref{l2.6} and Lemma~\ref{r2.11}, and $\Sigma_\rmc$ is of class $C^1$ by Lemma~\ref{l3.7}, we conclude that
\begin{equation}\label{first-result}
\cN_{\eps_0}'\circ\Sigma_\rmc\;\mbox{is of class}\; C^1\;\mbox{from}\;\overline{B}_{\bH_\rmc}(0,\eps_1(\delta_0))\;\mbox{to}\; \cB\big(\cZ_{2\gamma,2\beta}(\bH),\cZ_{\gamma_1,\beta_1}(\bH)\big).
\end{equation}
Since the function $\cN_{\eps_0}'\circ\Sigma_\rmc$ is of class $C^1$ only in the weaker $\cB\big(\cZ_{2\gamma,2\beta}(\bH),\cZ_{\gamma_1,\beta_1}(\bH)\big)$ topology, rather than the $\cB\big(\cZ_{2\gamma,2\beta}(\bH)\big)$ topology, we \textit{cannot} infer \eqref{diff-inverse} immediately. To overcome this issue, we use the fact that for any two bounded operators $S_j$, $j=1,2$, with $\|S_1\|<1$ and $\|S_2\|<1$ we have that
\begin{equation}\label{3.8-2}
(I-S_1)^{-1}-(I-S_2)^{-1}=(I-S_1)^{-1}(S_1-S_2)(I-S_2)^{-1}.
\end{equation}
Moreover, we need to adjust the weights accordingly. Therefore, we introduce $\gamma_j$ and $\beta_j$, $j=2,3$, by the formula $\gamma_j=2\beta_{j-1}+\gamma_{j-1}$ and $\beta_j=4\beta_{j-1}$, for $j=2,3$ and set $\gamma_0:=2\gamma$ and $\beta_0=2\beta$. We note that $\beta_3-\gamma_3>\beta_2-\gamma_2>\beta_1-\gamma_1>\beta-\gamma>2\alpha$. The original weights $\alpha$, $\beta$ and $\gamma$ can be chosen small enough such that $\beta_3<\frac{1}{2}\nu(\Gamma_0,E_0)$.

\noindent{\bf Claim 1.} The function $h\to \Big(I-\cN_{\eps_0}'(\Sigma_\rmc(h))\Big)^{-1}:\overline{B}_{\bH_\rmc}(0,\eps_1(\delta_0))\to\cB\big(\cZ_{\gamma_j,\beta_j}(\bH),\cZ_{\gamma_{j+1},\beta_{j+1}}(\bH)\big)$ is continuous for any $j=0,1,2$. Arguing similar to \eqref{3.7-2}, we have that the constants $\eps_0>0$ and $\delta_0>0$ can be chosen small enough such that $\cN_{\eps_0}'(\Sigma(h))$   can be extended to a bounded linear operator on $\cZ_{\gamma_j,\beta_j}(\bH)$ and
\begin{equation}\label{3.8-3}
\|\cN_{\eps_0}'(\Sigma_\rmc(h))\|_{\cB(\cZ_{\gamma_j,\beta_j}(\bH))}\leq\frac{1}{2}\;\mbox{for any}\;h\in\overline{B}_{\bH_\rmc}(0,\eps_1(\delta_0)),\,j=0,1,2,3.
\end{equation}
From \eqref{3.8-2} one can readily check that
\begin{align}\label{3.8-5}
\Big(I&-\cN_{\eps_0}'(\Sigma_\rmc(h))\Big)^{-1}-\Big(I-\cN_{\eps_0}'(\Sigma_\rmc(h_0))\Big)^{-1}\nonumber\\&=\Big(I-\cN_{\eps_0}'(\Sigma_\rmc(h))\Big)^{-1}
\Big( \cN_{\eps_0}'(\Sigma_\rmc(h))-\cN_{\eps_0}'(\Sigma_\rmc(h_0))\Big)
\Big(I-\cN_{\eps_0}'(\Sigma_\rmc(h_0))\Big)^{-1}
\end{align}
for any $h\in\overline{B}_{\bH_\rmc}(0,\eps_1(\delta_0))$. From \eqref{3.8-3} and \eqref{3.8-5} we obtain that
\begin{align}\label{3.8-4}
\Big\|\Big(I-\cN_{\eps_0}'(\Sigma_\rmc(h))\Big)^{-1}&-\Big(I-\cN_{\eps_0}'(\Sigma_\rmc(h_0))\Big)^{-1}\Big\|_{\cB\big(\cZ_{\gamma_j,\beta_j},\cZ_{\gamma_{j+1},\beta_{j+1}}\big)}\leq \nonumber\\&\leq
4\Big\|\cN_{\eps_0}'(\Sigma_\rmc(h))-\cN_{\eps_0}'(\Sigma_\rmc(h_0))\Big\|_{\cB\big(\cZ_{\gamma_j,\beta_j},\cZ_{\gamma_{j+1},\beta_{j+1}}\big)}
\end{align}
for any $h,h_0\in\overline{B}_{\bH_\rmc}(0,\eps_1(\delta_0))$. Since the function $\cN_{\eps_0}'\circ\Sigma_\rmc$ is of class $C^1$, as shown above, Claim 1 follows shortly from \eqref{3.8-4}.

\noindent{\bf Claim 2.} The function $h\to \Big(I-\cN_{\eps_0}'(\Sigma_\rmc(h))\Big)^{-1}:\overline{B}_{\bH_\rmc}(0,\eps_1(\delta_0))\to\cB\big(\cZ_{\gamma_j,\beta_j}(\bH),\cZ_{\gamma_{j+2},\beta_{j+2}}(\bH)\big)$ has partial derivatives for any $j=0,1$. Moreover, \eqref{partial-deriv-inverse} holds true. Fix $j\in\{0,1\}$.  Let $\{\mathrm{e}_\ell\}_{\ell}$ be a basis in $\bH_\rmc$ and $s\in\RR$ small enough.
To prove Claim 2 we set $h=h_0+s\mathrm{e}_\ell$ in \eqref{3.8-5} for $s\in\RR$ small enough and pass to the limit as $s\to0$. Indeed, from \eqref{first-result} we obtain that
\begin{equation}\label{3.8-6}
\frac{1}{s}\big( \cN_{\eps_0}'(\Sigma_\rmc(h_0+s\mathrm{e}_\ell))-\cN_{\eps_0}'(\Sigma_\rmc(h_0))\big)\to\pa_{h_\ell}(\cN_{\eps_0}'\circ\Sigma_\rmc)(h_0)\;\mbox{as}\;s\to 0\;\mbox{in}\;\cB(\cZ_{\gamma_j,\beta_j}(\bH),\cZ_{\gamma_{j+1},\beta_{j+1}}(\bH)).
\end{equation}
Moreover, from Claim 1 we infer that
\begin{equation}\label{3.8-7}
\Big(I-\cN_{\eps_0}'(\Sigma_\rmc(h_0+s\mathrm{e}_\ell))\Big)^{-1}\to\Big(I-\cN_{\eps_0}'(\Sigma_\rmc(h_0))\Big)^{-1}\;\mbox{as}\;s\to 0\;\mbox{in}\;\cB(\cZ_{\gamma_{j+1},\beta_{j+1}}(\bH),\cZ_{\gamma_{j+2},\beta_{j+2}}(\bH)).
\end{equation}
In addition, from \eqref{3.8-3} it follows that $\Big(I-\cN_{\eps_0}'(\Sigma_\rmc(h_0))\Big)^{-1}\in \cB(\cZ_{\gamma_j,\beta_j}(\bH))$. Summarizing, it is now clear that Claim 2 follows shortly from \eqref{3.8-6} and \eqref{3.8-7}. To finish the proof of lemma, we prove that the partial derivatives of $(I-\cN_{\eps_0}'\circ\Sigma_\rmc)^{-1}$ are continuous. We set $\tgamma:=\gamma_3$ and $\tbeta:=\beta_3$. Passing to the limit for $h\to h_0$ in \eqref{partial-deriv-inverse}, the lemma follows from \eqref{first-result} and Claim 1.
\end{proof}
\begin{lemma}\label{l3.9}
Assume Hypotheses (H1) and (H2). Then, for any integer $k\geq2$ there exist $\ogamma<\obeta<\frac{1}{2}\nu(\gamma_0,E_0)$ such that the function $\Sigma_\rmc$ is of class $C^k$ from $\overline{B}_{\bH_\rmc}(0,\eps_1(\delta_0))$ to $\cZ_{\ogamma,\obeta}(\bH)$.
\end{lemma}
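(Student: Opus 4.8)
The plan is to argue by induction on $k$, bootstrapping from the $C^1$ regularity already in hand. The base case is Lemma~\ref{l3.7} together with the $C^1$ statements of Lemma~\ref{l3.8}; for the inductive step I assume there exist weights $\gamma'<\beta'<\tfrac12\nu(\Gamma_0,E_0)$ (depending on $k$) for which $\Sigma_\rmc$ is of class $C^{k-1}$ from $\overline{B}_{\bH_\rmc}(0,\eps_1(\delta_0))$ into $\cZ_{\gamma',\beta'}(\bH)$, and propagate one more derivative through the identity
\begin{equation*}
\Sigma_\rmc'(h)=\bigl(I-\cN_{\eps_0}'(\Sigma_\rmc(h))\bigr)^{-1}\,\pa_hF_\rmc(\cdot,h)
\end{equation*}
from \eqref{3.7-4}. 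Since $F_\rmc$ is affine in $h$, the factor $\pa_hF_\rmc(\cdot,h)$ is constant in $h$ and lies in $\cB(\bH_\rmc,H^1_{-\alpha}(\RR,\bH))$, hence in $\cB(\bH_\rmc,\cZ_{\mu,\lambda}(\bH))$ for every $0<\mu<\lambda$ by \eqref{3.7-1}, so it contributes no regularity loss. The whole issue is to show that $h\mapsto(I-\cN_{\eps_0}'(\Sigma_\rmc(h)))^{-1}$ is of class $C^{k-1}$ between suitable weighted spaces; once that is done, \eqref{3.7-4} gives $\Sigma_\rmc'\in C^{k-1}$ with values in some $\cB(\bH_\rmc,\cZ_{\ogamma,\obeta}(\bH))$, i.e.\ $\Sigma_\rmc\in C^k$ into $\cZ_{\ogamma,\obeta}(\bH)$.

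To analyse $\cN_{\eps_0}'\circ\Sigma_\rmc$ I use the substitution-operator results of Appendix~\ref{appendix}. Because $N_{\eps_0}\in C^\infty(\bH)$ with all derivatives bounded (the cutoff $\rho$ has compact support), Lemma~\ref{A2} (with its parameter chosen appropriately for order $k-1$) gives that the substitution map $f\mapsto\bigl(z\mapsto(x\mapsto N_{\eps_0}'(f(x))z(x))\bigr)$ is of class $C^{k-1}$ from $\cZ_{\gamma',\beta'}(\bH)$ to $\cB\bigl(\cZ_{\gamma',\beta'}(\bH),\cZ_{\gamma_1,\beta_1}(\bH)\bigr)$ for suitably enlarged weights $\gamma_1,\beta_1$ of the shape prescribed there. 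Composing with the $C^{k-1}$ map $\Sigma_\rmc$ (inductive hypothesis) and with $\cK\in\cB(\cZ_{\gamma_1,\beta_1}(\bH))$ (Lemma~\ref{l2.6}, Lemma~\ref{r2.11}) shows $\cN_{\eps_0}'\circ\Sigma_\rmc$ is of class $C^{k-1}$ from $\overline{B}_{\bH_\rmc}(0,\eps_1(\delta_0))$ into $\cB\bigl(\cZ_{\gamma',\beta'}(\bH),\cZ_{\gamma_1,\beta_1}(\bH)\bigr)$.

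Since this map is smooth only into the \emph{weaker} space $\cZ_{\gamma_1,\beta_1}$, one cannot invoke analyticity of $S\mapsto(I-S)^{-1}$ directly; instead I iterate the resolvent identity
\begin{equation*}
(I-S_1)^{-1}-(I-S_2)^{-1}=(I-S_1)^{-1}(S_1-S_2)(I-S_2)^{-1},
\end{equation*}
exactly as in the proof of Lemma~\ref{l3.8} (see \eqref{3.8-2}--\eqref{partial-deriv-inverse}): each differentiation of the inverse costs one application of this identity, sandwiching a difference of $\cN_{\eps_0}'\circ\Sigma_\rmc$ between two inverse factors and moving one step down a weight cascade, so $C^{k-1}$ smoothness requires a number of such steps linear in $k$. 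Following Lemma~\ref{l3.8}, I take the cascade $\gamma_0:=2\gamma$, $\beta_0:=2\beta$, $\gamma_j:=2\beta_{j-1}+\gamma_{j-1}$, $\beta_j:=4\beta_{j-1}$, note that $\beta_j-\gamma_j$ is strictly increasing (so stays above $2\alpha$), and observe, as in \eqref{3.7-2}--\eqref{3.7-3} and \eqref{3.8-3}, that after shrinking $\eps_0,\delta_0$ the operator $\cN_{\eps_0}'(\Sigma_\rmc(h))$ is a strict contraction on each $\cZ_{\gamma_j,\beta_j}(\bH)$. Iterating yields that $h\mapsto(I-\cN_{\eps_0}'(\Sigma_\rmc(h)))^{-1}$ is of class $C^{k-1}$ from $\overline{B}_{\bH_\rmc}(0,\eps_1(\delta_0))$ to $\cB\bigl(\cZ_{\gamma_0,\beta_0}(\bH),\cZ_{\ogamma,\obeta}(\bH)\bigr)$ with $\ogamma,\obeta$ the terminal weights of the cascade; since its length depends only on $k$, the base weights $\alpha,\gamma,\beta$ may be chosen small enough that $\obeta<\tfrac12\nu(\Gamma_0,E_0)$ while $2\alpha<\beta-\gamma$. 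Feeding this into \eqref{3.7-4} closes the induction.

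The main obstacle I expect is exactly this \emph{loss of exponential weight under differentiation}: substitution operators and the operator-inverse are smooth only as maps into spaces with strictly larger weights, so the naive ``smooth fixed point $\Rightarrow$ smooth solution'' argument does not apply. The real content is the bookkeeping that the finite, $k$-dependent weight cascade never escapes the admissible window $(0,\tfrac12\nu(\Gamma_0,E_0))$, which is what forces the base weights --- and hence the final weights $\ogamma,\obeta$ --- to be chosen in a $k$-dependent manner, in accordance with the statement.
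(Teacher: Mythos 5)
Your overall architecture matches the paper's: induct on $k$, differentiate \eqref{3.7-4} once per step, control $(I-\cN_{\eps_0}'\circ\Sigma_\rmc)^{-1}$ by iterating the resolvent identity through a $k$-dependent weight cascade, and keep the cascade inside $(0,\tfrac12\nu(\Gamma_0,E_0))$ by shrinking the base weights with $k$. You also correctly identify the weight loss under differentiation as the real difficulty, and the observation that $\pa_hF_\rmc$ is constant (and hence costs nothing) is a nice, accurate simplification.

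There is, however, a genuine gap in the way you invoke Lemma~\ref{A2}. You assert that Lemma~\ref{A2}, ``with its parameter chosen appropriately for order $k-1$,'' gives $C^{k-1}$ regularity of the substitution map $f\mapsto\cW_1(f)$. Lemma~\ref{A2} proves only $C^1$ regularity of $\cW$; the integer $p$ that appears there governs the polynomial growth of $L'$ and $L''$, not the order of differentiability, so no choice of that parameter produces more than one derivative. Consequently the crucial claim that $\cN_{\eps_0}'\circ\Sigma_\rmc$ is of class $C^{k-1}$ does not follow from what you have cited.

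The paper closes this gap by introducing, for each $2\le m\le j$, the operator-valued map $L_m(h_1,\dots,h_m)g=N_{\eps_0}^{(m)}(h_1)(h_2,\dots,h_m,g)$, checking it satisfies the hypotheses of Lemma~\ref{A2}, and thereby obtaining $C^1$ regularity of the associated substitution operator $\cW_m$ (each application enlarging the weights). The point is then that every partial derivative of $\cN_{\eps_0}'\circ\Sigma_\rmc=\cK\,\cW_1\circ\Sigma_\rmc$ can be written, via the chain rule, in terms of $\cW_2,\dots,\cW_j$ and the lower-order derivatives of $\Sigma_\rmc$ furnished by the inductive hypothesis; each of these building blocks is $C^1$ into a suitably larger-weight space, so the composite inherits the required $C^{j}$ regularity. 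In short, the needed higher-order smoothness of the substitution operator comes from \emph{repeated} applications of Lemma~\ref{A2} across a family $\{\cW_m\}$ together with the chain rule, not from a single tuned application. That missing mechanism is exactly what would let you assert the $C^{k-1}$ regularity of $\cN_{\eps_0}'\circ\Sigma_\rmc$ which your resolvent-identity iteration then correctly exploits.
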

\begin{proof}
From Lemma~\ref{l3.8}, \eqref{3.7-1} and \eqref{3.7-4} we can immediately check that the function $\Sigma_\rmc$ is of class $C^2$ from $\overline{B}_{\bH_\rmc}(0,\eps_1(\delta_0))$ to $\cZ_{\tgamma,\tbeta}(\bH)$ for some weights $\tbeta>\tgamma>0$, satisfying the conditions $\tgamma>\alpha$ and $\tbeta<\frac{1}{2}\nu(\gamma_0,E_0)$. Next, we assume that $\Sigma_\rmc$ is of class $C^j$ from $\overline{B}_{\bH_\rmc}(0,\eps_1(\delta_0))$ to $\cZ_{\tgamma,\tbeta}(\bH)$ for some $j\geq 2$ and some weights $\tgamma<\tbeta<\frac{1}{2}\nu(\gamma_0,E_0)$. To prove the lemma, we show that from this assumption we can infer that the function $\Sigma_\rmc$ is of class $C^{j+1}$ for $j\leq k-1$.

Since the function $N_{\eps_0}$ defined in \eqref{def-Neps} is of class $C^\infty$ it follows that for any $2\leq m\leq j$ the function $L_m:\bH^m\to\cB(\bH)$ defined by
\begin{equation}\label{3.9-1}
\big(L_m(h_1,h_2,\dots,h_m)\big)g=N_{\eps_0}^{(m)}(h_1)\big(h_2,\dots,h_m,g)\;\mbox{for}\;g,h_1,h_2,\dots,h_m\in\bH
\end{equation}
is of class $C^\infty$. Moreover, since the cutoff function $\rho$ is of class $C^\infty$ with compact support, we have that $\sup_{h\in\bH}\|N_{\eps_0}^{(\ell)}(h)\|<\infty$ for any $\ell\geq 0$, which implies there exists a positive integer $p$ and $c_\ell>0$ such that
\begin{equation}\label{3.9-2}
\|L_m^{(\ell)}(h_1,h_2,\dots,h_m)\|\leq c_\ell\|(h_1,h_2,\dots,h_m)\|_{\bH^m}^p\;\mbox{for any}\;h_1,h_2,\dots,h_m\in\bH,\,\ell=1,2.
\end{equation}
From Lemma~\ref{A2} we obtain that there exist two weights $0<\ogamma<\obeta$ such that the function $\cW_m:\cZ_{\tgamma,\tbeta}(\bH^m)\to\cB\big(\cZ_{\tgamma,\tbeta}(\bH),\cZ_{\ogamma,\obeta}(\bH)\big)$ defined by
\begin{equation}\label{3.9-3}
\big(\cW_m(f)z\big)(x)=L_m(f(x))z(x),\;\mbox{for}\; f\in\cZ_{\tgamma,\tbeta}(\bH^m),\,z\in\cZ_{\tgamma,\tbeta}(\bH),\,x\in\RR,
\end{equation}
is of class $C^1$. The original weights $\alpha$, $\beta$ and $\gamma$ can be chosen small enough such that $\obeta<\frac{1}{2}\nu(\gamma_0,E_0)$. We recall that $\cN_{\eps_0}'(\Sigma_\rmc(h))=\cK\cW_1(\Sigma_\rmc(h))$ for any $h\in\overline{B}_{\bH_\rmc}(0,\eps_1(\delta_0))$, where $\cW_1$ is defined in \eqref{3.8-1}. Therefore, the partial derivatives of $\cN_{\eps_0}\circ\Sigma_\rmc$ can be expressed in terms of $\cW_m$, $2\leq m\leq j$ and the partial derivatives of $\Sigma_\rmc$.
Since $\cK\in\cB(\cZ_{\ogamma,\obeta}(\bH))$ by Lemma~\ref{l2.6} and Lemma~\ref{r2.11}, we infer that all partial derivatives of order less than $j$ of the function $\cN_{\eps_0}'\circ\Sigma_\rmc$ are of class $C^1$ from $\overline{B}_{\bH_\rmc}(0,\eps_1(\delta_0))$ to $\cB\big(\cZ_{\tgamma,\tbeta}(\bH),\cZ_{\ogamma,\obeta}(\bH)\big)$. Hence, the function $\cN_{\eps_0}'\circ\Sigma_\rmc$ is of class $C^{j+1}$ from $\overline{B}_{\bH_\rmc}(0,\eps_1(\delta_0))$ to $\cB\big(\cZ_{\tgamma,\tbeta}(\bH),\cZ_{\ogamma,\obeta}(\bH)\big)$. By modifying the choice of the original weights $\alpha$, $\beta$ and $\gamma$ once again, arguing similar to Lemma~\ref{l3.8} and taking partial derivative with respect to $h\in\bH_\rmc$ in \eqref{partial-deriv-inverse}, we have that the weights $\ogamma$ and $\obeta$ can be chosen such that the function
\begin{equation}\label{3.9-4}
h\to \Big(I-\cN_{\eps_0}'(\Sigma_\rmc(h))\Big)^{-1}:\overline{B}_{\bH_\rmc}(0,\eps_1(\delta_0))\to\cB\big(\cZ_{\tgamma,\tbeta}(\bH),\cZ_{\ogamma,\obeta}(\bH)\big)\;\mbox{is of class}\;C^{j+1}.
\end{equation}
From \eqref{3.7-4} and \eqref{3.9-4} we conclude that $\Sigma_\rmc$ is of class $C^{j+1}$ from $\overline{B}_{\bH_\rmc}(0,\eps_1(\delta_0))$ to $\cZ_{\ogamma,\obeta}(\bH)$. The lemma follows by repeating the argument above a \textit{finite} number of times.
\end{proof}
Using Lemma~\ref{l3.9} we can finally conclude that the center manifold $\cM_\rmc$ is of class $C^k$. Indeed, since the linear operator $\mathrm{Trace}_0:\cZ_{\ogamma,\obeta}(\bH)\to\bH$ defined by $\mathrm{Trace}_0(f)=f(0)$ is bounded, from \eqref{center-manifold} and \eqref{def-Sigma-c} we conclude that $\cM_\rmc$ is of class $C^k$.
	\begin{remark}\label{finitely-many-times}
We note that the iteration used to prove higher order regularity of the center manifold (Lemma~\ref{l3.9}) can be 
applied only finitely many times. The main reason is that at each step of the iteration we need to readjust the original weights $\alpha$, $\beta$ and $\gamma$ such that $\obeta<\nu(\Gamma_0,E_0)$. Since the weights increase by a factor of at least $2$ every time we make the adjustment, after $j$ steps the new weights are bigger than $\mathcal{O}(2^j)$, thus becoming greater than $\nu(\Gamma_0,E_0)$ after finitely many steps.
It is for this reason that our argument yields $C^k$ regularity for arbitrary but fixed $k$ rather than $C^\infty$.
\end{remark}

\subsection{Invariance of the center manifold}\label{CM-invariance}
To finish the proof of Theorem~\ref{t1.1} we need to prove that the center manifold $\cM_\rmc$ is invariant under the flow of equation \eqref{center-nonlinear} and to prove that it is tangent to the center subspace $\bH_\rmc$ at the equilibrium $\obu$.

\begin{lemma}\label{l3.10}
Assume Hypotheses (H1) and (H2). Then, the manifold $\cM_\rmc$ is invariant under the flow of equation \eqref{center-nonlinear}, in the sense that for each element of $\cM_\rmc$ there exists a solution of \eqref{center-nonlinear} that stays in $\cM_\rmc$ in finite time. Moreover, the manifold
$\cM_\rmc$ is tangent to the center subspace $\bH_\rmc$ at $\obu$.
\end{lemma}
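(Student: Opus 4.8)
The plan is to derive invariance from the near--translation covariance of the truncated fixed--point equation $\bw=\cT_{\eps_0}(w_0,\bw)$, and tangency from differentiating the graph map $\cJ_\rmc$ at $\obu$. For invariance, fix $w_0\in\bH_\rmc$ with $\|w_0\|<\eps_1(\delta_0)$, write $\bw:=\obw(\cdot,w_0)$ and $p:=\bw(0)\in\cM_\rmc$. By Lemma~\ref{l3.4}, $\bw$ solves \eqref{center-nonlinear} on $(-\eta_0,\eta_0)$; equivalently, by \eqref{3.4-6}, $A\bw'=Q'(\obu)\bw+\obf$ on all of $\RR$ with $\obf=N_{\eps_0}\circ\bw\in H^1_{-\alpha}(\RR,\bV)$. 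For $\tau\in\RR$ set $\bw_\tau:=\bw(\cdot+\tau)$. Since translation by $\tau$ is bounded on $H^1_{-\alpha}(\RR,\bH)$ (with norm $\le e^{\alpha|\tau|}$, because $e^{-\alpha|x-\tau|}\le e^{\alpha|\tau|}e^{-\alpha|x|}$) and $N_{\eps_0}$ acts pointwise in $x$, the function $\bw_\tau\in H^1_{-\alpha}(\RR,\bH)$ is a (mild) solution of \eqref{linear-sys-perturbed} with forcing $\obf_\tau:=N_{\eps_0}\circ\bw_\tau\in H^1_{-\alpha}(\RR,\bV)$; applying Lemma~\ref{l2.11}(i) to $\bw_\tau$ (and using $\cN_{\eps_0}(\cdot)=\cK(N_{\eps_0}\circ\,\cdot\,)$ together with $P_{\bV_1}P_\rmc=P_{\bV_1}$, cf.\ \eqref{center-proj}) gives
\[ \bw_\tau=F_\rmc(\cdot,w_\tau)+\cK\big(N_{\eps_0}\circ\bw_\tau\big)=\cT_{\eps_0}(w_\tau,\bw_\tau),\qquad w_\tau:=P_\rmc\bw_\tau(0)=P_\rmc\obw(\tau,w_0)\in\bH_\rmc. \]

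Next I would pin down $\bw_\tau$ by uniqueness: any $H^1_{-\alpha}$ solution $\bz$ of $\bz=\cT_{\eps_0}(w_\tau,\bz)$ satisfies, by Lemma~\ref{l3.2}(i)--(ii), $\|\bz\|_{H^1_{-\alpha}}\le c\|w_\tau\|+c\eps_0\|\bz\|_{H^1_{-\alpha}}$, so $\|\bz\|_{H^1_{-\alpha}}\le\frac{c}{1-c\eps_0}\|w_\tau\|\le\delta_0$ once $\|w_\tau\|\le\eps_1(\delta_0)$ (recall $c\eps_0<\frac14$ and $c\eps_1(\delta_0)<\frac{\delta_0}{2}$), whence Lemma~\ref{l3.3} forces $\bz=\obw(\cdot,w_\tau)$. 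Since $\tau\mapsto\obw(\tau,w_0)$ is continuous from $\RR$ to $\bH$ (by $H^1_{\mathrm{loc}}(\RR,\bH)\hookrightarrow C(\RR,\bH)$) and equals $w_0$ at $\tau=0$ by Lemma~\ref{r3.5}(i), there is $\eta_1>0$ with $\|w_\tau\|\le\eps_1(\delta_0)$ for $|\tau|<\eta_1$; for such $\tau$ we obtain $\obw(\tau,w_0)=\bw_\tau(0)=\obw(0,w_\tau)\in\cM_\rmc$. Thus $\tau\mapsto\obw(\tau,w_0)$ is a solution of \eqref{center-nonlinear} through $p$ that stays in $\cM_\rmc$ for $|\tau|<\min(\eta_0,\eta_1)$; the boundary case $\|w_0\|=\eps_1(\delta_0)$ is absorbed by shrinking the ball in \eqref{center-manifold}, invariance being a local statement near $\obu$.

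For tangency, note first that $F_\rmc(\cdot,0)\equiv0$ by \eqref{def-F-star}--\eqref{def-f-star}, so $\obw(\cdot,0)=\cN_{\eps_0}(\obw(\cdot,0))$ and Lemma~\ref{l3.2}(ii) (with $c\eps_0<1$) forces $\obw(\cdot,0)=0$; hence $\cM_\rmc$ contains the point $\bw=0$, i.e.\ $\obu\in\cM_\rmc$, and $\cJ_\rmc(0)=(I_{\bH}-P_\rmc)\obw(0,0)=0$. As $\cM_\rmc=\mathrm{Graph}(\cJ_\rmc)$ is $C^k$ (Lemma~\ref{l3.9} and boundedness of $\mathrm{Trace}_0$), tangency of $\cM_\rmc$ to $\bH_\rmc$ at $\obu$ reduces to $\cJ_\rmc'(0)=0$. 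By Claim~1 in the proof of Lemma~\ref{l3.7} (formula \eqref{3.7-4}), $\Sigma_\rmc'(0)=\big(I-\cN_{\eps_0}'(\Sigma_\rmc(0))\big)^{-1}\pa_hF_\rmc(\cdot,0)$; since $\Sigma_\rmc(0)=\obw(\cdot,0)=0$ and $N_{\eps_0}'(0)=0$ (immediate from \eqref{partial-Neps} and bilinearity of $B$), we get $\cN_{\eps_0}'(0)=0$ and $\Sigma_\rmc'(0)=\pa_hF_\rmc(\cdot,0)$, so that
\[ \cJ_\rmc'(0)\zeta=(I_{\bH}-P_\rmc)\big(\pa_hF_\rmc(\cdot,0)\zeta\big)(0)=(I_{\bH}-P_\rmc)\zeta=0\qquad(\zeta\in\bH_\rmc), \]
using $F_\rmc(0,w_0)=w_0$ and $P_\rmc\zeta=\zeta$ for $\zeta\in\bH_\rmc$. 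This yields tangency.

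The main obstacle is that $\cT_{\eps_0}$ is \emph{not} literally translation invariant: the Volterra operator $\cV$ inside $\cK$ and the explicitly $x$--linear term $f_\rmc$ inside $F_\rmc$ both single out $x=0$. The crux is therefore to recognize, via the representation in Lemma~\ref{l2.11}(i), that the translate $\obw(\cdot+\tau,w_0)$ is again a fixed point of $\cT_{\eps_0}$ but with the \emph{shifted} base point $w_\tau=P_\rmc\obw(\tau,w_0)$, and then to verify, using the a priori bound and continuity, that $w_\tau$ remains in the admissible ball for $|\tau|$ small. Once that is done, the tangency part is a short computation built on $\cN_{\eps_0}'(0)=0$.
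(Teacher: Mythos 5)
Your proof is correct, and the invariance half takes a genuinely different and somewhat cleaner route than the paper. The paper derives explicit translation identities for the building blocks of $\cK$ and $F_\rmc$ (equations \eqref{3.10-2}--\eqref{3.10-4}) and then matches terms to show $F_\rmc(\cdot+x_0,w_0)+\Gamma_3(\cV\obf)(x_0)=F_\rmc(\cdot,\widetilde w_0(x_0))$ with the base point $\widetilde w_0(x_0)=w_0+x_0E_1P_\bV w_0+\Gamma_3(\cV\obf)(x_0)$. You instead observe that the translate $\bw_\tau$ is again an $H^1_{-\alpha}$ solution of the \emph{same} inhomogeneous linear system \eqref{linear-sys-perturbed} with the translated forcing $\obf_\tau=N_{\eps_0}\circ\bw_\tau$, and then apply the general representation Lemma~\ref{l2.11}(i) directly to $\bw_\tau$; this reproduces the fixed-point equation with the shifted base point $w_\tau=P_\rmc\obw(\tau,w_0)$ in one step, after which uniqueness from Lemma~\ref{l3.3} does the rest. (A routine check along the lines of \eqref{3.5-1} shows $w_\tau$ agrees with the paper's $\widetilde w_0(\tau)$.) What the paper's route buys is that it never leans on the (not formally defined) notion of ``mild solution of \eqref{linear-sys-perturbed}'' for the translate; what your route buys is economy, since it reuses the already proved representation formula rather than rederiving its conclusion through operator-by-operator translation identities. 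To make your version watertight you should note in one line that an $H^1_{-\alpha}$ solution of the strong ODE $A\bw'=Q'(\obu)\bw+\obf$ remains one after translation (as the equation is autonomous and translation is bounded on $H^1_{-\alpha}$), and that such solutions satisfy the mild-solution hypothesis of Lemma~\ref{l2.11}(i) (cf.\ the remark inside the proof of Lemma~\ref{l2.8}). The tangency half is essentially identical to the paper's: $\obw(\cdot,0)=0$, $N_{\eps_0}'(0)=0$, hence $\Sigma_\rmc'(0)=\partial_hF_\rmc(\cdot,0)$ by \eqref{3.7-4}, giving $\cJ_\rmc'(0)=0$.
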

\begin{proof}
Fix $y\in\cM_\rmc$. Then, from \eqref{center-manifold} it follows that there exists $w_0\in\overline{B}_{\bH_\rmc}(0,\eps_1(\delta_0))$ such that $y=\obw(0,w_0)$. From Lemma~\ref{l3.4}(ii) we know that $\obw(\cdot,w_0)$ is a solution of \eqref{center-nonlinear} on $(-\eta_0,\eta_0)$. Therefore, to prove the lemma it is enough to prove that $\obw(x,w_0)\in\cM_\rmc$ for $x$ in a neighborhood of $0$. Our strategy is to show that for any $x_0$ small enough there exists $\widetilde{w}_0(x_0)\in\overline{B}_{\bH_\rmc}(0,\eps_1(\delta_0))$ such that $\obw(\cdot+x_0,w_0)=\obw(\cdot,\widetilde{w}_0(x_0))$. First, we note that
\begin{equation}\label{3.10-1}
\big(\cF\cK_0g(\cdot+x_0)\big)(\omega)=\big(\cF(\cK_0g)(\cdot+x_0)\big)(\omega)=e^{2\pi\rmi\omega x_0}(2\pi\rmi\omega\Gamma_0-E_0)^{-1}\widehat{g}(\omega)
\end{equation}
for any $\omega\in\RR$, $g\in \cS(\RR,\tbV)$, the Schwartz class of $\tbV$-valued functions on $\RR$. It follows that
\begin{equation}\label{3.10-2}
(\cK_0g)(\cdot+x_0)=\cK_0g(\cdot+x_0)\quad\mbox{for any}\quad g\in L^2_{-\alpha}(\RR,\tbV).
\end{equation}
Similarly, one can readily check that
\begin{equation}\label{3.10-3}
(\cV f)(\cdot+x_0)=\cV(f(\cdot+x_0))+(\cV f)(x_0)\quad\mbox{for any}\quad f\in L^2_{-\alpha}(\RR,\bH).
\end{equation}
Since $\cK f=\cV\Gamma_3f+\Gamma_4\cK_0P_{\tbV}f$ for any $f\in L^2_{-\alpha}(\RR,\bH)$ from \eqref{3.10-2} and \eqref{3.10-3} we obtain that
\begin{equation}\label{3.10-4}
(\cK f)(\cdot+x_0)=\cK(f(\cdot+x_0))+\Gamma_3(\cV f)(x_0)\quad\mbox{for any}\quad f\in L^2_{-\alpha}(\RR,\bH).
\end{equation}
Since $\obw(\cdot,w_0)$ is the unique solution of equation $\bw=\cT_{\eps_0}(w_0,\bw)$, from \eqref{3.10-4} we have that
\begin{align}\label{3.10-5}
\obw(x+x_0,w_0)&=F_\rmc(x+x_0,w_0)+\big(\cK N_{\eps_0}(\obw(\cdot,w_0))\big)(x+x_0)\nonumber\\
&=F_\rmc(x+x_0,w_0)+\big(\cK N_{\eps_0}(\obw(\cdot+x_0,w_0))\big)(x)+\Gamma_3(\cV \obf)(x_0)
\end{align}
for any $x\in\RR$, where $\obf=\rho\Big(\frac{\|\obw(\cdot,w_0)\|^2}{\eps_0^2}\Big)B(\obw(\cdot,w_0),\obw(\cdot,w_0))$. Since $w_0\in\bH_\rmc=\bV^\perp\oplus\bV_\rmc$ from \eqref{def-Vc} we have that $P_{\tbV}w_0=-E_0^{-1}P_{\tbV}EP_{\bV_1}w_0$. From \eqref{def-f-star} we obtain that
\begin{equation}\label{3.10-6}
f_\rmc(x,w_0)=x E_1(I-E_0^{-1}P_{\tbV}E)P_{\bV_1}w_0=x E_1(P_{\bV_1}w_0+P_{\tbV}w_0)=x E_1P_{\bV}w_0\;\mbox{for any}\;x\in\RR.
\end{equation}
Moreover, from \eqref{def-F-star} we have that
\begin{equation}\label{3.10-7}
F_\rmc(x+x_0,w_0)+\Gamma_3(\cV \obf)(x_0)=w_0+x_0E_1P_{\bV}w_0+\Gamma_3(\cV \obf)(x_0)+x E_1P_{\bV}w_0
\end{equation}
for any $x\in\RR$. Since $\im E_1\subseteq\ker A_{11}$, $\im\Gamma_3\subseteq\ker A_{11}$ and $\ker A_{11}\subseteq\bV^\perp\subseteq\bH_\rmc$, we infer that
\begin{equation}\label{3.10-8}
\widetilde{w}_0(x_0):=w_0+x_0E_1P_{\bV}w_0+\Gamma_3(\cV \obf)(x_0)\in\bH_\rmc\quad\mbox{and}\quad P_{\bV}\widetilde{w}_0(x_0)=P_{\bV}w_0.
\end{equation}
From \eqref{3.10-6}, \eqref{3.10-7} and \eqref{3.10-8} we conclude that
\begin{equation}\label{3.10-9}
F_\rmc(x+x_0,w_0)+\Gamma_3(\cV \obf)(x_0)=\widetilde{w}_0(x_0)+x E_1P_{\bV}\widetilde{w}_0(x_0)=F_\rmc(x,\widetilde{w}_0(x_0))
\end{equation}
for any $x\in\RR$. From \eqref{3.10-5} and \eqref{3.10-9} we infer that $\obw(\cdot+x_0,w_0)$ is a solution of equation $\bw=\cT_{\eps_0}(\widetilde{w}_0(x_0),\bw)$. Since
$\lim_{x_0\to 0}\widetilde{w}_0(x_0)=w_0\in B_{\bH_\rmc}(0,\eps_1(\delta_0))$ it follows that there exists $\nu_0>0$ such that $\widetilde{w}_0(x_0)\in\overline{B}_{\bH_\rmc}(0,\eps_1(\delta_0))$ for any $x_0\in (-\nu_0,\nu_0)$. From Lemma~\ref{l3.3} we obtain that
\begin{equation}\label{3.10-10}
\obw(\cdot+x_0,w_0)=\obw(\cdot,\widetilde{w}_0(x_0))\;\mbox{for any}\;x_0\in (-\nu_0,\nu_0),
\end{equation}
which implies that $\obw(x_0,w_0)=\obw(0,\widetilde{w}_0(x_0))\in\cM_\rmc$ for any $x_0\in (-\nu_0,\nu_0)$, proving that the center manifold $\cM_\rmc$ is locally invariant under the flow of equation \eqref{center-nonlinear}.

To prove that the manifold $\cM_\rmc$ is tangent to the center subspace $\bH_\rmc$ at $\obu$, it is enough to show that $\cJ_\rmc'(0)=0$.
By uniqueness of fixed point solutions, we immediately conclude that $\obw(\cdot,0)=0$. Moreover, since $N_{\eps_0}'(0)=0$ and the function
$\bH_\rmc\ni w_0\to f_\rmc(\cdot,P_{\bV_1}w_0)\in H^1_{-\alpha}(\RR,\bH)$ is linear, from \eqref{def-f-star}
we infer that
\begin{equation}\label{3.10-11}
\big(\pa_{w_0}\obw(\cdot,0)\big)(w_0)=w_0\rmone+f_\rmc(\cdot,P_{\bV_1}w_0)
\end{equation}
for any $w_0\in\bH_\rmc$. Since $\im E_1\subseteq\bV^\perp$ and $\tbV\subset\bV$, it follows that
\begin{equation}\label{3.10-12}
\big(\pa_{w_0}\obw(0,0)\big)(w_0)=w_0+f_\rmc(0,P_{\bV_1}w_0)=w_0\quad\mbox{for any}\quad w_0\in\bH_\rmc.
\end{equation}
From \eqref{3.5-1} and \eqref{3.10-12} we conclude that $\big(\cJ_\rmc'(0)\big)(w_0)=0$ for any $w_0\in\bH_\rmc$, proving the lemma.
\end{proof}

\begin{proof}[Proof of Theorem 1.1]
Summarizing the results of this section, Theorem~\ref{t1.1} follows shortly from Lemma~\ref{r3.5},
Lemma~\ref{l3.9}, and Lemma~\ref{l3.10}. To finish the proof of the theorem we need to show that the center manifold $\cM_\rmc$ contains
the trace at $0$ of any bounded, smooth solution $\bu_0$ of equation \eqref{nonlinear} that stays sufficiently close to the equilibrium $\obu$. Indeed, in this case one can readily check that $\bw_0=\bu_0-\obu$ is a bounded, smooth solution of equation \eqref{center-nonlinear} that is small enough. It follows that $N_{\eps_0}(\bw_0(x))=B(\bw_0(x),\bw_0(x))$ for any $x\in\RR$.  From Lemma~\ref{l2.11}(i) we obtain that $\bw_0=P_\rmc\bw_0(0)+f_\rmc(\cdot,P_{\bV_1}\bw_0(0))+\cK N_{\eps_0}(\bw_0)$, that is $\bw_0$ is a solution of equation $\bw=\cT_{\eps_0}(P_\rmc\bw_0(0),\bw)$. From Lemma~\ref{l3.3} we infer that $\bw_0=\obw(\cdot,(P_\rmc\bw_0(0))$. Since $P_\rmc\bw_0(0)\in\bH_\rmc$ is small enough, we conclude that $\bw_0(0)\in\cM_\rmc$, proving the theorem.
\end{proof}

\section{Approximation of the center manifold}\label{s:approx}
Similarly as in the usual (nonsingular $A$) case,
the center manifold may be approximated to arbitrary order by formal Taylor expansion.

\subsection{Canonical form}\label{s:canon}
By the invertible change of coordinates
\ba\label{coord}
w_\rmc=\bp u_1 - (\Gamma_1 + E_1E_0^{-1} \Gamma_0)\tilde v \\
E_1(\Id - E_0^{-1}P_{\tilde {\bV}}E)v_1\\
\tilde u + \tilde A_{11}^{-1}\tilde A_{12}(\tilde v + E_0^{-1} P_{\tilde{\bV}}Ev_1)  \ep,
\quad
w_\rmh&=\tilde v+ E_0^{-1} P_{\tilde{\bV}}Ev_1,
\ea
$w_\rmc$ and $w_\rmh$ parametrizing center and hyperbolic subspaces,
we reduce \eqref{nonlinear-sys-perturbed} to {\it canonical form}:
\begin{equation}\label{canon}
\left\{\begin{array}{ll} w_\rmc'=Jw_\rmc + g_\rmc,\\
\Gamma_0 w_\rmh'= E_0w_\rmh + g_\rmh \end{array}\right.,
\end{equation}
where $J=\begin{bmatrix} 0 & I_r & 0 \\ 0&0 & 0\\ 0&0&0 \end{bmatrix}$ is a nilpotent block-Jordan form, $r=\dim\ker A_{11}$, and
\ba\label{g}
g_\rmc= \Big(\big(E_1E_0^{-1} P_{\tilde {\bV}}f + (T_{12}^*)^{-1} P_{\bV_1}f\big)^T, 0,0\Big)^T,
 \quad
 g_\rmh= P_{\tilde{\bV}}f.
\ea
Here, we have used the fourth equation of \eqref{nonlinear-sys-perturbed} to express
$\tilde v= E_0^{-1}(\Gamma_0 \tilde v' -P_{\tilde {\bV}}Ev_1- P_{\tilde {\bV}}f)$,
then substituted into the first equation to obtain
$$
\big( u_1 - (\Gamma_1 + E_1E_0^{-1} \Gamma_0)\tilde v\big)'=
E_1(\Id - E_0^{-1}P_{\tilde {\bV}}E)v_1 + \big( E_1E_0^{-1} P_{\tilde {\bV}}f + (T_{12}^*)^{-1} P_{\bV_1}f\big).
$$
The key point in showing bounded invertibility is to observe that the coefficient
$E_1(\Id - E_0^{-1}P_{\tilde {\bV}}E)$ of $v_1$ in the second component of $w_\rmc$
may be expressed as a bounded, boundedly invertible operator $(T_{12}^*)^{-1}$ applied to
$$
\big( P_{\bV_1}E_{|\bV_1}- P_{\bV_1}E_{|\tilde {\bV}}E_0^{-1}P_{\tilde {\bV}}E_{|\bV_1}\big),
$$
which, writing $E=\begin{bmatrix} E_{11}&E_{12}\\ E_{12}^*& E_{22}\end{bmatrix}:\bV^\perp\oplus\bV\to\bV^\perp\oplus\bV$ in block form, may be recognized as
$ E_{11}- E_{12}E_{22}^{-1}E_{12}^*$, which
is symmetric negative definite as a minor of the symmetric negative definite operator
\be\label{neg}
\begin{bmatrix} E_{11}- E_{12}E_{22}^{-1}E_{12}^* &0 \\ 0 & E_{22}\end{bmatrix}
=
\begin{bmatrix}\Id &- E_{12}E_{22}^{-1} \\ 0 & \Id\end{bmatrix}
\begin{bmatrix} E_{11}&E_{12}\\ E_{12}^*& E_{22}\end{bmatrix}
\begin{bmatrix} \Id & 0\\ - E_{22}^{-1}E_{12}^* & \Id \end{bmatrix}.
\ee
\br\label{fibers}
We record for later use that the tangent subspace at $(u,v)=(\zeta, 0)$ to the equilibrium manifold $\cE=\{(u,v)\in\bV^\perp\oplus\bV: Q(u,v)=0\}$
is given in the new coordinates by
\be\label{newtan}
w_\rmc=(\zeta_1, 0, \tilde \zeta), \quad w_\rmh=0,
\ee
as can also be seen directly by computing the subspace of equilibria of \eqref{canon} with $g=(g_\rmc,g_\rmh)=0$.
\er


\subsection{Taylor expansion}\label{s:taylor}
From the canonical form \eqref{canon}, the computation of the formal Taylor expansion for the center manifold goes
exactly as in the usual (nonsingular $A$) case.
Taking $f=B(\bu-\obu,\bu-\obu)$ in \eqref{canon}, we recover the original nonlinear system \eqref{Relax-EQ}, with $g$
comprising quadratic and higher order terms in $(w_\rmc,w_\rmh)$.
Expressing $w_\rmh=\Xi(w_\rmc)$, substituting into \eqref{canon}(ii), and applying \eqref{canon}(i),
we obtain the defining relation
\be\label{define}
\Gamma_0 \Xi'(w_\rmc)(Jw_\rmc + \tilde{g}_\rmc)= E_0 \Xi(w_\rmc) + \tilde{g}_\rmh.
\ee
Here $\tilde{g}_\rmc$ and $\tilde{g}_\rmh$ are obtained by solving for $\bu=(u_1,\tu,v_1,\tv)$ in terms of $(w_\rmc,w_\rmh)$ in \eqref{coord} and plugging in $f=B(\bu-\obu,\bu-\obu)$ in \eqref{g}.
Inverting $E_0$ we obtain that $ \Xi(w_\rmc)= E_0^{-1}\big(\Gamma_0 \Xi'(w_\rmc)(Jw_\rmc + g_\rmc)- g_\rmh\big).  $
This gives $\Xi(w_\rmc) - E_0^{-1}\Gamma_0 \Xi'(w_\rmc)Jw_\rmc= E_0^{-1} g_\rmc((w_\rmc,0))$ modulo higher order terms in $w_\rmc$, from which me may successively solve for the coefficients of the Taylor series of $\Xi$.
For example, in the simplest case $J=0$, this becomes just
$\Xi(w_\rmc) = E_0^{-1} g_\rmc((w_\rmc,0))$ plus higher order terms.
We omit the (standard; see, e.g. \cite{HI}) details.

\subsection{Center flow}\label{s:cflow}
The {\it center flow}, given by the reduced equation
\be\label{cflow}
w_\rmc'=Jw_\rmc + g_\rmc(w_c, \Xi(w_\rmc)),
\ee
may be approximated to any desired order $k$ in powers of $\|w_\rmc\|$ by Taylor expansion of $\Xi$ to order $k-1$.
In practice (as will be the case here), it is often sufficient to approximate the flow only to order $k=2$ in
order to perform a normal-form analysis well-describing the flow, in which case it is not necessary to compute $\Xi$ at all,
being that the $k=1$ order approximation of $\Xi$ is just $0$.

\subsection{Relaxation structure}\label{s:cons}
The discussion of approximation (and indeed all of our analysis)
up to now has been completely general, applying equally to any system of form \eqref{canon},
not necessarily originating from a system of form \eqref{Relax-EQ}.
We now make two substantial simplifications based on the special structure of \eqref{Relax-EQ}.  The first is to note that,
encoding the conservative structure of the system, coordinates $w_{\rmc,2}$ and $w_{\rmc,3}$ are constants of the flow,
hence may be considered as parameters. This reduces the center flow to an equation on an {\it $r$-dimensional fiber
indexed by $w_{\rmc,1}$}, $r=\dim\ker A_{11}$, a considerable simplification.  For related observations, see the treatment of existence
of small amplitude viscous and relaxation shock profiles in \cite{MaP,MasZ2}.

The second, using \eqref{newtan}, Remark \ref{fibers}, is that by shifting the base state of our expansion along the equilibrium manifold,
we are able to arrange without loss of generality $w_{\rmc,3}\equiv 0$, so that we obtain, ultimately,
a family of {\it $r$-dimensional fibers
\be\label{fibeq}
w_{\rmc,1}'= \zeta+ \phi(w_{\rmc,1},\zeta),
\quad
\phi(w_{\rmc,1},\zeta):=\tilde{g}_{\rmc,1}\big((w_{\rmc,1},\zeta, 0), \Xi(w_{\rmc,1},\zeta, 0)\big)=\mathcal{O}(\|w_{\rmc,1}\|^2,\|\zeta\|^2)
\ee
indexed by the $r$-dimensional parameter $\zeta:=w_{\rmc,2}$.}
In the simplest nontrivial case $r=1$ (treated in Section \ref{s:gnl}),
this amounts to a {\it one-parameter family of scalar equations.}
Taylor expanding \eqref{fibeq}, we have that $\tilde{g}_{\rmc,1}\big(w_{\rmc,1},0,0),0\big)=w_{\rmc,1}^T \chi w_{\rmc,1}+\mathcal{O}(\|w_{\rmc,1}\|^3)$, for some $\chi\in\RR$, which shows that
the normal form (in all cases) is given by
\be\label{fibeqT}
w_{\rmc,1}'= \zeta+ w_{\rmc,1}^T\chi w_{\rmc,1}+\mathcal{O}(\|w_{\rmc,1}\|^3+\|w_{\rmc,1}\|\,\|\zeta\|+\|\zeta\|^2).
\ee

\section{Bifurcation and existence of small-amplitude shock profiles}\label{s:bif}
Using the framework of Section \ref{s:approx}, it is straightforward to
describe bifurcation from equilibrium, or {\it near-equilibrium}
steady flow, in the cases \eqref{gnl} and \eqref{ldg} discussed in the introduction:
in particular, existence of small-amplitude standing kinetic shock and boundary layer solutions.

\subsection{Bifurcation from a simple, genuinely nonlinear characteristic}\label{s:gnl}
We first treat the case \eqref{gnl}, starting with normal form \eqref{fibeqT}, by relating the constants $\zeta$
and $\chi$ to quantities occurring in the equilibrium problem \eqref{ce}, using the principle that, since equilibria of
\eqref{nonlinear}, \eqref{ce2}, and \eqref{ce} all agree, the normal forms for their respective equilibrium problems
must agree (up to constant multiple) as well.
More elaborate versions of this argument may be found, e.g., in \cite{MaP,MasZ2}.

\begin{proof}[Proof of Theorem~\ref{t1.2}, case \eqref{gnl}]
	First, note that $T_{12}v_1$ in the original coordinates of \eqref{linear-sys-perturbed2} is exactly the first component $q_1$ of $q$ in \eqref{ce}, or $v_1=T_{12}^{-1} q_1$.
Substituting this into the second component of \eqref{coord}, we find after a brief calculation that
$\zeta=w_{\rmc,2}= -\varkappa^{-1} q_1$, where
\be\label{varkap}
\varkappa^{-1}= (T_{12}^*)^{-1}(P_{\bV_1}E(\Id -(E_{|\tilde {\bV}})^{-1}P_{\tilde{\bV}}E)) (T_{12})^{-1},
\ee
or, in the notation of \eqref{neg},
$\varkappa=  -T_{12} (E_{11}-E_{12}E_{22}^{-1}E_{21})^{-1} T_{12}^{*}>0 ,  $
where $E=\begin{bmatrix} E_{11}& E_{12}\\E_{21}&E_{22}\end{bmatrix}:\bV^\perp\oplus\bV\to\bV^\perp\oplus\bV$.
Using the block-matrix inversion formula
\be\label{blockinv}
\begin{bmatrix} E_{11}& E_{12}\\E_{21}&E_{22}\end{bmatrix}^{-1}=
\begin{bmatrix} (E_{11}-E_{12}E_{22}^{-1}E_{21})^{-1}&  -(E_{11}-E_{12}E_{22}^{-1}E_{21})^{-1}E_{12}E_{22}^{-1}\\
-(E_{22}-E_{21}E_{11}^{-1}E_{12})^{-1} E_{21}E_{11}^{-1}  & (E_{22}-E_{21}E_{11}^{-1}E_{12})^{-1} \end{bmatrix}
\ee
(verifiable by multiplication against $E$, or inversion of relation \eqref{neg}),
we find alternatively that
$$
\varkappa=  -e_1^T A_{12}E^{-1}A_{12}^* e_1,
$$
$e_1$ the first Euclidean basis element, or, using $\obr=e_1$,
$\varkappa= \obr^T D_* \obr$, with $D_*$ as in \eqref{ce2}.

Using the first component of \eqref{coord} to trade $w_{\rmc,1}$ for
$u_1$ by an invertible coordinate change preserving the order of error terms, we may thus rewrite \eqref{fibeqT} as
\be\label{tempfibeqT}
u_1'= \varkappa^{-1} (-q_1 +  \varkappa \chi u_{1}^2) +O(|u_{1}|^3 +|u_{1}||q_1|+ |q_1|^2),
\ee
where $\varkappa \chi$ is yet to be determined.
On the other hand, performing Lyapunov-Schmidt
reduction for the equilibrium problem \eqref{ce}, we obtain the normal form
$ 0= (-q_1 +  \frac12 \Lambda u_{1}^2) +O(|u_{1}|^3 +|u_{1}||q_1|+ |q_1|^2).  $
Using the fact that equilibria for \eqref{nonlinear} and \eqref{ce} agree, we find that $\varkappa \chi$ must be
equal to $\frac12 \Lambda$, yielding a final normal form consisting of the approximate Burgers flow \eqref{burgers}.
A similar computation yields the same normal form for fibers of the center manifold of the formal viscous problem \eqref{ce2};
see also the more detailed computations of \cite{MaP} yielding the same result.

For $q_1 \Lambda>0$, the scalar equation \eqref{burgers} evidently possesses equilibria $\sim \mp\sqrt{2 q_1/\Lambda}$,
connected (since the equation is scalar) by a heteroclinic profile.
Observing that $\sgn u_1'=-\sgn \Lambda$ for $u_1$ between the equilibria, so that
$\big(\lambda(u)\big)'\sim \Lambda u_1'$ has sign of $-\Lambda^2<0$,
we see further that the connection is
in the direction of decreasing characteristic $\lambda(u)$, hence a Lax-type connection
for \eqref{ce}; for further discussion, see \cite{MaP,MasZ2}.
\end{proof}

\subsubsection{Comparison to Chapman-Enskog profiles}\label{s:compce}
We perform the comparison of profiles of \eqref{nonlinear} and \eqref{ce2} in three steps, comparing their
primary, $u_1$, coordinates to a Burgers shock, then to each other, and finally comparing remaining
coordinates slaved to the fiber \eqref{burgers}.

\begin{lemma}[\cite{Li,PlZ}]\label{lburgers}
	Let $\eta\in \R^1$ be a heteroclinic connection of an approximate Burgers equation
	\be\label{approx}
	\varkappa \eta'= \frac12 \Lambda(-\eps^2 +  \eta^2) +  S(\eps,\eta),
	\quad
	S=O(|\eta|^3+|\eps|^3)\in C^{k+1}(\R^2), \quad k\geq 0,
	\ee
	and $\bar \eta:= -\eps\tanh(\Lambda \eps x/2\varkappa)$ a connection of the exact Burgers equation
	$\varkappa\bar \eta'= \frac12 \Lambda(-\eps^2 +  \bar \eta^2)$.
	Then,
	\ba\label{etacomp}
 |\eta_{\pm}-\bar{\eta}_{\pm}| &\leq C\epsilon^2,\\
	|\partial_x^k \big(\bar \eta - \bar \eta_\pm)(x)| &\sim  \eps^{k+1}e^{-\delta  \eps|x|},
	\quad x\gtrless 0, \quad \delta>0,\\
	\big|\partial_x^k \big((\eta- \eta_\pm)- (\bar \eta - \bar \eta_\pm)\big)(x)\big| &\le C \eps^{k+2}e^{-\delta  \eps|x|},
	\quad x\gtrless 0,
	\ea
	uniformly in $\eps>0$, where $\eta_\pm:=\eta(\pm \infty)$, $\bar \eta_\pm:=\bar \eta(\pm \infty)=\mp\eps$ denote
	endstates of the connections.
\end{lemma}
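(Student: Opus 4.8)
The proof follows the standard Burgers-comparison scheme of \cite{Li,PlZ}; the plan is as follows. First I would remove the $\eps$-dependence by the rescaling $\eta=\eps\zeta$, $x=y/\eps$, which turns \eqref{approx} into
\[
\varkappa\zeta_y=\tfrac12\Lambda(\zeta^2-1)+\eps R(\eps,\zeta),\qquad R(\eps,\zeta):=\eps^{-2}S(\eps,\eps\zeta),
\]
and turns $\bar\eta=-\eps\tanh(\Lambda\eps x/2\varkappa)$ into the exact Burgers connection $\bar\zeta(y)=-\tanh(\Lambda y/2\varkappa)$ of $\varkappa\bar\zeta_y=\tfrac12\Lambda(\bar\zeta^2-1)$. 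Since $S=O(|\eta|^3+|\eps|^3)\in C^{k+1}(\R^2)$, the perturbation $R(\eps,\cdot)$ together with its $\zeta$-derivatives of order $\le k+1$ is $O(\eps)$ uniformly on bounded sets for $\eps$ small, so the rescaled equation is a uniform $C^{k+1}$, $O(\eps)$ perturbation of the exact Burgers vector field. Throughout I take $\Lambda/\varkappa>0$, matching the sign convention $\bar\eta_\pm=\mp\eps$ of the statement, the opposite orientation being symmetric. The second estimate of the lemma is then a direct computation: from $\bar\zeta(y)-\bar\zeta_+=2e^{-\Lambda y/\varkappa}/(1+e^{-\Lambda y/\varkappa})$ (and its $y\mapsto-y$ analogue), every $y$-derivative of $\bar\zeta-\bar\zeta_\pm$ is comparable to $e^{-\delta_0|y|}$ as $\pm y\to\infty$, $\delta_0:=\Lambda/\varkappa$, and is $O(1)$ on compacts, so scaling back gives $|\partial_x^j(\bar\eta-\bar\eta_\pm)(x)|\asymp\eps^{j+1}e^{-\delta_0\eps|x|}$; one then fixes a single $\delta\in(0,\delta_0)$ for the remaining upper bounds.

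Next I would pin down the perturbed connection. The zeros of $\tfrac12\Lambda(\zeta^2-1)+\eps R(\eps,\zeta)$ near $\zeta=\pm1$ persist by the Implicit Function Theorem, giving equilibria $\zeta_\pm=\mp1+O(\eps)$; undoing the rescaling, $|\eta_\pm-\bar\eta_\pm|=\eps|\zeta_\pm\mp1|\le C\eps^2$, which is the first estimate. The linearizations at $\zeta_\pm$ equal $\tfrac{\Lambda}{\varkappa}\zeta_\pm+O(\eps)=\mp\Lambda/\varkappa+O(\eps)$, nonzero of fixed sign (stable at $+\infty$, unstable at $-\infty$), so, the equation being scalar, a routine saddle--saddle phase-plane argument yields a unique transverse $C^{k+1}$ heteroclinic $\zeta$ from $\zeta_-$ to $\zeta_+$, which after the harmless translation normalization $\zeta(0)=0=\bar\zeta(0)$ satisfies $|\partial_y^j(\zeta-\zeta_\pm)(y)|\le Ce^{-\delta|y|}$ for $\pm y\ge0$, $0\le j\le k$, uniformly in $\eps$.

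The main step is the comparison. Writing $w:=\zeta-\bar\zeta$ and subtracting the two equations gives the \emph{linear} ODE $\varkappa w_y=\tfrac12\Lambda(\zeta+\bar\zeta)w+\eps R(\eps,\zeta)$, whose coefficient $a(y):=\tfrac{\Lambda}{2\varkappa}(\zeta+\bar\zeta)$ converges at exponential rate to $a_\pm=\mp\Lambda/\varkappa+O(\eps)$, of fixed sign at each end. Let $w_\pm:=\zeta_\pm-\bar\zeta_\pm=O(\eps)$ be the difference of the asymptotic equilibria (so $a_\pm w_\pm+\tfrac{\eps}{\varkappa}R(\eps,\zeta_\pm)=0$); then $v:=w-w_+$ on $[0,\infty)$ solves $v_y=a(y)v+F(y)$ with $F(y)=(a(y)-a_+)w_++\tfrac{\eps}{\varkappa}\big(R(\eps,\zeta(y))-R(\eps,\zeta_+)\big)$, and $|a(y)-a_+|+|\zeta(y)-\zeta_+|=O(e^{-\delta|y|})$ together with $w_+=O(\eps)$ and the Lipschitz bound on $R$ gives $|F(y)|\le C\eps e^{-\delta|y|}$. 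Since $\delta<|a_+|$, the variation-of-parameters formula for the bounded solution $v$ (with $v(0)=w(0)-w_+=-w_+=O(\eps)$) yields $|v(y)|\le C\eps e^{-\delta y}$, i.e. $|w(y)-w_+|\le C\eps e^{-\delta y}$ for $y\ge0$; the half-line $y\le0$ and endstate $w_-$ are identical. Differentiating the $v$-equation and bootstrapping ($S\in C^{k+1}$, so $\partial_y^j\zeta$ decay for $1\le j\le k$) gives $|\partial_y^j(w-w_\pm)(y)|\le C\eps e^{-\delta|y|}$ for $0\le j\le k$; undoing the rescaling via $\partial_x^j=\eps^j\partial_y^j$ and $(\eta-\eta_\pm)-(\bar\eta-\bar\eta_\pm)=\eps(w-w_\pm)$ produces exactly the third estimate, $|\partial_x^j[(\eta-\eta_\pm)-(\bar\eta-\bar\eta_\pm)](x)|\le C\eps^{j+2}e^{-\delta\eps|x|}$. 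The one delicate point — and the reason for rescaling at the outset — is uniformity in $\eps$: the dichotomy rate $\delta\approx\Lambda/\varkappa$, the $O(\eps)$ smallness of $w-w_\pm$, and all implied constants must be $\eps$-independent, which is automatic in the variables $(\zeta,y)$ but awkward to track directly; and the gain of the full power $\eps^{j+2}$ over the trivial $\eps^{j+1}$ (coming merely from $\eta,\bar\eta=O(\eps)$) is precisely the $O(\eps)$ size of $w-w_\pm$, forced by the homogeneity of the $w$-equation.
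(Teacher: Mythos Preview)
Your proposal is correct and follows exactly the paper's approach: rescale to remove the $\eps$-dependence, reducing to a uniform $O(\eps)$ perturbation of the exact Burgers flow, then invoke the implicit function theorem, stable/unstable manifold theory, and smooth dependence on parameters; the paper's proof is a three-line sketch of precisely this, and you have supplied the details it omits. One minor bookkeeping slip: with your rescaling the perturbation term in the $\zeta$-equation is $\eps^{-2}S(\eps,\eps\zeta)$, so either write the equation as $\varkappa\zeta_y=\tfrac12\Lambda(\zeta^2-1)+R$ with $R:=\eps^{-2}S=O(\eps)$, or as $+\eps\tilde R$ with $\tilde R:=\eps^{-3}S=O(1)$ --- your hybrid definition is off by a factor of $\eps$, but since you correctly use only that the perturbation and its $\zeta$-derivatives are $O(\eps)$, the downstream estimates are unaffected.
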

\begin{proof}(Following \cite{Li})
Rescaling ${\eta} \to {\eta}/\epsilon, {x} \to \Lambda \epsilon \tilde{x}/\beta$,
we obtain the ``blown-up'' equations
$$
\eta'=\frac{1}{2}(\eta^2-1)+\epsilon \tilde{S}(\eta,\epsilon)
\quad \tilde{S} \in C^{k+1}(\mathbb{R}^2)
$$
and $\bar{\eta}'=\frac{1}{2}(\bar{\eta}^2-1)$, for which estimates \eqref{approx} translate to
	\ba\label{scaleetacomp}
 |\eta_{\pm}-\bar{\eta}_{\pm}| &\leq C\epsilon,\\
	|\partial_x^k (\bar \eta - \bar \eta_\pm)(x)| &\sim  C\eps^{k}e^{-\theta  |x|},
	\quad x\gtrless 0, \quad \theta>0,\\
	|\partial_x^k \big((\eta- \eta_\pm)- (\bar \eta - \bar \eta_\pm)\big)(x)| &\le C \eps^{k+1}e^{-\theta  |x|},
	\quad x\gtrless 0.
	\ea
The estimates \eqref{scaleetacomp} follow readily from the implicit function theorem and stable manifold theorems together with
smooth dependence on parameters of solutions of ODE, giving the result.
\end{proof}

Setting $q_1=\Lambda \eps^2/2$, and either $\eta=u_{REL,1}$ or $\eta=u_{CE,1}$, we obtain approximate Burgers
equation \eqref{approx}, and thereby estimates \eqref{etacomp} relating $\eta=u_{REL,1}$, $u_{CE,1}$ to an exact
Burgers shock $\bar \eta$.

\begin{corollary}\label{ctri}
Let $\obu\in\ker Q$ be an equilibrium satisfying (H1)-(H2),
in the noncharacteristic case \eqref{gnl}, and $k$ and integer $\geq 2$.
Then, local to $\obu$ ($\bar u$), each pair of points $u_\pm$ corresponding to a standing Lax-type shock of \eqref{ce}
has a corresponding viscous shock solution $u_{CE}$ of \eqref{ce2} and
relaxation shock solution $\bu_{REL}=(u_{REL},v_{REL})$ of \eqref{nonlinear}, satisfying for all $j\leq k-2$:
	\ba\label{u1comp}
	|\partial_x^j ( u_{REL,1} - u_{REL,1}^\pm )(x)| &\sim C \eps^{j}e^{-\theta  |x|},
	\quad x\gtrless 0, \quad \theta>0,\\
	|\partial_x^j (u_{REL,1}-u_{CE,1})(x)| &\le C \eps^{j+1}e^{-\theta  |x|},
	\quad x\gtrless 0.
	\ea
\end{corollary}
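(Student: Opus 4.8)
Here is how I would prove Corollary~\ref{ctri}. The plan is to extract everything from Theorem~\ref{t1.2} (case \eqref{gnl}), the reduction of Section~\ref{s:approx}, and Lemma~\ref{lburgers}; the only genuinely new observations are that the relaxation profile and the Chapman--Enskog profile are compared against \emph{the same} exact Burgers shock and have \emph{the same} endstates. First I would fix the shock data. Given a standing Lax-type shock of \eqref{ce} with endstates $u_\pm$, the Rankine--Hugoniot relation \eqref{rh} singles out the common fiber index $q=f_*(u_\pm)$, and the Lax admissibility of the shock is precisely the sign condition $q_1\Lambda>0$ that surfaced in the proof of Theorem~\ref{t1.2}. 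Setting $\eps=|u_+-u_-|$ and normalizing so that $q_1=\tfrac12\Lambda\eps^2$ (permissible by the reduction of Section~\ref{s:approx}), the fiber \eqref{fibeq} carries a heteroclinic running between $u_1^\pm\approx\mp\eps$. Applying Theorem~\ref{t1.1} (with $k+1$ in place of $k$, see below) produces the $C^{k+1}$ center manifolds of \eqref{nonlinear} and of \eqref{ce2}, hence the profiles $\bu_{REL}$ and $u_{CE}$, and these are unique up to translation because the reduced fiber equation is a scalar approximate Burgers equation.

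Next I would invoke Theorem~\ref{t1.2} twice. On the center-manifold fiber of \eqref{nonlinear}, the coordinate $u_{REL,1}=\obr\cdot u_{REL}$ solves an equation of the form \eqref{approx} with $\varkappa=\obr^T D_*\obr$, $\Lambda$ as in \eqref{gnl}, and remainder $S=O(|\eta|^3+|\eps|^3)$; here $S\in C^{k+1}$ because $\phi$ in \eqref{fibeq} is the composition of $\tilde g_{\rmc,1}$ with the center-manifold graph $\Xi$, and this composition carries the extra derivative required by Lemma~\ref{lburgers}. The same holds, with the identical $\varkappa$ and $\Lambda$, for $u_{CE,1}=\obr\cdot u_{CE}$, since (as recorded in the proof of Theorem~\ref{t1.2}) the reduced fiber of \eqref{ce2} has exactly the same normal form. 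Consequently the comparison shock $\bar\eta=-\eps\tanh(\Lambda\eps x/2\varkappa)$ furnished by Lemma~\ref{lburgers} is literally the same object for both profiles, and \eqref{scaleetacomp} (equivalently \eqref{etacomp}) yields, uniformly in $\eps$, bounds on $\partial_x^j(\bar\eta-\bar\eta_\pm)$, on $\partial_x^j\big((u_{REL,1}-u_{REL,1}^\pm)-(\bar\eta-\bar\eta_\pm)\big)$, and on the analogous quantity built from $u_{CE,1}$.

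Finally I would assemble \eqref{u1comp} by the triangle inequality. For the first line, $\partial_x^j(u_{REL,1}-u_{REL,1}^\pm)=\partial_x^j(\bar\eta-\bar\eta_\pm)+\partial_x^j\big[(u_{REL,1}-u_{REL,1}^\pm)-(\bar\eta-\bar\eta_\pm)\big]$, the first term of the stated size and the second of strictly higher order in $\eps$. For the second line I would write
\[
u_{REL,1}-u_{CE,1}=\big[(u_{REL,1}-u_{REL,1}^\pm)-(\bar\eta-\bar\eta_\pm)\big]-\big[(u_{CE,1}-u_{CE,1}^\pm)-(\bar\eta-\bar\eta_\pm)\big]+(u_{REL,1}^\pm-u_{CE,1}^\pm),
\]
where the first two brackets are $O(\eps^{j+1}e^{-\theta|x|})$ by the third line of \eqref{scaleetacomp}, and the last term vanishes identically: the endstates of $\bu_{REL}$ and of $u_{CE}$ are exactly the equilibria $u_\pm$ of \eqref{ce}, since equilibria of \eqref{nonlinear}, \eqref{ce2}, and \eqref{ce} coincide (the observation just after \eqref{f*q}), and \eqref{newtan} together with Remark~\ref{fibers} identify the corresponding points on the two center manifolds. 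This gives \eqref{u1comp}; the uniqueness-up-to-translation claim is inherited from uniqueness of the heteroclinic of the scalar equation \eqref{burgers}.

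The main obstacle I anticipate is the smoothness bookkeeping rather than any estimate: Lemma~\ref{lburgers} wants $S\in C^{k+1}$ while Theorem~\ref{t1.1} a priori delivers only a $C^k$ center manifold, so one must either run the construction at one higher order of regularity or show that the reduction \eqref{fibeq}--\eqref{burgers} produces a remainder one degree smoother than the manifold, all while keeping every constant uniform in $\eps$. The companion point that needs care is verifying that the coefficients $\Lambda$ and $\varkappa$ computed for \eqref{nonlinear} and for \eqref{ce2} genuinely agree, so that a single $\bar\eta$ serves both comparisons; this is exactly the ``similar computation'' alluded to at the end of the proof of Theorem~\ref{t1.2}, which I would need to carry out explicitly.
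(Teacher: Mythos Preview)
Your proposal is correct and follows essentially the same approach as the paper: apply Lemma~\ref{lburgers} to both $u_{REL,1}$ and $u_{CE,1}$ (which, by Theorem~\ref{t1.2}, satisfy approximate Burgers equations with the \emph{same} $\varkappa$ and $\Lambda$, hence share the same comparison shock $\bar\eta$), then combine via the triangle inequality, using that the endstates coincide because equilibria of \eqref{nonlinear}, \eqref{ce2}, and \eqref{ce} agree exactly. The paper's proof compresses all of this into one sentence; your additional remarks on regularity bookkeeping and on the coincidence of the normal-form coefficients are valid points of care but are already handled (implicitly, by choosing $k$ large enough in Theorem~\ref{t1.1}, and explicitly, in the proof of Theorem~\ref{t1.2}).
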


\begin{proof}
	Immediate, by \eqref{scaleetacomp}, Lemma \ref{lburgers} and the triangle inequality, together with the observation that,
	since equilibria of \eqref{ce}, \eqref{ce2}, and \eqref{nonlinear} agree exactly,
	endstates $u_{REL,1}^\pm=u_{CE,1}^\pm$ agree.
\end{proof}

\begin{proof} [Proof of Corollary \ref{c1.3}]
	Noting that the $\im A_{11}$ and the $\bV$ components of $\bu_{REL}$ are the $C^1$ functions $\Psi(u_{REL,1})$, $\Phi(u_{REL,1})$ of $u_{REL,1}$ along
	the fiber \eqref{burgers}, we obtain \eqref{finalbds}(iii) immediately from \eqref{u1comp}(i).
	Denote by $\Psi_{CE}$ the map describing the dependence of $\im A_{11}$ component of $u_{CE}$ on $u_{CE,1}$
	on the corresponding fiber of \eqref{ce2}.
	Noting that $\Psi-\Psi_{CE}$ and $\Phi- v_*$ both vanish at the enstates $u_{REL,1}^\pm$, we have by smoothness
	of $\Psi$, $\Psi_{CE}$, $\Phi$, $v_*$ that
	$$
	|\Psi-\Psi_{CE}|, \, |\Psi-v_*|=\mathcal{O}(|u_{REL,1}- u_{REL,1}^+|,|u_{REL,1}- u_{REL,1}^-|),
	$$
	giving \eqref{finalbds}(i)-(ii) by \eqref{u1comp}(i)-(ii).
\end{proof}

\subsection{Bifurcation from a linearly degenerate characteristic}\label{s:ldg}
\begin{proof}[Proof of Theorem~\ref{t1.2}, case \eqref{ldg}]
In the case \eqref{ldg}, by an entirely similar argument, comparing to the normal form for the equilibrium problem \eqref{ce},
yields normal form \eqref{ldgflow}.
Here, the main point is to observe that, in the normal form for \eqref{ce}, all terms, including higher-order
error terms, include a factor
$q_1$, since in the fiber $q_1=0$ all points are equilibria.
Evidently, each $q_1$-fiber is either composed entirely of equilibria, or contains no equilibria, hence there exist no
nontrivial profiles connecting to equilibria either in forward or backward $x$.
\end{proof}

\section{Application to Boltzmann's equation}\label{s:localization}
We now specialize to the case Example~\ref{boltzeg} of Boltzmann's equation with hard sphere collision kernel.

\subsection{Existence and sharp localization in velocity of Center Manifolds}\label{s:sharploc}
Let $A$, $Q$ be as defined in Example~\ref{boltzeg} and $\bY^\sigma$ as defined in \eqref{Hs}, with $A_{ij}$, $\Gamma_0$
as in the rest of the paper.
We have the following result of \cite{MZ}.

\begin{lemma}[\cite{MZ}]\label{pcoerce}
Assume Hypotheses (H1) and (H2). For Boltzmann's equation with hard sphere kernel and any $1/2\leq \sigma<1$, the linear operator $E=Q'(M_{\bar u})_{|\bV}$, where $M_{\bar u}$ is the Maxwellian defined in \eqref{M}, and its inverse can be extended to bounded linear operators on $\bY^\sigma\cap\bV$.
\end{lemma}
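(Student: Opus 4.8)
The plan is to combine the classical Hilbert--Grad decomposition of the linearized collision operator with the weight-conjugation estimates of \cite{MZ}. In the coordinates of Example~\ref{boltzeg}, $E=Q'(M_{\bar u})_{|\bV}$ is the restriction to $\bV$ of an operator $-\Lambda+K$ on $\bH$, where $\Lambda$ is multiplication by a scalar function $\lambda(\xi)$ with $0<c_0\le\lambda(\xi)\le C_0$ (bounded below by Grad's lower bound on the rescaled collision frequency, above by boundedness of $E$ on $\bH=\bY^{1/2}$), and $K$ is Grad's integral operator, which is compact on $\bH$ by Hilbert's theorem. I would prove, in turn: (a) $\Lambda^{\pm1}$ and the $\bH$-orthogonal projections $P_{\bV},P_{\bV^\perp}$ extend to bounded operators on $\bY^\sigma$; (b) $K$ extends to a \emph{compact} operator on $\bY^\sigma$, this being the step where $\sigma<1$ enters; (c) deduce boundedness of $E$ and $E^{-1}$ on $\bY^\sigma\cap\bV$ by a Fredholm argument.

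Step (a) is routine: $\Lambda$ and the weight $w_\sigma:=\langle\cdot\rangle^{1/2}M_{\bar u}^{-\sigma}$ are both multiplication operators, so $\|\Lambda f\|_{\bY^\sigma}\le C_0\|f\|_{\bY^\sigma}$ and $\|\Lambda^{-1}f\|_{\bY^\sigma}\le c_0^{-1}\|f\|_{\bY^\sigma}$; and $P_{\bV^\perp}f=\sum_i\langle f,e_i\rangle_{\bH}\,e_i$, the $e_i$ spanning the five-dimensional space $\bV^\perp$ of Maxwellian-times-polynomial collision invariants, is bounded on $\bY^\sigma$ because, by Cauchy--Schwarz, $|\langle f,e_i\rangle_{\bH}|\le\|f\|_{\bY^\sigma}\,\|\langle\cdot\rangle^{1/2}M_{\bar u}^{\sigma-1}e_i\|_{L^2}$ and $\langle\cdot\rangle^{1/2}M_{\bar u}^{\sigma-1}e_i\in L^2$ for $\sigma\in(1/2,1)$ (the case $\sigma=1/2$ being the trivial one $\bY^{1/2}=\bH$); this estimate degenerates at $\sigma=1$, consistent with the sharpness remarked after \eqref{Hs}.

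Step (b) is the heart of the matter. Via the isometry $\bY^\sigma\to L^2$, $f\mapsto w_\sigma f$, boundedness (resp.\ compactness) of $K$ on $\bY^\sigma$ is equivalent to that of $w_\sigma Kw_\sigma^{-1}=M_{\bar u}^{-(\sigma-1/2)}\,\widehat K\,M_{\bar u}^{\sigma-1/2}$ on $L^2$, where $\widehat K:=w_{1/2}Kw_{1/2}^{-1}$ is the symmetrized Grad operator, bounded and compact on $L^2$. Its kernel obeys Grad's pointwise bound, schematically
\begin{equation*}
|\widehat k(\xi,\xi_*)|\le\frac{C}{|\xi-\xi_*|}\exp\Big(-a|\xi-\xi_*|^2-a\frac{\big(|\xi-\bar v|^2-|\xi_*-\bar v|^2\big)^2}{|\xi-\xi_*|^2}\Big),
\end{equation*}
with $a>0$ comparable to $1/\bar e$. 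Conjugation multiplies $\widehat k$ by $\exp\big(b(\sigma-1/2)(|\xi-\bar v|^2-|\xi_*-\bar v|^2)\big)$, $b\asymp1/\bar e$. Setting $s:=|\xi-\bar v|^2-|\xi_*-\bar v|^2$ and $d:=|\xi-\xi_*|$ and completing the square in $s$ against a fraction of the term $-a\,s^2/d^2$, one finds the resulting exponent bounded above by $-\mu\,s^2/d^2-\mu\,d^2$ for some $\mu>0$ \emph{precisely when $(\sigma-1/2)^2<a^2/b^2$}, which with the hard-sphere normalization is exactly $\sigma<1$. Thus the conjugated kernel keeps a Grad-type bound with strictly positive Gaussian rates; Schur's test then gives $L^2$-boundedness, and the classical truncation argument (the retained factor $e^{-\mu s^2/d^2}$ supplying decay of $\int|\,\cdot\,|\,\rmd\xi_*$ as $|\xi|\to\infty$) gives compactness. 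Hence $K$ extends to a compact operator on $\bY^\sigma$, and therefore $K_\bV:=P_\bV K_{|\bV}$ to a compact operator on $\bY^\sigma\cap\bV$ by (a).

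Step (c): on $\bV$ write $E=-\Lambda_\bV\big(I-\Lambda_\bV^{-1}K_\bV\big)$ with $\Lambda_\bV:=P_\bV\Lambda_{|\bV}$. Since $\langle\Lambda_\bV h,h\rangle_{\bH}=\langle\Lambda h,h\rangle_{\bH}\ge c_0\|h\|_{\bH}^2$ on $\bV$, $\Lambda_\bV$ is invertible on $\bV$; and from the Schur-complement identity $\Lambda_\bV=(P_\bV\Lambda^{-1}_{|\bV})^{-1}+(\text{finite rank})$ together with step (a) and the Fredholm alternative, $\Lambda_\bV^{-1}$ is bounded on $\bY^\sigma\cap\bV$. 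Consequently $\Lambda_\bV^{-1}K_\bV$ is compact on $\bY^\sigma\cap\bV$, while $I-\Lambda_\bV^{-1}K_\bV=-\Lambda_\bV^{-1}E$ is injective there (being injective on the larger space $\bV$, where $E^{-1}$ exists by (H2)); so $I-\Lambda_\bV^{-1}K_\bV$ is boundedly invertible on $\bY^\sigma\cap\bV$, and both $E$ and $E^{-1}=-\big(I-\Lambda_\bV^{-1}K_\bV\big)^{-1}\Lambda_\bV^{-1}$ are bounded there, as claimed. The main obstacle is Step (b): carrying out the completion-of-the-square bookkeeping with the explicit Grad kernel so that the threshold comes out as exactly $\sigma<1$ (rather than something smaller), which is the content of the estimate of \cite{MZ}; see \cite{Ce,Gl} for Grad's pointwise bound. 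A secondary nuisance is the finite-rank-projector bookkeeping in (a) and (c), which rests on the Gaussian decay of the elements of $\bV^\perp$ and, again, fails at $\sigma=1$.
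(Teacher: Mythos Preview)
The paper does not prove this lemma at all; it is stated with attribution to \cite{MZ} and immediately followed by the next lemma. Your proposal is therefore not competing with a proof in the paper but rather supplying what \cite{MZ} contains. The approach you outline---Grad decomposition $E=-\Lambda+K$, conjugated Grad kernel estimate giving compactness of $K$ on $\bY^\sigma$ for $\sigma<1$, then a Fredholm argument---is indeed the standard route and matches what is done in \cite{MZ,MZ2}; in particular Step~(b), the completion-of-the-square computation showing the conjugated kernel retains a Grad-type Gaussian bound exactly up to $\sigma=1$, is the essential content there. Note, too, that the paper itself invokes the same $M_*+K$ splitting and the mapping property \eqref{bd} of $K$ in the proof of Lemma~\ref{upgrade}, so your Step~(b) is consistent with what the paper takes for granted.

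One small correction: in Step~(a) you say the projection bound ``degenerates at $\sigma=1$'', but in fact $\langle\cdot\rangle^{1/2}M_{\bar u}^{\sigma-1}e_i\in L^2$ for all $\sigma>0$ since the $e_i$ carry a full Maxwellian factor; the obstruction at $\sigma=1$ lives entirely in Step~(b). In Step~(c), the argument for bounded invertibility of $\Lambda_\bV$ on $\bY^\sigma\cap\bV$ is correct but can be streamlined: since $\bY^\sigma\hookrightarrow\bH$ for $\sigma\ge1/2$, the operator $P_\bV\Lambda P_\bV+P_{\bV^\perp}$ on $\bY^\sigma$ is a finite-rank perturbation of the multiplication isomorphism $\Lambda$, hence Fredholm of index zero, and injectivity follows from the $\bH$-coercivity $\langle\Lambda h,h\rangle_\bH\ge c_0\|h\|_\bH^2$; this avoids the Schur-complement bookkeeping.
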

\begin{lemma}\label{ccoerce}
Assume Hypotheses (H1) and (H2). For Boltzmann's equation with hard sphere kernel and $1/2\leq \sigma<1$, $\bH_\rmc\subset \bY^\sigma$.
\end{lemma}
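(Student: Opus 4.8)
The plan is to show $\bH_\rmc\subset\bY^\sigma$ by checking that each of the finitely many ``building blocks'' making up the center subspace lies in $\bY^\sigma$, using the explicit description of $\bH_\rmc=\bV^\perp\oplus\bV_\rmc$ from \eqref{def-Vc} and the operator bounds of Lemma~\ref{pcoerce}. Recall that $\bH_\rmc$ is finite-dimensional and that an element $\bu=(u_1,\tu,v_1,\tv)^{\mathrm T}\in\bH_\rmc$ is determined by its $\bV^\perp$-component together with $v_1\in\bV_1$, since the $\tbV$-component is slaved by $\tv=-E_0^{-1}P_{\tbV}Ev_1$. So it suffices to verify: (a) $\bV^\perp\subset\bY^\sigma$; (b) the slaving map $v_1\mapsto -E_0^{-1}P_{\tbV}Ev_1$ sends $\bV_1$ into $\bY^\sigma\cap\bV$.

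For (a): $\bV^\perp=\ker Q'(M_{\bar u})$ is spanned by the collision invariants $1,\xi_1,\xi_2,\xi_3,|\xi|^2$ multiplied by the Maxwellian $M_{\bar u}$ (after the coordinate change \eqref{change}, by $\langle\xi\rangle^{1/2}M_{\bar u}$), and each such element, being a polynomial times a Gaussian, manifestly lies in $\bY^\sigma$ for every $\sigma<1$ since $M_{\bar u}^{-\sigma}M_{\bar u}=M_{\bar u}^{1-\sigma}$ still decays at Gaussian rate; equivalently this is the fact referenced in the introduction that the equilibrium manifold $\cE$ lies in $\bY^\sigma$ (and no stronger space). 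Since $\bV^\perp$ is finite-dimensional, $\|\cdot\|_{\bY^\sigma}$ is finite on it.

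For (b): here I would invoke Lemma~\ref{pcoerce}, which gives that $E=Q'(M_{\bar u})_{|\bV}$ and $E^{-1}$ extend to bounded operators on $\bY^\sigma\cap\bV$. Given $v_1\in\bV_1\subset\bV$, first note $v_1\in\bY^\sigma$ (again $\bV_1\subset\bV^\perp\oplus\ldots$ — more directly, $\bV_1=\im T_{12}^*$ is finite-dimensional, hence automatically in $\bY^\sigma$). Then $Ev_1\in\bY^\sigma\cap\bV$ by boundedness of $E$ on that space; applying the orthogonal projection $P_{\tbV}$ — which must be checked to be bounded on $\bY^\sigma$, or circumvented by noting $\tbV$ is a finite-codimension subspace of $\bV$ and the complementary piece $\bV_1$ is finite-dimensional — keeps us in $\bY^\sigma\cap\bV$; finally $E_0^{-1}=(P_{\tbV}E_{|\tbV})^{-1}$ acting on this is bounded on $\bY^\sigma\cap\tbV$, again by Lemma~\ref{pcoerce} together with the finite-codimension reduction relating $E_0$ to $E$. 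Hence $\tv=-E_0^{-1}P_{\tbV}Ev_1\in\bY^\sigma$, and assembling, $\bu\in\bY^\sigma$.

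The main obstacle I anticipate is the treatment of the projections $P_{\tbV}$, $P_{\bV_1}$, $P_{\im A_{11}}$, $P_{\ker A_{11}}$ on the weighted space $\bY^\sigma$: these are orthogonal with respect to the base Hilbert space $\bH$, not $\bY^\sigma$, so one cannot take their $\bY^\sigma$-boundedness for granted. The clean way around this is to exploit that all the ``extra'' subspaces ($\bV^\perp$, hence $\ker A_{11}$, $\im A_{11}$, $\bV_1$) are finite-dimensional — so decomposing $\bu$ as its finite-dimensional part plus its $\tbV$-part, the finite-dimensional part is trivially in $\bY^\sigma$, and on the $\tbV$-part one only needs $E_0^{-1}$ bounded on $\bY^\sigma\cap\tbV$, which follows from Lemma~\ref{pcoerce} by writing $E_0^{-1}$ in terms of $E^{-1}$ and a bounded correction supported on finite-dimensional pieces (the Schur-complement identity \eqref{blockinv}/\eqref{neg}). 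I would organize the proof around this finite-dimensional-plus-$\tbV$ splitting so that only the genuinely infinite-dimensional bound — the one supplied by \cite{MZ} via Lemma~\ref{pcoerce} — is used, and everything else is reduced to finite-dimensionality.
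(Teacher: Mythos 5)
Your overall strategy is the same as the paper's: decompose $\bH_\rmc=\bV^\perp\oplus\bV_\rmc$, show $\bV^\perp\subset\bY^\sigma$ because its elements are polynomial multiples of the Maxwellian, and handle the slaving relation $\tv=-E_0^{-1}P_{\tbV}Ev_1$ by combining Lemma~\ref{pcoerce} with the finite-dimensionality of $\bV^\perp$, $\ker A_{11}$, $\im A_{11}$, and $\bV_1$. Your worry that the orthogonal projectors are $\bH$-orthogonal rather than $\bY^\sigma$-orthogonal is well placed, and your resolution via finite rank is exactly what the paper does. Your plan to handle $E_0^{-1}$ on $\bY^\sigma\cap\tbV$ via the Schur-complement relation to $E^{-1}$ plus finite-rank corrections is also in the spirit of the paper's (rather terse) concluding step.

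There is, however, one real gap: you assert that $\bV_1=\im T_{12}^*$ ``is finite-dimensional, hence automatically in $\bY^\sigma$.'' This does not follow. A finite-dimensional subspace of $\bH$ need not lie in $\bY^\sigma$: for $1/2<\sigma<1$, the one-dimensional span of $M_{\bar u}^{\tau}$ with $1/2<\tau<\sigma$ lies in $\bH=\bY^{1/2}$ but not in $\bY^\sigma$. Finite-dimensionality alone tells you nothing about decay in $\xi$. The paper fills this in by first observing that $A$ — multiplication by the \emph{bounded} function $\xi_1/\langle\xi\rangle$ — extends to a bounded operator on $\bY^\sigma$; consequently $\bV_1=\im T_{12}^*=\im(A_{21})_{|\ker A_{11}}$ is the image of the subspace $\ker A_{11}\subset\bV^\perp\subset\bY^\sigma$ under the $\bY^\sigma$-bounded map $P_\bV A P_{\bV^\perp}$, hence $\bV_1\subset\bY^\sigma$. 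Only once $\bV_1\subset\bY^\sigma$ is established do the finite-rank projections $P_{\bV_1}$ and $P_{\tbV}=P_\bV-P_{\bV_1}$ preserve $\bY^\sigma$, which your proposal implicitly relies on. Incorporating this one observation about $A$ makes your argument complete and brings it into line with the paper's.
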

\begin{proof}
The subspace $\bH_\rmc$ is the direct sum of the subspace of equilibria $\bV^\perp$,
equal to the tangent space to the manifold $\cE$ of Maxwellians, and the space $\bV_\rmc$ defined in \eqref{def-Vc}. The tangent subspace to $\cE$ at $\obu$ is given by polynomial multiples of $M_{\bar u}$, hence $\bV^\perp$ evidently lies in $\bY^\sigma$. Recalling that $A$ is a bounded multiplication operator in $\xi$, we have that $A$ can be extended to a bounded linear operator on $\bY^\sigma$. It follows that $\bY^\sigma$ is invariant under the orthogonal projectors associated to the orthogonal decomposition $\bH=\ker A_{11}\oplus\im A_{11}\oplus\bV_1\oplus\tbV$. Moreover, we have that $\bV_1=\im T_{12}^*=\im (A_{21})_{|\ker A_{11}}\subset\bY^\sigma$. Fix $v=(v_1,\tv)\in\bV_\rmc$.
Since $\bY^\sigma$ is invariant under $P_{\tbV}$, from \eqref{def-Vc} and Lemma~\ref{pcoerce} we infer that
$\tv=-E_0^{-1}P_{\tbV}Ev_1\in\bY^\sigma$, proving the lemma.
\end{proof}
\begin{lemma}\label{upgrade}
	Assume Hypotheses (H1) and (H2). The Fourier multiplier $\cK_0=\cF^{-1}M_{R_{\Gamma_0,E_0}}\cF$, associated to the
operator-valued function $R_{\Gamma_0,E_0}:\RR\to\mathcal{B}(\tbV)$ defined by $R_{\Gamma_0,E_0}(\omega)=(2\pi\rmi\omega\Gamma_0-E_0)^{-1}$,
	is bounded on $H^1_{-\alpha}(\R,\bY^\sigma\cap\tbV)$ for any $1/2\leq \sigma <1$ and $\alpha\in (0,\nu(\Gamma_0,E_0))$.
\end{lemma}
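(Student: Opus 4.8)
The plan is to transfer the proof of Lemma~\ref{l2.6} from $\bH$ to the Hilbert space $\bY^\sigma\cap\tbV$, a closed subspace of $\bY^\sigma$. I would first reduce the $H^1$ claim to the $L^2$ one: if $\cK_0$ is bounded on $L^2_{-\alpha}(\R,\bY^\sigma\cap\tbV)$, then for $g\in H^1_{-\alpha}(\R,\bY^\sigma\cap\tbV)$ both $g$ and $g'$ lie in $L^2_{-\alpha}(\R,\bY^\sigma\cap\tbV)$, hence so do $\cK_0g$ and $\cK_0g'$, and by \eqref{deriv-cK0} $(\cK_0g)'=\cK_0g'$, so $\cK_0g\in H^1_{-\alpha}(\R,\bY^\sigma\cap\tbV)$ with the required estimate. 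The $L^2_{-\alpha}$ bound I would obtain by running the $L^2$ half of the proof of Lemma~\ref{l2.6} verbatim with $\bH$ replaced by $\bY^\sigma\cap\tbV$: the key identity $\psi\cK_0g=\cK_0\bigl(\psi g+\psi'(\cG_{\Gamma_0,E_0}^{*}*g)\bigr)$ of \cite[Lemma 4.10]{PZ1} is a formal identity valid over any Banach space carrying $\cK_0$ and $\cG_{\Gamma_0,E_0}^{*}$, and the convolution estimate Lemma~\ref{r2.5} holds over any Banach space. Hence it suffices to prove, over $\bY^\sigma\cap\tbV$, the two inputs used there: (a) $\cK_0$ is bounded on the \emph{unweighted} $L^2(\R,\bY^\sigma\cap\tbV)$, equivalently $\sup_{\omega\in\RR}\|R_{\Gamma_0,E_0}(\omega)\|_{\cB(\bY^\sigma\cap\tbV)}<\infty$ (Plancherel on the Hilbert space $\bY^\sigma\cap\tbV$); and (b) $\cG_{\Gamma_0,E_0}(\cdot)$ decays exponentially in $\cB(\bY^\sigma\cap\tbV)$ at a rate $\geq\nu(\Gamma_0,E_0)$.

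As a preliminary I would check that $\Gamma_0$, $E_0$ and $E_0^{-1}$ extend to bounded operators on $\bY^\sigma\cap\tbV$. Since $A=\xi_1/\langle\xi\rangle$ is a bounded multiplication operator in the velocity variable, it — and therefore all of $A_{11},A_{12},A_{21},A_{22}$ — extends boundedly to $\bY^\sigma$; the operators $\tA_{11}^{-1}$, $(T_{12}^{*})^{-1}$ and $\tA_{12}^{*}\tA_{11}^{-1}\tA_{12}$ factor through the finite-dimensional spaces $\ker A_{11}\subset\bV^\perp$ and $\bV_1$ and are trivially bounded; and the orthogonal projections $P_{\bV^\perp},P_{\bV_1},P_{\tbV}$, with finite-dimensional ranges spanned by polynomial multiples of $M_{\bar u}$, are bounded on $\bY^\sigma$, exactly as in the proof of Lemma~\ref{ccoerce}. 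Thus $\Gamma_0=P_{\tbV}(A_{22}-\tA_{12}^{*}\tA_{11}^{-1}\tA_{12})_{|\tbV}$ and $E_0=P_{\tbV}E_{|\tbV}$ extend boundedly to $\bY^\sigma\cap\tbV$, and $E_0^{-1}$ does so by Lemma~\ref{pcoerce} composed with these projections.

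The main obstacle is input (a). One cannot simply transfer the $\bH$-bound of Lemma~\ref{l2.2}(iv), because it rests on the coercivity $\mathrm{Re}\langle\cL_0(\omega)\tv,\tv\rangle\geq\delta\|\tv\|^2$ in the $\bH$-inner product, which has no $\bY^\sigma$-analogue; indeed, the weighted bound is genuinely equivalent to boundedness of $2\pi\rmi\omega R(2\pi\rmi\omega,S_{\Gamma_0,E_0})$ on $\bY^\sigma\cap\tbV$ uniformly in $\omega$, which does not follow formally. Instead I would argue as in the bootstrap of \cite{MZ}, exploiting the structure of the hard sphere collision operator: by Grad's decomposition $E=-\nu(\xi)+K$, where $\nu(\xi)\sim\langle\xi\rangle$ is multiplication by the collision frequency (bounded below by some $\nu_0>0$ and commuting with the Maxwellian weight), and $K$ is a compact integral operator whose kernel's Maxwellian decay makes it bounded from $\bH$ into $\bY^\sigma$ for $1/2\leq\sigma<1$; correspondingly $\Gamma_0$ differs from a compression of multiplication by $\xi_1/\langle\xi\rangle$ by finite-rank terms, which (having Maxwellian-decaying ranges) also map $\bH$ into $\bY^\sigma$. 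For $g\in\bY^\sigma\cap\tbV$ and $\tv=R_{\Gamma_0,E_0}(\omega)g$, one isolates the multiplicative principal part $m_\omega(\xi):=2\pi\rmi\omega\,\xi_1/\langle\xi\rangle+\nu(\xi)$, whose modulus is $\geq\nu(\xi)\geq\nu_0$ uniformly in $\omega$, so that $m_\omega^{-1}$ is bounded on $\bY^\sigma$ uniformly in $\omega$, and rewrites the resolvent equation — modulo the finite-rank corrections to the principal part — as $\tv=m_\omega(\xi)^{-1}\bigl(g+R_\omega\tv\bigr)$ with $R_\omega$ a remainder mapping $\bH$ boundedly into $\bY^\sigma$. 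Since $\|\tv\|_{\bH}\leq C\|g\|_{\bH}\leq C'\|g\|_{\bY^\sigma}$ by Lemma~\ref{l2.2}(iv), this yields $\|\tv\|_{\bY^\sigma}\leq C''\|g\|_{\bY^\sigma}$ once one checks that $R_\omega$ maps $\bH$ into $\bY^\sigma$ with norm bounded uniformly in $\omega$. The uniform-in-$\omega$ control of the $\omega$-dependent (finite-rank) part of $R_\omega$ is the delicate point — it is exactly the thin-slab phenomenon near $\xi_1=0$ — and is handled, as in \cite{MZ}, using the bound $\sup_{\omega}\|\omega R(2\pi\rmi\omega,S_{\Gamma_0,E_0})\|_{\cB(\bH)}<\infty$ from Lemma~\ref{r2.3}(iii) (recall $R(2\pi\rmi\omega,S_{\Gamma_0,E_0})=R_{\Gamma_0,E_0}(\omega)\Gamma_0$) together with the regularizing mapping property of the pieces of $\Gamma_0$.

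For input (b), I would rerun the argument of (a) with $\omega$ replaced by a complex frequency: the complexified principal part $2\pi\rmi(\omega+\rmi\tau)\xi_1/\langle\xi\rangle+\nu(\xi)$ still has modulus bounded below on the strip $|\tau|<\nu_0/2\pi$, and the compact/finite-rank remainders are unchanged, so $R_{\Gamma_0,E_0}(\cdot)$ extends analytically and remains bounded on $\bY^\sigma\cap\tbV$ in a neighborhood of $\rmi\RR$; equivalently the bi-semigroup of $S_{\Gamma_0,E_0}=\Gamma_0^{-1}E_0$, together with its dichotomy subspaces $\tbV_{\rms/\rmu}$ and projections $\tP_{\rms/\rmu}$ from Lemma~\ref{r2.3}, restricts to $\bY^\sigma\cap\tbV$ with the same exponential decay rate $\nu(\Gamma_0,E_0)$, since the underlying spectral data of $S_{\Gamma_0,E_0}$ is intrinsic. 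Hence $\cG_{\Gamma_0,E_0}$ decays exponentially in $\cB(\bY^\sigma\cap\tbV)$, and combining (a) and (b) with Lemma~\ref{r2.5} and the identity of \cite[Lemma 4.10]{PZ1} exactly as in Lemma~\ref{l2.6} gives $\cK_0\in\cB(L^2_{-\alpha}(\R,\bY^\sigma\cap\tbV))$; the $H^1$-reduction at the start then finishes the proof.
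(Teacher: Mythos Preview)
Your bootstrap idea for input (a) --- isolate the multiplicative principal part $m_\omega(\xi)=2\pi\rmi\omega\,\xi_1/\langle\xi\rangle+\nu(\xi)$, invert it trivially on $\bY^\sigma$, and control the remainder via the known $\bH$-bound and the $\bH\to\bY^\sigma$ smoothing of $K$ --- is exactly the \cite{MZ} mechanism the paper uses. The paper, however, executes it more economically: rather than working pointwise in $\omega$ and then separately transferring to the weighted space via the convolution machinery of Lemma~\ref{l2.6} (which is what forces you to need input (b)), it lifts $\cK_0g$ to a full solution $\bu$ of $A\bu'=Q'(\obu)\bu+g$ and applies the bootstrap \emph{directly in $H^1_{-\alpha}(\R,\bY^\sigma)$}. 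The diagonal operator $(M_a\partial_x-M_*)^{-1}$ is trivially bounded there (scalar multiplication in $\xi$, Fourier multiplier in $x$), so inverting it handles the spatial weight and the velocity weight in one stroke; the remainder $K\bu$ is then controlled by \eqref{bd} and the already-known $\sigma=1/2$ case. This yields the $H^1_{-\alpha}(\R,\bY^\sigma)$ bound without ever touching the Green function in $\bY^\sigma$.

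Your route is not wrong in spirit, but input (b) is a genuine gap as written. Analytic continuation of the resolvent to the strip $|\tau|<\nu_0/2\pi$ gives decay only at that rate, which need not equal $\nu(\Gamma_0,E_0)$, so you would not cover the full range $\alpha\in(0,\nu(\Gamma_0,E_0))$ claimed; and the appeal to ``intrinsic spectral data'' is heuristic --- for unbounded generators of bi-semigroups there is no automatic spectral mapping theorem transferring resolvent location to semigroup decay across different norms. Moreover, the identity from \cite[Lemma~4.10]{PZ1} used in Lemma~\ref{l2.6} involves the $\bH$-adjoint $\cG_{\Gamma_0,E_0}^*$, and you would need to show that this $\bH$-adjoint preserves $\bY^\sigma$, which is a separate, non-obvious claim. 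Establishing (b) rigorously would essentially require applying the same bootstrap to the homogeneous equation --- i.e.\ reproducing the paper's argument in a less direct form.
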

\begin{proof}
Fix $\alpha\in (0,\nu(\Gamma_0,E_0))$. The result for $\sigma=1/2$ has already been established, giving
\be\label{one}
\|\cK_0g \|_{H^1_{-\alpha}(\RR,\tbV)}\leq c\|g\|_{H^1_{-\alpha}(\RR,\tbV)}\quad\mbox{for any}\quad g\in H^1_{-\alpha}(\RR,\tbV).
\ee
We use a bootstrap argument like that of \cite[p. 677, Proposition 3.1]{MZ} to extend to $1/2<\sigma<1$.
Namely, we use the fact, observed in \cite[Section 2]{MZ}, that $Q'(\obu)=M_*+K$, where $M_*$ is the operator of multiplication by a real valued function
bounded above and below and
\be\label{bd}
	\|\langle \cdot \rangle^{1/2} K \by\|_{\bY^\sigma}\leq c \| \langle \cdot\rangle^{-1/2} \by\|_{\bY^\sigma}\quad\mbox{for any}\quad \by\in\langle \cdot\rangle^{1/2}\bY^\sigma.
\ee
From Lemma~\ref{l2.8}(i) we have that $\cK_0g$ is the unique mild solution of equation \eqref{tildev-eq1} for any $g\in H^1_{-\alpha}(\RR,\tbV)$. It follows that
$\bu=\big(\Gamma_1\cK_0g+E_1\cV\cK_0g,-\tA_{11}^{-1}\tA_{12}\cK_0g,0,\cK_0g\big)^{\mathrm{T}}\in H^1_{-\alpha}(\RR,\bH)$ is a solution of the system \eqref{linear-sys-perturbed2} , which is equivalent to the system \eqref{linear-sys-perturbed} for $f=g$. We infer that
\begin{equation}\label{a}
A\bu'=Q'(\obu)\bu+g.
\end{equation}
Recall that $A$ is the multiplication operator by the real valued function $a:\RR^3\to\RR$ defined by $a(\xi)=\xi_1/\langle \xi\rangle$. Observing that the diagonal operator
$(M_a\partial_x -M_*)^{-1}$ is bounded on $H^1_{-\alpha}(\R,\bY^\sigma)$, and since $Q'(\obu)=M_*+K$, from \eqref{a} we obtain that
\begin{equation}\label{b}
\bu=(M_a\partial_x -M_*)^{-1}(g+K\bu).
\end{equation}
From \eqref{bd} and \eqref{b} we obtain that
\ba\label{bd2}
\|\bu\|_{H^1_{-\alpha}(\RR,\bY^\sigma)}&\leq c
\|g\|_{H^1_{-\alpha}(\R,\bY^\sigma\cap\tbV)}+c\|K\bu\|_{H^1_{-\alpha}(\R,\bY^\sigma)}\leq c
\|g\|_{H^1_{-\alpha}(\R,\bY^\sigma\cap\tbV)}+c\|\langle \cdot\rangle^{-1/2} \bu \|_{H^1_{-\alpha}(\R,\bY^\sigma)}.
\ea
Noting that $\langle \xi\rangle^{-1/2}M_{\bar u}^{-\sigma}(\xi)<\frac{1}{2}M_{\bar u}^{-\sigma}(\xi)$ for $|\xi|\geq C$ and some $C\gg 1$,
and $M_{\bar u}^{-\sigma}(\xi)\lesssim M_{\bar u}^{-1/2}(\xi)$ for $|\xi|\leq C$, we may rearrange \eqref{bd2} to
conclude that
\ba\label{bdf}
	\|\bu\|_{H^1_{-\alpha}(\RR,\bY^\sigma)}&\leq c
\|g\|_{H^1_{-\alpha}(\R,\bY^\sigma\cap\tbV)}+c\|\bu\|_{H^1_{-\alpha}(\R,\bY^{1/2})}\leq c\|g\|_{H^1_{-\alpha}(\R,\bY^\sigma\cap\tbV)}+c\|\bu\|_{H^1_{-\alpha}(\R,\bY^{1/2})}\\
&\leq c\|g\|_{H^1_{-\alpha}(\R,\bY^\sigma\cap\tbV)}+c\|\cK_0g\|_{H^1_{-\alpha}(\R,\tbV)}+c\|\cV\cK_0g\|_{H^1_{-\alpha}(\R,\tbV)}\\&\leq c \|g\|_{H^1_{-\alpha}(\R,\bY^\sigma\cap\tbV)}+c\|g\|_{H^1_{-\alpha}(\R,\tbV)}.
\ea
Since $P_{\tbV}\bu=\cK_0g$, from \eqref{bdf} it follows that
\begin{equation}\label{bdf2}
\|\cK_0g\|_{H^1_{-\alpha}(\RR,\bY^\sigma\cap\tbV)}\leq c \|g\|_{H^1_{-\alpha}(\R,\bY^\sigma\cap\tbV)}+c\|g\|_{H^1_{-\alpha}(\R,\tbV)}
\end{equation}
Define now $\cY^\sigma \sim \bY^\sigma$ to be the space determined by mixed norm
$ \|\by\|_{ \cY^\sigma}:=\|\by\|_{\bY^\sigma}+n\|\by\|_{\bY^{1/2}}$, where $n>>1$. Summing $n$ times \eqref{one} and \eqref{bdf2}, we obtain
$\|\cK_0g \|_{H^1_{-\alpha}(\R,\cY^{\sigma}\cap\tbV)}\leq c\|g \|_{H^1_{-\alpha}(\R,\cY^{\sigma}\cap\tbV)}$ for $n$ sufficiently large,
yielding the result, finally, by equivalence of $\bY^\sigma$ and $\cY^\sigma$.
\end{proof}

\begin{proof}[Proof of Proposition \ref{p1.4}]
Similarly as in the standard case $\bH=\bY^{1/2}$, the Volterra integral determining the part of our fixed-point mapping is readily seen to be bounded on
$H^1_{-\alpha}(\R,\bY^\sigma)$,
whence we may repeat our previous argument in its entirety to obtain existence of a center manifold
valued in $\bY^\sigma$, as claimed.
\end{proof}

\br\label{sharprmk}
It is easily seen that the result $\cM_\rmc \subset \bY^\sigma$, $1/2\leq \sigma<1$ is sharp, even in the noncharacteristic case.
For, consider the difference $v(\xi)= e^{-\theta |\xi|^2}- e^{-|\xi|^2}$, $0<\theta<1$,
between a base point Maxwellian $e^{-|\xi|^2}$ and a nearby equilibrium consisting of a different
Maxwellian $e^{-\theta |\xi|^2}$ with slightly slower decay in $|\xi|$.
Evidently, $v(\xi)\sim e^{-\theta |\xi|^2}$ for large $\xi$, whence $v\in \bY^{\theta}$ but $v\not \in
\bY^{1}$.
\er

\subsection{Physical behavior}\label{s:phys}
Specialized to Boltzmann's equation, the observations on center manifold structure in Theorem \ref{t1.2}
have a number of interesting physical applications, for example to
{\it Milne's problem} and {\it condensation/evaporation} phenomena \cite{So1,So2,So3}.
See \cite{LiuYu} for further discussion.\footnote{
	Our analysis shows
	that assumption $f$ even in $\xi_2$, $\xi_3$ of \cite{LiuYu}, restricting $\mathrm{dim}\ker A_{11}=1$
	in case \eqref{ldg}, may be dropped.
}
\appendix

\section{Smoothness of substitution operators}\label{appendix}
In this section we study the smoothness properties in the mixed-norm spaces $\cZ_{\gamma,\beta}(\bH)$ topology of substitution operators $\cN: \overline{B}_{H^1_{-\alpha}(\RR,\bH)}(0,\delta)\to H^1_{-\alpha}(\RR,\bH)$
defined by
\begin{equation}\label{subst}
\big(\cN(f)\big)(x)=N(f(x)),\quad x\in\RR,\; f\in H^1_{-\alpha}(\RR,\bH).
\end{equation}
Here $N:\bH\to\bH$ is a bounded, nonlinear $C^\infty$ function on $\bH$, whose derivatives are bounded. Moreover, the weights $\alpha>0$, $\beta>0$ and $\gamma>0$ satisfy the conditions of Lemma~\ref{l3.2}, namely $0<\alpha<\gamma<\beta$ and $0<2\alpha<\beta-\gamma$.
\begin{lemma}\label{A1}
Assume  $N:\bH\to\bH$ is a $C^\infty$ function on $\bH$ such that $\sup_{h\in\bH}\|N^{(j)}(h)\|<\infty$ for any $j\geq 0$. Then, the substitution operator $\cN$ defined in \eqref{subst} is a $C^1$ function from $\cZ_{\gamma,\beta}(\bH)$ to $\cZ_{2\gamma,2\beta}(\bH)$. Moreover, $\cN'(f)$ can be extended to a bounded linear operator on $\cZ_{\gamma,\beta}(\bH)$ for any $f\in\overline{B}_{H^1_{-\alpha}(\RR,\bH)}(0,\delta)$.
\end{lemma}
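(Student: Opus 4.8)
The plan is to prove $C^1$-smoothness of $\cN$ along the classical route: exhibit a candidate Gâteaux derivative, check that it depends continuously on the base point in operator norm, and then invoke the standard fact that a map which is Gâteaux differentiable on an open set with continuous Gâteaux derivative is Fréchet of class $C^1$ there; passing to $C^1$ automatically yields the integral-remainder expansion used in the proof of Lemma~\ref{l3.7}. The only structural ingredients needed are the boundedness on $\bH$ of $N$ and its first few derivatives, the elementary inclusion $\cZ_{\gamma,\beta}(\bH)\subset H^1_\loc(\RR,\bH)$, and the Sobolev-type bound $\|g(x)\|\le c\,e^{(\beta+\gamma)|x|/2}\|g\|_{\cZ_{\gamma,\beta}}$ equivalent to \eqref{sob}.

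First I would record well-definedness. For $f\in\cZ_{\gamma,\beta}(\bH)$ the function $\cN(f)(\cdot)=N(f(\cdot))$ is bounded, hence lies in $L^2_{-2\gamma}(\RR,\bH)$ (indeed in every negatively weighted $L^2$); and, by the chain rule for the composition of a $C^1$ map with an $H^1_\loc$ function in one variable, $(\cN(f))'=N'(f(\cdot))f'(\cdot)\in L^2_{-\beta}(\RR,\bH)\subset L^2_{-2\beta}(\RR,\bH)$ because $N'$ is bounded; thus $\cN$ maps $\cZ_{\gamma,\beta}(\bH)$ into $\cZ_{2\gamma,2\beta}(\bH)$ (in fact into $\cZ_{\gamma,\beta}(\bH)$ itself). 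The candidate derivative is the substitution--multiplication operator $(\cN'(f)g)(x):=N'(f(x))g(x)$, whose $x$-derivative is $N''(f)(f',g)+N'(f)g'$; its $L^2_{-2\gamma}$ component and the $N'(f)g'$ summand are bounded by $\|N'\|_\infty$, while the summand $N''(f)(f',g)$ is handled by estimating one factor $\|g(x)\|^2$ by $c\,e^{(\beta+\gamma)|x|}\|g\|_{\cZ_{\gamma,\beta}}^2$ and using $3\beta-\gamma>2\beta$, so that the leftover weight $e^{-(3\beta-\gamma)|x|}$ absorbs $\|f'\|_{L^2_{-\beta}}^2$. This computation is exactly where one is \emph{forced} to weaken the target from $\cZ_{\gamma,\beta}(\bH)$ to $\cZ_{2\gamma,2\beta}(\bH)$: with the undoubled weight the same term is only controlled by $\int e^{-(\beta-\gamma)|x|}\|f'\|^2$, finite precisely when $f'\in L^2_{-(\beta-\gamma)/2}(\RR,\bH)$ --- which fails for general $f\in\cZ_{\gamma,\beta}(\bH)$ but \emph{holds}, thanks to $2\alpha<\beta-\gamma$, when $f\in\overline B_{H^1_{-\alpha}}(0,\delta)$. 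This is precisely the observation that gives the final assertion: for such $f$, $\cN'(f)$ maps $\cZ_{\gamma,\beta}(\bH)$ boundedly into itself.

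Next I would establish Gâteaux differentiability and continuity of the derivative. For Gâteaux, one writes $t^{-1}\big(\cN(f+tg)-\cN(f)\big)-\cN'(f)g$ and its $x$-derivative as averages over $s\in[0,1]$ of $\big(N'(f+stg)-N'(f)\big)$ against $g$, resp.\ of $\big(N''(f+stg)-N''(f)\big)$ against $(f',g)$ plus a term $\big(N'(f+stg)-N'(f)\big)g'$, and applies dominated convergence: the integrands vanish pointwise as $t\to0$ by continuity of $N'$, $N''$, and are dominated by $\|N'\|_\infty$- and $\|N''\|_\infty$-multiples of $\|g(\cdot)\|$, $\|f'(\cdot)\|\,\|g(\cdot)\|$, $\|g'(\cdot)\|$, each square-integrable against the relevant weight $e^{-4\gamma|x|}$ or $e^{-4\beta|x|}$ (the doubled weight $2\beta$ is again what renders $\|f'(\cdot)\|\,\|g(\cdot)\|$ dominating-integrable, via the Lemma~\ref{lsob} bound and $3\beta-\gamma>2\beta$). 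For continuity of $f\mapsto\cN'(f)$ into $\cB\big(\cZ_{\gamma,\beta}(\bH),\cZ_{2\gamma,2\beta}(\bH)\big)$ I would use the standard tail/compact split: applying $\cN'(f_1)-\cN'(f_2)$ to $g$ with $\|g\|_{\cZ_{\gamma,\beta}}\le1$, on $\{|x|\ge R\}$ one bounds the difference of $N'$- (resp.\ $N''$-) values crudely by $2\|N'\|_\infty$ (resp.\ $2\|N''\|_\infty$) and uses the \emph{$L^2$} norms of $g$, $g'$, $f_i'$ to extract a factor $e^{-cR}$, uniformly in $\|g\|_{\cZ_{\gamma,\beta}}\le1$; on $\{|x|\le R\}$ one uses continuity of $N'$, $N''$ together with the $L^\infty$ bound of Lemma~\ref{lsob} for $\|g(x)\|$ and $\|f_i'(x)\|$ on the compact interval, so that the $N$-derivative differences tend to $0$ as $f_1\to f_2$. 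Choosing $R$ large, then $f_1$ close to $f_2$, yields the required uniform smallness. The proof then concludes via the mean-value identity $\cN(f+g)-\cN(f)-\cN'(f)g=\int_0^1\big(\cN'(f+sg)-\cN'(f)\big)g\,ds$, valid in $\cZ_{2\gamma,2\beta}(\bH)$ since $s\mapsto\cN(f+sg)$ is $C^1$ into that space by the above, so that the left side is bounded in $\cZ_{2\gamma,2\beta}(\bH)$ by $\sup_{s\in[0,1]}\|\cN'(f+sg)-\cN'(f)\|_{\cB(\cZ_{\gamma,\beta},\cZ_{2\gamma,2\beta})}\,\|g\|_{\cZ_{\gamma,\beta}}=o\big(\|g\|_{\cZ_{\gamma,\beta}}\big)$.

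I expect the main --- essentially the only --- difficulty to be the careful bookkeeping of the exponential weights forced by the $L^\infty$ embedding: each quadratic term loses a factor $e^{(\beta+\gamma)|x|}$ when one of its factors is passed through the embedding, and one must verify term by term that multiplication by the \emph{doubled} weight restores integrability (the key inequality being $3\beta-\gamma>2\beta$), while for the tails one deliberately reverts to $L^2$ norms in order to gain decay in $R$. Keeping track of when the strengthened conclusion $\cN'(f)\in\cB(\cZ_{\gamma,\beta}(\bH))$ is available --- namely on $H^1_{-\alpha}$-balls, by $2\alpha<\beta-\gamma$ --- is part of the same accounting.
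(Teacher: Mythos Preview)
Your approach is correct and reaches the same conclusion, but it differs from the paper's in two respects. First, the paper proves Fr\'echet differentiability directly by writing out the second-order Taylor remainder $\int_0^1 sN''(sf+(1-s)f_0)\,ds$ and obtaining an explicit quantitative bound $\|\cN(f)-\cN(f_0)-T_0(f-f_0)\|_{\cZ_{2\gamma,2\beta}}\le c\|f-f_0\|_{\cZ_{\gamma,\beta}}^{3/2}+c\|f-f_0\|_{\cZ_{\gamma,\beta}}^2$, whereas you establish G\^ateaux differentiability via dominated convergence and then upgrade via continuity of the derivative. Second, for continuity of $\cN'$ the paper uses the global Lipschitz bound $\|N^{(j)}(h_1)-N^{(j)}(h_2)\|\le\sup\|N^{(j+1)}\|\,\|h_1-h_2\|$ to obtain an explicit H\"older modulus $\|\cN'(f)-\cN'(f_0)\|\le c\|f-f_0\|_{\cZ_{\gamma,\beta}}^{1/2}+c\|f-f_0\|_{\cZ_{\gamma,\beta}}$, whereas you use a qualitative tail/compact split. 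Both routes are valid; the paper's gives sharper quantitative control (though only the operator-norm bound on $\cN'(f)$, which you also obtain, is actually used downstream in Lemma~\ref{l3.7}), while yours is closer to the standard Nemytskii-operator playbook and in fact establishes $C^1$ on all of $\cZ_{\gamma,\beta}(\bH)$ rather than just on the $H^1_{-\alpha}$-ball.

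One small caveat: in your continuity argument you speak of ``the $L^\infty$ bound of Lemma~\ref{lsob} for $\|f_i'(x)\|$ on the compact interval,'' but there is no such bound --- Lemma~\ref{lsob} controls $f$ pointwise, not $f'$. This is harmless: for the term $(N''(f_1)-N''(f_2))(f_1',g)$ on $[-R,R]$ you should instead pull out $\sup_{|x|\le R}\|N''(f_1(x))-N''(f_2(x))\|$ (which does go to zero since $f_1\to f_2$ uniformly on compacts via Lemma~\ref{lsob} and $N'''$ is bounded) and keep $\|f_1'\|$ under the integral, bounding $\int_{-R}^R e^{-4\beta|x|}\|f_1'\|^2\|g\|^2\,dx$ by the same $3\beta-\gamma>2\beta$ computation you already used. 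With that correction your argument goes through.
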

\begin{proof} We fix $f_0\in\overline{B}_{H^1_{-\alpha}(\RR,\bH)}(0,\delta)$ and we define $T_0:\cZ_{\gamma,\beta}(\bH)\to\cZ_{\gamma,\beta}(\bH)$ by
\begin{equation}\label{A1-1}
(T_0z)(x)=N'(f_0(x))z(x),\quad x\in\RR,\;z\in\cZ_{\gamma,\beta}(\bH).
\end{equation}

\noindent{\bf Claim 1.} $T_0$ is well-defined and bounded on $\cZ_{\gamma,\beta}(\bH)$. Since $N'$ is a bounded function on $\bH$ one can readily check that $\|(T_0z)(x)\|\leq \|N'(f_0(x))\|\,\|z(x)\|\leq\sup_{h\in\bH}\|N'(h)\|\,\|z(x)\|$ for any $x\in\RR$ and $z\in\cZ_{\gamma,\beta}(\bH)$, which implies that
\begin{equation}\label{A1-2}
T_0z\in L^2_{-\gamma}(\RR,\bH)\;\mbox{and}\;\|T_0z\|_{L^2_{-\gamma}}\leq \sup_{h\in\bH}\|N'(h)\|\,\|z\|_{L^2_{-\gamma}}\;\mbox{for any}\;z\in\cZ_{\gamma,\beta}(\bH).
\end{equation}
Next, we note that $T_0z\in H^1_{\mathrm{loc}}(\RR,\bH)$ and $(T_0z)'(x)=N''(f_0(x))\Big(f_0'(x),z(x)\Big)+N'(f_0(x))z'(x)$ for any $x\in\RR$ and $z\in\cZ_{\gamma,\beta}(\bH)$.
Since the functions $N'$ and $N''$ are bounded functions on $\bH$ and $2\alpha<\beta-\gamma$ from \eqref{sob} we obtain that
\begin{align}\label{A1-3}
\int_\RR e^{-2\beta|x|}\|(T_0z)'(x)\|^2\rmd x&\leq 2\sup_{h\in\bH}\|N''(h)\|^2\int_\RR e^{-2\beta|x|}\|f_0'(x)\|^2\|z(x)\|^2\rmd x+2\sup_{h\in\bH}\|N'(h)\|^2\|z'\|_{L^2_{-\beta}}^2\nonumber\\
&\leq \Big(2\sup_{h\in\bH}\|N''(h)\|^2\int_\RR e^{-(\beta-\gamma)|x|}\|f_0'(x)\|^2\rmd x+2\sup_{h\in\bH}\|N'(h)\|^2\Big)\|z\|_{\cZ_{\gamma,\beta}}^2\nonumber\\
&\leq \Big(2\sup_{h\in\bH}\|N''(h)\|^2\|f_0\|_{H^1_{-\alpha}}^2+2\sup_{h\in\bH}\|N'(h)\|^2\Big)\|z\|_{\cZ_{\gamma,\beta}}^2\nonumber\\
&\leq\Big(2\delta^2\sup_{h\in\bH}\|N''(h)\|^2+2\sup_{h\in\bH}\|N'(h)\|^2\Big)\|z\|_{\cZ_{\gamma,\beta}}^2<\infty
\end{align}
for any $z\in\cZ_{\gamma,\beta}(\bH)$, proving Claim 1.

\noindent{\bf Claim 2.} $\cN$ is Frechet differentiable at $f_0$ in the $\big(\cZ_{\gamma,\beta}(\bH),\cZ_{2\gamma,2\beta}(\bH)\big)$ topology and $\cN'(f_0)=T_0$.
Since $N$ is a $C^\infty$ function on $\bH$ we have that
\begin{equation}\label{A1-4}
N(h_1)-N(h_2)-N'(h_2)(h_1-h_2)=\Big(\int_0^1sN''\big(sh_1+(1-s)h_2\big)\rmd s\Big)(h_1-h_2,h_1-h_2)
\end{equation}
for any $h_1,h_2\in\bH$. Since $N''$ is a bounded function on $\bH$ from \eqref{A1-4} we conclude that
\begin{equation}\label{A1-5}
\|N(h_1)-N(h_2)-N'(h_2)(h_1-h_2)\|\leq \sup_{h\in\bH}\|N''(h)\|\,\|h_1-h_2\|^2\;\mbox{for any}\; h_1,h_2\in\bH.
\end{equation}
Using Sobolev's inequality, from \eqref{A1-5} we obtain that
\begin{align}\label{A1-6}
\|\cN(f)&-\cN(f_0)-T_0(f-f_0)\|_{L^2_{-2\gamma}}^2\leq \sup_{h\in\bH}\|N''(h)\|^2\int_{\RR} e^{-4\gamma|x|}\|f(x)-f_0(x)\|^4\rmd x\nonumber\\
&\leq \sup_{h\in\bH}\|N''(h)\|^2 \|f-f_0\|_{L^2_{-\gamma}}^2\|f-f_0\|_{L^\infty_{-\gamma}}^2\leq \sup_{h\in\bH}\|N''(h)\|^2 \|f-f_0\|_{L^2_{-\gamma}}^3\|f'-f_0'\|_{L^2_{-\gamma}}\nonumber\\
&\leq \sup_{h\in\bH}\|N''(h)\|^2 \|f-f_0\|_{L^2_{-\gamma}}^3\|f-f_0\|_{H^1_{-\alpha}}\leq 2\delta\sup_{h\in\bH}\|N''(h)\|^2 \|f-f_0\|_{\cZ_{\gamma,\beta}}^3
\end{align}
for any $f\in\overline{B}_{H^1_{-\alpha}(\RR,\bH)}(0,\delta)$. Since $N$ is a $C^\infty$ function on $\bH$ and $T_0\in\cB(\cZ_{\gamma,\beta}(\bH))$ we infer that $\cN(f)-\cN(f_0)-T_0(f-f_0)\in H^1_{\mathrm{loc}}(\RR,\bH)$ and
\begin{align}\label{A1-7}
\big(\cN(f)-\cN(f_0)&-T_0(f-f_0)\big)'(x)=N'(f(x))f'(x)-N'(f_0(x))f_0'(x)-N'(f_0(x))\big(f'(x)-f_0'(x)\big)\nonumber\\
&\qquad\qquad\qquad\qquad-N''(f_0(x))\big(f_0'(x),f(x)-f_0(x)\big)\nonumber\\
&=\big(N'(f(x))-N'(f_0(x))\big)f'(x)-N''(f_0(x))\big(f'(x),f(x)-f_0(x)\big)\nonumber\\
&\qquad\qquad\qquad\qquad+N''(f_0(x))\big(f'(x)-f_0'(x),f(x)-f_0(x)\big)
\end{align}
for any $x\in\RR$ and $f\in\overline{B}_{H^1_{-\alpha}(\RR,\bH)}(0,\delta)$. Using again that $N$ is a $C^\infty$ function on $\bH$ we have that
\begin{equation}\label{A1-8}
N'(h_1)h_3-N'(h_2)h_3-N''(h_2)(h_3,h_1-h_2)=\Big(\int_0^1sN'''\big(sh_1+(1-s)h_2\big)\rmd s\Big)(h_3,h_1-h_2,h_1-h_2)
\end{equation}
Since $N'''$ is a bounded function on $\bH$ from \eqref{A1-8} it follows that
\begin{equation}\label{A1-9}
\|N'(h_1)h_3-N'(h_2)h_3-N''(h_2)(h_3,h_1-h_2)\|\leq \sup_{h\in\bH}\|N'''(h)\|\,\|h_3\|\,\|h_1-h_2\|^2\;\mbox{for any}\; h_1,h_2,h_3\in\bH.
\end{equation}
Since $2\alpha<\beta-\gamma$ from \eqref{sob}, \eqref{A1-7} and \eqref{A1-9} we obtain that
\begin{align}\label{A1-10}
\big\|\big(\cN(f)&-\cN(f_0)-T_0(f-f_0)\big)'\big\|_{L^2_{-2\beta}}^2\leq 2\sup_{h\in\bH}\|N'''(h)\|^2\int_{\RR} e^{-4\beta|x|}\|f'(x)\|^2\|f(x)-f_0(x)\|^4\rmd x\nonumber\\
&\qquad\qquad\qquad\qquad+2\sup_{h\in\bH}\|N''(h)\|^2\int_{\RR} e^{-4\beta|x|}\|f'(x)-f_0'(x)\|^2\|f(x)-f_0(x)\|^2\rmd x\nonumber\\
&\leq 2\sup_{h\in\bH}\|N'''(h)\|^2\Big(\int_{\RR} e^{-4\alpha|x|}\|f'(x)\|^2\rmd x\Big)\|f-f_0\|_{\cZ_{\gamma,\beta}}^4\nonumber\\
&\qquad\qquad\qquad\qquad+2\sup_{h\in\bH}\|N''(h)\|^2\|f'-f_0'\|_{L^2_{-\beta}}^2\|f-f_0\|_{L^\infty_{-\beta}}^2\nonumber\\
&\leq 2\sup_{h\in\bH}\|N'''(h)\|^2\|f\|_{H^1_{-\alpha}}\|f-f_0\|_{\cZ_{\gamma,\beta}}^4\nonumber\\
&\qquad\qquad\qquad\qquad+2\sup_{h\in\bH}\|N''(h)\|^2\|f-f_0\|_{\cZ_{\gamma,\beta}}^2\|f-f_0\|_{L^2_{-\beta}}\|f'-f_0'\|_{L^2_{-\beta}}\nonumber\\
&\leq \Big(2\delta\sup_{h\in\bH}\|N'''(h)\|^2+2\sup_{h\in\bH}\|N''(h)\|^2\Big)\|f-f_0\|_{\cZ_{\gamma,\beta}}^4
\end{align}
for any $f\in\overline{B}_{H^1_{-\alpha}(\RR,\bH)}(0,\delta)$. From \eqref{A1-6} and \eqref{A1-10} we conclude that there exists $c>0$ such that
\begin{equation}\label{A1-11}
\|\cN(f)-\cN(f_0)-T_0(f-f_0)\|_{\cZ_{2\gamma,2\beta}}\leq c\|f-f_0\|_{\cZ_{\gamma,\beta}}^{3/2}+c\|f-f_0\|_{\cZ_{\gamma,\beta}}^2
\end{equation}
for any $f\in\overline{B}_{H^1_{-\alpha}(\RR,\bH)}(0,\delta)$, proving Claim 2.

\noindent{\bf Claim 3.} $\cN'$ is continuous in the $\Big(\cZ_{\gamma,\beta}(\bH),\cB\big(\cZ_{\gamma,\beta}(\bH),\cZ_{2\gamma,2\beta}(\bH)\big)\Big)$ topology. First, we fix $f_0\in\overline{B}_{H^1_{-\alpha}(\RR,\bH)}(0,\delta)$. Since $N$ is a $C^\infty$ function on $\bH$ and its derivatives are bounded on $\bH$ we have that
\begin{equation}\label{Lagrange-est}
\|N^{(j}(h_1)-N^{(j)}(h_2)\|\leq \sup_{h\in\bH}\|N^{(k+1)}(h)\|\,\|h_1-h_2||\quad\mbox{for any}\quad h_1,h_2\in\bH, j\geq 1,
\end{equation}
which implies that
\begin{align}\label{A1-12}
\|\cN'(f)z-\cN'(f_0)z\|_{L^2_{-2\gamma}}^2&=\int_{\RR} e^{-4\gamma|x|}\|N'(f(x))z(x)-N'(f_0(x))z(x)\|^2\rmd x\nonumber\\
&\leq \sup_{h\in\bH}\|N''(h)\|^2\int_{\RR} e^{-4\gamma|x|}\|f(x)-f_0(x)\|^2\|z(x)\|^2\rmd x\nonumber\\
&\leq \sup_{h\in\bH}\|N''(h)\|^2\|z\|_{L^2_{-\gamma}}^2\|f-f_0\|_{L^\infty_{-\gamma}}^2\nonumber\\
&\leq \sup_{h\in\bH}\|N''(h)\|^2\|z\|_{\cZ_{\gamma,\beta}}^2\|f-f_0\|_{L^2_{-\gamma}}\|f-f_0\|_{H^1_{-\alpha}}\nonumber\\
&\leq 2\delta\sup_{h\in\bH}\|N''(h)\|^2\|z\|_{\cZ_{\gamma,\beta}}^2\|f-f_0\|_{\cZ_{\gamma,\beta}}
\end{align}
for any $f\in\overline{B}_{H^1_{-\alpha}(\RR,\bH)}(0,\delta)$ and $z\in\cZ_{\gamma,\beta}(\bH)$. Moreover, for any $f\in\overline{B}_{H^1_{-\alpha}(\RR,\bH)}(0,\delta)$ and $z\in\cZ_{\gamma,\beta}(\bH)$ one can readily check that $\cN'(f)z-\cN'(f_0)z\in H^1_{\mathrm{loc}}(\RR,\bH)$ and
\begin{align}\label{A1-13}
\big(\cN'(f)z-\cN'(f_0)z\big)'(x)&=N''(f(x))\big(f'(x),z(x)\big)-N''(f_0(x))\big(f_0'(x),z(x)\big)\nonumber\\
&+\big(N'(f(x)-N'(f_0(x)\big)z'(x)=F_1(x)+F_2(x)+F_3(x)
\end{align}
for any $x\in\RR$, where the functions $F_j:\RR\to\bH$, $j=1,2,3$, are defined by
\begin{align}\label{A1-14}
F_1(x)&=\big(N''(f(x))-N''(f_0(x))\big)\big(f'(x),z(x)\big),\nonumber\\
F_2(x)&=N''(f_0(x))\big(f'(x)-f_0'(x),z(x)\big),\nonumber\\
F_3(x)&=\big(N'(f(x)-N'(f_0(x)\big)z'(x).
\end{align}
Next, we estimate the $L^2_{-2\gamma}(\RR,\bH)$-norm of $F_j$, $j=1,2,3$, using \eqref{sob} and \eqref{Lagrange-est}.
\begin{align}\label{A1-15}
\|F_1\|_{L^2_{-2\beta}}^2&\leq\sup_{h\in\bH}\|N'''(h)\|^2\int_{\RR} e^{-4\beta|x|}\|f(x)-f_0(x)\|^2\|f'(x)\|^2\|z(x)\|^2\rmd x\nonumber\\
&\leq\sup_{h\in\bH}\|N'''(h)\|^2\Big(\int_{\RR} e^{-2(\beta-\gamma)|x|}\|f'(x)\|^2\rmd x\Big)\|f-f_0\|_{\cZ_{\gamma,\beta}}^2\|z\|_{\cZ_{\gamma,\beta}}^2\nonumber\\
&\leq\sup_{h\in\bH}\|N'''(h)\|^2\Big(\int_{\RR} e^{-4\alpha|x|}\|f'(x)\|^2\rmd x\Big)\|f-f_0\|_{\cZ_{\gamma,\beta}}^2\|z\|_{\cZ_{\gamma,\beta}}^2\nonumber\\
&\leq\delta^2\sup_{h\in\bH}\|N'''(h)\|^2\|f-f_0\|_{\cZ_{\gamma,\beta}}^2\|z\|_{\cZ_{\gamma,\beta}}^2;
\end{align}
\begin{align}\label{A1-16}
\|F_2\|_{L^2_{-2\beta}}^2&\leq\sup_{h\in\bH}\|N''(h)\|^2\int_{\RR} e^{-4\beta|x|}\|f'(x)-f_0'(x)\|^2\|z(x)\|^2\rmd x\nonumber\\
&\leq\sup_{h\in\bH}\|N''(h)\|^2\Big(\int_{\RR} e^{-2(\alpha+\beta)|x|}\|f'(x)-f_0'(x)\|^2\rmd x\Big)\|z\|_{\cZ_{\gamma,\beta}}^2\nonumber\\
&\leq\sup_{h\in\bH}\|N''(h)\|^2\|f'-f_0'\|_{L^2_{-\beta}}^2\|z\|_{\cZ_{\gamma,\beta}}^2\leq\sup_{h\in\bH}\|N''(h)\|^2\|f-f_0\|_{\cZ_{\gamma,\beta}}^2\|z\|_{\cZ_{\gamma,\beta}}^2;
\end{align}
\begin{align}\label{A1-17}
\|F_3\|_{L^2_{-2\beta}}^2&\leq\sup_{h\in\bH}\|N''(h)\|^2\int_{\RR} e^{-4\beta|x|}\|f(x)-f_0(x)\|^2\|z'(x)\|^2\rmd x\leq\sup_{h\in\bH}\|N''(h)\|^2\|f-f_0\|_{L^\infty_{-\beta}}^2\|z'\|_{L^2_{-\beta}}^2\nonumber\\
&\leq\sup_{h\in\bH}\|N''(h)\|^2\|f-f_0\|_{L^2_{-\beta}}\|f-f_0\|_{H^1_{-\beta}}\|z\|_{\cZ_{\gamma,\beta}}^2
\leq2\delta\sup_{h\in\bH}\|N''(h)\|^2\|f-f_0\|_{\cZ_{\gamma,\beta}}\|z\|_{\cZ_{\gamma,\beta}}^2.
\end{align}
Summarizing the estimates \eqref{A1-15}--\eqref{A1-17}, from \eqref{A1-13} we conclude that there exists $c>0$ such that
\begin{equation}\label{A1-18}
\big\|\big(\cN'(f)z-\cN'(f_0)z\big)'\big\|_{L^2_{-2\beta}}\leq c\big(\|f-f_0\|_{\cZ_{\gamma,\beta}}^{1/2}+\|f-f_0\|_{\cZ_{\gamma,\beta}}\big)\|z\|_{\cZ_{\gamma,\beta}}
\end{equation}
for any $f\in\overline{B}_{H^1_{-\alpha}(\RR,\bH)}(0,\delta)$ and $z\in\cZ_{\gamma,\beta}(\bH)$. From \eqref{A1-12} and \eqref{A1-18} we obtain that
\begin{equation}\label{A1-19}
\|\cN'(f)-\cN'(f_0)\|_{\cB\big(\cZ_{\gamma,\beta}(\bH),\cZ_{2\gamma,2\beta}(\bH)\big)}\leq c\big(\|f-f_0\|_{\cZ_{\gamma,\beta}}^{1/2}+\|f-f_0\|_{\cZ_{\gamma,\beta}}\big)
\end{equation}
for any $f\in\overline{B}_{H^1_{-\alpha}(\RR,\bH)}(0,\delta)$, proving Claim 3 and the lemma.
\end{proof}
To prove higher order differentiability of the nonlinear map $\cN$ defined in \eqref{subst}, we need to study the smoothness properties of operator-valued substitution operators. We recall that for any three Hilbert space $\bX$, $\bY$ and $\bZ$ we can identify the set $\cB\big(\bX,\cB(\bY,\bZ)\big)$ with the set of bilinear maps from $\bX\times\bY$ to $\bZ$, denoted by $\cB_2(\bX\times\bY,\bZ)$.
\begin{lemma}\label{A2}
Let $\bH$ and $\tbH$ be two Hilbert spaces,  $L:\tbH\to\cB(\bH)$ a $C^\infty$, $p\geq 0$ such that
\begin{equation}\label{1-2-deriv-bound}
 \|L'(\tbh)\|\leq c\|\tbh\|^p\;\mbox{and}\; \|L''(\tbh)\|\leq c\|\tbh\|^p\;\mbox{for any}\;\tbh\in\tbH
\end{equation}
and $0<\gamma<\beta$. Denoting by $\tgamma=(p+2)\beta+\gamma$ and $\tbeta=(p+4)\beta$, the nonlinear map $\cW:\cZ_{\gamma,\beta}(\tbH)\to\cB\big(\cZ_{\gamma,\beta}(\bH),\cZ_{\tgamma,\tbeta}(\bH)\big)$ defined by
\begin{equation}\label{def-cW}
\big(\cW(f)z\big)(x)=L(f(x))z(x),\;\mbox{for}\; f\in\cZ_{\gamma,\beta}(\tbH),\,z\in\cZ_{\gamma,\beta}(\bH),\,x\in\RR,
\end{equation}
is of class $C^1$ and
\begin{equation}\label{deriv-cW}
\big(\cW'(f)(z_1,z_2)\big)(x)=L'(f(x))\big(z_1(x),z_2(x)\big),\;\mbox{for}\; f,z_1\in\cZ_{\gamma,\beta}(\tbH),\, z_2\in\cZ_{\gamma,\beta}(\bH),\,x\in\RR.
\end{equation}
\end{lemma}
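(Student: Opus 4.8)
The plan is to mirror the three-part structure of the proof of Lemma~\ref{A1}: (1) for each fixed $f\in\cZ_{\gamma,\beta}(\tbH)$, the operator $z\mapsto\cW(f)z$ is bounded from $\cZ_{\gamma,\beta}(\bH)$ to $\cZ_{\tgamma,\tbeta}(\bH)$; (2) $\cW$ is Fr\'echet differentiable at every $f_0$ with derivative given by \eqref{deriv-cW}; (3) $f\mapsto\cW'(f)$ is continuous with values in $\cB_2\big(\cZ_{\gamma,\beta}(\tbH)\times\cZ_{\gamma,\beta}(\bH),\cZ_{\tgamma,\tbeta}(\bH)\big)$. The only pointwise control on $L$ is the polynomial growth of $L'$, $L''$ together with continuity of all derivatives; writing $L(h)=L(0)+\int_0^1 L'(sh)h\,\dif s$ one first records $\|L(h)\|\le\|L(0)\|+c\|h\|^{p+1}$, so that $L$, $L'$, $L''$ are polynomially bounded of degrees $p+1$, $p$, $p$. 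The choice $\tgamma=(p+2)\beta+\gamma$, $\tbeta=(p+4)\beta$ is exactly what is needed so that, each time a pointwise factor $\|f(x)\|^{q}$ ($q\le p+1$), $\|f_0(x)\|^{q}$ or $\|z(x)\|$ is estimated through the Sobolev embedding \eqref{sob} by $c\,e^{(\beta+\gamma)|x|/2}\|\cdot\|_{\cZ_{\gamma,\beta}}$, the resulting exponential is strictly reabsorbed against $e^{-2\tgamma|x|}$ (or $e^{-2\tbeta|x|}$), bearing in mind that a derivative factor $\|f'(x)\|$ or $\|f_0'(x)\|$ can only be carried under the integral in $L^2_{-\beta}$; concretely the bookkeeping reduces to $2\tgamma>(p+2)(\beta+\gamma)$ and $\tbeta-\beta>(p+2)(\beta+\gamma)/2$, both immediate from $\gamma<\beta$.

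Step~(1) is a direct pointwise estimate of $L(f(x))z(x)$ and of its derivative $L'(f(x))(f'(x),z(x))+L(f(x))z'(x)$, using the displayed growth bounds and \eqref{sob} exactly as in \eqref{A1-3}. For step~(2), writing $g=f-f_0$, the product and chain rules give
\[
\begin{aligned}
\big(\cW(f)z-\cW(f_0)z-\cW'(f_0)(g,z)\big)'
&=\big(L(f)-L(f_0)-L'(f_0)g\big)z'\\
&\quad+\big(L'(f)-L'(f_0)-L''(f_0)g\big)(f',z)+L''(f_0)\big(g,f'-f_0',z\big),
\end{aligned}
\]
while the undifferentiated remainder is $\big(L(f)-L(f_0)-L'(f_0)g\big)z$. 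The undifferentiated remainder, the first displayed term, and the third displayed term are all governed by the second-order Taylor estimate $\|L(h_1)-L(h_2)-L'(h_2)(h_1-h_2)\|\le c(\|h_1\|^p+\|h_2\|^p)\|h_1-h_2\|^2$ and the bound on $L''$, and contribute terms that are $o(\|g\|_{\cZ_{\gamma,\beta}})$, of the same form as in \eqref{A1-11}.

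The term $\big(L'(f)-L'(f_0)-L''(f_0)g\big)(f',z)$ is the one that does not fit this pattern, and it is the main obstacle, since no polynomial bound on $L'''$ is available; the resolution is to trade the (unavailable) Lipschitz bound on $L''$ for mere continuity. I would write $L'(f(x))-L'(f_0(x))-L''(f_0(x))g(x)=\int_0^1\big(L''(f_0(x)+sg(x))-L''(f_0(x))\big)g(x)\,\dif s$, bound its norm by $\|g(x)\|\,\omega_f(x)$ with $\omega_f(x):=\sup_{s\in[0,1]}\|L''(f_0(x)+sg(x))-L''(f_0(x))\|\le 2c(\|f(x)\|^p+\|f_0(x)\|^p)$, and then split $f'=(f'-f_0')+f_0'$: the $(f'-f_0')$ piece is handled by the uniform polynomial bound on $\omega_f$ and $\|f'-f_0'\|_{L^2_{-\beta}}\le\|g\|_{\cZ_{\gamma,\beta}}$, while in the $f_0'$ piece (with $f_0'$ fixed) one uses that $\cZ_{\gamma,\beta}(\bH)\hookrightarrow L^\infty_{-(\beta+\gamma)/2}(\bH)$ forces $f(x)\to f_0(x)$ for every $x$, hence $\omega_f(x)\to0$ pointwise, together with an $f$-independent $L^1$ majorant built from the polynomial bound on $\omega_f$ and the weight $\tbeta=(p+4)\beta$, so that dominated convergence yields a $o(\|g\|_{\cZ_{\gamma,\beta}})$ contribution and completes step~(2). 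Step~(3) is entirely parallel: $\big(\cW'(f)-\cW'(f_0)\big)(g,z)=\big(L'(f)-L'(f_0)\big)(g,z)$, and its $x$-derivative is a sum of terms each carrying a factor $L'(f)-L'(f_0)$ (controlled by the $L''$-bound and $\|f-f_0\|\to0$) or a factor $\big(L''(f)-L''(f_0)\big)$ acting on $f'$, for which one again splits $f'=(f'-f_0')+f_0'$ and applies the same boundedness-plus-dominated-convergence argument, giving $\|\cW'(f)-\cW'(f_0)\|\to0$. Formula \eqref{deriv-cW} is read off from the identification of $\cW'(f_0)$ produced along the way.
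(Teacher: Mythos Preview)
Your proof is correct and follows the same three-claim scaffold (boundedness of the candidate derivative, Fr\'echet differentiability, continuity of $\cW'$) as the paper's.  The decomposition you write for the derivative of the remainder is a minor rearrangement of the paper's $G_1,G_2,G_3$ in \eqref{A2-13}--\eqref{A2-14}.

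The substantive difference is your handling of the second-order Taylor remainder of $L'$, namely the term $\big(L'(f)-L'(f_0)-L''(f_0)g\big)(f',z)$ in step~(2) and the analogous $\big(L''(f)-L''(f_0)\big)(f_0',\cdot,\cdot)$ contribution in step~(3).  The paper simply invokes the pointwise bound $\|L'''(\tbh)\|\le c\|\tbh\|^p$ (used to derive the first line of \eqref{A2-12} and \eqref{A2-23}) and obtains the quantitative estimate \eqref{A2-18}, an $O(\|f-f_0\|^2)$ bound that is stronger than the $o(\|f-f_0\|)$ required for Fr\'echet differentiability.  Strictly speaking this third-derivative bound is not among the stated hypotheses \eqref{1-2-deriv-bound}; it does hold in every application in the body of the paper (where all derivatives of $N_{\eps_0}$ are uniformly bounded), so the lemma as \emph{used} is fine, but the lemma as \emph{stated} has this small gap.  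You correctly flag it and close it under the literal hypotheses by trading Lipschitz control of $L''$ for mere continuity plus dominated convergence, splitting off $f'-f_0'$ so that the remaining integral carries the fixed weight $\|f_0'(x)\|^2$ against the pointwise-vanishing factor $\omega_f(x)^2$.  The Sobolev embedding $\cZ_{\gamma,\beta}\hookrightarrow L^\infty_{-(\beta+\gamma)/2}$ indeed gives $f(x)\to f_0(x)$ for each $x$, and your weight bookkeeping produces an $f$-independent $L^1$ majorant once $\|f-f_0\|_{\cZ_{\gamma,\beta}}$ is bounded, so dominated convergence is legitimate; sequential convergence suffices since the source space is metric.

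Both routes are sound: the paper's (with the tacit extra hypothesis) yields a Lipschitz estimate on $\cW'$, while yours is the honest proof of the lemma exactly as written, at the cost of losing the quantitative rate.  For the use made of Lemma~\ref{A2} in Lemmas~\ref{l3.8}--\ref{l3.9}, either suffices.
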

\begin{proof} We fix $f_0\in\cZ_{\gamma,\beta}(\tbH)$ and define $\cD_0:\cZ_{\gamma,\beta}(\tbH)\times\cZ_{\gamma,\beta}(\bH)\to\cZ_{\tgamma,\tbeta}(\bH)$ by
\begin{equation}\label{A2-1}
\big(\cD_0(z_1,z_2)\big)(x)=L'(f_0(x))\big(z_1(x),z_2(x)\big),\;\mbox{for}\; z_1\in\cZ_{\gamma,\beta}(\tbH),\, z_2\in\cZ_{\gamma,\beta}(\bH),\,x\in\RR.
\end{equation}
\noindent{\bf Claim 1.} The bilinear map $\cD_0$ is well-defined and bounded.
Indeed, from \eqref{1-2-deriv-bound} it follows that
\begin{equation}\label{A2-2}
\big\|\big(\cD_0(z_1,z_2)\big)(x)\big\|\leq c\|f_0(x)\|^p\|z_1(x)\|\,\|z_2(x)\| \;\mbox{for any}\; z_1\in\cZ_{\gamma,\beta}(\tbH),\, z_2\in\cZ_{\gamma,\beta}(\bH),\,x\in\RR,
\end{equation}
which implies that
\begin{align}\label{A2-3}
\|\cD_0(z_1,z_2)\|_{L^2_{-\tgamma}}&\leq c\|f_0\|_{L^\infty_{-\beta}}^p\|z_1\|_{L^\infty_{-\beta}}\|z_2\|_{L^2_{-\gamma}}\leq c\|f_0\|_{H^1_{-\beta}}^p\|z_1\|_{H^1_{-\beta}}\|z_2\|_{\cZ_{\gamma,\beta}}\nonumber\\
&\leq c\|f_0\|_{\cZ_{\gamma,\beta}}^p\|z_1\|_{\cZ_{\gamma,\beta}}\|z_2\|_{\cZ_{\gamma,\beta}}\;\mbox{for any}\; z_1\in\cZ_{\gamma,\beta}(\tbH),\, z_2\in\cZ_{\gamma,\beta}(\bH).
\end{align}
Since $L$ is a $C^\infty$ function from $\tbH$ to $\cB(\bH)$, we have that $\cD_0(z_1,z_2)\in H^1_{\mathrm{loc}}(\RR,\bH)$ and
\begin{equation*}
\big(\cD_0(z_1,z_2)\big)'(x)=L'(f_0(x))\big(z_1'(x),z_2(x)\big)+L'(f_0(x))\big(z_1(x),z_2'(x)\big)
+L''(f_0(x))\big(f_0'(x),z_1(x),z_2(x)\big)
\end{equation*}
for any $z_1\in\cZ_{\gamma,\beta}(\tbH)$, $z_2\in\cZ_{\gamma,\beta}(\bH)$ and $x\in\RR$, which implies that
\begin{align}\label{A2-4}
\big\|\big(\cD_0(z_1,z_2)\big)'(x)\big\|\leq c\|f_0(x)\|^p\Big[\|f_0'(x)\|\,\|z_1(x)\|\,\|z_2(x)\|+\|z_1'(x)\|\,\|z_2(x)\|+\|z_1(x)\|\,\|z_2'(x)\|\Big]
\end{align}
for any $z_1\in\cZ_{\gamma,\beta}(\tbH)$, $z_2\in\cZ_{\gamma,\beta}(\bH)$ and $x\in\RR$. From \eqref{1-2-deriv-bound} and \eqref{A2-4} we obtain that
\begin{align}\label{A2-5}
\|(\cD_0&(z_1,z_2))'\|_{L^2_{-\tbeta}}\leq c\|f_0\|_{L^\infty_{-\beta}}^p\Big[\|f_0'\|_{L^2_{-\beta}}\|z_1\|_{L^\infty_{-\beta}}\|z_2\|_{L^\infty_{-\beta}}+
\|z_1'\|_{L^2_{-\beta}}\|z_2\|_{L^\infty_{-\beta}}+\|z_1\|_{L^\infty_{-\beta}}\|z_2'\|_{L^2_{-\beta}}\Big]\nonumber\\
&\leq c\|f_0\|_{H^1_{-\beta}}^p\big(\|f_0\|_{\cZ_{\gamma,\beta}}+1)\|z_1\|_{H^1_{-\beta}}\|z_2\|_{H^1_{-\beta}}\leq c\|f_0\|_{\cZ_{\gamma,\beta}}^p\big(\|f_0\|_{\cZ_{\gamma,\beta}}+1\big)\|z_1\|_{\cZ_{\gamma,\beta}}\|z_2\|_{\cZ_{\gamma,\beta}}
\end{align}
for any $z_1\in\cZ_{\gamma,\beta}(\tbH)$, $z_2\in\cZ_{\gamma,\beta}(\bH)$. From \eqref{A2-3} and \eqref{A2-5} we conclude that $\cD_0(z_1,z_2)\in\cZ_{\tgamma,\tbeta}(\bH)$ and
\begin{equation}\label{A2-6}
\|\cD_0(z_1,z_2)\|_{\cZ_{\tgamma,\tbeta}}\leq c\|f_0\|_{\cZ_{\gamma,\beta}}^p\big(\|f_0\|_{\cZ_{\gamma,\beta}}^2+1\big)^{1/2}\|z_1\|_{\cZ_{\gamma,\beta}}\|z_2\|_{\cZ_{\gamma,\beta}}
\end{equation}
for any $z_1\in\cZ_{\gamma,\beta}(\tbH)$, $z_2\in\cZ_{\gamma,\beta}(\bH)$, proving Claim 1.

\noindent{\bf Claim 2.} $\cW$ is Frechet differentiable at $f_0$ and $\cW'(f_0)=\cD_0$. Since $L$ is $C^\infty$ function from $\tbH$ to $\cB(\bH)$ we have that
\begin{equation}\label{A2-7}
L(\tbh_1)g-L(\tbh_2)g-L'(\tbh_2)(\tbh_1-\tbh_2,g)=\Big(\int_0^1sL''\big(s\tbh_1+(1-s)\tbh_2\big)\rmd s\Big)(\tbh_1-\tbh_2,\tbh_1-\tbh_2,g)
\end{equation}
for any $\tbh_1,\tbh_2\in\tbH$ and $g\in\bH$. From \eqref{1-2-deriv-bound} we infer that
\begin{align}\label{A2-8}
\|L(\tbh_1)g-L(\tbh_2)g-L'(\tbh_2)(\tbh_1-\tbh_2,g)\|&\leq c\Big(\int_0^1s\|s\tbh_1+(1-s)\tbh_2\|^p\rmd s\Big)\|\tbh_1-\tbh_2\|^2\|g\|\nonumber\\
&\leq c\big(\|\tbh_1\|^p+\|\tbh_2\|^p\big)\|\tbh_1-\tbh_2\|^2\|g\|
\end{align}
for any $\tbh_1,\tbh_2\in\tbH$ and $g\in\bH$. From \eqref{def-cW}, \eqref{A2-1} and \eqref{A2-8} it follows that
\begin{equation}\label{A2-9}
\big\|\big(\cW(f)z-\cW(f_0)z-\cD_0(f-f_0,z)\big)(x)\big\|\leq c\big(\|f(x)\|^p+\|f_0(x)\|^p\big)\|f(x)-f_0(x)\|^2\|z(x)\|
\end{equation}
for any $f\in\cZ_{\gamma,\beta}(\tbH)$, $z\in\cZ_{\gamma,\beta}(\bH)$ and $x\in\RR$, which implies that
\begin{align}\label{A2-10}
\|\cW(f)z&-\cW(f_0)z-\cD_0(f-f_0,z)\|_{L^2_{-\tgamma}}\leq c(\|f\|_{L^\infty_{-\beta}}^p+\|f_0\|_{L^\infty_{-\beta}}^p)\|f-f_0\|_{L^\infty_{-\beta}}^2\|z\|_{L^2_{-\gamma}}\nonumber\\
&\leq c(\|f\|_{H^1_{-\beta}}^p+\|f_0\|_{H^1_{-\beta}}^p)\|f-f_0\|_{H^1_{-\beta}}^2\|z\|_{L^2_{-\gamma}}\leq c(\|f\|_{\cZ_{\gamma,\beta}}^p+\|f_0\|_{\cZ_{\gamma,\beta}}^p)\|f-f_0\|_{\cZ_{\gamma,\beta}}^2\|z\|_{\cZ_{\gamma,\beta}}
\end{align}
for any $f\in\cZ_{\gamma,\beta}(\tbH)$, $z\in\cZ_{\gamma,\beta}(\bH)$. Next, we need to establish a couple of estimates satisfied by the function $L$ and its derivatives that are needed in the sequel. We note that
\begin{align}\label{A2-11}
\big(L'(\tbh_1)&-L'(\tbh_2)\big)(\tbh_3,g)-L''(\tbh_2)(\tbh_3,\tbh_1-\tbh_2,g)\nonumber\\&=\Big(\int_0^1sL'''\big(s\tbh_1+(1-s)\tbh_2\big)\rmd s\Big)(\tbh_3,\tbh_1-\tbh_2,\tbh_1-\tbh_2,g)\nonumber\\
\big(L'(\tbh_1)&-L'(\tbh_2)\big)(\tbh_3,g)=\Big(\int_0^1L''\big(s\tbh_1+(1-s)\tbh_2\big)\rmd s\Big)(\tbh_3,\tbh_1-\tbh_2,g)
\end{align}
for any $\tbh_1,\tbh_2,\tbh_3\in\tbH$ and $g\in\bH$, which implies that
\begin{align}\label{A2-12}
\big\|\big(L'(\tbh_1)-L'(\tbh_2)\big)(\tbh_3,g)-L''(\tbh_2)(\tbh_3,\tbh_1-\tbh_2,g)\big\|&\leq c\big(\|\tbh_1\|^p+\|\tbh_2\|^p\big)\|\tbh_1-\tbh_2\|^2\|\tbh_3\|\,\|g\|\nonumber\\
\big\|\big(L'(\tbh_1)-L'(\tbh_2)\big)(\tbh_3,g)\big\|&\leq c\big(\|\tbh_1\|^p+\|\tbh_2\|^p\big)\|\tbh_1-\tbh_2\|\,\|\tbh_3\|\,\|g\|
\end{align}
for any $\tbh_1,\tbh_2,\tbh_3\in\tbH$ and $g\in\bH$.

Since $L$ is $C^\infty$ function from $\tbH$ to $\cB(\bH)$ and $\cD_0\in\cB_2(\cZ_{\gamma,\beta}(\tbH)\times\cZ_{\gamma,\beta}(\bH),\cZ_{\tgamma,\tbeta}(\bH))$ from \eqref{def-cW}, \eqref{A2-1} we infer that for any $f\in\cZ_{\gamma,\beta}(\tbH)$, $z\in\cZ_{\gamma,\beta}(\bH)$ the function $\cW(f)z-\cW(f_0)z-\cD_0(f-f_0,z)\in H^1_{\mathrm{loc}}(\RR,\bH)$ and
\begin{align}\label{A2-13}
\big(\cW(f)z&-\cW(f_0)z-\cD_0(f-f_0,z)\big)'(x)=L'(f(x))\big(f'(x),z(x)\big)+L(f(x))z'(x)\nonumber\\
&\quad-L'(f_0(x))\big(f_0'(x),z(x)\big)-L(f_0(x))z'(x)-L''(f_0(x))\big(f_0'(x),f(x)-f_0(x),z(x)\big)\nonumber\\
&\quad-L'(f_0(x))\big(f'(x)-f'_0(x),z(x)\big)-L'(f_0(x))\big(f(x)-f_0(x),z'(x)\big)\nonumber\\
&=\big(L'(f(x))-L'(f_0(x))\big)\big(f'(x),z(x)\big)-L''(f_0(x))\big(f_0'(x),f(x)-f_0(x),z(x)\big)\nonumber\\
&\quad+\big(L(f(x))-L(f_0(x))\big)z'(x)-L'(f_0(x))\big(f(x)-f_0(x),z'(x)\big)=G_1(x)+G_2(x)+G_3(x)
\end{align}
for any $x\in\RR$, where the functions $F_j:\RR\to\bH$, $j=1,2,3$, are defined by
\begin{align}\label{A2-14}
G_1(x)&=\big(L'(f(x))-L'(f_0(x))\big)\big(f'(x)-f_0'(x),z(x)\big),\nonumber\\
G_2(x)&=\big(L'(f(x))-L'(f_0(x))\big)\big(f_0'(x),z(x)\big)-L''(f_0(x))\big(f_0'(x),f(x)-f_0(x),z(x)\big),\nonumber\\
G_3(x)&=\big(L(f(x))-L(f_0(x))\big)z'(x)-L'(f_0(x))\big(f(x)-f_0(x),z'(x)\big).
\end{align}
We use \eqref{A2-8} and \eqref{A2-12} to estimate the $L^2_{-\tbeta}(\RR,\bH)$ of the functions $G_j$, $j=1,2,3$, defined in \eqref{A2-14}. Indeed, we have that
\begin{align}\label{A2-15}
\|G_1\|_{L^2_{-\tbeta}}&\leq  c(\|f\|_{L^\infty_{-\beta}}^p+\|f_0\|_{L^\infty_{-\beta}}^p)\|f-f_0\|_{L^\infty_{-\beta}}\|f'-f_0'\|_{L^2_{-\beta}}\|z\|_{L^\infty_{-\beta}}\nonumber\\
&\leq c(\|f\|_{H^1_{-\beta}}^p+\|f_0\|_{H^1_{-\beta}}^p)\|f-f_0\|_{H^1_{-\beta}}\|f-f_0\|_{\cZ_{\gamma,\beta}}\|z\|_{H^1_{-\beta}}\nonumber\\
&\leq c(\|f\|_{\cZ_{\gamma,\beta}}^p+\|f_0\|_{\cZ_{\gamma,\beta}}^p)\|f-f_0\|_{\cZ_{\gamma,\beta}}^2\|z\|_{\cZ_{\gamma,\beta}};
\end{align}
\begin{align}\label{A2-16}
\|G_2\|_{L^2_{-\tbeta}}&\leq c(\|f\|_{L^\infty_{-\beta}}^p+\|f_0\|_{L^\infty_{-\beta}}^p)\|f_0'\|_{L^2_{-\beta}}\|f-f_0\|_{L^\infty_{-\beta}}^2\|z\|_{L^\infty_{-\beta}}\nonumber\\
&\leq c(\|f\|_{H^1_{-\beta}}^p+\|f_0\|_{H^1_{-\beta}}^p)\|f_0\|_{\cZ_{\gamma,\beta}}\|f-f_0\|_{H^1_{-\beta}}^2\|z\|_{H^1_{-\beta}}\nonumber\\
&\leq c(\|f\|_{\cZ_{\gamma,\beta}}^p+\|f_0\|_{\cZ_{\gamma,\beta}}^p)\|f_0\|_{\cZ_{\gamma,\beta}}\|f-f_0\|_{\cZ_{\gamma,\beta}}^2\|z\|_{\cZ_{\gamma,\beta}};
\end{align}
\begin{align}\label{A2-17}
\|G_3\|_{L^2_{-\tbeta}}&\leq c(\|f\|_{L^\infty_{-\beta}}^p+\|f_0\|_{L^\infty_{-\beta}}^p)\|f-f_0\|_{L^\infty_{-\beta}}^2\|z'\|_{L^2_{-\beta}}\nonumber\\
&\leq c(\|f\|_{H^1_{-\beta}}^p+\|f_0\|_{H^1_{-\beta}}^p)\|f-f_0\|_{H^1_{-\beta}}^2\|z\|_{\cZ_{\gamma,\beta}}\nonumber\\
&\leq c(\|f\|_{\cZ_{\gamma,\beta}}^p+\|f_0\|_{\cZ_{\gamma,\beta}}^p)\|f-f_0\|_{\cZ_{\gamma,\beta}}^2\|z\|_{\cZ_{\gamma,\beta}}.
\end{align}
From \eqref{A2-10}, \eqref{A2-13}, \eqref{A2-15}, \eqref{A2-16} and \eqref{A2-17} we infer that
\begin{equation}\label{A2-18}
\big\|\cW(f)z-\cW(f_0)z-\cD_0(f-f_0,z)\big\|_{\cZ_{\tgamma,\tbeta}}\leq c (\|f\|_{\cZ_{\gamma,\beta}}^p+\|f_0\|_{\cZ_{\gamma,\beta}}^p)\big(\|f_0\|_{\cZ_{\gamma,\beta}}+1\big)\|f-f_0\|_{\cZ_{\gamma,\beta}}^2\|z\|_{\cZ_{\gamma,\beta}}
\end{equation}
for any $f\in\cZ_{\gamma,\beta}(\tbH)$, $z\in\cZ_{\gamma,\beta}(\bH)$, proving Claim 2.

\noindent{\bf Claim 3.} $\cW'$ is continuous from $\cZ_{\gamma,\beta}(\tbH)$ to $\cB_2\big(\cZ_{\gamma,\beta}(\tbH)\times\cZ_{\gamma,\beta}(\bH),\cZ_{\tgamma,\tbeta}(\bH) \big)$. Indeed, from
\eqref{A2-1} and \eqref{A2-12} we infer that
\begin{align}\label{A2-19}
\big\|\big(\cW'(f)-\cW'(f_0)\big)\big(z_1,z_2)\big\|_{L^2_{-\tgamma}}&\leq c(\|f\|_{L^\infty_{-\beta}}^p+\|f_0\|_{L^\infty_{-\beta}}^p)\|f-f_0\|_{L^\infty_{-\beta}}\|z_1\|_{L^\infty_{-\beta}}\|z_2\|_{L^2_{-\gamma}}\nonumber\\
&\leq c(\|f\|_{H^1_{-\beta}}^p+\|f_0\|_{H^1_{-\beta}}^p)\|f-f_0\|_{H^1_{-\beta}}\|z_1\|_{H^1_{-\beta}}\|z_2\|_{\cZ_{\gamma,\beta}}\nonumber\\
&\leq c(\|f\|_{\cZ_{\gamma,\beta}}^p+\|f_0\|_{\cZ_{\gamma,\beta}}^p)\|f-f_0\|_{\cZ_{\gamma,\beta}}\|z_1\|_{\cZ_{\gamma,\beta}}\|z_2\|_{\cZ_{\gamma,\beta}}
\end{align}
for any $f,z_1\in\cZ_{\gamma,\beta}(\tbH)$ and $z_2\in\cZ_{\gamma,\beta}(\bH)$. Using again that $L$ is a $C^\infty$ function from $\tbH$ to $\cB(\bH)$ we obtain that for any $f,z_1\in\cZ_{\gamma,\beta}(\tbH)$ and $z_2\in\cZ_{\gamma,\beta}(\bH)$ the function $\big(\cW'(f)-\cW'(f_0)\big)\big(z_1,z_2)\in H^1_{\mathrm{loc}}(\RR,\bH)$ and
\begin{align}\label{A2-20}
\Big(\big(\cW'(f)&-\cW'(f_0)\big)\big(z_1,z_2)\Big)'(x)=L''(f(x))\big(f'(x),z_1(x),z_2(x)\big)-L''(f_0(x))\big(f_0'(x),z_1(x),z_2(x)\big)\nonumber\\
&+\big(L'(f(x))-L'(f_0(x))\big)\big(z_1'(x),z_2(x)\big)+\big(L'(f(x))-L'(f_0(x))\big)\big(z_1(x),z_2'(x)\big)\nonumber\\
&=H_1(x)+H_2(x)+H_3(x)+H_4(x)
\end{align}
for any $x\in\RR$, where the functions $H_j:\RR\to\bH$, $j=1,2,3,4$, are defined by
\begin{align}\label{A2-21}
H_1(x)&=\big(L''(f(x))-L''(f_0(x))\big)\big(f'(x),z_1(x),z_2(x)\big), \nonumber\\
H_2(x)&=L''(f_0(x))\big(f'(x)-f_0'(x),z_1(x),z_2(x)\big),\nonumber\\
H_3(x)&=\big(L'(f(x))-L'(f_0(x))\big)\big(z_1'(x),z_2(x)\big),\nonumber\\
H_4(x)&=\big(L'(f(x))-L'(f_0(x))\big)\big(z_1(x),z_2'(x)\big).
\end{align}
Since $L$ is a $C^\infty$ function from $\tbH$ to $\cB(\bH)$ we have that
\begin{equation}\label{A2-22}
\big(L''(\tbh_1)-L''(\tbh_2)\big)(\tbh_3,\tbh_4,g)=\Big(\int_0^1L'''\big(s\tbh_1+(1-s)\tbh_2\big)\rmd s\Big)(\tbh_3,\tbh_4,\tbh_1-\tbh_2,g)
\end{equation}
for any $\tbh_1,\tbh_2,\tbh_3,\tbh_4\in\tbH$ and $g\in\bH$. From \eqref{1-2-deriv-bound} it follows that
\begin{equation}\label{A2-23}
\big\|\big(L''(\tbh_1)-L''(\tbh_2)\big)(\tbh_3,\tbh_4,g)\big\|\leq c\big(\|\tbh_1\|^p+\|\tbh_2\|^p\big)\|\tbh_1-\tbh_2\|\,\|\tbh_3\|\,\|\tbh_4\|\,\|g\|
\end{equation}
for any $\tbh_1,\tbh_2,\tbh_3,\tbh_4\in\tbH$ and $g\in\bH$. Below we estimate the $L^2_{-\tbeta}(\RR,\bH)$-norm of the functions $H_j$, $j=1,2,3,4$, using the estimates  \eqref{A2-12} and \eqref{A2-23}.
\begin{align}\label{A2-24}
\|H_1\|_{L^2_{-\tbeta}}&\leq  c(\|f\|_{L^\infty_{-\beta}}^p+\|f_0\|_{L^\infty_{-\beta}}^p)\|f-f_0\|_{L^\infty_{-\beta}}\|f'\|_{L^2_{-\beta}}\|z_1\|_{L^\infty_{-\beta}}\|z_2\|_{L^\infty_{-\beta}}\nonumber\\
&\leq c(\|f\|_{H^1_{-\beta}}^p+\|f_0\|_{H^1_{-\beta}}^p)\|f-f_0\|_{H^1_{-\beta}}\|f\|_{\cZ_{\gamma,\beta}}\|z_1\|_{H^1_{-\beta}}\|z_2\|_{H^1_{-\beta}}\nonumber\\
&\leq c(\|f\|_{\cZ_{\gamma,\beta}}^p+\|f_0\|_{\cZ_{\gamma,\beta}}^p)\|f\|_{\cZ_{\gamma,\beta}}\|f-f_0\|_{\cZ_{\gamma,\beta}}\|z_1\|_{\cZ_{\gamma,\beta}}\|z_2\|_{\cZ_{\gamma,\beta}};
\end{align}
\begin{align}\label{A2-25}
\|H_2\|_{L^2_{-\tbeta}}&\leq  c\|f_0\|_{L^\infty_{-\beta}}^p\|f'-f_0'\|_{L^2_{-\beta}}\|z_1\|_{L^\infty_{-\beta}}\|z_2\|_{L^\infty_{-\beta}}\nonumber\\
&\leq c\|f_0\|_{H^1_{-\beta}}^p\|f-f_0\|_{\cZ_{\gamma,\beta}}\|z_1\|_{H^1_{-\beta}}\|z_2\|_{H^1_{-\beta}}\nonumber\\
&\leq c\|f_0\|_{\cZ_{\gamma,\beta}}^p\|f-f_0\|_{\cZ_{\gamma,\beta}}\|z_1\|_{\cZ_{\gamma,\beta}}\|z_2\|_{\cZ_{\gamma,\beta}};
\end{align}
\begin{align}\label{A2-26}
\|H_3\|_{L^2_{-\tbeta}}&\leq  c(\|f\|_{L^\infty_{-\beta}}^p+\|f_0\|_{L^\infty_{-\beta}}^p)\|f-f_0\|_{L^\infty_{-\beta}}\|z_1'\|_{L^2_{-\beta}}\|z_2\|_{L^\infty_{-\beta}}\nonumber\\
&\leq c(\|f\|_{H^1_{-\beta}}^p+\|f_0\|_{H^1_{-\beta}}^p)\|f-f_0\|_{H^1_{-\beta}}\|z_1\|_{\cZ_{\gamma,\beta}}\|z_2\|_{H^1_{-\beta}}\nonumber\\
&\leq c(\|f\|_{\cZ_{\gamma,\beta}}^p+\|f_0\|_{\cZ_{\gamma,\beta}}^p)\|f-f_0\|_{\cZ_{\gamma,\beta}}\|z_1\|_{\cZ_{\gamma,\beta}}\|z_2\|_{\cZ_{\gamma,\beta}};
\end{align}
\begin{align}\label{A2-27}
\|H_4\|_{L^2_{-\tbeta}}&\leq  c(\|f\|_{L^\infty_{-\beta}}^p+\|f_0\|_{L^\infty_{-\beta}}^p)\|f-f_0\|_{L^\infty_{-\beta}}\|z_1\|_{L^\infty_{-\beta}}\|z_2'\|_{L^1_{-\beta}}\nonumber\\
&\leq c(\|f\|_{H^1_{-\beta}}^p+\|f_0\|_{H^1_{-\beta}}^p)\|f-f_0\|_{H^1_{-\beta}}\|z_1\|_{H^1_{-\beta}}\|z_2\|_{\cZ_{\gamma,\beta}}\nonumber\\
&\leq c(\|f\|_{\cZ_{\gamma,\beta}}^p+\|f_0\|_{\cZ_{\gamma,\beta}}^p)\|f-f_0\|_{\cZ_{\gamma,\beta}}\|z_1\|_{\cZ_{\gamma,\beta}}\|z_2\|_{\cZ_{\gamma,\beta}}.
\end{align}
From \eqref{A2-19}, \eqref{A2-20}, \eqref{A2-24}, \eqref{A2-25}, \eqref{A2-26} and \eqref{A2-27} we conclude that
\begin{equation}\label{A2-28}
\big\|\big(\cW'(f)-\cW'(f_0)\big)\big(z_1,z_2)\big\|_{\cZ_{\tgamma,\tbeta}}\leq c(\|f\|_{\cZ_{\gamma,\beta}}^p+\|f_0\|_{\cZ_{\gamma,\beta}}^p)(1+\|f\|_{\cZ_{\gamma,\beta}})\|f-f_0\|_{\cZ_{\gamma,\beta}}\|z_1\|_{\cZ_{\gamma,\beta}}\|z_2\|_{\cZ_{\gamma,\beta}}
\end{equation}
for any $f,z_1\in\cZ_{\gamma,\beta}(\tbH)$ and $z_2\in\cZ_{\gamma,\beta}(\bH)$, proving Claim 3 and the lemma.
\end{proof}


\end{document}